\newtheorem{thm}{Theorem}[section]
\newtheorem{lem}{Lemma}[section]
\newtheorem{hyp}{Hypotheses}[section]
\newtheorem{cor}{Corollary}[section]
\newtheorem{Prop}{Proposition}[section]
\newtheorem*{St*}{Statement}
\newtheorem*{theorem*}{Theorem}
\theoremstyle{remark}
\theoremstyle{definition}
\newtheorem{df}{Definition}[section]
\theoremstyle{remark}
\newtheorem{oss}{Remark}[section]
\newcommand{\be}{\begin{equation}}
\newcommand{\ee}{\end{equation}}
\newcommand{\R}{\mathbb{R}}
\newcommand{\N}{\mathbb{N}}
\newcommand{\G}{\Gamma}
\newcommand{\Greg}{\text{gen-reg}\,}
\newcommand{\Reg}{\text{reg}\,}
\newcommand{\spt}[1]{\text{spt}\,\|#1\|}
\newcommand{\B}[1]{({\mathcal B{\emph #1}})}
\newcommand\res{\mathop{\hbox{\vrule height 7pt width .5pt depth 0pt
\vrule height .5pt width 6pt depth 0pt}}\nolimits}
\def\eps{\mathop{\varepsilon}}
\def\e{\mathop{\varepsilon}}
\def\k{\mathop{\kappa}}
\def\g{\mathop{\gamma}}
\def\Uc{\mathop{\mathcal{U}_{}}}
\def\Oc{\mathop{\mathcal{O}}}
\def\Hc{\mathop{\mathcal{H}}}
\def\th{\theta}
\def\b{\beta}
\def\t{\tau}
\def\d{\delta}
\def\e{\varepsilon}
\def\t{\tau}
\def\z{\zeta}
\def\s{\sigma}
\def\r{\rho}
\def\Om{\Omega}
\def\om{\omega}
\def\p{\partial}
\def\eps{\mathop{\varepsilon}}
\def\Om{\Omega}
\def\om{\omega}
\def\p{\partial}
\DeclareMathAlphabet{\mathscr}{OT1}{pzc}{m}{it}
\begin{document} 

\title[Stable CMC integral varifolds of codimension $1$]{Stable CMC integral varifolds of codimension $1$: regularity and compactness}
\author[C. Bellettini \& N. Wickramasekera ]{Costante Bellettini \& Neshan Wickramasekera}


\begin{abstract}
We give two structural conditions on a codimension $1$ integral $n$-varifold with first variation locally summable to an exponent $p>n$ that imply the following: whenever each orientable portion of the $C^{1}$-embedded part of the varifold (which is non-empty by the Allard regularity theory) is stationarity and the $C^{2}$-immersed part of it is stable with respect to the area functional for volume preserving deformations, its support, except possibly on a closed set of codimension $7$, is an immersed constant-mean-curvature (cmc) hypersurface of class $C^{2}$ that can fail to be embedded only at points where locally the support is the union of two $C^{2}$ embedded cmc disks with only tangential intersection. Both  structural conditions are necessary for the conclusions and involve only those parts of the varifold that are made up of embedded $C^{1, \alpha}$-regular pieces coming together in a regular fashion, making them easy to check in principle. We show also that any family of codimension 1 integral varifolds satisfying these structural and variational hypotheses as well as locally uniform mass and mean curvature bounds is compact in the varifold topology.  Our results generalize both the regularity theory \cite{WicAnnals} (for stable minimal hypersurfaces) and the regularity theory of Schoen--Simon (\cite{SS}, for hypersurfaces satisfying a priori a smallness hypothesis on the singular set in addition to the variational hypotheses). Corollaries of the main varifold regularity theorem are obtained for sets of locally finite perimeter, which generalize the regularity theory of  Gonzalez--Massari--Tamanini (\cite{GonzMassTaman}) for boundaries that locally minimize perimeter subject to the fixed enclosed volume constraint.

\end{abstract}

\maketitle
\tableofcontents
\section{Introduction} 

Our purpose here is to develop a local regularity and compactness theory  for a class of hypersurfaces (codimension 1 integral varifolds) that are stationary and stable on their regular parts with respect to the area functional for deformations that preserve the ``enclosed volume'' functional. 
There is a rich literature concerning local and global geometric consequences  of these variational hypotheses for a hypersurface of a smooth Riemannian manifold whenever the hypersurface is a priori assumed to be of class $C^{2}$.  For instance, it is well known that if a (piece of a) hypersurface is of class $C^{2}$,  then it is area-stationary for volume preserving deformations  if and only if  it is a constant-mean-curvature (CMC) hypersurface; and whenever the ambient manifold is simply connected and has constant sectional curvature, a compact $C^{2}$ hypersurface  is stationary and stable with respect to the area functional for volume preserving deformations if and only if it is a geodesic sphere (\cite{BarbDoCEsch}).  The main results of the present article give sharp local structural conditions implying  $C^{2}$ (and hence also higher) regularity, away from a small singular set, of a codimension 1 integral varifold having first variation locally summable to a power greater than its dimension and satisfying stationarity and stability hypotheses in the above sense.

The work described here is to be viewed as both a generalization,
to CMC hypersurfaces, of the regularity theory of \cite{WicAnnals}
(that if an $n$-dimensional minimal hypersurface has stable regular set and has no ``classical
singularities''---see definition ($\dag$) below---then it is regular except on a set of
Hausdorff dimension $n-7$, together with the associated compactness theory and
various applications) and a generalization of the work of Schoen--Simon (\cite{SS}) (that gives regularity and compactness conclusions as in the theorems here but subject to an a priori smallness assumption on the singular set in addition to the variational hypotheses). The proofs of the main results here, while making indispensable use of the estimates of \cite{SS} and the techniques of \cite{WicAnnals}, require also accounting for some new, considerable analytic difficulties which arise from the combination of the failure of the two-sided strong maximum principle (in contrast to the minimal hypersurface case) and the absence of any a priori size hypothesis on the singular set; moreover, there are some
subtleties in formulating the optimal set of hypotheses of the theorems. We shall elaborate on these aspects in the discussion that follows. In a forthcoming sequel \cite{BW2}, we shall generalize the main results obtained here even further, to include the setting where the scalar mean curvature of the hypersurface is prescribed by a given, appropriately regular non-negative function on the ambient manifold. This condition, much like the CMC case treated here, has a variational formulation. Also, in the present article we shall confine ourselves to ambient spaces that are open subsets of ${\mathbb R}^{n+1}$, deferring  to the sequel the discussion of the (routine) technical modifications necessary to extend the results to the case of general Riemannian ambient spaces. 
 In the case of CMC hypersurfaces  of a Euclidean ambient space, as considered here, the proofs of the main results not only allow for a more transparent exposition but also contain many of the main necessary geometric and analytic ingredients.

The theory developed here  gives two sharp  structural conditions, that are in principle easy to check, on a codimension 1 integral varifold $V$ of dimension $n \geq 2$ having first variation summable to an exponent $> n$ and satisfying (appropriate forms of) stationarity and stability hypotheses with respect to the area functional for volume preserving deformations so that: (i) these hypotheses imply that  $V$, possibly away from a much lower dimensional closed set $\Sigma$ of singularities, corresponds to a CMC hypersurface of class $C^{2}$ in the sense that away from $\Sigma$, the support of $V$ is locally either a single $C^{2}$ embedded CMC disk or precisely two $C^{2}$ embedded CMC disks with only tangential intersection along a set contained in an $(n-1)$-dimensional embedded submanifold, and (ii) any subcollection of  such hypersurfaces satisfying additionally uniform volume and mean curvature bounds is compact in the topology of measure-theoretic (i.e.\ varifold) convergence. The structural hypotheses and the precise form of the variational hypotheses, as well as the main conclusions we establish, are contained in Theorem~\ref{thm:mainregularity} (regularity theorem) and  Theorem~\ref{thm:compactness} (compactness theorem) below. Sections~\ref{intro-caccioppoli} and \ref{intro-varifolds} below contain a less technical discussion of these hypotheses and conclusions. (In particular, hypothesis (a) of the theorem in Section~\ref{intro-caccioppoli} gives the first structural hypothesis, namely, the absence of classical singularities, and hypothesis (a$^{\prime}$) of the theorem in Section~\ref{intro-varifolds} gives the second).

The results established here should be regarded as giving conditions implying ``embeddedness'' of a  stable codimension 1 CMC varifold away from a small set of genuine singularities, although our conclusion allows for two $C^{2}$ pieces of the varifold to intersect tangentially. The most general manner in which tangential intersection of $C^{2}$ pieces of an $n$-dimensional CMC hypersurface is possible is as permitted in our conclusion (i) above, i.e.\ along a set of dimension at most $(n-1)$. Such tangential intersection is in fact natural for CMC hypersurfaces when the mean curvature is non-zero; consider for instance two touching unit cylinders with parallel $(n-1)$-dimensional axes in ${\mathbb R}^{n+1}$, which is an example at one extreme where the touching set is $(n-1)$-dimensional, or two touching unit spheres, an example at the other extreme with just a single touching point. Of course without the full freedom of such intersection,   
no compactness assertion as in (ii) above can be true (consider e.g.\ two disjoint half-cylinders with equal radii coming together), and the usefulness of the theory would in principle be limited.   
Note  that since one of our structural hypotheses (the absence of classical singularities; see Section~\ref{intro-caccioppoli} below) rules out transverse self intersections of the hypersurface, it follows from  the maximum principle that three distinct $C^{2}$ pieces of a CMC hypersurface as in our theorems cannot have a common point.

 There is a rich variational theory of minimal hypersurfaces in Riemannian manifolds that has been developed over the past seven or so decades. In that theory, understanding regularity and compactness properties of stable minimal hypersurfaces has been indispensable for establishing existence of optimally regular minimal hypersurfaces. In particular, a recent approach to this existence theory (established through the combined works of Guaraco (\cite{Gua}), Hutchinson--Tonegawa (\cite{HutchTon}) and Tonegawa--Wickramasekera (\cite{TonWic})) shows that having at one's disposal a sharp regularity theory for stable minimal hypersurfaces (as in \cite{WicAnnals}) makes it possible to reduce the construction part of the theorem to a standard PDE mountain pass lemma, replacing the varifold min-max construction in the original Almgren--Pitts--Schoen--Simon approach. See Section~\ref{minimal} below for a brief discussion on this. The success of this PDE approach in that setting naturally leads to the question whether a similar theory for hypersurfaces of more generally prescribed mean curvature could be developed. We shall address this question in forthcoming work (\cite{BW3}). An essential step in such a theory is to develop sufficiently strong regularity and compactness theorems for the corresponding \emph{stable} solutions.  Our work here and in the sequel \cite{BW2} provide these. We remark that the results established in the present work and in \cite{BW2} however require no assumption that is specific to any existence construction; the work in fact produces considerably general local results that might conceivably be applied in a variety of different situations.

 \subsection{Caccioppoli sets} \label{intro-caccioppoli}
In the most general version of our main regularity and compactness theorems  (Theorem~\ref{thm:mainregularity} and Theorem~\ref{thm:compactness}), a hypersurface means a codimension 1 integral $n$-varifold whose first variation is absolutely continuous with respect to its weight measure and whose generalized mean curvature is locally in 
 $L^{p}$ for some $p > n.$ In that generality however the meaning of the notions of enclosed volume and volume preserving deformations is not immediately clear, and these notions need to be defined appropriately. Moreover, the theorem in that generality allows for multiplicity $> 1$. Before discussing these general theorems, it is perhaps instructive to mention a special case (the theorem below, which is a mildly imprecise re-statement of Corollary~\ref{cor:CaccioppoliwithoutLp}) which is simpler to state and yet involves a natural setting for CMC hypersurfaces---namely, that of boundaries of sets of locally finite perimeter, known also as Caccioppoli sets---in which it is clear what enclosed volume means. Moreover, as it turns out, in this setting only one of the two structural hypotheses is necessary. 
 
 Let $n \geq 2$. Recall that by definition, a subset $E$ of $\R^{n+1}$ is a Caccioppoli set if $E$ is ${\mathcal H}^{n+1}$ measurable and its characteristic function $\chi_{E} \in BV_{\rm loc}(\R^{n+1}).$ Thus if $E$ is a Caccioppoli set in $\R^{n+1}$, it follows from the Riesz representation theorem that  there is a Radon measure on $\R^{n+1}$, denoted $|D\chi_{E}|$, and a $|D\chi_{E}|$-measurable vector field $\nu_{E}$ on $\R^{n+1}$ with $|\nu_{E}| = 1$ $|D\chi_{E}|$-a.e.\ on $\R^{n+1}$ 
 satisfying $\int_{E} {\rm div} \, g \, d{\mathcal H}^{n+1} = -\int_{\R^{n+1}} g \cdot \nu_{E} \, |D\chi_{E}|$ for every smooth compactly supported vector field $g$ on $\R^{n+1}$.  (Thus in case $E$ is a $C^{1}$ domain, by the divergence theorem $|D\chi_{E}|$ is just ${\mathcal H}^{n} \res \partial \, E$ and $\nu_{E}$ is the unit normal to $\partial \, E$ pointing into $E$; in general, ${\rm spt} \, |D\chi_{E}|$ is to be thought of as the generalized boundary of $E$, and $\nu_{E}$ as the generalized unit normal to the generalized boundary pointing into $E$.) 
 
 For $\lambda \in {\mathbb R}$ and ${\mathcal O}$ an open subset of $\R^{n+1}$ with compact closure, let
$$J_{\mathcal O}(E) = |D\chi_{E}|({\mathcal O}) + \lambda {\mathcal H}^{n+1}(E \cap {\mathcal O})$$
for Caccioppoli sets $E$ in $\R^{n+1}$. Note that stationarity of $E$ with respect to $J_{\mathcal O}$ for some $\lambda$ and arbitrary ambient deformations fixing $E$ outside ${\mathcal O}$ is equivalent to stationarity of $E$ with respect to  the perimeter functional ($= |D\chi_{E}|({\mathcal O})$) for deformations  that fix $E$ outside ${\mathcal O}$ and preserve the enclosed volume ${\mathcal H}^{n+1}(E \cap {\mathcal O}).$ 

The first of the two structural hypotheses (and the only one needed in the setting of Caccioppoli sets), namely hypothesis (a) of the theorem below, requires the following general definition:
 
\begin{enumerate}
\item[($\dag$) ]A \emph{classical singularity} of a set $W \subset \R^{n+1}$ is a point $p \in W$ about which there is a neighborhood $U$ such that $W \cap \, U$ is, for some $\alpha \in (0, 1)$,  the union of 
 three or more embedded $C^{1, \alpha}$ hypersurfaces-with-boundary sharing a single common boundary containing $p,$ meeting pairwise only along the common boundary and with at least one pair meeting transversely.
\end{enumerate}

In order to formulate the stability assumption we need the following notion of volume-preserving deformation for immersions. Let $\iota:S \to \R^{n+1}$ denote a smooth immersion of an $n$-dimensional orientable manifold $S$ into $\R^{n+1}$. For $\Oc \subset \subset \R^{n+1}$, let $\psi(t,x)=\psi_t(x)$ (for $t\in(-\eps,\eps)$ for some $\eps > 0$ and $x\in S$) denote a one-parameter family of immersions (smooth on $(-\eps,\eps) \times S$) such that $\psi_0=\iota$ and $\psi_t(x)=\iota(x)$ for all $t\in(-\eps, \eps)$ and all $x \in S \setminus \iota^{-1}(\Oc)$. Let 
\begin{equation*}\label{enclosedvolume}
{\rm Vol} \, (t) = \int_{[0,t] \times S} \psi^*d\text{vol}
\end{equation*}
where $d\text{vol}$ denotes the usual volume form on $\R^{n+1}$.
We say that $\psi_t$ is a \emph{volume-preserving deformation of $\iota(S)$ in $\Oc$ as an immersion} if

\begin{equation}
 \label{eq:volumepreservingimmersions}
{\rm Vol}^{\prime}(t) =0 \text{ for all } t\in(-\eps,\eps).
\end{equation}

\medskip

 \noindent
 \emph{{\bf Theorem} (Corollary~\ref{cor:CaccioppoliwithoutLp}). For $n \geq 2$, let $E$ be a Caccioppoli set in $\R^{n+1}$ and $\Uc$ be an open set, with ${\mathcal H}^{n+1}(E\cap \Uc) >0.$ Let $\lambda \in {\mathbb R}$ be a constant. Suppose  that: 
 \begin{enumerate}
 \item[(a)] no point $p \in {\rm spt} \, |D\chi_{E}| \cap \Uc$ is a classical singularity of ${\rm spt} \, |D\chi_{E}|$;
 \item[(b)] For each open set ${\mathcal O}$ with compact closure in $\Uc$, $E$ is stationary with respect to the functional  $J_{\mathcal O}(\cdot)$ for ambient deformations that fix $E$ outside ${\mathcal O}$ and 
 \item[(c)] For each open set ${\mathcal O}$ with compact closure in $\Uc$, the smoothly immersed part $M$ of ${\rm spt} \, |D\chi_{E}|$ (which by (b) is CMC) is stable with respect to the area functional for volume-preserving deformations of $M$ in $\Oc$ as an immersion.
\end{enumerate} 
 Then there is a closed set 
 $\Sigma \subset {\rm spt} \, |D\chi_{E}|$ with $\Sigma = \emptyset$ if $n \leq 6$, $\Sigma$ discrete if $n=7$ and ${\rm dim}_{\mathcal H} \, (\Sigma) \leq n-7$ if $n \geq 8$ such that: 
 \begin{enumerate}
 \item [(i)] locally near each point  $p \in ({\rm spt} \, |D\chi_{E}| \cap \Uc) \setminus \Sigma$, either ${\rm spt} \, |D\chi_{E}|$ is a single smoothly embedded disk or ${\rm spt} \, |D\chi_{E}|$ is precisely two smoothly embedded disks with only tangential intersection along a subset contained in a smooth $(n-1)$-dimensional submanifold, and 
\item[(ii)] the mean curvature $H$ of $({\rm spt} \, |D\chi_{E}| \cap \Uc) \setminus \Sigma$ is given by 
 $H = - \lambda  \nu_{E}$ where $\nu_{E}$ is the unit normal to ${\rm spt} \, |D\chi_{E}|$ pointing into $E$.
 \end{enumerate}}

By virtue of the assumption that $E$ is a Caccioppoli  set, it follows from the well known structure theorem of De~Giorgi (\cite{DG1}, \cite{DG2}; see also \cite{Giusti}, \cite{Maggi}) that $|D\chi_{E}|$ is $n$-rectifiable, i.e.\ $|D\chi_{E}| = {\mathcal H}^{n} \res \partial^{\star} \, E$ where $\partial^{\star} \, E$ (the reduced boundary of $E$) is an $n$-rectifiable set having a multiplicity 1 tangent hyperplane at every point. 
Since $E$ is a stationary point of $J_{\mathcal O}$ for every open set ${\mathcal O} \subset\subset \R^{n+1}$, it follows (see the discussion in Remark \ref{oss:LpmeancurvCaccioppoli} below) that the first variation of the multiplicity 1 varifold $V$ associated with $|D\chi_{E}|$ is absolutely continuous with respect to its weight measure $\|V\|$ (= $|D\chi_{E}|$) and that the generalized mean curvature of $V$ is equal to $-\lambda \nu_{E}$. Thus Allard's regularity theorem (\cite{Allard})  implies that the singular set of ${\rm spt} \, |D\chi_{E}|$ (i.e.\ the set of points of ${\rm spt} \, |D\chi_{E}|$ where ${\rm spt} \, |D\chi_{E}|$  is not smoothly embedded) has zero $n$-dimensional Hausdorff measure. What is new in the above theorem is that, if additionally the $C^{2}$ immersed part  of ${\rm spt} \, |D\chi_{E}|$ 
is stable with respect to area for volume preserving variations, and if ${\rm spt} \, |D\chi_{E}|$  has no classical singularities, then the singular set of ${\rm spt} \, |D\chi_{E}|$ decomposes as  the disjoint union of the set of points near which ${\rm spt} \, |D\chi_{E}|$ consists locally of two smoothly embedded CMC disks intersecting tangentially and a closed set of codimension $\geq 7.$ 

Although the multiplicity of the hypersurface in the above theorem is 1 a.e., its proof is not much simpler than the proof of our more general varifold regularity result, Theorem~\ref{thm:mainregularity} (reproduced in Section~\ref{intro-varifolds} below). This is because even in the above special case, a multiplicity 2 tangent hyperplane can arise (e.g.\ as in the case of pieces of two touching unit spheres or unit cylinders in Euclidean space), and the occurrence of tangent hyperplanes with multiplicity $\geq 3$ cannot a priori be ruled out. It is part of the conclusion that there are no tangent hyperplanes with multiplicity $\geq 3$, and that a multiplicity 2 tangent hyperplane can only occur at a point where two $C^{2}$ pieces of the hypersurface meet tangentially (in particular, tangent cones along $\Sigma$ are non-planar).
 
\medskip

In view of the topology induced on the space of Caccioppoli sets by its embedding into $L^{1}_{\rm loc}$, there is a different and yet very natural notion of stationarity for Caccioppoli sets with respect to the functional 
$J_{\mathcal O}$. Although this stationarity condition is in fact stronger than the one assumed in the preceding theorem, its advantage is that it automatically rules out classical singularities, and moreover allows us to assume stability only on the smoothly \emph{embedded} part (and hence stability needs to be checked only for \textit{ambient} volume-preserving deformations). Let us now describe the deformations this stationarity condition entails and give the precise statement of the result it implies.

Let $E$ be a Caccioppoli set in $\R^{n+1}$. For each open set ${\mathcal O} \subset {\mathbb R}^{n+1}$ with compact closure, consider a one parameter family of sets $\{E_t\}_{t\in[0, \eps)}$ such that $E_t$ is a Caccioppoli set for each $t \in [0, \eps)$ with the properties:

\noindent $\bullet$ $E_0=E$; $E_t  \cap (\R^{n+1} \setminus {\mathcal O})  = E \cap (\R^{n+1} \setminus {\mathcal O})$ for all $t \in [0, \eps)$; and ${\mathcal H}^{n+1}(E_t\cap \mathcal{O})={\mathcal H}^{n+1}(E \cap \mathcal{O})$ for all $t \in [0, \eps)$;

\noindent $\bullet$ the map $t \in [0, \eps)\to \chi_{E_t}$ is continuous, where the topology on the characteristic functions $\chi_{E_t}$ is the one induced by the embeddeding in $L^1_{\text{loc}}$; moreover the associated map $t \in [0, \eps)\to J_{\mathcal O}(E_t)$ is differentiable from the right at $t=0$.

We refer to such a family $\{E_t\}_{t\in[0, \eps)}$ as a \emph{one-sided one-parameter volume-preserving family of deformations in $\mathcal{O}$ with respect to the $L^1_{\text{loc}}$-topology}.

\medskip

\noindent
 \emph{{\bf Theorem} (Corollary~\ref{cor:Caccioppolianalytic}). For $n \geq 2$, let $E$ be a Caccioppoli set in $\R^{n+1}$ and $\Uc$ be an open set, with ${\mathcal H}^{n+1}(E\cap \Uc) >0.$ Let $\lambda \in {\mathbb R}$ be a constant.  Suppose  that: 
 \begin{enumerate}
 \item[($b^{\prime}$)] For each open set ${\mathcal O}$ with compact closure in $\Uc$ and for each one-sided one-parameter volume-preserving family of deformations in $\mathcal{O}$ with respect to the $L^1_{\text{loc}}$-topology, we have that $\left.\frac{d}{dt}\right|_{t=0^+}J_{\mathcal O}(E_t)\geq 0$;
 \item[$(c^{\prime})$] the smoothly embedded part of ${\rm spt} \, |D\chi_{E}|$ is stable with respect to $J_{\mathcal O} (\cdot)$ for ambient deformations that fix $E$ outside ${\mathcal O}$ and preserve ${\mathcal H}^{n+1}(E \cap {\mathcal O}).$ 
\end{enumerate} 
 Then there is a closed set 
 $\Sigma \subset {\rm spt} \, |D\chi_{E}|$ with $\Sigma = \emptyset$ if $n \leq 6$, $\Sigma$ discrete if $n=7$ and ${\rm dim}_{\mathcal H} \, (\Sigma) \leq n-7$ if $n \geq 8$ such that: 
 \begin{enumerate}
\item [(i)] locally near each point  $p \in ({\rm spt} \, |D\chi_{E}| \cap \Uc) \setminus \Sigma$, either ${\rm spt} \, |D\chi_{E}|$ is a single smoothly embedded disk or ${\rm spt} \, |D\chi_{E}|$ is precisely two smoothly embedded disks with only tangential intersection along a subset contained in a smooth $(n-1)$-dimensional submanifold, and 
\item[(ii)] the mean curvature $H$ of $({\rm spt} \, |D\chi_{E}| \cap \Uc) \setminus \Sigma$ is given by 
 $H = - \lambda  \nu_{E}$ where $\nu_{E}$ is the unit normal to ${\rm spt} \, |D\chi_{E}|$ pointing into $E$.
 \end{enumerate}}
 
\medskip

We point out the following: Let $X\in C^1_c(\Oc, \R^{n+1})$ and $\psi_t:(-\eps,\eps)\times \Oc \to \Oc$ be a one-parameter family of ambient diffeomorphisms with $\psi_0=Id$ and $\left.\frac{d}{dt}\right|_{t=0} \psi_t=X$ and consider $t \in (-\eps, \eps)\to \chi_{E_t}$, where $E_t=\psi_t(E)$; then both $t \in [0, \eps)\to \chi_{E_t}$ and $t \in [0, \eps)\to \chi_{E_{-t}}$ are one-sided one-parameter volume-preserving families of deformations in $\mathcal{O}$ with respect to the $L^1_{\text{loc}}$-topology. Since in this case $t \in (-\eps, \eps)\to J_{\Oc}(E_t)$ is differentiable at $t=0$, it is immediate that assumption $(b^{\prime})$ implies, in particular, the stationarity of $E$ with respect to $J_{\Oc}$ for all ambient deformations. The importance of $1$-sided deformations allowed in $(b^\prime)$ lies in the fact that in the presence for example of a classical singularity the deformations (in $L^1_{\text{loc}}$) that preserve the class of Caccioppoli sets are naturally $1$-sided and, in such cases, the stationarity condition should entail ``not decreasing area to first order'' (hence the inequality).

The preceding theorem generalizes the result by Gonzales--Massari--Tamanini \cite{GonzMassTaman} that established regularity of boundaries that \emph{minimize} area subject to the fixed enclosed volume constraint. 
The natural generalizations of the two theorems above to the case of ambient Riemannian manifolds will be discussed in \cite{BW2}.

\subsection{More general varifolds} \label{intro-varifolds}

The most general setting in which  the concepts of first variation of $n$-dimensional area and area-stationarity can be understood is that of $n$-varifolds. Amongst $n$-varifolds, the space ${\mathcal V}_{n, p}$ of integral $n$-varifolds $V$ whose first variation is absolutely continuous with respect to the weight measure $\|V\|$ and generalized mean curvature $H_{V}$ is in $L^{p}_{\rm loc} (\|V\|)$ for some $p > n$ is amply general for the study of many geometric variational problems.  As established by the fundamental regularity theory of Allard (\cite{Allard}), there is an embryonic control of singularities of varifolds in ${\mathcal V}_{n, p}$ that allows one to directly extend classical geometric constraints  (that may, for instance, arise from variational conditions) to $V \in {\mathcal V}_{n, p}$, albeit on an a priori small part of $V$, namely, the regular part of ${\rm spt} \, \|V\|$;  indeed, by Allard's regularity theorem, if $V \in {\mathcal V}_{n, p}$ then the (open) subset ${\rm reg}_{1} \, V$ of points of ${\rm spt} \, \|V\|$ near which ${\rm spt} \, \|V\|$ is a an embedded $C^{1}$ submanifold is dense in ${\rm spt} \, \|V\|$, and ${\rm reg}_{1} \, V$ is in fact of class $C^{1, 1-\frac{n}{p}}$ (where $n < p < \infty$). The hypothesis $H_{V} \in L^{p}_{\rm loc} \, (\|V\|)$ however is not strong enough to give any control of the size (Hausdorff measure) of the singular set ${\rm spt} \, \|V\|  \setminus {\rm reg}_{1} \, V$; there is in fact a well known example due to Brakke (\cite{Brak}) of an integral 2-varifold in ${\mathbb R}^{3}$ with (variable) generalized mean curvature in $L_{\rm loc}^{\infty}$ and a singular set of positive 2-dimensional Hausdorff measure. 
 
 Our general regularity theorem (Theorem~\ref{thm:mainregularity}) is formulated and proved for varifolds $V \in {\mathcal V}_{n, p}$.  Although as mentioned above its proof does not require much more effort than the proof of the theorem above for Caccioppoli sets, there is some subtlety involved in the formulation of its hypotheses so that they, while being not too restrictive, still guarantee the conclusion that the hypersurface is ``classical,'' i.e.\ is of class $C^{2}$ (in the same sense as in the theorems above, allowing two $C^{2}$ pieces to touch) away from an $(n-7)$-dimensional closed set of genuine singularities. In particular, a second structural hypothesis (($\dag\dag$) below) is necessary. There are in fact two important aspects with regard to  the hypotheses of the varifold version of the theorem that are not apparent in the setting of Caccioppoli sets. Rather than reproducing a full, precise statement of Theorem~\ref{thm:mainregularity} here, let us just highlight these two main points and give a slightly informal statement of the theorem:

 \begin{enumerate}
\item[(i)]  First, in light of the assumption that the first variation of $V$ is locally summable to an exponent $p >n$, the stationarity requirement (i.e.\ the analogue of hypothesis (b) in the first theorem in Section~\ref{intro-caccioppoli})  needs to be  imposed only on ${\rm reg}_{1} \, V$. This stationarity requirement precisely is the following:  On every orientable portion of ${\rm reg}_{1} \, V$, there exists a choice of orientation such that  that portion is stationary with respect to area for deformations preserving the enclosed volume (in the sense that (\ref{eq:volumepreservingimmersions}) holds, or equivalently, with the enclosed volume taken to be defined by (\ref{enclosed-vol}) of Section~\ref{main-thms} with respect to the chosen orientation). See hypothesis 1 of Theorem~\ref{thm:mainregularity} (or  hypothesis (b) in the theorem below). A posteriori this orientation is determined, up to sign (independent of the connected component of the hypersurface), by the mean curvature vector. Existence a priori of such an orientation is necessary in order to make $C^{2}$ regularity conclusions, as we do, away from a subset of codimension $\geq 2$, as shown by Figure \ref{fig:C11graph} in Section~\ref{examples}. Note  that whenever $V$ is the varifold defined by $|D\chi_{E}|$ $(= {\mathcal H}^{n} \res \partial^{\star} \, E)$ for some Caccioppoli set $E$ as in Section \ref{intro-caccioppoli}, this stationarity hypothesis is implied by hypothesis (b) of the theorem in Section \ref{intro-caccioppoli}) since in that case we have that ${\rm reg}_{1} \, V = \partial^{\star} \, E$, $\nu_{E}$ is an orienting unit normal to ${\rm reg}_{1} \, V$ and by the divergence theorem, ${\mathcal H}^{n+1}(E) =  \frac{1}{n+1}\int_{\partial^{\star} \, E} x \cdot \nu_{E} \, d{\mathcal H}^{n}$.

\item[(ii)] In light of example in Figure \ref{fig:C11_Regularity} in Section~\ref{examples}, it is necessary to make an additional hypothesis in the varifold setting (in addition to the no-classical-singularities assumption, stationarity and stability) in order for the $C^{2}$ regularity conclusions to hold, as asserted, away from a singular set of codimension $\geq 2$.  Of course the additional hypothesis must automatically be satisfied in the setting of Caccioppoli sets, but note that the example referred to above shows that even when ${\mathcal H}^{n}({\rm spt} \, \|V\| \setminus {\rm reg}_{1} \, V) = 0$ (which is automatic for $J_{\mathcal O}$-stationary Caccioppoli sets, as pointed out above), an additional hypothesis is necessary. As it turns out, this additional hypothesis takes the form of a second structural condition; just as with the no-classical-singularities hypothesis, it requires verification of a property only in regions where the entire structure of the varifold is given by (two) $C^{1, \alpha}$ hypersurfaces. Specifically, this hypothesis says the following: 

\begin{enumerate}
\item[($\dag\dag$)] Whenever a point 
 $p \in {\rm spt} \, \|V\| \setminus {\rm reg}_{1} \, V$ that is not a classical singularity of ${\rm spt} \, \|V\|$ has a neighborhood $U$ in which ${\rm spt} \, \|V\|$ is, for some $\alpha \in (0, 1)$, the union of two embedded $C^{1, \alpha}$ hypersurfaces of $U$ (i.e.\ whenever $p$ is a ``touching singularity;'' see Definition~\ref{df:touchingsingularity}), $p$ has a possibly smaller neighborhood $\widetilde{U}$ such that ${\mathcal H}^{n} \, (\{q \in \widetilde{U} \, : \, \Theta \,(\|V\|, q) = \Theta \, (\|V\|, p)\}) = 0.$ 
 \end{enumerate}
Here $\Theta \, (\|V\|, p)$ denotes the density of $V$ at $p$. Since a point $p$ as in ($\dag\dag$) satisfies $\Theta \, (\|V\|, p) \geq 2$, this hypothesis is redundant when $V$ corresponds to $|D\chi_{E}|$ for some Caccioppoli set $E$ 
because in that case $\Theta \,(\|V\|, x) = 1$ for every $x \in {\rm reg}_{1} \, V$ and ${\mathcal H}^{n} \, ({\rm spt} \, \|V\| \setminus {\rm reg}_{1} \, V) = 0.$ 
\end{enumerate}
 
 Our general regularity and compactness theorem can now be stated, albeit a little imprecisely, as follows (see Theorems~\ref{thm:mainregularity} and ~\ref{thm:compactness} for the precise statements):\\    

\noindent
\emph{{\bf Theorem.} Let $V$ be an integral $n$-varifold ($n \geq 2$) in an open set $\Uc \subset \R^{n+1}$, whose first variation is locally bounded and absolutely continous with respect to $\|V\|$ and whose generalized mean curvature is in $L^{p}_{\rm loc} \, (\|V\|)$ for some $p >n$, and 
let $\lambda \in {\mathbb R}$ be a constant.  Let ${\rm reg}_{1} \, V$ denote the $C^{1}$ embedded part of ${\rm spt} \, \|V\|$ (which by Allard's regularity theorem is a dense open subset of ${\rm spt} \, \|V\|$). Suppose that: 
\begin{enumerate}
\item[(a)] no point of ${\rm spt} \, \|V\|$ is a  classical singularity of ${\rm spt} \, \|V\|$; 
\item[(a$^{\prime}$)] $V$ satisfies {\rm($\dag\dag$)}; 
\item[(b)] for each open set ${\Oc} \subset\subset \Uc \setminus ({\rm spt} \, \|V\| \setminus {\rm reg}_{1} \, V)$ such that ${\rm reg}_{1} \, V \cap {\Oc}$ is orientable,  $V \res {\Oc}$ is stationary with respect to the functional $J_{\Oc} \, : \, IV_{n}({\Oc}) \to {\mathbb R}$ given by  
$$J_{\Oc}(W)=\|W\|(\Oc)+\lambda\text{vol}_{\Oc}(W)$$
for any ambient deformation that only moves  $V \res {\Oc}$, where $\text{vol}_{\Oc} \, (\cdot)$ is the volume enclosed by $V \res {\Oc}$ relative to a choice of orientation on ${\rm reg}_{1} \, V \cap {\Oc}$; 
\item[(c)] the $C^{2}$ immersed part $\text{gen-reg} \, V$ of ${\rm spt} \, \|V\|$ (which is a classical CMC immersion by (b)) is stable (as an immersion) with respect to the functional $J_{\Oc}(\cdot)$ (on multiplicity 1 immersions) for any volume preserving deformation that only moves a compact region of $\text{gen-reg} \, V.$  
\end{enumerate}
Then there is a closed set 
 $\Sigma \subset {\rm spt} \, \|V\|$ with $\Sigma = \emptyset$ if $n \leq 6$, $\Sigma$ discrete if $n=7$ and ${\rm dim}_{\mathcal H} \, (\Sigma) \leq n-7$ if $n \geq 8$ such that: 
 \begin{enumerate}
\item [(i)] locally near each point  $p \in {\rm spt} \, \|V\| \setminus \Sigma$, either ${\rm spt} \, \|V\|$ is a single smoothly embedded disk or ${\rm spt} \, \|V\|$ is precisely two smoothly embedded disks with only tangential intersection along a subset contained in a smooth $(n-1)$-dimensional submanifold, and 
\item[(ii)] ${\rm spt} \, \|V\| \setminus \Sigma$ is an orientable immersion and there is a continuous choice of unit normal $\nu$ on ${\rm spt} \, \|V\| \setminus \Sigma$ such that the mean curvature $H_{V}$ of ${\rm spt} \, \|V\| \setminus \Sigma$ is given by $H_{V} =  \lambda  \nu$.
 \end{enumerate}
 Moreover, if $(V_j)$ is a sequence of integral $n$-varifolds in $\Uc$ and $(\lambda_{j})$ is a sequence of real numbers satisfying the above hypotheses with 
 $V_{j}$ in place of $V$ and $\lambda_{j}$ in place of $\lambda$, and if $\limsup_j (|\lambda_{j}| + \|V_j\| (K))<\infty$ for each compact set $K \subset \Uc$, then there exist an integral $n$-varifold $V$ in $\Uc$ and a number $\lambda \in {\mathbb R}$ satisfying the above hypotheses, and a subsequence $({j'})$ such that  $\lambda_{j^{\prime}} \to \lambda$ and $V_{j^{\prime}} \to V$ as varifolds in $\Uc$.}\\ 

\noindent
\emph{{\bf Remark}}. In fact a weaker stability assumption than (c) will suffice: we will need only a special type of volume-preserving deformations as immersions (see Theorem \ref{thm:mainregularity}). An equivalent formulation of this condition can be given by the so-called weak stability inequality, see (\ref{eq:weakstabilityinequality}).

\medskip

\noindent
\emph{{\bf Remark}}. In \cite{BCW} we expoit the above theorem and prove, for the class of weakly stable CMC hypersurfaces with bounds on the area and on the mean curvature, a priori curvature estimates for $n \leq 6$ and, under an additional necessary flatness assumption, sheeting theorems for arbitrary $n$.

\medskip
 
We emphasize that apart from the requirement that the generalized mean curvature $H_{V} \in L^{p}_{\rm loc}$ for some $p >n$,  each of the hypotheses of the above theorem is a condition on a part of the varifold $V$ where its regularity, at least of class $C^{1, \alpha}$ in some form, is known. Specifically, each of the two structural hypotheses rules out or controls a type of singularity that is formed when $C^{1, \alpha}$ embedded pieces of the varifold come together in a regular fashion; and the two variational hypotheses are, likewise, required only on the regular parts of the varifold---stationarity only on the $C^{1, 1 - \frac{n}{p}}$ embedded part ${\rm reg}_{1} \, V$, and stability only on the $C^{2}$ immersed part.  This is a very useful feature of the theorem because it makes these hypotheses easy to check in principle.  Beyond these requirements no hypothesis is necessary concerning the singular set, and the theorem guarantees lower dimensionality of the singular set. For an arbitrary varifold $V$ satisfying $H_{V} \in L^{p}_{\rm loc}$ for some $p > n,$ clearly no such conclusion is possible in view of Brakke's example referred to above. What is surprising is that it suffices to impose a set of hypotheses just on the ``regular parts'' of such a varifold $V$ in order to infer optimal size control of its singular set.

\subsection{Minimal hypersurface theory: the Allen--Cahn construction}\label{minimal}
The present work generalises the work \cite{WicAnnals} which established an analogous regularity and compactness theory for stable minimal hypersurfaces---more precisely, an analogous theory for codimension 1 integral $n$-varifolds $V$ that have no classical singularities; that are stationary with respect to the area functional for (unconstrained) ambient deformations that fix the region outside a compact subset; and that have stable regular parts ${\rm reg}_{1} \, V$ with respect to area for unconstrained ambient deformations that only move compact regions of ${\rm reg}_{1} \, V$. The work \cite{WicAnnals} showed that whenever these hypotheses are satisfied, ${\rm spt} \, \|V\|$ is smoothly embedded away from a closed singular set of Hausdorff dimension $\leq n-7$ which is empty if $n \leq 6$ and discrete if $n=7.$  (Note in particular that the present work in fact shows that the stationarity assumption in \cite{WicAnnals}, which is equivalent to the requirement that the generalized mean curvature $H_{V} = 0$ everywhere, can be weakened to the combined requirement that $H_{V} \in L^{p}_{\rm loc}$ for some  $p > n$ and the $C^{1}$ embedded part ${\rm reg}_{1} \, V$ be stationary; moreover, the stability condition can be relaxed to weak stability, i.e.\ stability (of ${\rm reg}_{1} \, V (= {\rm reg}\,  V$)) for volume preserving deformations.) 

The regularity theory of \cite{WicAnnals}  has subsequently been used to give a new proof of the celebrated existence theorem for embedded minimal hypersurfaces in compact Riemannian manifolds. This theorem asserts that in any given $(n+1)$-dimensional compact Riemannian manifold $N$ with $n \geq 2$, there is a closed embedded minimal hypersurface $M$ with a possible singular set whose Hausdorff dimension is $\leq n-7$. This result was first established by the combined work of Almgren (\cite{Alm1}), Pitts (\cite{Pitts}) and Schoen--Simon (\cite{SS}) in the early 1980's. The original proof was based on a geometric min-max construction due to Pitts (\cite{Pitts}) that refined earlier work of Almgren (\cite{Alm1}), giving a stationary integral $n$-varifold $V$ with a special ``almost minimizing'' property with respect to the area functional. This almost minimizing property makes it possible for the regularity of $V$ to be inferred from the compactness theory of Schoen and Simon (\cite{SS})  for stable minimal hypersurfaces with small singular sets. This approach of using a varifold min-max construction was subsequently adapted by Simon--Smith \cite{SimonSmith} to construct minimal 2-spheres in the 3-sphere with an arbitrary Riemannian metric. Both the general Almgren--Pitts argument and the Simon--Smith argument have been streamlined in the more recent works of De~Lellis--Tasnady \cite{DLTasnady}  and of Colding--De Lellis \cite{ColdingDL} respectively. The method has also been adapted in a very recent paper of Zhou--Zhu \cite{ZZ} which asserts the existence of a CMC hypersurface of prescribed mean curvature in dimensions $n$ with $2\leq n \leq 6$. 

In contrast to the direct varifold min-max arguments as in these works, the new proof of existence of minimal hypersurfaces is more PDE theoretic and is based on the basic idea of obtaining the minimal hypersurface as a weak limit of level sets of solutions to the (elliptic) Allen--Cahn equation on $N$. The first step of the argument is the work of Tonegawa and the second author (\cite{TonWic}), in which the regularity theory of \cite{WicAnnals} in its full strength is used to establish regularity of the minimal hypersurfaces $V_{\rm ac}$---the Allen--Cahn minimal hypersurfaces---arising as weak limits of level sets of stable solutions to  Allen--Cahn equations with perturbation parameters $\epsilon_{j} \to 0^{+}$; earlier work of Hutchinson--Tonegawa (\cite{HutchTon}) and  Tonegawa (\cite{Ton}) had established the existence of varifold limits of the level sets. The second step is the recent work of Guaraco (\cite{Gua}) that produces, by an elegant, simple PDE argument, an approriate solution $u_{\epsilon}$ of the $\epsilon$-Allen--Cahn equation on $N$ for every small $\epsilon>0;$ this construction is based on a standard PDE mountain pass lemma  and it produces a smooth solution $u_{\epsilon}$ such that the Morse index of $u_{\epsilon}$ (with respect to the Allen--Cahn energy functional) is bounded by 1, and the Allen--Cahn energy of $u_{\epsilon}$ is bounded above and away from zero independently of $\epsilon$, guaranteeing in particular the non-triviality of the limit varifold $V_{\rm ac}$ corresponding to a sequence $u_{\epsilon_{j}}$ with $\epsilon_{j} \to 0^{+}$. The desired regularity of the minimal hypersurface $M = {\rm spt} \, \|V_{\rm ac}\|$ follows in a straightforward manner by applying the result of \cite{TonWic} in a small arbitrary ball $B_{\rho}(x)$ or in $N \setminus \overline{B_{\rho}(x)}$ (in one of which regions a subsequence of $u_{\epsilon_{j}}$ must be stable, since the Morse index of $u_{\epsilon_{j}}$ is at most 1), where $x \in {\rm spt} \, \|V_{\rm ac}\|$ is arbitrary (see \cite{Gua}).

There are two important aspects of this new proof. First, it avoids the intricate Almgren--Pitts min-max construction, used in the original proof, that was carried out directly for the area functional on the space of codimension 1 integral cycles on $N$; in its place, the new proof uses a much simpler PDE min-max construction implemented in a Hilbert space, namely in $W^{1, 2}(N),$ giving $u_{\epsilon}$ as above. In particular, the uniform Morse index bound on $u_{\epsilon}$ removes the necessity of anything like an almost minimizing property to reduce the regularity question to that of stable hypersurfaces. The end result is a striking gain in simplicity on the part of the construction of a stationary varifold, whose justification---and this is the second key aspect---requires a heavier  investment in  regularity theory. This  new proof and the role in it played by the sharp regularity theory of \cite{WicAnnals} partly provide motivation for the present work. 

Finally, we remark that for $n=1, 2,$ different PDE approaches have been developed by various authors; for immersed closed geodesics or branched minimal surfaces see the works of Colding--Minicozzi (\cite{ColMin1}, \cite{ColMin2}), Rivi\`{e}re (\cite{Riv}), Michelat--Rivi\`{e}re (\cite{MR}) and Pigati--Rivi\`{e}re (\cite{PR}), and for prescribed mean curvature CMC surfaces the work of Struwe (\cite{Struwe}).

\subsection{Additional difficulties in the present work}
We end this introduction by briefly pointing out the main new challenges overcome in the proofs in the present work that were not present in the work \cite{WicAnnals}. The reader unfamiliar with \cite{WicAnnals} will benefit from reading Section~\ref{mainsteps} (below) before proceeding with the rest of this discussion. 

The main regularity result, Theorem \ref{thm:mainregularity}, is first reduced to Theorem~\ref{thm:mainregularityrestated} where ``strong stability'' (i.e.\ stability with respect to $J$ for uncontrained deformations) of the $C^{2}$ immersed part of the varifold $V$ can be assumed. Subsequently, the proof of Theorem~\ref{thm:mainregularityrestated} is divided into three main steps, the Sheeting Theorem (Theorem~\ref{thm:sheeting}), the Minimum Distance Theorem (Theorem~\ref{thm:min-dist}) and the Higher Regularity Theorem (Theorem~\ref{thm:higher-reg}), all proved simultaneously by induction. Much of the additional effort needed in the present work goes into the proof of the Higher Regularity Theorem. In the case of zero mean curvature as in \cite{WicAnnals}, unlike here, this step is an immediate consequence of the Hopf boundary point lemma and the standard elliptic regularity theory.

 The Sheeting Theorem roughly speaking says that if a varifold $V$ as in Theorem~\ref{thm:mainregularityrestated} and with a fixed bound on the scalar mean curvature of its regular part is close to a multiplicity $q$ hyperplane $P$ (in mass, $L^{2}$ distance and $L^{p}$ mean-curvature) then it decomposes locally as the sum of $q$ multiplicity 1 $C^{1, \alpha}$ graphs over $P$ with small $C^{1, \alpha}$ norm. The Higher Regularity Theorem says that if a varifold $V$ is the sum of $q$ multiplicity 1 $C^{1, \alpha}$ graphs over a hyperplane, and if $V$ satisfies the hypotheses of Theorem~\ref{thm:mainregularityrestated} (or Theorem~\ref{thm:mainregularity}) except for the stability hypothesis, then its support is the union of $\widetilde{q}$ ($\leq q$) $C^{2}$ graphs, each separately CMC, and hence also smooth. (We emphasize that when the mean curvature $H \neq 0$, this is only true for the support of the varifold; the original graphs giving the varifold with multiplicity are no more than $C^{1, 1}$ regular in general. See the example in Remark~\ref{oss:jumpsattouchingsing} and Figure \ref{fig:jumpsattouching}.) The Minimum Distance Theorem says that given a non-negative constant $H$ and a stationary integral cone ${\mathbf C}$ made up of three or more $n$-dimensional half-hyperplanes meeting along a common $(n-1)$-dimensional subspace, there is a fixed positive lower bound  (depending on ${\mathbf C}$ and $H$) on the Hausdorff distance at unit scale between ${\mathbf C}$ and any varifold $V$ with its regular part having  scalar mean curvature bounded by $H$ and satisfying the hypotheses of Theorem~\ref{thm:mainregularityrestated} as well as an appropriate mass bound. The induction parameter for the Sheeting Theorem and the Higher Regularity Theorem is the positive integer $q$, and that for the Minimum Distance Theorem is the density $\Theta \, (\|{\mathbf C}\|, 0)$ of ${\mathbf C}$ which takes values in $\{q, q+ 1/2\}$ for some integer $q \geq 1.$ 

The proofs of the inductive steps of the Sheeting Theorem and the Minimum Distance Theorem follow closely the corresponding argument in \cite{WicAnnals}, but with two key new aspects. One is that they make essential inductive use of the Higher Regularity Theorem. The other is that the conclusion of the Sheeting Theorem yields, initially, a weaker H\"older exponent for the gradient (of the functions defining the sheets) than in \cite{WicAnnals}. This exponent needs to be improved (as we do in the inductive step for the Higher Regularity Theorem) by independent arguments. The reason for this initially weaker conclusion is that the key excess-decay result needed for the Sheeting Theorem in the present context is obtained for an excess $\hat{E}$ that has, as is usual when the mean curvature $H$ is non-zero, e.g.\ as in \cite{Allard}, an extra lower order additive term (in addition to the $L^{2}$ height term)  involving $H$. In contrast to the multiplicity 1 setting of \cite{Allard} however, establishing excess-decay in the present higher-multiplicity setting requires a priori estimates for the varifold that make crucial use of the monotonicity formula (see Section~\ref{HardtSimon}). Consequently, the best possible choice for the lower order term in ${\hat E}$ is of the order $\sqrt{\|H\|_{L^{p}(\|V\|)}}$; see the definition of ${\hat E}$ in Theorem~\ref{thm:mainregularity}. This limitation arises precisely from the ``error term'' in the monotonicity formula when $H \neq 0$. Hence the excess-decay result we establish will initially only prove the Sheeting Theorem with $C^{1,\alpha}$ sheets for a value of $\alpha < \frac{1}{2}(1- \frac{n}{p}).$ Although we can improve this H\"older exponent by a second run of  the argument with the additional knowledge that $H$ is constant in the graph region, the best value of $\alpha$ we can get at this stage is still $<\frac{1}{2}$. 

In \cite{WicAnnals}, since $H = 0$, the value of $\alpha$ is irrelevant and higher regularity of the sheets is immediate. This is because by the Hopf boundary point lemma, the distinct sheets making up the support of the varifold are disjoint, and hence the functions defining the individual sheets  satisfy separately the minimal surface equation weakly. In the present case, the sheets do not separate in this manner, and our hypotheses in fact allow an a priori optimally large set $T$ of points where the sheets may touch each other; indeed, the only a priori control we have on $T$ is that  ${\mathcal H}^{n} \, (T) = 0$ (which follows from the structural hypothesis ($\dag\dag$) above, a sharp condition). Thus starting from just knowing $C^{1, \alpha}$ regularity, for some $\alpha < 1/2,$ of the distinct sheets of the support of the varifold which are allowed to touch on a set $T$ of measure zero, we need to prove their $C^{2}$ regularity. This requires considerable effort.

This is carried out, by means of PDE arguments, in Section \ref{higherreg} where the induction step for the Higher Regularity Theorem is completed. First we need to improve the H\"older exponent obtained in the Sheeting Theorem to some $\alpha\geq \frac{1}{2}$ (Section \ref{higherholderexponent}). Then, exploiting the improved exponent, we show that the regularity can be improved to $C^2$ (Sections \ref{semidifferenceisW22} and \ref{extensionPDEv}). We remark that the stronger hypothesis ${\mathcal H}^{n-1}( {\rm spt} \, \|V\| \setminus {\rm reg}_{1} \, V) = 0$ would lead to a substantially simpler proof of the Higher Regularity Theorem. This is because then ${\mathcal H}^{n-1} \,(T) = 0$ and hence by a straightforward cutoff function argument $T$ can be shown  to be removable for the PDE (the CMC equation) satisfied, in the complement of $T$, 
by the functions defining the sheets. This stronger hypothesis however is undesirable from the point of view of applications; for instance, it is not implied by the general structure theory of Caccioppoli sets, nor does it permit a full compactness theorem for the hypersurfaces as the one established here (as shown by the example of a sequence $V_{i} \to V$, where $V_{i}$ is made up of two disjoint half-cylinders of unit radius and parallel axes in ${\mathbb R}^{3}$ that come together in the limit $V$ made up of two half-cylinders  touching along a line). In the general case, we still of course show removability of $T$ (for $C^{1, \alpha}$ functions solving the the CMC equation away from $T$) but the proof is considerably more involved.

\section{Main theorems}
\subsection{Definitions and the statements of the main results}\label{main-thms}

 The hypotheses of our main theorems are motivated by the geometric variational problem of studying the hypersurfaces that are \textit{stable critical points with respect to the hypersurface-area functional for deformations that keep the volume enclosed by the hypersurface fixed}. There is a vast literature on this subject in the classical setting where the hypersurfaces are assumed to be smooth. 
However, in the geometric measure theory setting that we take up here, where smooth hypersurfaces are replaced by codimension 1 integral varifolds, it is not immediately clear how to define either the criticality or the stability for volume preserving deformations; indeed, the classical notion of volume-preserving variations, and stationarity with respect to such variations, require an \emph{oriented} immersion with $C^1$ regularity, while the notion of stability (of a stationary immersion) requires the immersion to be of class $C^2$. In the varifold setting, in addition to the hypersurfaces having possibly large singular sets a priori preventing their orientability,  they present also the extra difficulty caused by the presence of multiplicity $> 1$. Nevertheless, as will be clear soon, we will make as mild a set of hypotheses as possible on the varifolds in our theorems; roughly speaking, we will impose stationarity and stability only on regions of the varifold where these conditions make sense classically (i.e.\ away from singularities), and make the following assumption which is the only variational hypothesis that concerns the varifold in its entirety: the (unconstrained) first variation of the varifold is locally bounded, is absolutely continuous with respect to its weight measure and its generalized mean curvature (i.e.~the Radon-Nikodym derivative of the first variation with respect to the weight measure) is in $L^p_{\text{loc}}$ for some $p>n$. These conditions on the first variation of the varifold are natural from the point of view that the class of integral varifolds satisfying them enjoys good compactness properties under a uniform bound on the area and the $L^p$-norm of the mean curvature (\cite{Allard}, \cite{SimonNotes}). In addition to these variational hypotheses, we will also need two structural conditions on certain specific types of singularities of the varifold, to which we refer to as ``classical singularities'' and ``touching singularities'' (see the definitions below). 

Let $V$ be an integral varifold of dimension $n$ on and open set $\mathcal{U}\subset \R^{n+1}$ and let $\|V\|$ denote the weight measure associated with $V$. 

\begin{df}[\textit{Regular set $\Reg \, {V}$ and singular set $\text{sing} \, V$}]
A point $X \in {\mathcal U}$ is a regular point of $V$ if $X \in \spt{V}$ and if there exists $\sigma > 0$ such that $\spt{V} \cap B_{\sigma}^{n+1}(X)$ is an embedded smooth hypersurface of $B_{\sigma}^{n+1}(X)$. The regular set of $V$, denoted ${\rm reg} \, V,$ is the set of all regular points of 
$V.$ The (interior) singular set of $V$, denoted ${\rm sing} \, V$, is $(\spt{V}\setminus \Reg{V}) \cap {\mathcal U}$. By definition, ${\rm reg} \, V$ is relatively open in ${\rm spt} \, \|V\|$ and ${\rm sing} \, V$ is relatively closed in ${\mathcal U}$.  
\end{df}

\begin{df}[\textit{$C^1$-regular set $\text{reg}_1 V$}]
\label{df:C1embedded}
We define $\text{reg}_1 V$ to be the set of points $X \in \spt{V}$ with the property that there is $\sigma > 0$ such that $\spt{V} \cap B^{n+1}_{\sigma}(X)$ is an embedded hypersurface of $B^{n+1}_{\sigma}(X)$ of class $C^{1}$.
\end{df}
 
\begin{df}[\textit{Set of classical singularities $\text{sing}_C \, V$}]
\label{df:classicalsingularity}
A point $X \in \text{sing} \, V$ is a classical singularity of $V$ if there exists $\sigma >0$ such that, for some $\alpha \in (0, 1]$,  $\spt{V} \cap B^{n+1}_{\sigma}(X)$ is the union of three or more embedded $C^{1,\alpha}$ hypersurfaces-with-boundary meeting  pairwise only along their common $C^{1,\alpha}$ boundary $\gamma$ containing $X$ and such that at least one pair of the hypersurfaces-with-boundary meet transversely everywhere along $\gamma$.  

The set of all classical singularities of $V$ will be denoted by $\text{sing}_C \, V$.
\end{df}

\begin{df}[\textit{Set of touching singularities $\text{sing}_T V$}]
\label{df:touchingsingularity}
A point $X \in \text{sing}\,  V \setminus \text{reg}_1 V$ is a touching singularity of $V$ if $X \notin \text{sing}_C \, V$ and if there exists $\sigma>0$ such that 
$\spt{V} \cap B^{n+1}_\sigma(X)$ is the union of two embedded $C^{1,\alpha}$-hypersurfaces  of $B^{n+1}_{\sigma}(X)$. 
The set of all touching singularities of $V$ will be denoted by $\text{sing}_T \, V$. 
\end{df}

\begin{oss}[\textit{Graph structure around a point $X\in \text{sing}_T \, V$}]
\label{oss:touchingsinggraphs}
If $X \in \text{sing}_T V$ then each of the two $C^{1,\alpha}$-hypersurfaces corresponding to $X$ (as in Definition~\ref{df:touchingsingularity}) contains $X$ and they are tangential to each other at $X;$ the former is implied by the fact that $X \in \text{sing} \, V \setminus \text{reg}_1 \, V$ and the latter by the fact that $X \notin \text{sing}_C \, V$. Let $L$ be the common tangent plane to the two hypersurfaces at $X$. Upon possibly choosing a smaller $\sigma$ we see that 
$$\spt{V} \cap B^{n+1}_\sigma(X) = (\text{graph}\, u_1 \cup \text{graph}\, u_2) \cap B^{n+1}_\sigma(X)$$ for two functions $$u_{1}, u_{2} \, : \, \left( B_\sigma^{n+1}(X) \cap L \right) \to L^\bot$$ of class $C^{1,\alpha}$ such that $u_1(X)=u_2(X)$ and $Du_1(X)=Du_2(X)=0$. Note that $u_{1} \neq u_{2}$ since $X \in \text{sing} V\setminus \text{reg}_1 V$. 
\end{oss}

\begin{oss}[\textit{terminology}]
 \label{twofold}
For a general integral varifold $V$ and integer $\ell \geq 2$, one may speak of an \textit{$\ell$-fold touching singularity}: a point $X \in \text{sing} \, V$ is an $\ell$-fold touching singularity of $V$ if there exists $\sigma>0$ such that $$\spt{V} \cap B^{n+1}_\sigma(X) = \cup_{i=1}^\ell M_i$$ where $M_i$ are distinct $C^{1,\alpha}$ embedded submanifolds of $B^{n+1}_\sigma(X)$ with $X \in M_i$ for every $i\in\{1, ..., \ell\}$ and $T_X M_i = T_X M_j$ for any 
$i,j\in \{1, ..., \ell\}$. Denote by $\text{sing}^\ell_T \, V$ the set of all $\ell$-fold touching singularities of $V.$ For the varifolds $V$ in each of our theorems in this paper, the only type of touching singularities on which we need to make any assumption are those in $\text{sing}_T^{2} \, V,$ and we will in fact a posteriori rule out the occurrence of $\ell$-fold touching singularities in $V$ for $\ell \geq 3$. For this reason, we will just refer to a $2$-fold touching singularity simply as a ``touching singularity'' and write $\text{sing}_T \, V$ for $\text{sing}_T^2 \, V$. 
\end{oss}

We now precisely state the hypotheses (items labeled {\textbf{(I)}}-{\textbf{(V)}} below) on  $V$ together with some comments related to them: 
\begin{description}
 \item[{\textbf{(I)}}] The first variation of $V$ is locally bounded in ${\mathcal U}$ and is absolutely continuous with respect to $\|V\|,$ and the generalized mean curvature of $V$ is in $L^p_{\text{loc}}(\|V\|)$ for some $p>n$. 
\end{description}

\noindent 
Under the conditions (\textbf{I}) the monotonicity formula \cite[17.6]{SimonNotes} holds and implies that the density $\Theta(\|V\|,X):=\lim_{\rho\to 0}\frac{\|V\|(B^{n+1}_{\rho}(X))}{\om_n \rho^n}$ exists for every $X \in \Uc$, is upper-semi-continuous and that $\Theta(\|V\|,X)\geq 1$ for every $X \in \spt{V}$. Moreover, Allard's regularity theorem \cite{Allard} implies the existence of a dense open subset of $\spt{V}$ in which $\spt{V}$ agrees with an embedded $C^{1}$  hypersurface (which in fact is of class $C^{1,\alpha}$ where $\alpha = 1 - \frac{n}{p}$ if $p \in (n, \infty)$ and $\alpha$ is any number  $\in (0, 1)$ if $p = \infty$). This $C^{1,\alpha}$-embedded part of $\spt{V}$ coincides with the set $\text{reg}_1 \, V$ as in Definition~\ref{df:C1embedded}. As explained in Lemma \ref{lem:constancyfirst}, the density $\Theta \, (\|V\|, X)$ is a locally constant integer for $X \in \text{reg}_1\, V$. 
 
\medskip

\begin{description}
 \item[{\textbf{(II)}}] $V$ has no classical singularities, i.e. $\text{sing}_C \, V = \emptyset$.

 \item[{\textbf{(III)}}] For each $X \in \text{sing}_T \, V$ there exists $\rho>0$ such that $${\Hc}^{n}\left(\{Y \in \spt{V} \cap B_\rho^{n+1}(X) : \Theta(\|V\|,Y) = \Theta(\|V\|,X)\}\right)=0.$$

\end{description}

\begin{oss}
\label{oss:zeromeasuretouchingsing}
Note that hypthesis {\textbf{(III)}}, in view of Lemma \ref{lem:constancysecond}, implies that $$\mathcal{H}^n(\text{sing}_T \, V)=0.$$ Indeed, for every $X \in \text{sing}_T V$, Lemma \ref{lem:constancysecond} gives that $\Theta(\|V\|,X)=q$ for some $q\in \N$ and that there exists a ball $B_\rho^{n+1}(X)$ such that $$\text{sing}_T \, V \cap B_\rho^{n+1}(X)\subset \{Y \in \spt{V}\, : \, \Theta(\|V\|,Y\|)=q\};$$ then hypothesis ${\textbf{(III)}}$ implies that, possibly choosing a smaller $\rho$, $\text{sing}_T \, V \cap B_\rho^{n+1}(X)$ is $\mathcal{H}^n$-null. Taking the union on $q \in \N$ we conclude that $\mathcal{H}^n(\text{sing}_T \, V)=0$. (The same conclusion could be reached, without the use of Lemma \ref{lem:constancysecond}, by means of a Besicovitch covering argument.)  
\end{oss}

Let us now discuss the first and second variation hypotheses.

\textit{\underline{Stationarity}}. It is well-known that for an embedded $C^2$ hypersurface $M$ the stationarity of area with respect to volume-preserving deformations is equivalent to fact that $M$ has constant mean curvature---such critical points are indeed referred to as \textit{constant mean curvature (CMC) hypersurfaces}. By the divergence theorem, in the case that $M$ is a boundary, the enclosed volume is equivalently given by $\displaystyle \frac{1}{n+1} \int_{M} \vec{x} \cdot  \hat{\nu}\, d\mathcal{H}^n$, where $\vec{x}= (x_1, ...,  x_{n+1})$ and $ \hat{\nu}$ is the outward unit normal on $M$. The advantage of this formula lies in the fact that it makes sense in wider generality: $M$ need not be a boundary but merely orientable.
Given $V \in IV_n(\mathcal{U}$), let $\mathcal{O}\subset \Uc \setminus (\spt{V}\setminus \text{reg}_1 V)$, so that $\spt{V \res \mathcal{O}} \subset \text{reg}_1 V$: then, if $\text{reg}_1 V \cap \mathcal{O}$ is orientable, we define the enclosed volume of $V \res \mathcal{O}$ as
\begin{equation}\label{enclosed-vol}
\text{vol}_{\mathcal{O}}(V) = \frac{1}{n+1} \int_{\text{reg}_1 V \cap \mathcal{O}}\, \vec{x} \cdot  \hat{\nu} \,d\|V\|,
\end{equation}
where $\vec{x}= (x_1, ...,  x_{n+1})$ and $ \hat{\nu}$ is a continuous choice of unit normal on $\text{reg}_1 V \cap \mathcal{O}$. This formula generalises the notion to the case when we have an orientable embedded $C^1$-hypersurface not necessarily closed and endowed with an integer multiplicity. Note that this is a signed volume: a change of sign in the choice of the normal induces a change in the sign of the enclosed volume. Geometrically, for a $C^1$ embedded hypersurface $D$ of small size, $|\text{vol}(D)|$ is the volume of the cone on $D$ and vertex at the origin. With the previous discussion in mind we can introduce the stationarity assumption that we will make in our setting. 

Given a vector field $X \in C^1_c(\mathcal{O})$ (where, as above, $\spt{V \res \mathcal{O}} \subset \text{reg}_1 V$ and $\text{reg}_1 V \cap \mathcal{O}$ is orientable) we take an associated $1$-parameter family of deformations $\Psi_t$, i.e. a one-parameter family of diffeomorphisms $\Psi_t:\mathcal{O}\to \mathcal{O}$ such that $\left.\frac{d}{dt}\right|_{t=0}\Psi_t=X$ with $t\in(-\eps, \eps)$ for some $\eps>0$ small enough to ensure that $\Psi_t$ is the identity on $\p \mathcal{O}$ for $t\in(-\eps, \eps)$. In view of this, $\Psi_t$ can also be viewed as a diffeomorphism $\Uc \to \Uc$ that is the indentity on $\Uc \setminus \mathcal{O}$ for $t\in(-\eps, \eps)$. The variation $\Psi_t$ is called volume-preserving if $\text{vol}_{\Oc}\left((\Psi_t)_\sharp V\right)$ is constant for $t\in(-\eps, \eps)$. The stationarity condition on $V$ is the requirement that $V$ is critical for the hypersurface measure under volume-preserving variations, i.e. $\left.\frac{d}{dt}\right|_{t=0} \|(\Psi_t)_\sharp V\|=0 \text{ for any }\Psi_t \text{ that is volume-preserving}$.
A natural condition on $X$ that guarantees the existence of an associated volume-preserving variation is $\int_{\text{reg}_1 V \cap \Oc} X \cdot  \hat{\nu} \, d\|V\| =0$ (see \cite[Lemma 2.4]{BarbDoCarmo} the proof of which, notice, only requires $C^1$-regularity of the hypersurface; note also that multiplicity  is constant, by Lemma \ref{lem:constancyfirst}, on each connected component of $\text{reg}_1 V$). As explained in \cite{BarbDoCarmo} the first variation depends only on $X$ and not otherwise on the family of deformation $\Psi_t$. 

Equivalently, we can encode the fixed-enclosed-volume constraint by introducing a Lagrange multiplier \cite{BarbDoCarmo}: for $W \in IV_n(\Uc)$ and $\lambda \in \R$, we consider the functional
$$J_{\Oc}(W)=A_{\Oc}(W)+\lambda\text{vol}_{\Oc}(W),$$
where $A_{\Oc}(W)=\|W\|(\mathcal{O})$, and require that $V \res \mathcal{O}$ is stationary for $J_{\Oc}$ with respect to arbitrary deformations, i.e.~$\left.\frac{d}{dt}\right|_{t=0} J_{\Oc}((\Psi_t)_\sharp V)=0$ for every $X \in C^1_c(\mathcal{O})$. Again the first variation depends only on $X$. Thus our stationarity assumption is the following:

\begin{description}
   \item[{\textbf{(IV)}}] Whenever $\mathcal{O} \subset \left(\Uc \setminus (\spt{V} \setminus \text{reg}_1 V)\right)$ is such that $\text{reg}_1 V \cap \mathcal{O}$ is orientable, there exists an orientation $ \hat{\nu}$ on $\text{reg}_1 V \cap \mathcal{O}$ such that

$$\left.\frac{d}{dt}\right|_{t=0} \|(\Psi_t)_\sharp V\|=0$$
$  \text{ for any } X \in C^1_c(\Oc) \text{ with } \int_{\text{reg}_1 V \cap \Oc} X \cdot  \hat{\nu} \, d\|V\| =0 $ and any deformation $\Psi_t$ with $\left.\frac{d}{dt}\right|_{t=0}\Psi_t=X$,
or equivalently, there exists $\lambda\in \R$ such that 

$$\left.\frac{d}{dt}\right|_{t=0} J_{\Oc}((\Psi_t)_\sharp V)=0$$
$ \text{ for every } X \in C^1_c(\mathcal{O})$ and for any deformation $\Psi_t$ with $\left.\frac{d}{dt}\right|_{t=0}\Psi_t=X$.

\end{description}
 
\noindent \textit{Discussion}. Let us now analyse the local and global consequences of hypothesis \textit{\textbf{(IV)}}.
Since multiplicity on each connected component of $\text{reg}_1\,  V$ is constant by Lemma \ref{lem:constancyfirst}, every connected component of ${\rm reg}_{1} \, V$ can locally be expressed as a graph of a $C^{1,\alpha}$ function $u$ (over a tangent plane) which, taken with multiplicity 1, is stationary for $J_{\Oc}$; this yields that $u$ satisfies, in a weak sense, the $CMC$ equation 

$$\text{div}\left( \frac{Du}{\sqrt{1+|Du|^2}}\right) =\lambda $$
for a constant $\lambda$, where $u \in C^{1,1-\frac{n}{p}}(B_R^n(0))$. Standard elliptic theory yields that $u$ is of class $C^\infty,$ and therefore that $\text{reg}_1 V$ is a smooth hypersurface and thus $\text{reg}_1 V = \Reg{V}$. Moreover the equation is equivalent to the condition that $\vec{H}= \lambda \hat{\nu}$, where $\vec{H}$ is the mean curvature of $\Reg{V}$, i.~e. $\text{graph}\,u$ is a smooth CMC hypersurface with scalar mean curvature $h_0:= \lambda$. Note that at this stage the value $h_0$ of the mean curvature, while constant on each connected component, might still depend on the chosen connected component of $\text{reg}_1 V = \Reg{V}$. Note that, unless the mean curvature is zero\footnote{Note that the orientability of each connected component is obtained here as a consequence of the non-vanishing of the mean curvature. However our work covers the case $H=0$ as well, see Remark \ref{oss:samelson}.}, the fact that the mean curvature vector $\vec{H}$ is parallel implies that \textit{each connected component of ${\rm reg}_{1} \, V$ is orientable}. (We wish to emphasise that the preceding derivation only requires the local orientability of $\text{reg}_1 V$, which is always true, and that either of the two possible choices of orientation leads to the same conclusion.)

Let us next discuss the presence of distinct connected components. Note that the volume preserving condition, without a preferred orientation for the varifold, is ambiguous when we are dealing with two distinct connected components of $\text{reg}_1 V = \text{reg}\, V$. However, as we have seen, local considerations imply the existence of a (non-zero) parallel mean curvature vector on $\text{reg}\,  V$ and hence a canonical global orientation. This allows us to choose $\Oc$ to cover multiple connected components of $\text{reg} V$.

We will next show that assumption {\textbf{(IV)}} implies that $ \hat{\nu}=+\frac{\vec{H}}{|\vec{H}|}$ and $ \hat{\nu}=-\frac{\vec{H}}{|\vec{H}|}$ are the only choices of orientation for which that assumption can possibly hold; moreover, there exists a constant $h$ such that $\vec{H}=h  \hat{\nu}$. 

By the previous discussion $\text{reg}_1 V=\Reg V$ and moreover, for the chosen $ \hat{\nu}$, on each connected component $\mathcal{R}$ of $\text{reg}_1 V$, $\vec{H}=h_{\mathcal{R}} \hat{\nu}$ for a constant $h_{\mathcal{R}} \in \R$.
Now consider a volume-preserving variation (with respect to the chosen orientation $ \hat{\nu}$) that is supported on the union of two distinct connected components ${\mathcal{R}_1}$ and ${\mathcal{R}_2}$ of $\text{reg}_1 V=\text{reg} V$ and not separately volume-preserving on each of them. As we recalled earlier, such a volume-preserving variation can be induced by any $\zeta \in C^1_c({\mathcal{R}_1}\cup {\mathcal{R}_2})$ such that $\int_{\mathcal{R}_1} \zeta \theta_1 d\mathcal{H}^n+\int_{\mathcal{R}_2} \zeta \theta_2d\mathcal{H}^n= 0$, where $\theta_1, \theta_2 \in \N$ denote respectively the (constant) density on $\mathcal{R}_1$ and on $\mathcal{R}_2$, and such that $\int_{\mathcal{R}_1} \zeta \theta_1d\mathcal{H}^n\neq 0$, $\int_{\mathcal{R}_2} \zeta \theta_2d\mathcal{H}^n\neq 0$. Then by \cite[\S16]{SimonNotes} the first variation of area $\delta V$ evaluated on the vector field $\zeta  \hat{\nu}$ is given by
\begin{equation}
 \label{eq:computationsameH}
\delta V (\zeta  \hat{\nu})= \int \vec{H} \cdot \zeta  \hat{\nu} \, d\|V\| = 
\end{equation}
$$=\int_{\mathcal{R}_1} h_{\mathcal{R}_1} \zeta \theta_1 d\mathcal{H}^n+ \int_{\mathcal{R}_2} h_{\mathcal{R}_2} \zeta \theta_2d\mathcal{H}^n =h_{\mathcal{R}_1}\int_{\mathcal{R}_1}  \zeta \theta_1 d\mathcal{H}^n + h_{\mathcal{R}_2} \int_{\mathcal{R}_2}  \zeta \theta_2d\mathcal{H}^n .$$
This implies that $h_{\mathcal{R}_1}=h_{\mathcal{R}_2}$ and hence there exists a constant $h\in \R$ such that $\vec{H}=h  \hat{\nu}$. Thus the assertion holds.

\medskip

\textit{\underline{Stability and stability inequalities}}. Let us now discuss the stability hypothesis, i.e. non-negativity for the second variation of $V$ with respect to the area functional for volume-preserving deformations. In our theroems, the stability assumption will be made only on the smoothly immersed part of $V$, which we shall call the ``generalized regular set'' of $V$:

\begin{df}[\textit{Generalized regular set}]
\label{df:regularpoints}
Let $V\in IV_n(\mathcal{U})$. A point $X \in \text{spt}\,\|V\|$ is a generalized regular point if either (i) $X \in \Reg V$ or (ii) $X \in \text{sing}_T V$ and the two functions $u_1$ and $u_2$ corresponding to $X$ (as in Definition \ref{df:touchingsingularity}) are \textit{smooth}. The set of generalised regular points will be denoted by $\Greg{V}$. 
\end{df}

\begin{oss}
Under assumption \textit{\textbf{(II)}} $\Greg{V}$ is open in $\text{spt}\,\|V\|$. By definition $\Greg{V}$ can be realised as a smooth immersion in $\mathcal{U}$ of an abstract $n$-dimensional manifold (possibly with many connected components).
\end{oss}

\begin{oss}
\label{oss:separatesheetsingreg}

It is important to note the following. Assume \textit{\textbf{(I)}} \textit{\textbf{(II)}} and \textit{\textbf{(IV)}}. Locally near any point $X \in \Greg{V}$ we have that $\spt{V}$ is a smooth embedded hypersurface or the union of exactly two smooth embedded hypersurfaces. By Allard's regularity theorem, $\Reg{V}$ is dense in ${\rm spt} \, \|V\|$  and in particular any $y\in \text{sing} \, V$ is a limit point of $\Reg{V}$. Therefore the mean curvature is necessarily constant on each smooth embedded hypersurface describing $\Greg{V}$; in other words $\Greg{V}$ is a $C^2$ CMC immersion.
This condition is equivalent \cite[Proposition 2.7]{BarbDoCarmo} to the fact that the immersion is stationary for the (multiplicity 1) area measure under volume preserving variations. The definition of enclosed volume can be given for any oriented immersion \cite[(2.2)]{BarbDoCarmo} and we will discuss it in detail after Remark \ref{oss:maxprincsmooth}.
\end{oss}

\begin{oss}[Maximum principle and the measure of $\Greg{V} \setminus \Reg{V}$]
\label{oss:maxprincsmooth}
At any $y\in \Greg{V} \setminus \Reg{V}$ by definition $\spt{V}$ is locally given by the union of two smooth CMC hypersurfaces that intersect tangentially at $y$. Set coordinates such that $y=(0,0)$ and $u_j:B^n_\sigma(0) \to \R$ are smooth and satisfy the CMC equation and describe $\spt{V}$ around $y$, with $u_1 \leq u_2$, $u_1(0)=u_2(0)$ and $Du_1(0)=Du_2(0)=0$; here each function $u_1$ and $u_2$ solves one of the following PDEs 
$$\text{div}\left( \frac{Du}{\sqrt{1+|Du|^2}}\right) =+  |h| \,\,\, \text{ or }\,\,\,\text{div}\left( \frac{Du}{\sqrt{1+|Du|^2}}\right) =-   |h|.$$
If the mean curvature is $0$ then $\Greg{V}\setminus \Reg{V}=\emptyset$ by the maximum principle. When $h\neq 0$, analysing case by case and considering the possibilities for the signs on the right-hand side of the equation and writing the PDE for the difference $v=u_1-u_2$ we conclude, again by the maximum principle, that $u_1$ must necessarily solve the PDE with $- |h|$ on the right-hand side and $u_2$ must solve it with $+ |h|$ on the right-hand side. This means, in other words, that the mean curvature vector $\vec{H}$ of $\text{graph}\,u_1$ is such that $\vec{H}\cdot \hat{e}_{n+1}<0$ and the mean curvature vector $\vec{H}$ of $\text{graph}\,u_2$ is such that $\vec{H} \cdot \hat{e}_{n+1}>0$. Moreover, observing the Hessian of $v$, there must exist an index $\ell \in \{1, ..., n\}$ such that $D^2_{\ell \ell}v(0) \neq 0$ (because of the non-vanishing of the mean curvature). The implicit function theorem then gives that the set $\{D_\ell v =0\}$ is, locally around $0$, an $(n-1)$-dimensional submanifold. The set of points $\{u_1 =u_2\}$ is contained in the set $\{Du_1 = Du_2\}$ by assumption {\textbf{(II)}} and therefore $\{u_1 =u_2\} \subset \{Dv =0\} \subset \{D_\ell v =0\}$. This implies in particular the the set $\Greg{V} \setminus \Reg{V}$ has locally finite $\mathcal{H}^{n-1}$-dimensional measure.
\end{oss}

Since $\Greg{V}$ is a $C^2$ CMC immersion (possibly with many connected components), it is orientable and is stationary (as an immersion) for volume-preserving variations, where the enclosed volume for an oriented immersion $\mathscr{i}:M^n \to \mathcal{U}$ is given by the formula \cite[(2.2)]{BarbDoCarmo}
$$\mathscr{vol}(\mathscr{i})=\frac{1}{n+1}\int_{M^n} \vec{\mathscr{i}} \cdot  \hat{\nu} \,dM,$$
where $dM$ is the metric induced on $M^n$ by the immersion into $\mathcal{U}$, $ \hat{\nu}$ is the unit normal chosen by the orientation and $\vec{\mathscr{i}}$ is the vector $(\mathscr{i}_1, ..., \mathscr{i}_{n+1})$. (For the varifold $V$ under study this quantity is equal to 
$$\text{vol}_{\Uc\setminus(\text{sing}V \setminus \Greg{V})}(|\Reg{V}|)=\frac{1}{n+1} \int_{\Reg{V}} \vec{\mathscr{i}} \cdot  \hat{\nu} \,d\mathcal{H}^n ;$$
note that ${\mathcal{H}}^n\left(\Greg{V} \setminus \Reg{V}\right)=0$.) 
We stress that, when we consider volume-preserving variations of $\Greg{V}$ as an immersion, we allow a one-parameter family $\mathscr{i}_t:M^n \to \Uc$ of immersions, $t\in(-\eps, \eps)$, with $\mathscr{i}_0=\mathscr{i}$, $\mathscr{i}_t(x)=\mathscr{i}_0(x)$ for $x$ outside a fixed compact subset of $M$ and $\mathscr{vol}(\mathscr{i}_t)=\mathscr{vol}(\mathscr{i}_0)$. Such a deformation is not necessarily induced by an ambient vector field in $\mathcal{U}$: in a neighbourhood of a point in $\Greg{V} \setminus \Reg{V}$ the two touching sheets will generally be moved independently of each other by such variations (while preserving $\mathscr{vol}(\mathscr{i}_t)$). We will not require the stability for all possible volume-preserving variations as an immersion, but only for those induced by an ambient test function; more precisely, we only need to test the stability for variations with initial normal speed given by $\phi \nu$, where $\nu$ is the chosen unit normal and $\phi$ is an arbitrary ambient smooth function compactly supported in $\Uc\setminus(\text{sing}V \setminus \Greg{V})$ such that 
$\int_{\Greg{V}} \phi \, d\mathcal{H}^n =0$ (as we will discuss below, the last integral condition is necessary and sufficient for the existence of a volume-preserving variation with initial normal speed $\phi \nu$).

Our stability assumption precisely is as follows:

\medskip

  \begin{description}
   \item [{\textbf{(V)}}] For $V$ as above and for every  $\phi \in C^1_c(\Uc\setminus(\text{sing}V \setminus \Greg{V}))$ that satisfies
 $$\int_{\Greg{V}} \phi \,d\mathcal{H}^n =0,$$
 let $\mathscr{i}_t:M^n \to \Uc$, with $t\in(-\eps, \eps)$ for some $\eps>0$, be a smooth one-parameter family of immersions such that $\left.\frac{\p}{\p t}\right|_{t=0} \mathscr{i}_t = \phi \nu$, $\mathscr{i}_0(M^n)=\Greg{V}$, $\mathscr{i}_t=\mathscr{i}_0$ outside a fixed compact set for every $t\in (-\eps, \eps)$ and $\mathscr{vol}(\mathscr{i}_t)=\mathscr{vol}(\mathscr{i}_0)$ for $t\in(-\eps, \eps)$. Then 
$$\left.\frac{d^2}{dt^2}\right|_{t=0} \mathscr{a}(\mathscr{i}_t)\geq0,$$
where $\mathscr{a}(\mathscr{i}_t) = \int_{M^n} dM_t$ and $dM_t$ is the metric induced on $M^n$ by the immersion $i_t$.
 
  \end{description}

\noindent \textit{Discussion}. Let us now discuss hypothesis {\textbf{(V)}} and its consequences. First of all we recall some facts from \cite{BarbDoCarmo}. Given a $C^2$ CMC immersion $\mathscr{i}:M=M^n\to \Uc$ and $\zeta \in C^1_c(M)$ such that $\int_M \zeta dg=0$, where $g$ is the metric induced on $M$ by $\Uc$ via the immersion, there exists a volume-preserving (normal) variation $\mathscr{i}_t$ whose variation vector $\left.\frac{d}{dt}\right|_{t=0} \mathscr{i}_t= \zeta  \hat{\nu}$, where $ \hat{\nu}$ is a ($C^1$) choice of the unit normal vector on $M$ \cite[Lemma(2.4)]{BarbDoCarmo}. Using this fact it is straightforward to show that (see \cite[Proposition 2.10]{BarbDoCarmo}) stability with respect to volume-preserving variations implies the inequality 

\begin{equation}
 \label{eq:weakstabilityinequality}
\int_M  |A|^2\zeta^2 dg \leq \int_{M} |\nabla \zeta|^2 dg \,\,\,  \text{ for any }\zeta \in C^1_c(M) \text{ such that } \int_M \zeta dg =0,
\end{equation}
where $A$ denotes the second fundamental form on $M$ induced by the immersion and $\nabla$ is the gradient on $M$. This is called the \textit{weak stability inequality}. The terminology is used to distinguish it from the \textit{strong stability inequality}, i.e.~the same inequality for arbitrary $\zeta \in C^1_c(M)$ (that are not required to satisfy the condition $\int_M \zeta dg =0$). In fact \cite[Proposition(2.10)]{BarbDoCarmo} shows that, given an immersed $C^2$ CMC hypersurface, stability with respect to volume-preserving variations and the validity of the weak stability inequality are actually \textit{equivalent}. Let us outline this argument below.

Given an oriented immersion $\mathscr{i}:M=M^n \to \Uc$ with constant mean curvature $h_0  \hat{\nu}$, consider the functional 

\begin{equation}
 \label{eq:Jfunctional}
J(\mathscr{i}) = \mathscr{a}(\mathscr{i}) + h_0 \mathscr{vol}(\mathscr{i}),
\end{equation}
where $\mathscr{vol}(\mathscr{i})$ is the enclosed volume and $\mathscr{a}(\mathscr{i})$ is the area defined above. For any one-parameter variation $\mathscr{i}_t:M \to \Uc$ with $\mathscr{i}_0(M^n)=\Greg{V}$, $\mathscr{i}_t=\mathscr{i}_0$ for all $t\in (-\eps, \eps)$ outside a fixed compact set, and $\mathscr{vol}(\mathscr{i}_t)=\mathscr{vol}(\mathscr{i}_0)$ for $t\in(-\eps, \eps)$, we let $f =  \hat{\nu} \cdot \left(\left.\frac{d}{dt}\right|_{t=0} \mathscr{i}_t\right)$. Then writing $J(t)=J(\mathscr{i}_t)$, $\mathscr{a}(t)=\mathscr{a}(\mathscr{i}_t)$ and $\mathscr{vol}(t)=\mathscr{vol}(\mathscr{i}_t)$ we have, by the constancy of the mean curvature and \cite[Proposition(2.7)]{BarbDoCarmo}, that $J'(0)=0$. Moreover by \cite[Lemma(2.8)]{BarbDoCarmo} \textit{$J''(0)$ depends only on $f$} and 

\begin{equation}
 \label{eq:Jsecondvariation}
J''(0)=\int_M (-|A|^2 f^2 +  |\nabla f|^2 )dg.
\end{equation}
(See \cite[Appendix]{BarbDoCarmo} for the computation; the difficulty in the preceding statement is that the same $f$ can be associated to many distinct variations, not necessarily normal variations.) Once this is established, \cite[Proposition(2.7)]{BarbDoCarmo} completes the proof of the implication ``weak stability inequality $\Rightarrow$ stability for $\mathscr{a}$ under volume-preserving variations'' as follows: given any volume-preserving variation $\mathscr{i}_t$, it easily follows that its normal component $f  \hat{\nu}$ is such that $\int_M f dM_0=0$ and so by the weak stability inequality (taken with $\zeta=f$) we have $J''(0)(f) \geq 0$. On the other hand $J''(0) = \mathscr{a}''(0) + h_0 \mathscr{vol}''(0) = \mathscr{a}''(0)$ because $\mathscr{i}_t$ preserves $\mathscr{vol}$, and hence $\mathscr{a}''(0)\geq 0$. 

In light of this discussion, assumption {\textbf{(V)}} can be equivalently phrased by requiring that the weak stability inequality 
\begin{equation}
 \label{eq:ambientweakstabilityinequality}
 \int_{\Greg{V}}  |A|^2 \phi^2 \,d\mathcal{H}^n \leq \int_{\Greg{V}} |\nabla \phi|^2 \,d\mathcal{H}^n 
\end{equation}
holds for every $\phi \in C^\infty_c(\Uc\setminus(\text{sing}V \setminus \Greg{V}))$ such that $\int_{\Greg{V}} \phi \,d\mathcal{H}^n =0$, where $\nabla$ stands for the gradient on $\Greg{V}$.

Let us now come to an additional result (Remark \ref{oss:weakimpliesstrongforJ} below) that will be important for our later purposes. First we need the following:

\begin{oss}[\textit{weak stability inequality $\Rightarrow$ strong stability inequality at smaller scales}]
\label{oss:weakimpliesstrong}
Let $\mathcal{M}=\mathscr{i}(M)$ be a $C^2$ immersed $CMC$ hypersurface in the open set $\mathcal{U}$ and $g$ is the metric induced on $M$ by the immersion. assume that it satisfies 

$$\int_M |A|^2\zeta^2 dg \leq \int_M |\nabla \zeta|^2 d g$$ 
for all $\zeta \in C^1_c(M)$ with $\int_{M}\zeta  dg=0$. Then whenever $\mathcal{V}_1$ and $\mathcal{V}_2$ are disjoint non-empty open subsets of $M$ it must be true that it (at least) one of them the inequality 

$$\int_{\mathcal{V}_j} |A|^2\zeta^2 dg \leq \int_{\mathcal{V}_j} |\nabla \zeta|^2 dg$$ 
holds for all $\zeta \in C^1_c(\mathcal{V}_j)$ without the zero-average restriction. Indeed, assume that this fails in both $\mathcal{V}_1$ and $\mathcal{V}_2$, then we can find $\zeta_1$ and $\zeta_2$ compactly supported respectively in $\mathcal{V}_1$ and $\mathcal{V}_2$ such that 

$$\int_{\mathcal{V}_j} |A|^2\zeta_j^2  > \int_{\mathcal{V}_j} |\nabla \zeta_j|^2$$ 
for $j\in \{1,2\}$. Then we can find $c_1, c_2 \in \R$ such that $c_1 \zeta_1 + c_2 \zeta_2$ satisfies the zero average condition on $M$ , i.e. $c_1\int_{\mathcal{V}_1} \zeta_1 = -c_2 \int_{\mathcal{V}_2} \zeta_2 $, and since the supports of $\zeta_1$ and $\zeta_2$ are disjoint we then have $(c_1\zeta_1+c_2 \zeta_2)^2 = c_1^2\zeta_1^2+c_2^2 \zeta_2^2$ from which it follows that

$$\int_{M} |A|^2(c_1\zeta_1+c_2 \zeta_2)^2  > \int_{M} |\nabla(c_1\zeta_1+c_2 \zeta_2)|^2,$$
contradicting the assumption. Thus the weak stability inequality implies the stong one in at least one of two arbitrary disjoint subsets.

By using this fact we can see now that the weak stability inequality assumed on $\mathscr{i}(M)\subset \mathcal{U}$ with $p \in \mathscr{i}(M)$ actually implies that there exists an ambient open ball $B$ around $p$ in which the strong stability holds (i.e. without the restriction of the zero-average on the test function $\zeta \in C^1_c(B\setminus(\text{sing}V \setminus \Greg{V}))$). To see this, consider, for $R>0$ fixed such that $B^{n+1}_R(p) \subset \mathcal{U}$ and $0<r<R$, the ball $B_r^{n+1}(p)$ and the annulus $B_R^{n+1}(p) \setminus \overline{B}_r^{n+1}(p)$. By the previous discussion, the strong stability inequality must hold in at least one of the disjoint open sets $\mathscr{i}^{-1}(B_R^{n+1}(p) \setminus \overline{B}_r^{n+1}(p))$ and $\mathscr{i}^{-1}({B}_r^{n+1}(p))$. We have either (i) for some $r$ the strong stability inequality holds for all $\zeta\in C^1_c\left( {B}_r^{n+1}(p) \setminus(\text{sing}V \setminus \Greg{V})\right)$ or (ii) the strong stability holds with any $\zeta\in C^1_c\left( B_R^{n+1}(p)\setminus(\text{sing}V \setminus \Greg{V}) \setminus \{p\}\right)$. In the latter case the inequality can be shown to hold for an arbitrary $\zeta$ supported in $B_R^{n+1}(p)\setminus(\text{sing}V \setminus \Greg{V})$ by a standard capacity argument, since $n \geq 2$. In either case we reach the same conclusion: there is a ball $B$ around $p$ such that the strong stability inequality holds for all $\zeta \in C^1_c(B\setminus(\text{sing}V \setminus \Greg{V}))$.
\end{oss}

\begin{oss}[\textit{assumption \rm\textbf{(V)} $\Rightarrow$ local strong stability for $J$}]
\label{oss:weakimpliesstrongforJ}
Remark \ref{oss:weakimpliesstrong} says that the requirement that a immersed CMC hypersurface is variationally stable (as an immersion) in an open set with respect to volume-preserving variations induced by ambient test functions (which is equivalent, as mentioned earlier, to the validity of the weak stability inequality in the same open set) implies the validity of the strong stability inequality in a neighbourhood of every point and therefore it gives the non-negativity of $J''(0)$ for any variation supported in that neighbourhood (non necessarily volume preserving) that is induced by an ambient test function. Therefore the (geometrically natural) variational stability of a CMC hypersurface for volume-preserving variations implies that the hypersurface is \textit{locally} a stable critical point for the functional $J$, for the variations that we allowed. The importance of this observation lies in the fact that $J$ is an admissible functional for the validity of the results in \cite{SS}. 
\end{oss}

\medskip

This concludes the discussion on the ``CMC stable'' assumptions and we are now ready to state the main regularity result.

\begin{thm}[\textbf{regularity for (weakly) stable CMC integral varifolds}]
\label{thm:mainregularity}
Let $n \geq 2$ and let $V$ be an integral $n$-varifold on an open set $\mathcal{U} \subset \R^{n+1}$ such that the hypotheses \textbf{(I)}-\textbf{(V)} above hold; specifically: 
\begin{enumerate}
 \item the first variation of $V$ with respect to the area functional is locally bounded in ${\mathcal U}$ and is absolutely continuous with respect to $\|V\|,$ and the generalized mean curvature $\vec{H}$ of $V$ is in $L^p_{\text{loc}}(\|V\|)$ for some $p>n$;
 \item $\text{sing}_C V = \emptyset$;
 \item whenever $X \in \text{sing}_T V$ there exists $\rho>0$ such that\footnote{In particular from this assumption it follows that $\mathcal{H}^n\left(\text{sing}_T V\right)=0$, see Remark \ref{oss:zeromeasuretouchingsing}.} $${\Hc}^{n}\left(\{Y \in \spt{V} \cap B_\rho^{n+1}(X) : \Theta(\|V\|,Y) = \Theta(\|V\|,X)\}\right)=0;$$
 \item if an open set $\mathcal{O} \subset \left(\Uc \setminus (\spt{V} \setminus \text{reg}_1 V)\right)$ is such that $\text{reg}_1 V \cap \mathcal{O}$ is orientable, then, relative to one (of the two possible choices of)  orientation on $\text{reg}_1 \, V \cap \mathcal{O}$, $V \res \mathcal{O}$ is stationary with respect to the area functional under volume-preserving variations;\footnote{Of course locally on $\text{reg}_{1} \, V$ there is always an orientation; by the discussion following  \textbf{(IV)} all of $\text{reg}_1 \, V$ is orientable (and smooth) whenever \textbf{(IV)} holds.}
 \item for every  $\phi \in C^\infty_c(\Uc\setminus(\text{sing} \, V \setminus \Greg{V}))$ that satisfies $\int_{\Greg{V}} \phi \,d\mathcal{H}^n =0$, $\Greg{V}$ is stable with respect to the area functional under (volume-preserving) variations with initial normal speed $\phi \nu.$ \footnote{The fact that $\Greg{V}$ is a CMC $C^2$-immersion (possibly with several connected components) is not an assumption here, it is an immediate consequence of assumption 4, in view of Remark \ref{oss:separatesheetsingreg}. Only the stability is an assumption.}
\end{enumerate}
Then $\text{sing} \, V \setminus \text{sing}_T \, V $ is empty if $n \leq 6$, discrete if $n=7$ and is a closed set of Hausdorff dimension at most $n-7$ if $n \geq 8$. Moreover $\text{sing}_T \, V \subset \Greg{V}$ (see Definition \ref{df:regularpoints} above) and $\text{sing}_T \, V$ is locally contained in a smooth submanifold of dimension $(n-1)$, and $\Greg{V}$ is a classical CMC immersion.
\end{thm}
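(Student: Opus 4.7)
My first move would be to upgrade hypothesis (5) to a strong stability condition on small balls via Remark \ref{oss:weakimpliesstrongforJ}, which converts the weak (zero-average) stability inequality into the strong stability inequality on sufficiently small ambient balls. This places the problem into the framework where the Schoen--Simon estimates of \cite{SS} apply to the functional $J$, and permits organizing the proof as in \cite{WicAnnals}: I would prove, by simultaneous induction on a positive integer $q$, three interlocking statements --- a Sheeting Theorem (if $V$ is $L^2$-close to a multiplicity-$q$ hyperplane with small $L^p$ mean curvature, then $\mathrm{spt}\,\|V\|$ is a sum of $q$ graphs of $C^{1,\alpha}$ functions), a Minimum Distance Theorem (no stationary cone built from three or more half-hyperplanes meeting along an $(n-1)$-axis can arise as a varifold limit of the class at density essentially $q$), and a Higher Regularity Theorem (a sum of $q$ such $C^{1,\alpha}$ graphs whose total varifold satisfies hypotheses \textbf{(I)}--\textbf{(IV)} of Theorem~\ref{thm:mainregularity} decomposes as a union of $\widetilde{q}\leq q$ $C^2$ graphs, each separately solving the CMC equation classically).

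\textbf{Sheeting and Minimum Distance steps.} Conditional on the induction at densities $<q$, the Sheeting Theorem at density $q$ follows the blow-up scheme of \cite{WicAnnals}: the strong stability inequality and \cite{SS} give curvature and Hardt--Simon type estimates on the portion of $V$ where the density is strictly less than $q$, while the inductive Higher Regularity Theorem identifies the blow-ups as classical harmonic-type limits so that an excess-decay (reverse Hardt--Simon) lemma can be closed. The new wrinkle is that the excess $\hat{E}$ has to carry a lower-order term of the order $\sqrt{\|H\|_{L^p(\|V\|)}}$ arising from the monotonicity-formula error when $H\neq 0$, which initially restricts the Hölder exponent of the graphs to some $\alpha_0 < \tfrac{1}{2}(1-\tfrac{n}{p})$. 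The Minimum Distance Theorem is proved by the same blow-up contradiction argument as in \cite{WicAnnals}, combining the inductive Sheeting Theorem to control multiplicity away from the spine with the inductive Higher Regularity Theorem and the no-classical-singularity hypothesis \textbf{(II)} to preclude a transverse meeting of half-hyperplane sheets in the limit.

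\textbf{Higher Regularity Theorem: the main obstacle.} The substantial new analytic content is concentrated here, replacing a one-line Hopf-boundary-point argument from \cite{WicAnnals}. Given that $\mathrm{spt}\,\|V\|$ is locally the union of $q$ graphs $u_1\leq\ldots\leq u_q$ of $C^{1,\alpha_0}$ functions each satisfying the CMC equation away from the touching set $T=\bigcup_{i<j}\{u_i=u_j\}$ (which by hypothesis \textbf{(III)} satisfies $\mathcal{H}^n(T)=0$ and on which $Du_i=Du_j$ by the absence of classical singularities), I would proceed in three sub-steps: (i) improve the Hölder exponent to some $\alpha\geq \tfrac{1}{2}$ by rerunning the excess-decay argument in the now-CMC graph region so that the $\sqrt{\|H\|_{L^p}}$ term contributes only a lower-order perturbation; (ii) show that each pairwise semi-difference $v=u_j-u_i$, which on its positivity set satisfies a linear uniformly elliptic equation with Hölder coefficients and which vanishes together with its gradient on $\{v=0\}\subset T$, lies in $W^{2,2}_{\mathrm{loc}}$ via a Caccioppoli-type estimate compatible with these degenerate boundary conditions; (iii) use this $W^{2,2}$ control, the improved Hölder exponent and the vanishing of $\mathcal{H}^n(T)$ to show that $T$ is removable for the distributional CMC equation satisfied by each individual $u_i$, and conclude $C^2$ (hence $C^\infty$) regularity by standard elliptic theory. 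The local maximum-principle analysis of Remark \ref{oss:maxprincsmooth} applied to the two smooth touching sheets simultaneously yields that $\mathrm{sing}_T\,V$ is locally contained in a smooth $(n-1)$-dimensional submanifold, proving $\mathrm{sing}_T\,V\subset\text{gen-reg}\,V$.

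\textbf{Conclusion via dimension reduction.} With the three inductive theorems established for all $q$, the main theorem follows from a standard Almgren--Federer dimension-reduction applied at each $X\in\mathrm{sing}\,V\setminus\mathrm{sing}_T\,V$: tangent cones at such $X$ are stationary integral cones that inherit hypotheses \textbf{(I)}--\textbf{(V)}; the Sheeting and Higher Regularity Theorems rule out a hyperplane tangent cone (which would force $X$ to be regular or a touching singularity), the Minimum Distance Theorem rules out cones supported on three or more half-hyperplanes through a common $(n-1)$-spine, and the usual iteration-plus-stratification argument then yields $\dim_{\mathcal{H}}(\mathrm{sing}\,V\setminus\mathrm{sing}_T\,V)\leq n-7$ with the sharpenings (emptiness for $n\leq 6$, discreteness for $n=7$) coming from the same dimension-reduction combined with the classification of non-planar stable tangent cones at the low-dimensional critical cases.
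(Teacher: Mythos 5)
Your overall architecture is exactly the paper's: reduce weak to strong stability locally via Remark~\ref{oss:weakimpliesstrongforJ}, then run a simultaneous induction over the Sheeting, Minimum~Distance and Higher~Regularity Theorems, with the excess carrying a lower-order term of size $\sqrt{\|H\|_{L^p(\|V\|)}}$, and conclude by Almgren--Federer dimension reduction. The identification of the Higher Regularity Theorem as the locus of the main new analytic difficulty is also correct.

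There is however a genuine gap in your sub-step~(i) for the Higher Regularity Theorem. You propose to get $\alpha\geq\tfrac12$ by ``rerunning the excess-decay argument in the now-CMC graph region so that the $\sqrt{\|H\|_{L^p}}$ term contributes only a lower-order perturbation.'' The paper explicitly observes that this does not work: even a second pass through the Sheeting excess-decay, using the extra knowledge that $H$ is constant, still yields only $\alpha<\tfrac12$, because the limitation stems from the error term in the monotonicity formula, which is unavoidable in that argument. The actual route (Section~\ref{higherholderexponent}) is PDE-theoretic and quite different: after first using the Hopf boundary point lemma (Lemma~\ref{lem:Hopf}) to prove that the support near a touching singularity is exactly two graphs $u_1\leq u_q$ with mean curvature vectors pointing in opposite directions --- a reduction step you do not explicitly carry out but implicitly invoke later when you refer to ``the two smooth touching sheets'' --- one writes linear elliptic PDEs for the \emph{semi-difference} and (crucially) the \emph{weighted average} $q_1u_1+q_2u_2$, establishes a Schauder-type estimate for the semi-difference by a blow-up contradiction in the spirit of Simon's scaling argument (Lemma~\ref{lem:SimonslemmaforSchauder}), and derives a De~Giorgi decay for both quantities (Propositions~\ref{Prop:DeGiorgitypedecayforthesemidifference} and \ref{Prop:DeGiorgitypedecayfortheweightedaverage}), which then combine to give any $\alpha<1$ for each individual sheet via Campanato theory. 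Your reliance on only the pairwise semi-differences, and your omission of the weighted-average decay, leaves no way to recover Hölder improvement for the individual $u_i$. Once these pieces are in place, your sub-steps~(ii) and~(iii) --- the $W^{2,2}$ Caccioppoli estimate for the semi-difference (requiring $\alpha\geq\tfrac12$ via the estimate $|D^2 u_a|\lesssim \mathrm{dist}(\cdot,T)^{\alpha-1}$) and the removability of $T$ for the multiplicity-weighted distributional CMC equation --- do match the paper's Sections~\ref{semidifferenceisW22} and~\ref{extensionPDEv}.
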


\begin{oss}
It follows directly from the definition of classical singularity that the no-classical-singularities assumption (hypothesis 2) is equivalent  to the following: there exists a set $Z \subset {\rm spt} \, \|V\|$ with ${\mathcal H}^{n-1}(Z) = 0$ (not assumed closed) such that ${\rm sing}_{C} \, V \cap ({\rm spt} \, \|V\| \setminus Z) = \emptyset.$ 
\end{oss}

\begin{oss}
\label{oss:simplifyassumptions}
The preceding regularity result is of local nature, so it suffices to prove it locally around any point of $\spt{V}$, i.e.\ taking ${\mathcal U}$ to be a small open ball around any given point in $\spt{V}$. On the other hand, in view of Remark \ref{oss:weakimpliesstrongforJ}, around any $X \in \spt{V}$ we can find a ball such that $\Greg{V}$ is, in that ball, a $C^2$ CMC immersion that is \emph{strongly} stable with respect to the functional $J = \mathscr{a} + h_0 \mathscr{vol}$ (i.e.\, stable with respect to $J$ for variations induced by arbitrary ambient test functions not necessarily having zero average), where $h_0$ is the constant value for the scalar mean curvature (implied by assumption 4, see the discussion after \textit{\textbf{(IV)}} above). 
\end{oss}

In view of Remark~\ref{oss:simplifyassumptions} we see that Theorem \ref{thm:mainregularity} will be implied by the following theorem in which strong stability is assumed. It turns out that, for the proof, we only need to require the stability with respect to variations with initial speed $f \nu$ where $f$ is a \textit{non-negative} ambient test function.

\begin{thm}[\textbf{regularity for strongly stable CMC integral varifolds}]
\label{thm:mainregularityrestated}
Let $n \geq 2$ and let $V$ be an integral $n$-varifold on an open set $\mathcal{U} \subset \R^{n+1}$ that satisfies the following assumptions:
\begin{enumerate}
 \item the first variation of $V$ with respect to the area functional is locally bounded and is absolutely continuous with respect to $\|V\|,$ and the generalized mean curvature $\vec{H}$ of $V$ is in $L^p(\|V\|)$ for some $p>n$;
 \item $\text{sing}_C V = \emptyset$;
 \item whenever $X \in \text{sing}_T V$ there exists $\rho>0$ such that\footnote{In particular from this assumption it follows that $\mathcal{H}^n\left(\text{sing}_T \, V\right)=0$, see Remark \ref{oss:zeromeasuretouchingsing}.} $${\Hc}^{n}\left(\{Y \in \spt{V} \cap B_\rho^{n+1}(X) : \Theta(\|V\|,Y) = \Theta(\|V\|,X)\}\right)=0;$$
 \item $\text{reg}_1 V = \Reg{V}$ and there exists a continuous choice of unit normal $ \hat{\nu}$ on $\Reg{V}$ and a constant $h \in \R$ such that $\vec{H}=h  \hat{\nu}$ everywhere on $\Reg{V}$;
 \item for each $f\in C^1_c(\mathcal{U}\setminus(\text{sing}V \setminus \Greg{V}))$ with $f\geq 0$, $\Greg{V}$ is stable with respect to the functional $J$ defined in (\ref{eq:Jfunctional}) under variations (as an immersion) with initial normal speed $f \nu$; equivalently, $$\int_{\Greg{V}} |A|^2 f^2 \, d\mathcal{H}^n\leq \int_{\Greg{V}} |\nabla f|^2 \, d\mathcal{H}^n \text{ for all such } f,$$ with notation as in (\ref{eq:ambientweakstabilityinequality}).
\end{enumerate}
Then $\text{sing} \, V \setminus \text{sing}_T \, V $ is empty for $n \leq 6$, discrete for $n=7$ and for $n \geq 8$ it is a closed set of Hausdorff dimension at most $n-7$. Moreover $\text{sing}_T V \subset \Greg{V}$, in the sense of Definition \ref{df:regularpoints} and $\text{sing}_T V$ is locally contained in a smooth submanifold of dimension $(n-1)$, and $\Greg{V}$ is a classical CMC immersion.
\end{thm}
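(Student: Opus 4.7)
My plan is to follow the overall architecture outlined in Section~\ref{mainsteps} (and modelled on \cite{WicAnnals}), namely to reduce the theorem to a simultaneous induction, on a positive integer parameter $q$, of three auxiliary statements: a Sheeting Theorem asserting that if $V$ is sufficiently close (in mass, $L^{2}$-height, and $L^{p}$-mean-curvature) to a multiplicity-$q$ hyperplane $P$, then locally $V$ decomposes as the sum of $q$ multiplicity-$1$ $C^{1,\alpha}$ graphs over $P$; a Minimum Distance Theorem giving a quantitative lower bound on the Hausdorff distance between such a $V$ and any stationary integral hyperplane cone of density $\Theta(\|\mathbf{C}\|,0)\in\{q,\, q+\tfrac12\}$ whose support is the union of three or more half-hyperplanes sharing an $(n-1)$-spine; and a Higher Regularity Theorem upgrading the $q$ sheets of the Sheeting Theorem to a union of $C^{2}$ CMC graphs. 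The base case $q=1$ is handled by Allard's regularity theorem together with hypotheses (1) and (4), which force the graph to solve the CMC equation weakly and hence to be $C^{\infty}$ by standard elliptic theory.

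For the inductive step of the Sheeting and Minimum Distance Theorems, I would essentially transcribe the Hardt--Simon/Schoen--Simon blow-up scheme of \cite{WicAnnals}, using (i) the Schoen--Simon curvature estimates on $\text{gen-reg}\,V$, which are available because Remark~\ref{oss:weakimpliesstrongforJ} promotes the weak stability in hypothesis~(5) to strong stability for the functional $J$ in small balls (and $J$ fits the Schoen--Simon framework); (ii) the monotonicity formula for varifolds with $L^{p}$ mean curvature; and (iii) structural hypotheses~(2) and~(3), the latter giving $\mathcal{H}^{n}(\text{sing}_{T}\,V)=0$ (Remark~\ref{oss:zeromeasuretouchingsing}) so that the sheets of a blow-up are well-defined a.e. The key excess quantity must include an additive lower-order term $\sqrt{\|H\|_{L^{p}(\|V\|)}}$ to absorb the CMC error term in the monotonicity identity, and this gives an initial H\"older exponent $\alpha<\tfrac12(1-\tfrac{n}{p})$ for the gradient of the sheets, strictly weaker than in the minimal case.

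The hard part, and the real novelty compared with \cite{WicAnnals}, is the Higher Regularity Theorem: upgrading the union of two possibly touching $C^{1,\alpha}$ sheets to a classical $C^{2}$ CMC immersion. In the minimal case the Hopf boundary point lemma forces two distinct $C^{1}$ sheets to be disjoint, so each individually satisfies the minimal surface equation, and higher regularity is immediate. Here the touching set $T=\{u_{1}=u_{2}\}$ is a priori only known to be $\mathcal{H}^{n}$-null, and in particular the standard cut-off capacity argument (which would require $\mathcal{H}^{n-1}(T)=0$) is unavailable. The plan is threefold: first, improve the initial H\"older exponent of $Du_{i}$ from $\alpha<\tfrac12(1-\tfrac{n}{p})$ to some $\alpha\geq\tfrac12$ by a second run of the excess-decay argument exploiting that, on the graph region, $H$ is now known to be constant (Section~\ref{higherholderexponent}); second, use this improved exponent to show that the difference $v=u_{2}-u_{1}\geq 0$ lies in $W^{2,2}$ and satisfies a linear elliptic PDE weakly across $T$ (Section~\ref{semidifferenceisW22}); and third, extend the CMC equation for each $u_{i}$ across $T$ by a careful PDE argument exploiting the $W^{2,2}$ control on $v$ and the orientation hypothesis~(4), establishing $C^{2}$ regularity (Section~\ref{extensionPDEv}). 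The maximum-principle analysis of Remark~\ref{oss:maxprincsmooth} then also confines $\text{sing}_{T}\,V$ locally to a smooth $(n-1)$-submanifold.

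With the three inductive theorems in hand for all $q$, the conclusion is derived by Federer's dimension reduction argument, exactly as in \cite{WicAnnals}: any tangent cone at a point of $\spt{V}$ is stationary, stable, classical-singularity-free, and satisfies~(3), the Minimum Distance Theorem rules out tangent cones supported on three or more half-hyperplanes meeting along a common spine, and the Sheeting plus Higher Regularity Theorems give full $C^{2}$ regularity wherever some tangent cone is a hyperplane. The standard stratification then produces the claimed estimates ($\text{sing}\,V\setminus\text{sing}_{T}\,V=\emptyset$ if $n\leq 6$, discrete if $n=7$, and Hausdorff dimension at most $n-7$ if $n\geq 8$), while $\text{sing}_{T}\,V\subset\text{gen-reg}\,V$ together with its containment in a smooth $(n-1)$-submanifold follows from the Higher Regularity Theorem combined with Remark~\ref{oss:maxprincsmooth}.
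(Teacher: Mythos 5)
Your proposal reproduces the paper's proof scheme essentially verbatim: the simultaneous induction on $q$ across the Sheeting, Minimum Distance, and Higher Regularity Theorems, the base case via Allard plus elliptic regularity for the CMC equation, the blow-up arguments modelled on \cite{WicAnnals} and Schoen--Simon with the extra $\sqrt{\|H\|_{L^p}}$ excess term, the three-step upgrade in the Higher Regularity Theorem (improved H\"older exponent, $W^{2,2}$ control on the semi-difference across the touching set, extension of the CMC equation across $T$), and the concluding Federer dimension reduction. One small imprecision: you invoke Remark~\ref{oss:weakimpliesstrongforJ} to promote weak stability to strong stability, but hypothesis~(5) of Theorem~\ref{thm:mainregularityrestated} is already the strong stability inequality (that remark is used to reduce Theorem~\ref{thm:mainregularity} to Theorem~\ref{thm:mainregularityrestated}, not inside the proof of the latter); this is a misattribution rather than a gap, since the strong inequality you need is in hand either way.
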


\begin{oss}[\textit{the case $H=0$}]
\label{oss:samelson}
For the minimal case ($H=0$) Theorem \ref{thm:mainregularity} provides the same result as \cite{WicAnnals} but with weaker assumptions, namely the fact that $H$ is identically $0$ is replaced by the requirements that (assumption 1) $H \in L^p(\|V\|)$ and (assumption 3) $H=0$ on $\text{reg}_1 V$, the $C^{1,\alpha}$ embedded set. The global vanishing of $H$, which is an assumption in \cite{WicAnnals}, is for us a conclusion. Assumption 2 becomes redundant in the minimal case, as it follows from the remaining assumptions and from the maximum principle that $\text{sing}_T V=\emptyset$. Moreover the variational stability (assumption 5) only needs to be assumed for volume preserving variations rather than for arbitrary ones; note, to this end, that a complete minimal hypersurface in an open ball is always orientable by \cite{Samelson}, and the argument extends to the case of a singular set having codimension at least $7$ (it will be clear from the proof that this is all that is needed).
\end{oss}

The class of varifolds in Theorem \ref{thm:mainregularity} is moreover compact under mass and mean curvature bounds:

\begin{thm}[\textbf{compactness for stable CMC integral varifolds}]
\label{thm:compactness}
Let $n \geq 2$ and consider an open set $\mathcal{U} \subset \R^{n+1}$. The class of integral $n$-varifolds $V$ that satisfy assumptions (1)-(5) of Theorem \ref{thm:mainregularity} and have uniformly bounded masses 
$\|V\| ({\mathcal U}) \leq K_0$ and uniformly bounded mean curvatures $|H|\leq H_0$ for $K_0, H_0 \in \R$ (where $H$ is the generalized mean curvature of $V$ as in assumption 3) is compact in the varifold topology.
\end{thm}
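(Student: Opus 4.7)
The plan is to combine Allard's compactness theorem for integral varifolds with the regularity theorem (Theorem~\ref{thm:mainregularity}) applied to each member of the sequence, in order to verify all five hypotheses of Theorem~\ref{thm:mainregularity} in the limit. First, the mass bound $\|V_j\|(\mathcal{U}) \leq K_0$ together with the uniform pointwise bound $|H_j| \leq H_0$ (which certainly implies a uniform $L^p_{\text{loc}}(\|V_j\|)$ bound for every $p > n$) allows us, via Allard's compactness for integral varifolds, to extract a subsequence (not relabelled) with $V_j \to V$ as varifolds for some integral $n$-varifold $V$ on $\mathcal{U}$. The mean curvature constants $h_j$ associated to $V_j$ via hypothesis 4 (equal in absolute value to the scalar mean curvature) satisfy $|h_j| \leq H_0$, so we may further extract so that $h_j \to h \in \mathbb{R}$. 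Lower semicontinuity of the first variation together with varifold convergence and the uniform $L^\infty$ control on the mean curvature vectors yield that the first variation of $V$ is absolutely continuous with respect to $\|V\|$ and that the generalised mean curvature $H_V$ of $V$ satisfies $|H_V| \leq H_0$, establishing hypothesis 1 for $V$.

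The central task is to verify hypothesis 2 (no classical singularities) for $V$. Suppose for contradiction that $X_0 \in \spt\|V\|$ were a classical singularity of $V$. Then some blow-up of $V$ at $X_0$ along a suitable sequence of scales would produce a stationary integral cone $\mathbf{C}$ made up of at least three half-hyperplanes meeting along a common $(n-1)$-dimensional axis. The Minimum Distance Theorem (Theorem~\ref{thm:min-dist}) applied to the class of varifolds satisfying hypotheses 1--5 with scalar mean curvature bounded by $H_0$ provides a positive constant $\varepsilon_0 = \varepsilon_0(\mathbf{C}, H_0) > 0$ such that any such varifold is at Hausdorff distance at least $\varepsilon_0$ from $\mathbf{C}$ in $B_1(0)$ after rescaling. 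A diagonal argument combining the blow-up of $V$ at $X_0$ converging to $\mathbf{C}$ with the varifold convergence $V_j \to V$ produces a sequence of rescalings of appropriately chosen $V_j$ converging to $\mathbf{C}$, contradicting that lower distance bound. Hence hypothesis 2 holds for $V$. This step is the main obstacle, and it is precisely where the full strength of Theorem~\ref{thm:mainregularity} (specifically the Minimum Distance Theorem), simultaneously established during the induction scheme, is required.

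To verify hypotheses 4 and 5 for $V$, I apply the Regularity Theorem to each $V_j$: away from a closed set of Hausdorff dimension at most $n-7$, each $V_j$ is locally a union of at most two smooth CMC graphs. Along the convergent subsequence, the Sheeting Theorem (Theorem~\ref{thm:sheeting}) together with the Higher Regularity Theorem (Theorem~\ref{thm:higher-reg}) and standard elliptic regularity for the CMC equation upgrades the varifold convergence to $C^2$ convergence of the graphs on compact subsets of $\Greg V$ away from its singular set. Combined with $h_j \to h$, this permits passage to the limit in both the first variation identity (for test vector fields supported in $\mathcal{U} \setminus (\spt\|V\| \setminus \text{reg}_1 V)$) and in the weak stability inequality (\ref{eq:ambientweakstabilityinequality}) via Fatou's lemma applied to $|A_{V_j}|^2$ and the smooth convergence of test functions and metrics; this gives hypotheses 4 and 5 for $V$ with the constant $h$.

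It remains to verify hypothesis 3 at each touching singularity $X_0$ of $V$. Near $X_0$, by the touching singularity structure, $\spt\|V\|$ decomposes as the union of two $C^{1,\alpha}$ graphs $u_1 \leq u_2$ over the common tangent plane; by the already-verified hypothesis 4 and standard elliptic regularity, both $u_i$ are smooth and satisfy the CMC equation with constant $h$. The maximum principle analysis exactly as in Remark~\ref{oss:maxprincsmooth} then shows that $v = u_2 - u_1$ is a non-negative solution of a uniformly elliptic equation whose right-hand side forces a non-zero second-order derivative of $v$ at $X_0$ in some direction $\hat e_\ell$, so that $\{v = 0\} \subset \{D_\ell v = 0\}$ lies in a smooth $(n-1)$-dimensional submanifold by the implicit function theorem. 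The set of points of density equal to $\Theta(\|V\|,X_0)$ in a small ball about $X_0$ is contained in $\{v=0\}$, and in particular has vanishing $\mathcal{H}^n$-measure. This verifies hypothesis 3 for $V$ and completes the proof of subsequential compactness.
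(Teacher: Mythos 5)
Your high-level strategy for hypotheses 1 and 2 matches the paper exactly: Allard compactness for hypothesis 1, and the Minimum Distance Theorem via a diagonal blow-up argument for hypothesis 2. Your plan for hypotheses 3 and 4 (apply the Sheeting Theorem to the $V_j$ near a candidate point, upgrade to $C^2$ convergence of the outer sheets, invoke the Hopf boundary point lemma) is also the approach in the paper. The gaps are in two places where the proposal asserts passages that actually require substantial work.

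First, to apply the Sheeting Theorem (Theorem~\ref{thm:sheeting}) to the $V_j$'s you must place them in the class $\mathcal{S}_H$, which demands \emph{strong} stability of $\Greg V_j$ for the functional $J$ (hypothesis 5 of Theorem~\ref{thm:mainregularityrestated}), not the weak stability of hypothesis 5 of Theorem~\ref{thm:mainregularity}. Remark~\ref{oss:weakimpliesstrong} says weak stability implies strong stability locally, but the resulting ball depends on the varifold; for a sequence $V_j$ there is no a priori common ball. The paper handles this by a dichotomy: for each $j$, strong stability holds either in $B_R(x)$ or in the annulus $B_1(x)\setminus\overline{B}_R(x)$; if the former fails for a subsequence and all $R$, one obtains strong stability on a punctured ball (after a diagonal extraction) and closes the puncture by a capacity argument. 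This appears once locally (Step~1 of the second part of the proof) and again globally (Lemma~\ref{lem:neckregion}, the ``neck region'' identification). Your proposal silently uses the Sheeting Theorem on the $V_j$'s without justifying that they are admissible, which is precisely the point that the paper's careful extraction argument resolves.

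Second, for hypothesis 5 the proposal appeals to ``Fatou's lemma ... and the smooth convergence of test functions and metrics.'' But the obstruction is not convergence of $|A_{V_j}|^2$ (which is fine on compacts where $C^2$ convergence holds). The obstruction is the zero-average constraint: a test function $\phi$ with $\int_{\Greg V}\phi\,d\mathcal{H}^n = 0$ generically fails to satisfy $\int_{\Greg V_j}\phi\,d\mathcal{H}^n = 0$, so it is not an admissible test function for $V_j$. The paper corrects $\phi$ to $\phi + a_j\zeta$, with $\zeta$ a fixed cutoff disjointly supported and $a_j\to 0$ chosen to restore zero average for $V_j$, and shows the correction terms vanish in the limit. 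Moreover, the ``global sheeting'' and neck-region dichotomy of Lemma~\ref{lem:neckregion} are needed because one cannot simply patch local sheetings into a coherent immersion of the abstract surface $\Sigma$ parametrising $\Greg V$ without resolving orientations and choosing consistent ``top sheets'' across touching singularities, nor can one pass the stability inequality to the limit if the $V_j$'s are only weakly stable and become strongly stable only after a hole is punched around a degenerating neck point. Neither of these is addressed in the proposal. These are the genuinely new difficulties of the compactness theorem compared to the minimal-surface case, and they cannot be dismissed by a Fatou argument.
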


\begin{oss}
In the presence of touching singularities one could consider a stronger stability assumption, namely one that allows variations that move the two $C^{1,\alpha}$ hypersurfaces independently at the touching set. Such an assumption would make it possible to employ techniques similar to those used in \cite{Caff} for the so-called obstacle problem, and in particular it would permit the regularity improvement from $C^{1,\alpha}$ to $C^{1,1}$. Note that we are not allowing these variations; we impose only the weaker, classical assumption that stability holds when we already know that the hypersurface is $C^2$. 
\end{oss}

\subsection{Optimality of the theorems: some examples}\label{examples} 

\begin{oss}
 As observed in Section \ref{intro-varifolds}, the stationarity assumption must be fulfilled on \textit{any} orientable portion of $\text{reg}_1\,V$ for $C^2$ regularity of $\text{reg}_{1} \, V$ to follow, see Figure \ref{fig:C11graph}. 
\end{oss}

\begin{figure}[h]
\centering
 \includegraphics[width=4cm]{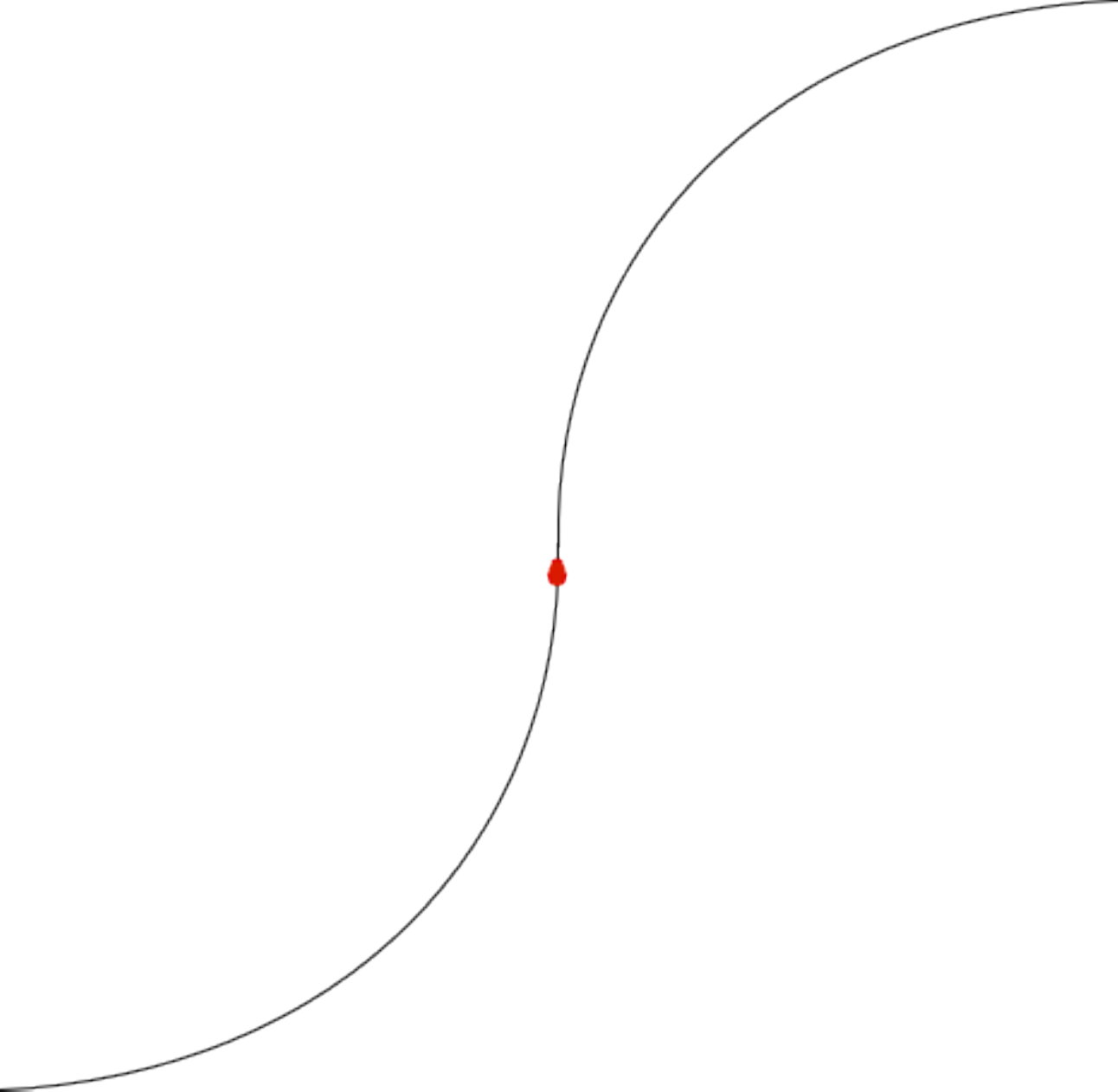} 
\caption{The 1-dimensional varifold $V$ depicted here consists of two quarters-of-circle with equal radii joined together in a $C^{1,1}$ fashion, taken with multiplicity 1. Every point of the varifold is in $\text{reg}_1\,V$ but $V$ is not of class $C^2$. Although all of $V$ is orientable, $V$ is not stationarity (for volume preserving deformations) with respect to either choice of orientation; $V$ is stationary only away from the point where the two circular arcs meet.}
 \label{fig:C11graph}
\end{figure}

\begin{oss}
\label{oss:withoutzeromeasureC11only}
In the absence of hypothesis 3, the $C^{2}$ regularity conclusion of Theorem~\ref{thm:mainregularity} away from a codimension 7 set cannot hold. This is easily seen by the following 1-dimensional example $V$ in 
${\mathbb R}^{2}$ which satisfies hypotheses 1, 2, 4, 5 but not hypothesis 3 of Theorem \ref{thm:mainregularity}, and has one point where it is not $C^{2}$ (but is $C^{1,1}$) immersed. (Of course, an $n$-dimensional example is obtained, with an $(n-1)$-dimensional set where the varifold is not $C^{2}$ immersed, by taking the cartesian product of $V$ with ${\mathbb R}^{n-1}$). In this example, $V$ is supported on the set $S\subset \R^2$ defined, with $(x,y)\in\R^2$, by
$$S=\{y\leq 1, x^2 + (y-1)^2 =1\} \cup \{y \geq -1, x\leq 0, x^2 + (y+1)^2 =1\}$$
and has multiplicity $2$ on the portion $\{(x,y)\in\R^2: y\leq 1, x\geq 0, x^2 + (y-1)^2 =1\}$ and multiplicity $1$ on the rest. See Figure \ref{fig:C11_Regularity}. 
Observe that the origin is a touching singularity and the mean curvature is constant on $\text{reg}_1 V=S\setminus\{(0,0)\}$. The stability is also true on $S\setminus\{(0,0)\}$ since we have graphical portions of a CMC curve. However writing the support $S$ at this touching singularity as the union of two graphs on the line $\{y=0\}$ we are forced to use, for one of the graphs, the function $u_1$ on $[-1,1]$ that takes the value $\sqrt{1-x^2} +1$ for $x\geq 0$ and the value $\sqrt{1-x^2} -1$ for $x\leq 0$, which is $C^{1,1}$ and enjoys no better regularity (the other graph is the one of the function $u_2=\sqrt{1-x^2} +1$, that is $C^2$). 
\end{oss}

\begin{figure}[h]
\centering
 \includegraphics[width=4cm]{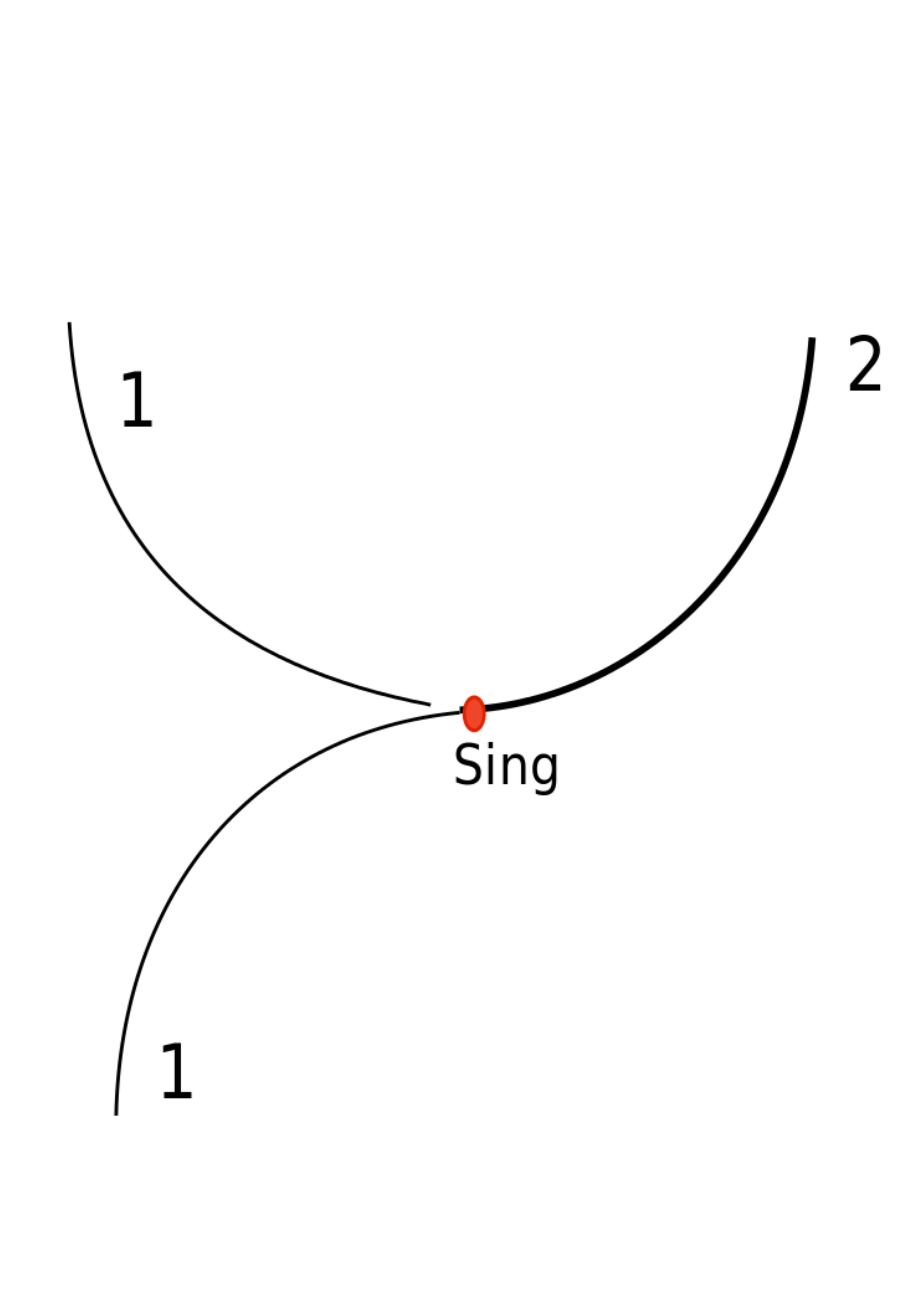} 
\caption{Lack of $C^2$ regularity in the absence of assumption 3, see Remark \ref{oss:withoutzeromeasureC11only}.}
 \label{fig:C11_Regularity}
\end{figure}

\begin{oss} 
\label{oss:withoutclassicalsingularityassumption}
If we drop assumption 2 (absence of classical singularities) we have the examples of two spheres of equal radii crossing along an equator or two transversely intersecting graphical pieces of spheres of equal radii. Both these examples have stable regular parts, and in fact satisfy assumptions 1, 3, 4, 5, but clearly do not satisfy the regularity conclusion. We conjecture that in the absence of assumption 2 the optimal regularity should be that $\text{sing} V $ is at most $(n-1)$-dimensional. 
\end{oss}

\begin{oss}[\textit{jumps in the multiplicities at the touching points}]
\label{oss:jumpsattouchingsing}
We wish to stress that the stability condition is given only on $\spt{V}$, i.e. we neglect multiplicities: indeed, with the notation from Definition \ref{df:regularpoints} and implicitly restricting to a neighbourhood of $p\in \Greg{V}\cap \text{sing}_T V$, we do not generally have that $V=q_1 |\text{graph}\,  u_1| + q_2 |\text{graph}\,  u_2|$ for some constants $q_1, q_2 \in \N$, as the following examples show.
Consider the $1$-dimensional integral varifold $V$ (higher dimensional examples follow by a trivial product with a linear subspace) whose support is given by (see Figure \ref{fig:jumpsattouching}) the set $D\subset \R^2$ defined by (here $(x,y)\in\R^2$)
$$D=\{y\geq -1, x^2 + (y+1)^2 =1\} \cup \{y \leq 1,  x^2 + (y-1)^2 =1\}$$
with multiplicity $2$ on the portions $\{(x,y)\in\R^2: -1\leq y\leq 0, x\leq 0, x^2 + (y+1)^2 =1\}$ and $\{(x,y)\in\R^2: 0\leq y \leq 1, x\geq 0, x^2 + (y-1)^2 =1\}$, and multiplicity $1$ on the rest. The support of $V$ agrees with $\Greg{V}$, the origin is a touching singularity and all assumptions of Theorem \ref{thm:mainregularityrestated} are satisfied. 

Note that each of the two sheets $\{y=\sqrt{1-x^2}-1\}$ and $\{y=\sqrt{1-x^2}+1\}$, taken separately with the assigned multiplicity, is not stationary for the variational problem, as it does not even have generalized mean curvature in $L^p$, due to the multiplicity jump (the origin belongs to the so-called varifold boundary). For this reason the stability assumption \textit{\textbf{(V)}} is stated for $\spt{V}$.

We can turn the given example, which is of local nature, into a global one in the standard sphere $S^2$, where the support of the varifold is given by the union of four tangential circles of radius $\sqrt{2}/2$, as follows. Let $S^2=\{(x,y,z):x^2+y^2+z^2=1\}$ and 
$$C_1=\left\{x^2+y^2=\frac{1}{2}, z=\frac{\sqrt{2}}{2}\right\},\, C_2=\left\{x^2+z^2=\frac{1}{2}, y=\frac{\sqrt{2}}{2}\right\},  $$
$$C_3=\left\{x^2+y^2=\frac{1}{2}, z=-\frac{\sqrt{2}}{2}\right\},\, C_4=\left\{x^2+z^2=\frac{1}{2}, y=-\frac{\sqrt{2}}{2}\right\}.  $$
We set multiplicities as follows: on the half-circles $C_1 \cap \{x>0\}$, $C_2 \cap \{x<0\}$, $C_3 \cap \{x>0\}$ and $C_4 \cap \{x<0\}$ we set the multiplicity equal to $2$ and on the remaining four half-circles we set it equal to $1$. 
\end{oss}

\begin{figure}[h]
\centering
 \includegraphics[width=4cm]{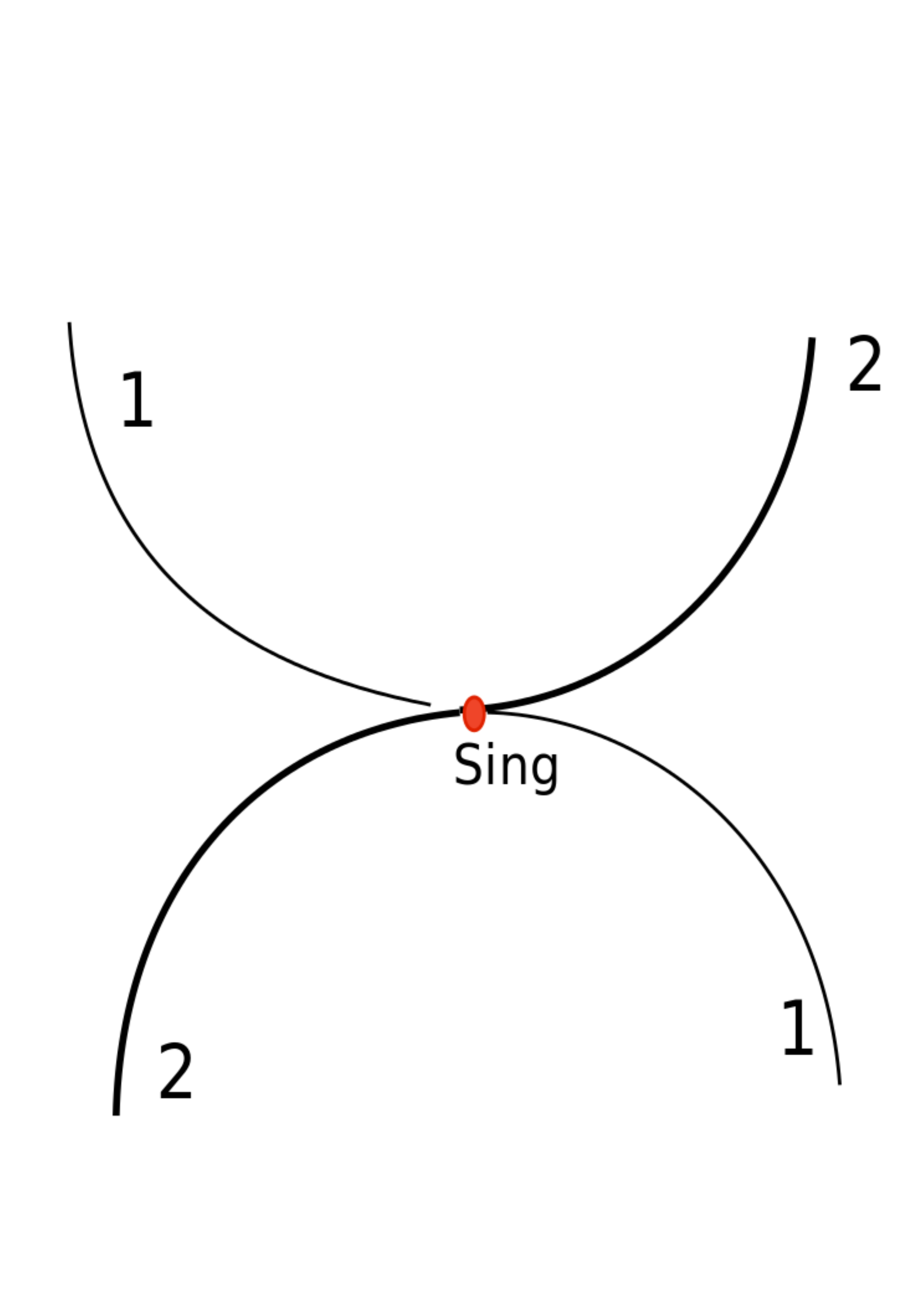} 
\caption{A jumps in the multiplicity of each of two $C^2$ CMC graphs along the touching set, see Remark \ref{oss:jumpsattouchingsing}.}
 \label{fig:jumpsattouching}
\end{figure}

\begin{oss}
In view of Remark \ref{oss:jumpsattouchingsing} it is natural to consider the restricted class of varifolds that satisfy the assumptions of Theorem \ref{thm:mainregularity} with the extra constraint that for $p\in \text{sing}_T V \cap \Greg{V}$ the two embedded hypersurfaces going through $p$ have separately constant multiplicity. It will follow from the proof of Theorem \ref{thm:compactness} that this class also enjoys the same compactness result (it is immediate that the regularity theorem holds for this restricted class as well). 
\end{oss}

\begin{oss}
\label{oss:Irving}
The possibility, allowed in the conclusion of Theorems \ref{thm:mainregularity} and \ref{thm:compactness}, that a codimension-$7$ singular set $\Sigma$ may be present for $n\geq 7$ is not surprising, in view of the analogous statements for stable minimal hypersurfaces, shown to be optimal by the example of Simons' cone.
The recent work \cite{Irving} constructs, in a spirit similar to \cite{CHS}, examples of CMC hypersurfaces with an isolated singularity that are asymptotic to a singular minimal cone. These hypersurfaces are stable when the minimal cone is strictly stable (e.g.~Simon's cone), showing the optimality of our conclusion.
\end{oss}

\subsection{Consequences for Caccioppoli sets}
\label{Caccioppoli}

In this subsection we focus on a special class of integral varifolds, namely multiplicity $1$ varifolds associated to the reduced boundary of Caccioppoli sets. The latter is a natural class for the variational problem of minimizing boundary area for a fixed enclosed volume, indeed the literature on the subject in the minimizing case is rich and rather complete, see e.g.~\cite{GonzMassTaman} for the Euclidean case and \cite{Morgan} for the extension to Riemannian manifolds. For ``stationary Caccioppoli sets'' and for ``stationary-stable Caccioppoli sets'' there is not even a partial local theory available for the variational problem under consideration and the notion of stationarity/stability itself is not immediately clear. In the following we point out how a very natural stationarity condition (on a Caccioppoli set) for ambient deformations fits very well with hypotheses 1 and 3 in Theorem \ref{thm:mainregularity} and thus makes the class of varifolds used in Theorem \ref{thm:mainregularity} suited to the context of Caccioppoli sets.

\begin{oss}[\textit{stationarity for ambient deformations $\Rightarrow$ hypothesis 1}]
\label{oss:LpmeancurvCaccioppoli}
Any Caccioppoli set $E$ admits a natural notion of enclosed volume, namely $\int \chi_E$, where $\chi_E$ denotes the characteristic function of $E$. In order to make sense of this notion when the enclosed volume is not necessarily finite, one restricts to an arbitrary open set with compact closure. For $\Oc \subset \subset \R^{n+1}$ we consider the functional (for a certain $\lambda \in \R$)
\begin{equation}
 \label{eq:JforCaccioppoli}
J_{\Oc}(E)=\|\p^* E\|(\Oc) + \lambda \int_{\Oc} \chi_E,
\end{equation}
where $\|\p^* E\|(\Oc)$ denotes the total mass of the boundary measure $\p^* E = D\chi_E$ in $\Oc$, and impose the stationarity condition for the varifold $|\p^* E|$ as follows. For any one-parameter family of deformations $\psi_t$ for $t\in(-\eps, \eps)$ with initial velocity $X \in C^1_c(\Oc;\R^{n+1})$ we obtain a one-parameter family of Caccioppoli sets $\{E_t=\psi_t(E)\}_{t\in(-\eps, \eps)}$ such that $E_0=E$ and $E_t\setminus \Oc=E\setminus \Oc$ for $t\in(-\eps,\eps)$; we require 

\begin{equation}
\label{eq:firstvarCaccioppoli}                                                                                                                                                                                                                                                                                                                                                                                                  
\left.\frac{d}{dt}\right|_{t=0} J_{\Oc}(E_t) =0 .                                                                                                                                                                                                                                                                                                                                                                               \end{equation}
This is a stronger hypothesis compared to assumption \textit{\textbf{(IV)}} above (we are allowing variations not necessarily supported on $\text{reg}_1$). In this case the stationarity implies automatically that the generalised mean curvature of the multiplicity $1$ $n$-varifold $|\p^* E|$ associated to the reduced boundary is a constant multiple of the unit normal with no singular part, as we will show now. The first variation of  $\int_{\Oc} \chi_{E_t}$ is equal to $\int_{\p^* E \cap {\Oc}} \nu  \cdot X d\mathcal{H}^n \res \p^* E$ by the divergence theorem on $E$, where $\nu$ denotes the outer normal on $\p^* E$. The first variation of $\|\p^* E_t\|(\Oc)=|\mathcal{H}^n \res( \p^* E_t \cap \Oc)|$ is, on the other hand, by the first variation formula, given by $\delta_{\p^* E} (X)=\int_{ \p^* E_t \cap \Oc} \text{div}_{\p^* E} X \, d\mathcal{H}^n \res \p^* E$ and is by definition a continuous linear functional on $C^1_c(\Oc;\R^{n+1})$. The stationarity assumption implies that 

$$\delta_{\p^* E} (X) -  \lambda \int \nu \cdot X (d\mathcal{H}^n \res \p^* E) =0 $$
for every $X\in C^1_c(\Oc;\R^{n+1})$, i.e. $\delta_{\p^* E}$ and the vector measure $\lambda (d\mathcal{H}^n \res \p^* E)\nu$ are equal as elements of the dual of $C^1_c(\Oc;\R^{n+1})$. The fact that $(d\mathcal{H}^n \res \p^* E)\nu$ is a measure implies therefore that $\delta_{\p^* E}(X)\leq C_E \lambda |X|_{C^0}$ for every $X \in C^1_c(\Oc;\R^{n+1})$, for some constant $C_E$, in other words the varifold $|\p^* E|$ has locally bounded first variation in the sense of \cite[\S 39]{SimonNotes} and $\delta_{\p^* E}$ extends to a continuous linear functional on $X \in C^0_c(\Oc;\R^{n+1})$: this extension necessarily agrees with $\lambda (d\mathcal{H}^n \res \p^* E)\nu$. The latter is absolutely continuous with respect to the varifold measure $\mathcal{H}^n \res \p^* E$ and we conclude that the generalized mean curvature of $|\p^* E|$ in $\Oc$ is $\vec{H}=\lambda \nu$ for the constant $\lambda$, in particular it is $L^\infty$. We point out that the stationarity for $J_{\Oc}$ for arbitrary ambient deformations is equivalent to the stationarity of the perimeter measure under volume-preserving ambient deformations, see \cite{Maggi}.
\end{oss}

\begin{oss}[\textit{hypothesis 1 $\Rightarrow$ hypothesis 3}]
\label{oss:touchingCaccioppoli}
When $V$ is the multiplicity $1$ varifold naturally associated to the reduced boundary $\p^*E$ of Caccioppoli set $E \subset \R^{n+1}$, hypothesis 3 in Theorems \ref{thm:mainregularity} and \ref{thm:mainregularityrestated} is automatically satisfied in the presence of assumption 1. Indeed, let $\|V\|=\mathcal{H}^n \res \p^* E$; the assumption that the generalized mean curvature is in $L^p(\|V\|)$ for $p>n$ implies, by the monotonicity formula \cite[17.6]{SimonNotes}, that the density $\Theta(\|V\|,x)$ exists everywhere and is $\geq 1$ on $\spt{V}$. Moreover by \cite[Theorem 3.15]{SimonNotes} we have that $\Theta(\|V\|,x)=0$ for ${\Hc}^n$-a.e. $x \in \spt{V}\setminus \p^*E$, so we must have ${\Hc}^n\left(\spt{V}\setminus \p^*E\right)=0$. De Giorgi's rectifiability theorem further gives that, for $x \in \p^*E$, $\Theta(\|V\|,x)=1$. Since, by the definition of $\text{sing}_T V$, for any $p \in \text{sing}_T V$ we have $\Theta(\|V\|,p)\geq 2$, it follows that hypothesis 3 of Theorem \ref{thm:mainregularity} holds. 
\end{oss}

Remarks \ref{oss:touchingCaccioppoli} and \ref{oss:LpmeancurvCaccioppoli} imply immediately the validity of the following corollary of Theorem \ref{thm:mainregularity}. It is worthwhile pointing out that, to our knowledge, proving this corollary alone is not easier than proving the more general Theorem \ref{thm:mainregularity}; the only slight simplification lies in the fact that the jumps in multiplicities on $\Greg{V}\setminus \Reg{V}$ described in Remark \ref{oss:jumpsattouchingsing} would be prevented, as the multiplicity is necessarily $1$ on $\Reg{V}$, but this would not contribute significantly to shortening the arguments.

\begin{cor}[\textbf{stable CMC Caccioppoli sets}]  
\label{cor:CaccioppoliwithoutLp}
Let $n \geq 2$ and let $|\p^* E|$ be the multiplicity $1$ integral $n$-varifold associated to the reduced boundary $\p^*E$ of a Caccioppoli set $E \subset \R^{n+1}$. Let $\Oc \subset \subset \R^{n+1}$ and assume that:

\noindent (i) $\text{sing}_C |\p^* E| \cap \Oc =\emptyset$; 

\noindent (ii)  the set $E$ is stationary with respect to the functional $J_{\Oc}$ as in (\ref{eq:JforCaccioppoli}), i.e.\ the condition~(\ref{eq:firstvarCaccioppoli}) holds (for ambient deformations $\psi_{t}$ as specified in (\ref{eq:firstvarCaccioppoli}));

\noindent (iii) $\Greg{|\p^* E|}\cap \Oc$ is stable as an immersion with respect to the functional $J$ in (\ref{eq:Jfunctional}), written with $h_0=\lambda$ and $\Oc$ instead of $\Uc$, for all volume-preserving variations with initial speed $f \nu$, where $f\in C^1_c(\Oc\setminus\left(\text{sing}\,|\p^* E| \setminus \Greg{|\p^* E|}\right))$ and $\int_{\Greg{|\p^* E|}} f d\mathcal{H}^n=0$.

Then $\Oc \cap (\text{sing}\, |\p^* E| \setminus \text{sing}_T \, |\p^* E|)$ is empty for $n \leq 6$, closed and discrete for $n=7$ and for $n \geq 8$ it is a closed set of Hausdorff dimension at most $n-7$. Moreover $\Oc \cap \text{sing}_T \, |\p^* E| \subset \Greg{|\p^* E|}$ and $\text{sing}_T |\p^* E| \cap \Oc$ is locally contained in a smooth submanifold of dimension $(n-1)$. 
\end{cor}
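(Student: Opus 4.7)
The plan is to derive Corollary \ref{cor:CaccioppoliwithoutLp} directly from Theorem \ref{thm:mainregularity}, applied to the multiplicity $1$ varifold $V = |\p^* E|$ on the open set $\Oc$ (in the role of $\Uc$). The work reduces to verifying the five hypotheses (1)--(5) of that theorem from hypotheses (i)--(iii) here, a task for which the preceding Remarks \ref{oss:LpmeancurvCaccioppoli} and \ref{oss:touchingCaccioppoli} already supply essentially all the needed arguments.

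Hypothesis (2) of Theorem \ref{thm:mainregularity} is literally (i). For hypothesis (1), I invoke the computation in Remark \ref{oss:LpmeancurvCaccioppoli}: stationarity of $E$ for $J_\Oc$ under ambient deformations forces the identity of distributions
$$\delta_V(X) \;=\; \lambda \int_{\p^* E} X \cdot \nu \, d\mathcal{H}^n, \qquad X \in C^1_c(\Oc;\R^{n+1}),$$
where $\nu$ is the measure-theoretic outward unit normal along $\p^* E$. Consequently $V$ has locally bounded first variation absolutely continuous with respect to $\|V\|$, and its generalized mean curvature $\vec H = \lambda \nu$ lies in $L^\infty_{\text{loc}}(\|V\|) \subset L^p_{\text{loc}}(\|V\|)$ for every $p > n$. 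Hypothesis (3) is then automatic via Remark \ref{oss:touchingCaccioppoli}: rectifiability of $\p^* E$ gives $\Theta(\|V\|, \cdot) = 1$ on $\p^* E$ and $\mathcal{H}^n(\spt{V} \setminus \p^* E) = 0$, and since every touching singularity has density at least $2$, the locus $\{Y : \Theta(\|V\|, Y) = \Theta(\|V\|, X)\}$ near any $X \in \text{sing}_T V$ is $\mathcal{H}^n$-null.

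For hypothesis (4), I pick any open $\Oc' \subset \Oc \setminus (\spt{V} \setminus \text{reg}_1 V)$ with $\text{reg}_1 V \cap \Oc'$ orientable and choose the orientation given by $\nu$; the identity above then immediately yields $\delta_V(X) = 0$ for every $X \in C^1_c(\Oc';\R^{n+1})$ satisfying the zero-flux condition $\int X \cdot \nu \, d\|V\| = 0$, which is precisely the stationarity requirement under volume-preserving variations. Hypothesis (5) is implied by (iii): by the discussion preceding Theorem \ref{thm:mainregularity}, stability of the immersion $\Greg V$ with respect to $J$ under zero-mean-speed variations is equivalent to stability of the area functional under volume-preserving ones, and the admissible test-function class $C^\infty_c$ in (5) is contained in the $C^1_c$ class used in (iii).

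With hypotheses (1)--(5) verified on $\Oc$, Theorem \ref{thm:mainregularity} produces the full conclusion of Corollary \ref{cor:CaccioppoliwithoutLp}. There is no genuine analytic obstacle in this derivation: the substance of the statement is carried entirely by Theorem \ref{thm:mainregularity}, and the role of this corollary is merely to package that regularity theorem for the natural class of Caccioppoli sets by showing that the (stronger) ambient stationarity hypothesis (ii) together with the multiplicity-$1$ structure of $\p^* E$ automatically supply both the $L^p$ mean-curvature bound and the touching-singularity density condition.
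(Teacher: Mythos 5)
Your proposal is correct and takes essentially the same approach as the paper: the paper's own ``proof'' consists precisely of the observation that Remarks \ref{oss:LpmeancurvCaccioppoli} and \ref{oss:touchingCaccioppoli} reduce the corollary to an application of Theorem \ref{thm:mainregularity}. You have made explicit the routine checks of hypotheses (2), (4) and (5) (equivalence of $J$-stability and area-stability under volume-preserving variations being exactly the content of the discussion surrounding \eqref{eq:Jsecondvariation}), which the paper leaves to the reader, and that added detail is all to the good.
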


\begin{oss}
The regularity conclusion in the preceding corollary is sharp, as shown by the examples constructed in \cite{Irving}. 
Very recent remarkable work by Delgadino--Maggi \cite{MaggiDelg} classifies Caccioppoli sets in $\R^{n+1}$ with \textit{finite volume} that are stationary with respect to the perimeter for volume-preserving ambient deformations, showing that they are unions of balls. Even in the Euclidean case, a local analogue of this regularity result does not hold under stationarity only, in view of \cite{Irving}.   
\end{oss}

\begin{oss}
\label{oss:Caccioppolistabilityimersions}
At first sight the stability assumption \textit{(iii)} of Corollary \ref{cor:CaccioppoliwithoutLp} might seem unsuited to the context of Caccioppoli sets, since we are requiring variations as an immersion of $\Greg{|\p^* E|}$ and, in doing so, we may exit the class of Caccioppoli sets. We wish to point out however that
assumption \textit{(iii)} can be rephrased as requiring non-negativity at $t=0$ of the second variation of the perimeter measure computed along a deformation within the class of Caccioppoli sets that enclose the same volume and that are close to the initial one with respect to the $L^1_{\text{loc}}$-topology. In particular is satisfied under the area-minimizing assumption in Gonzales--Massari--Tamanini \cite{GonzMassTaman}. Figure \ref{fig:CaccioppoliDeformationImmersion} shows the idea behind this claim, which can be made precise.
\end{oss}

\begin{figure}[h]
\centering
 \includegraphics[width=5cm]{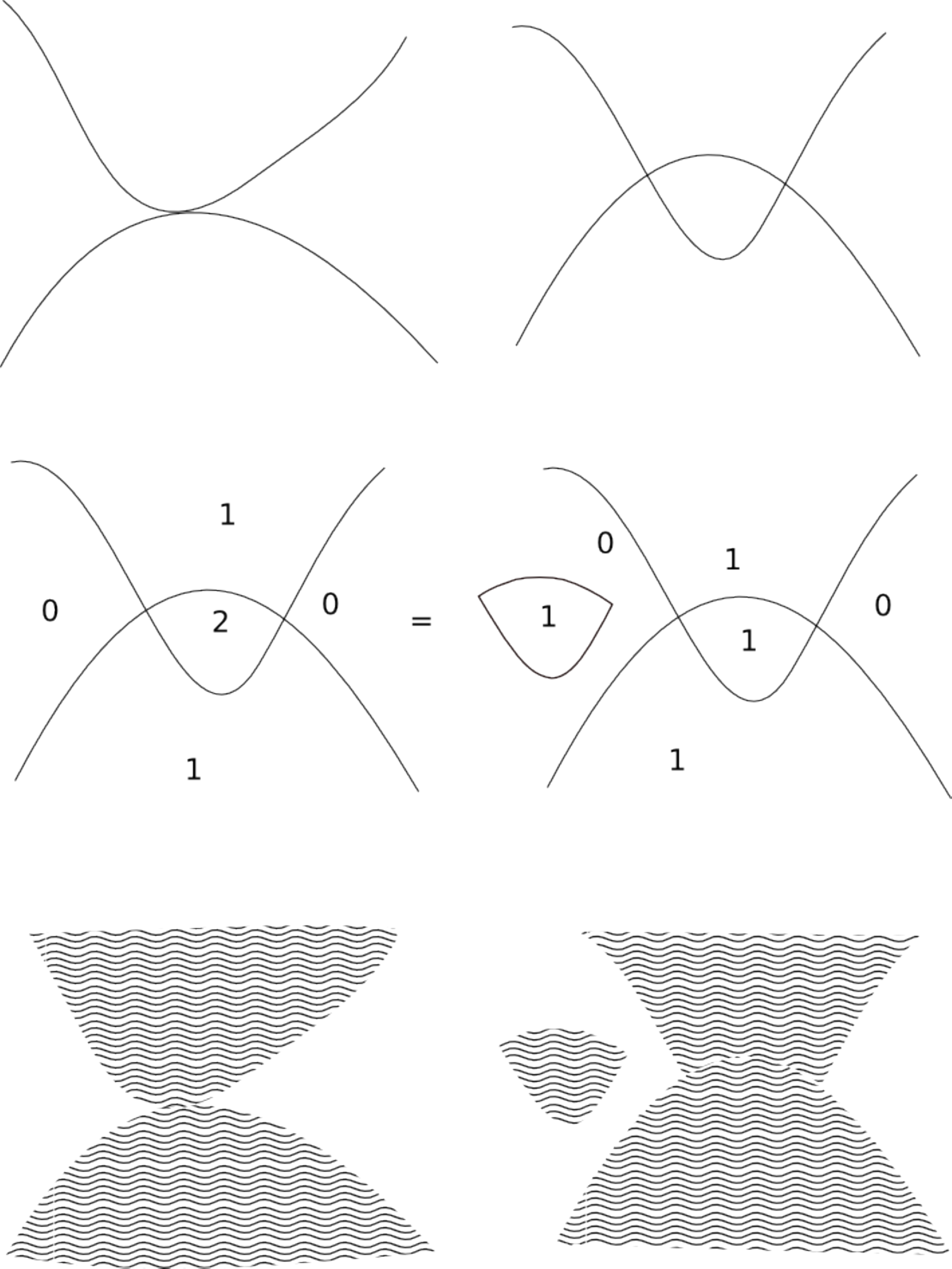} 
\caption{We consider the Caccioppoli set $E$ depicted in the bottom left picture. In the first row, a volume-preserving deformation, as an immersion, of the smoothly immersed boundary of $E$ (close to a touching singularity) is depicted. In order to see the preservation of enclosed volume as an immersion, we need to count with multiplicity two the overlapping region (second row, left picture). The same enclosed volume and the same perimeter can be realized by ``creating a copy'' of the overlapping region (second row, right picture). The bottom row depicts the corresponding volume-preserving deformation within the class of Caccioppoli sets: note that the perimeter and enclosed volume of the Caccioppoli set on the bottom right picture are respectively the hypersurface area and enclosed volume of the immersion in the top right picture.}
 \label{fig:CaccioppoliDeformationImmersion}
\end{figure}
 
\begin{oss}
The notion of stability with respect to the $L^1_{\text{loc}}$-topology on Caccioppoli sets, as discussed in Remark \ref{oss:Caccioppolistabilityimersions}, leads to the natural question of what can be said in the case when both stationarity and stability hold with respect to the $L^1_{\text{loc}}$-topology (rather than assuming stationarity for ambient deformations, as in Corollary \ref{cor:CaccioppoliwithoutLp}). We will discuss this in the next subsection, where we will prove that under such variational assumptions a stronger result can be obtained. In fact, subject to this stationarity assumption, a weaker notion of stability suffices.
\end{oss}

\subsubsection{Stationarity among $L^1_{\text{loc}}$-close Caccioppoli sets}

In the previous Corollary \ref{cor:CaccioppoliwithoutLp} we required the stationarity condition for deformations induced by ambient vector fields and the stability of $\Greg{|\p^* E|}$ as an immersion. Depending on the application of the regularity theory, there might be more or less suited stationarity and stability conditions. Much effort has been devoted to the case in which the Caccioppoli set is minimizing for the isoperimetric problem (\cite{GonzMassTaman}, \cite{Morgan}): this assumption can be viewed as sitting at one end of the spectrum, where we are allowed to compare with any other Caccioppoli set and we have a minimization property. A slightly weaker notion, of similar flavour, is that of locally minimizing, where the Caccioppoli set minimizes the perimeter measure among all Caccioppoli sets that are close to it in the sense of the $L^1_{\text{loc}}$-topology. At the other end of the spectrum, we might require that stationarity and stability hold for volume-preserving deformations induced by ambient vector fields.

We give, in this subsection, a corollary of our main result that is in between these two ends. In this corollary, stationarity is required with respect to the $L^1_{\text{loc}}$-topology; somewhat surprisingly, under such a stationarity assumption (clearly stronger than the one in Corollary \ref{cor:CaccioppoliwithoutLp}), we only need a very minimal stability requirement, namely only stability of the smoothly embedded part of $\p^* E$ for volume-preserving deformations (which are therefore induced by ambient vector fields). Beyond these variational hypotheses, no further condition, structural or otherwise, is needed and we in fact obtain a stronger conclusion than in Corollary \ref{cor:CaccioppoliwithoutLp}. We here prove this in the Euclidean case; the routine extension to the case of an analytic ambient metric will be included in \cite{BW2}. We conjecture that the same result should hold in a smooth Riemannian manifold.

\begin{df}
\label{df:oneparameterfamilyCaccioppoli}
Let $E$ be a Caccioppoli set and $\Oc$ be a bounded open set. A one-sided one-parameter family of deformations of $E$ in $\Oc$ is a collection $\{E_t\}_{t\in[0,\eps)}$ of Caccioppoli sets, for some $\eps>0$, such that the curve $$t \in [0,\eps) \to \chi_{E_t}$$ is continuous in the $L^1_{\text{loc}}$-topology and such that $E_t=E$ in $\R^{n+1}\setminus \Oc$ for every $t\in[0,\eps)$ and $E_0=E$.   

A one-sided one-parameter volume-preserving family of deformations of $E$ in $\Oc$ is a one-sided one-parameter family of deformations of $E$ in $\Oc$ with the additional constraint that $|E\cap \Oc|=|E_t \cap \Oc|$ for every $t\in[0,\eps)$. 
\end{df}

\begin{df}
\label{df:Caccioppolistationary}
Let $E$ be a Caccioppoli set in $\R^{n+1}$ and $\Oc$ be a bounded open set. We say that $E$ is \textit{stationary in $\Oc$ for the perimeter measure among Caccioppoli sets that enclose the same volume} when the following condition holds:

 $$\left.\frac{d}{dt}\right|_{t=0^+}\|D \chi_{E_t}  \res \mathcal{O}\| \geq 0$$ for any choice of one-sided one-parameter volume-preserving family of deformations of $E$ in $\Oc$ as in Definition \ref{df:oneparameterfamilyCaccioppoli} such that the map $t\to \|D \chi_{E_t}  \res \mathcal{O}\|$ is differentiable from the right at $t=0$.
\end{df}

Definition \ref{df:Caccioppolistationary} imposes a condition that is stronger than stationarity under ambient volume-preserving deformations (i.e.~those deformations that are induced by $C^1_c$ ambient vector fields). More generally, for any one-parameter volume-preserving deformation $t\in (-\eps,\eps)\to E_t$, continuous with respect to the $L^1_{\text{loc}}$-topology on $E_t$ and such that $t\to \|D \chi_{E_t}  \res \mathcal{O}\|$ is differentiable $t=0$, then the requirement in Definition \ref{df:Caccioppolistationary} immediately implies the stationarity condition $\left.\frac{d}{dt}\right|_{t=0}\|D \chi_{E_t}  \res \mathcal{O}\|=0$. The purpose of the requirement in Definition \ref{df:Caccioppolistationary} is to impose a notion of stationarity in cases where the structure of the Caccioppoli set naturally gives rise only to one-sided deformations\footnote{Roughly speaking, and as will be clear from the proof in Section \ref{Caccioppolicorollariesproofs}, when the reduced boundary has a touching singularity or a classical singularity, then the $L^1_{\text{loc}}$-topology allows to ``break the singularity apart'' in one direction only. From this perspective, a Caccioppoli set with a touching singularity or a classical singularity should be thought of as sitting ``at the boundary'' of the space of Caccioppoli sets: its deformations are therefore naturally one-sided, and the stationarity condition must be formulated as an inequality.}.

\begin{cor}
\label{cor:Caccioppolianalytic}
Let $E$ be a Caccioppoli set in $\R^{n+1}$ such that $E$ is stationary in a bounded open set $\Oc$ for the perimeter measure among Caccioppoli sets with the same enclosed volume, in the sense of Definition \ref{df:Caccioppolistationary}. Moreover assume that the smoothly embedded part $\Reg{|\p^* E|}$ is weakly stable, i.e.~stable for the perimeter measure under volume-preserving ambient deformations (in the sense of hypothesis \textbf{(V)}, but with $\phi \in C^1_c(\mathcal{O}\setminus \text{sing}\,|\p^*E|)$).

Then there exists a closed set $\Sigma$ such that $\text{dim}_{\mathcal{H}} \Sigma \leq n-7$ and $\p^* E \cap (\Oc \setminus \Sigma)$ is a smoothly immersed CMC hypersurface (possibly with several connected components) and with the property that at every point $p\in \overline{\p^* E} \cap (\Oc \setminus \Sigma)$ at which $\overline{\p^* E}$ is not locally embedded there exists a neighbourhood $B^{n+1}_\rho(p)$ in which $\overline{\p^* E}$ is the union of exactly two smooth complete CMC hypersurfaces in $B^{n+1}_\rho(p)$ that intersect tangentially. Moreover $\text{sing}_T |\p^*E|$ is a finite union of submanifolds of dimensions between $0$ and $n-2$.
\end{cor}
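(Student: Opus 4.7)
My strategy is to first verify hypotheses~(1)--(4) of Theorem~\ref{thm:mainregularity} for the multiplicity-one varifold $V := |\p^{*}E|$, use Remark~\ref{oss:separatesheetsingreg} to deduce that $\Greg{V}$ is then already a $C^{2}$ CMC immersion, use real-analyticity of CMC graphs in $\R^{n+1}$ together with the stronger stationarity of Definition~\ref{df:Caccioppolistationary} to refine the structure of $\text{sing}_{T}V$, and finally verify hypothesis~(5) via a capacity argument and invoke Theorem~\ref{thm:mainregularity}. The easier verifications are essentially by inspection: for any $X \in C^{1}_{c}(\Oc;\R^{n+1})$ with flow $\psi_{t}$, both $t \mapsto \psi_{t}(E)$ and $t \mapsto \psi_{-t}(E)$ are admissible one-sided volume-preserving $L^{1}_{\text{loc}}$ families (Definition~\ref{df:oneparameterfamilyCaccioppoli}), so the one-sided inequality of Definition~\ref{df:Caccioppolistationary} applied to both yields two-sided stationarity of $E$ under all ambient volume-preserving deformations; as in Remark~\ref{oss:LpmeancurvCaccioppoli}, this delivers hypotheses~(1) and~(4) with $\vec{H} = -\lambda\hat{\nu}$ for a constant $\lambda$ (in particular $H \in L^{\infty}$). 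Hypothesis~(3) is automatic by Remark~\ref{oss:touchingCaccioppoli}.

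The main new variational step is hypothesis~(2), the absence of classical singularities. Arguing by contradiction, if $p \in \text{sing}_{C}V \cap \Oc$, then locally near $p$, $\spt{V}$ is the union of at least three $C^{1,\alpha}$ half-hypersurfaces meeting along an $(n-1)$-dimensional spine $\gamma \ni p$, with at least one pair meeting transversally. I would construct a one-parameter family $\{E_{s}\}_{s\in[0,\eps)}$ of Caccioppoli sets, compactly supported near $p$, that ``resolves'' the junction by modifying $\chi_{E}$ inside a tubular neighborhood of $\gamma$ of scale $s$ (replacing the transverse configuration by a smoothly capped piece of size $s$), with the induced volume change compensated by a normal perturbation of $\Reg{V}$ supported away from $p$, whose first-variation perimeter cost is of order $|h|$ times the compensated volume, hence of lower order in $s$. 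After a suitable reparametrisation ensuring finite right differentiability at $s=0$, transversality of at least one pair of sheets yields $\left.\tfrac{d}{ds}\right|_{s=0^{+}}\|D\chi_{E_{s}}\|(\Oc) < 0$, contradicting Definition~\ref{df:Caccioppolistationary}. With~(1)--(4) verified, Remark~\ref{oss:separatesheetsingreg} gives that $\Greg{V}$ is a $C^{2}$ CMC immersion and Remark~\ref{oss:maxprincsmooth} that $\text{sing}_{T}V$ is locally contained in a smooth $(n-1)$-submanifold.

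In $\R^{n+1}$ the CMC equation has constant (hence analytic) coefficients, so $C^{2}$ CMC graphs are real analytic; near any $p \in \text{sing}_{T}V$ the functions $u_{1}\leq u_{2}$ of Definition~\ref{df:touchingsingularity} are therefore analytic, and $v := u_{2}-u_{1}$ is a nonnegative analytic function with $v(p)=0$, $Dv(p)=0$ and, by Remark~\ref{oss:maxprincsmooth}, $\Delta v(p) = 2|h|$, so $D^{2}v(p)\geq 0$ has at least one strictly positive eigenvalue. Lojasiewicz's structure theorem gives that the real-analytic variety $\text{sing}_{T}V = \{v=0\}$ is locally a finite union of real-analytic submanifolds of dimensions $\leq n-1$. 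To exclude an $(n-1)$-dimensional stratum $\Gamma$, I would use that the positive transverse Hessian forces $v \sim c\,\mathrm{dist}(\cdot,\Gamma)^{2}$ near $p$, and exploit this quadratic contact to design a one-sided volume-preserving $L^{1}_{\text{loc}}$ deformation parametrised so that $s \mapsto \|D\chi_{E_{s}}\|(\Oc)$ is right-differentiable at $s=0$ with strictly negative right derivative (transparent in the model of two externally tangent solid unit cylinders under the reparametrisation $d = 2 - s^{2}$ of the inter-center distance), again contradicting Definition~\ref{df:Caccioppolistationary}. Hence $\text{sing}_{T}V$ has dimension $\leq n-2$ and, by the local finite-stratification it inherits, $\sigma$-finite $\mathcal{H}^{n-2}$-measure, so zero $W^{1,2}$-capacity inside the $n$-dimensional immersion $\Greg{V}$; a standard cut-off/capacity argument (together with Remark~\ref{oss:weakimpliesstrong}) then lifts the assumed weak stability of $\Reg{V}$ to the stability of $\Greg{V}$ required in hypothesis~(5), and Theorem~\ref{thm:mainregularity} yields the remaining assertions of the corollary. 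The main technical obstacle, shared by the verification of~(2) and by the exclusion of $(n-1)$-dimensional touching strata, is the explicit construction of the one-sided Caccioppoli deformations together with the first-variation calculation delivering a finite, strictly negative right derivative of perimeter: the transversality (respectively the positive transverse Hessian of $v$) is what supplies, after reparametrisation, the linear-in-$s$ perimeter gain dominating the lower-order perimeter cost of the volume compensation.
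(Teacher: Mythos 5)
Your proposal follows the same overall route as the paper: verify hypotheses (1)--(4) of Theorem~\ref{thm:mainregularity} (the reduction of Definition~\ref{df:Caccioppolistationary} to ambient stationarity being exactly the observation that $t\mapsto\psi_t(E)$ and $t\mapsto\psi_{-t}(E)$ are both admissible one-sided families), use the one-sided stationarity to rule out classical singularities and $(n-1)$-dimensional touching strata, invoke analyticity of $C^2$ CMC hypersurfaces for the finer stratification of $\text{sing}_T|\p^*E|\cap\Greg{|\p^*E|}$, and lift stability from $\Reg{|\p^*E|}$ to $\Greg{|\p^*E|}$ by a capacity argument since the removable set has locally finite $\mathcal{H}^{n-2}$-measure. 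Where you diverge from the paper is in the construction of the one-sided deformations that produce the contradiction. The paper uses a single unified device: it identifies a connected component $A$ of $E\cap B$ (respectively of $E^c\cap B$) bounded by the two transversal (respectively tangential) sheets, sets $E_t=(E\setminus A)\cup(\mathrm{Id}+t\Psi\partial_{x_1})(A)$ for a fixed ambient vector field $\Psi\partial_{x_1}$ pointing into $A$ along the spine, and extracts the negative contribution by blowing up the first variation of $\p^*A$ at a spine point $p$; the blow-up limit is a concentrated Dirac term of weight $-2\cos\theta\,\Psi(p)$ (or $-2\Psi(p)$ in the tangential case), while the volume-compensation term is of lower order under the rescaling and vanishes in the limit. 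You instead use two different surgeries: a corner-capping at scale $s$ for the classical singularity, and a reparametrised collision of the two sheets (the $d=2-s^2$ reparametrisation in the cylinder model) for the $(n-1)$-dimensional touching stratum. Both of your constructions are sound --- the capping gains perimeter linearly in $s$ with quadratic volume cost, and the $s\mapsto d(s)$ reparametrisation converts the square-root behaviour of the overlap perimeter into a finite negative right derivative --- but the paper's ambient-push approach is arguably cleaner, since $t$-differentiability and the $L^1_{\rm loc}$-continuity of $t\mapsto\chi_{E_t}$ are automatic for diffeomorphic pushes, whereas your surgery constructions require some care to justify right-differentiability at $s=0$, precisely the point you flag as the main technical obstacle. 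A minor imprecision worth noting: the analyticity argument only applies to $\text{sing}_T|\p^*E|\cap\Greg{|\p^*E|}$ (where the sheets are already known to be $C^2$), not to all of $\text{sing}_T|\p^*E|$ a priori; this is harmless since the capacity argument only needs the $\mathcal{H}^{n-2}$-finiteness of $\Greg\setminus\Reg$, and $\text{sing}_T|\p^*E|\subset\Greg{|\p^*E|}$ then comes out a posteriori from Theorem~\ref{thm:mainregularity}.
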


Corollary \ref{cor:Caccioppolianalytic} generalizes the work of Gonzales--Massari--Tamanini (\cite{GonzMassTaman}) that established regularity of boundaries that \emph{minimize} area subject to the fixed enclosed volume constraint\footnote{In \cite{BW2} we will similarly generalize the result of \cite{Morgan}  to Caccioppoli sets in anlytic Riemannian manifolds that are stationary and stable in the sense of Corollary \ref{cor:Caccioppolianalytic}.}.

In Section \ref{Caccioppolicorollariesproofs} we will prove this result by reducing it to Corollary \ref{cor:CaccioppoliwithoutLp}. In particular, we will show that the stationarity condition in Corollary \ref{cor:Caccioppolianalytic} rules out classical singularities and moreover forces $\Greg{V}$ to have a more restrictive structure where tangential CMC sheets do not touch along a submanifold of dimension $n-1$ (whence by analyticity the touching set has locally finite $n-2$ dimensional Hausdorff measure). This is the reason why the stability requirement can be weakened to only involve $\Reg{V}$. We point out that there is actually only one type of one-sided volume-preserving deformation that we need to employ that is not induced by an ambient deformation.

\section{Main steps of the proof: Sheeting, Minimum Distance and Higher Regularity Theorems}
\label{mainsteps}

Let $H$ be a non-negative constant. We will denote by $\mathcal{S}_H$ the class of integral $n$-varifolds satisfying assumptions 1, 2, 3, 4, 5 of Theorem \ref{thm:mainregularityrestated} with ${\mathcal U} = B_{2}^{n+1}(0)$ and $|h| \leq H$ and with fixed $p > n$.

Although many intermediate steps will be required, we can summarise the strategy of the proof of Theorem \ref{thm:mainregularityrestated} with the following three main theorems, which will all be proved by simultaneous induction.

\begin{thm}[Sheeting Theorem]
\label{thm:sheeting}
Let $q$ be a positive integer. There exists $\eps = \eps(n, $p$, q, H) \in (0, 1)$ such that if $V \in {\mathcal S}_H$, 
$(\omega_{n}2^{n})^{-1}\|V\|(B_{2}^{n+1}(0)) < q + 1/2,$ $q - 1/2 \leq \omega_{n}^{-1}\|V\|\left(B_{1} \times {\mathbb R} ^{n}(0) \right) < q + 1/2$ and 

$$\int_{B_{1}^{n}(0) \times {\R} } |x^{n+1}|^{2} \, d\|V\|(X) + \frac{1}{\eps} \left(\int_{B_{1}^{n}(0) \times {\R} } |h|^{p} \, d\|V\|(X) \right)^{\frac{1}{p}} < \eps \quad \text{then}$$ 

$$V \res \left(B_{1/2}^{n}(0) \times {\R} \right) = \sum_{j=1}^{q} |{\rm graph} \, u_{j}|$$
where $u_{j} \in C^{1, \alpha} \, (B_{1/2}^{n}(0); {\mathbb R})$ and $u_{1} \leq u_{2} \leq \ldots \leq u_{q}$, with
$$\|u_{j}\|^2_{C^{1, \alpha}(B_{1/2}^{n}(0))} \leq C \left(\int_{B_{1}^{n}(0) \times {\mathbb R}} |x^{n+1}|^{2} \, d\|V\|(X) + \frac{1}{\eps} \left(\int_{B_{1}^{n}(0) \times {\R} } |h|^{p} \, d\|V\|(X) \right)^{\frac{1}{p}}\right)$$ 
for some fixed constants $\alpha  = \alpha(n,p, q, H) \in (0, 1/2),$ $C = C(n, p, q, H) \in (0, \infty)$ and each $j=1, 2, \ldots, q$.  
\end{thm}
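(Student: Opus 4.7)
I would prove the Sheeting Theorem by induction on $q$, with the Minimum Distance and Higher Regularity Theorems proved simultaneously. The base case $q=1$ is essentially Allard's regularity theorem: the smallness of the $L^{2}$ height together with the $L^{p}$ mean-curvature bound (both controlled by $\hat{E}$) gives a single $C^{1,\alpha}$ graph directly. Assume now that all three inductive statements hold for densities strictly below $q$; the interesting case is $q\geq 2$.

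\textbf{Reduction to an excess-decay lemma.} The main analytic ingredient I would establish is the following: there exist constants $\theta\in(0,\tfrac{1}{4})$, $\gamma\in(0,1)$ and $\alpha\in\bigl(0,\tfrac{1}{2}(1-\tfrac{n}{p})\bigr)$, depending only on $n,p,q,H$, such that whenever $V\in\mathcal{S}_{H}$ satisfies the mass/density hypotheses of the theorem and has sufficiently small $\hat{E}^{2}:=\int_{B_{1}^{n}\times\R}|x^{n+1}|^{2}\,d\|V\|+\eps^{-1}\bigl(\int|h|^{p}\,d\|V\|\bigr)^{1/p}$, one can find an affine hyperplane $P'$ (slightly tilted from $\{x^{n+1}=0\}$ and translated vertically) with $\hat{E}(\eta_{0,\theta\#}V,P')^{2}\leq\gamma\theta^{2\alpha}\hat{E}^{2}$. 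Standard dyadic iteration of this decay, in the style of \cite{Allard} and \cite{SS}, would then yield the $q$ ordered graphs $u_{1}\leq\cdots\leq u_{q}$ with the claimed $C^{1,\alpha}$ estimate.

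\textbf{Proof of the excess-decay lemma.} I would argue by compactness and contradiction. Pick a sequence $V_{k}\in\mathcal{S}_{H}$ of putative counterexamples with excesses $\hat{E}_{k}\to 0$; after a subsequence, $V_{k}$ converges as varifolds to the multiplicity-$q$ plane $q|B_{1}\times\{0\}|$, and the rescaled mean curvatures $|h_{k}|/\hat{E}_{k}$ tend to $0$ in $L^{p}$. Following the Hardt--Simon blow-up procedure (Section~\ref{HardtSimon}) I would divide the sheet heights by $\hat{E}_{k}$ to produce a blow-up $v=(v_{1},\ldots,v_{q}):B_{1/2}^{n}\to\R^{q}$ with $v_{1}\leq\cdots\leq v_{q}$. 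Three inductive inputs make this construction rigorous: the Minimum Distance Theorem (at densities $<q+\tfrac{1}{2}$) rules out intermediate-scale tangent cones of classical-singular type, so that $V_{k}$ decomposes as graphs off a small bad set; the Sheeting and Higher Regularity Theorems at levels $<q$ give $C^{1,\alpha}$ — hence a posteriori $C^{2}$ — graph decompositions on these good regions, making the blow-up well-defined; and the weak stability inequality from assumption 5 of $\mathcal{S}_{H}$, after rescaling by $\hat{E}_{k}^{-1}$, passes to an inequality in which the $|A|^{2}$ term vanishes, forcing each $v_{j}$ to be weakly harmonic (the $h$-contribution to the second variation disappears because $|h_{k}|\to 0$ in $L^{p}$ faster than $\hat{E}_{k}$). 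Linear estimates for harmonic functions then yield arbitrary polynomial decay for $v$, which transfers back to the $V_{k}$ at scale $\theta$ for large $k$ and contradicts the choice of the sequence.

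\textbf{Main obstacle.} The principal new difficulty relative to \cite{WicAnnals} is the mean-curvature error in the monotonicity formula. When $h\neq 0$ the monotonicity identity carries a non-negative correction controlled only by $\bigl(\int|h|^{p}\bigr)^{n/p}$ on cylinders, and this forces the lower-order ingredient of any workable excess to be at most the $\tfrac{1}{2p}$-power of $\int|h|^{p}$; this is precisely the shape of $\hat{E}$. The consequent loss when transferring decay from the (linear) blow-up back to $V_{k}$ limits the attainable H\"older exponent to $\alpha<\tfrac{1}{2}(1-\tfrac{n}{p})$, which is strictly less than $1$; improving $\alpha$ (and eventually obtaining $C^{2}$) requires the Higher Regularity Theorem in later iterations. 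A secondary obstacle is the failure of two-sided Hopf separation of sheets: hypothesis (\dag\dag) (assumption 3) only guarantees $\mathcal{H}^{n}(T)=0$ for the touching set $T\subset\{u_{i}=u_{j}\}$, not $\mathcal{H}^{n-1}(T)=0$, and establishing that the blow-up $v$ both respects the ordering and is accessible to classical PDE through $T$ is where the structural assumption (\dag\dag) and the inductively available Higher Regularity Theorem are used in an essential way.
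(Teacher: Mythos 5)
Your overall architecture matches the paper's: induction on $q$ with the Minimum Distance and Higher Regularity Theorems proved simultaneously, base case from Allard, reduction to a tilt-excess decay lemma, and a compactness/blow-up argument with the correct identification of why the excess $\hat{E}$ must carry the $1/p$-power of $\|h\|_{L^p}$ and why this caps $\alpha$ below $\tfrac{1}{2}(1-\tfrac{n}{p})$. That much is right and reproduces the paper's skeleton.

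However, there is a genuine gap in the step where you claim harmonicity of the coarse blow-up $v=(v_1,\dots,v_q)$. You write that the rescaled weak stability inequality "passes to an inequality in which the $|A|^2$ term vanishes, forcing each $v_j$ to be weakly harmonic." This is not a valid inference: if the $|A|^2$ term degenerates, the stability inequality becomes the tautology $0\leq\int|\nabla\phi|^2$ and yields no constraint on $v$; more to the point, harmonicity of the individual components $v_j$ simply does not follow from stability, and a counterexample is exactly the one the paper records in Remark~\ref{oss:B7mindistance}: the pair $v=(\min\{\ell,\ell'\},\max\{\ell,\ell'\})$ built from two transverse affine functions is a perfectly good limit of arbitrarily flat stable two-sheeted configurations (pairs of transverse planes, or Scherk-type surfaces in the minimal case), yet it is not componentwise harmonic. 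What actually rules this out in the paper is the verification that the coarse blow-up class $\mathcal{B}_q$ is a \emph{proper blow-up class}, i.e.\ satisfies properties $(\mathcal{B}1)$--$(\mathcal{B}7)$, and then the regularity theorem for proper blow-up classes (Theorem~\ref{harmonic}, from \cite{WicAnnals}) deduces harmonicity from the combination of the Hardt--Simon inequality $(\mathcal{B}4)$ and the Minimum Distance Property $(\mathcal{B}7)$. Establishing $(\mathcal{B}7)$ is where the bulk of the work and all the structural hypotheses enter: it splits into the "easy case" Lemma~\ref{step1} (one side planar) proved by the tilt non-concentration Lemma~\ref{non-concentration}, and the "hard case" Lemma~\ref{step2}, proved by the fine blow-up argument (Lemmas~\ref{multi-scale}, \ref{intermediate}, \ref{L2-est-1}) that ultimately produces a classical singularity near the origin of some $V_k$, contradicting the no-classical-singularities assumption. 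The stability hypothesis is used at a different place entirely — via the adapted Schoen--Simon estimate (Theorem~\ref{thm:SS}) to get graph decompositions in regions of density $<q$ — not as a direct source of harmonicity. Your proposal elides this entire mechanism and replaces it with a claim that does not hold.
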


\begin{thm}[Minimum Distance Theorem]
\label{thm:min-dist}
Let $\delta \in (0, 1/2)$ and let ${\mathbf C} \in {\rm IV}_{n}({\mathbb R}^{n+1})$ be a stationary cone in ${\mathbb R}^{n+1}$  such that ${\rm spt} \, \|{\mathbf C}\|$ consists of three or more $n$-dimensional half-hyperplanes meeting along a common $(n-1)$-dimensional subspace.  
There exists $\eps = \eps({\mathbf C}, \delta, n, p, H) \in (0, 1)$ such that if $V \in {\mathcal S}_H$ and
$(\omega_{n}2^{n})^{-1}\|V\|(B_{2}^{n+1}(0)) \leq \Theta \, (\|{\mathbf C}\|, 0) +\delta$ then 
$${\rm dist}_{\mathcal H} \, ({\rm spt} \, \|V\| \cap B_{1}^{n+1}(0), {\rm spt} \, \|{\mathbf C}\| \cap B_{1}^{n+1}(0)) > \eps.$$
\end{thm}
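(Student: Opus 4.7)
The proof is by contradiction and compactness, intertwined with the simultaneous induction of Theorems~\ref{thm:sheeting}, \ref{thm:min-dist} and the Higher Regularity Theorem. Suppose the claim fails for the given $\mathbf{C}$ and $\delta$: there exist $(V_k) \subset \mathcal{S}_H$ with $(\omega_n 2^n)^{-1}\|V_k\|(B_2^{n+1}(0)) \leq \Theta(\|\mathbf{C}\|,0) + \delta$ and ${\rm dist}_{\mathcal H}({\rm spt}\, \|V_k\|\cap B_1^{n+1}(0), {\rm spt}\, \|\mathbf{C}\|\cap B_1^{n+1}(0)) \to 0$. The uniform $L^p$ mean curvature bound combined with Allard's compactness theorem for integral varifolds yields a subsequential varifold limit $V$ that is integral with $L^p$ generalized mean curvature; the Hausdorff convergence of the supports together with the monotonicity-based density lower bound and upper semi-continuity of density force ${\rm spt}\, \|V\|\cap B_1^{n+1}(0) = {\rm spt}\, \|\mathbf{C}\|\cap B_1^{n+1}(0)$.

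Write $L = \{0\}\times \mathbb{R}^{n-1}$ for the spine and $H_1, \ldots, H_N$ (with $N \geq 3$) for the open half-hyperplanes of $\mathbf{C}$. On each open $H_i$, ${\rm spt}\, \|V\|$ is contained in a flat hyperplane, so Allard's regularity theorem and the constancy of integer multiplicities on a connected regular set give $V \res H_i = q_i |H_i|$ for positive integers $q_i$ with $\sum_i q_i \leq 2(\Theta(\|\mathbf{C}\|,0) + \delta)$. Applying the Sheeting Theorem (inductively, for each multiplicity $q_i$ in the range where it has already been established at this step of the simultaneous induction) to $V_k$ on balls centered on $H_i$ at scales comparable to the distance to $L$, and performing a standard covering-and-scaling argument, produces uniformly $C^{1,\alpha}$-controlled graphical decompositions of $V_k$ over each $H_i$ that extend continuously up to $L$. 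Consequently, for each $p \in L \cap B_1^{n+1}(0)$ with $p \neq 0$ and all sufficiently large $k$, ${\rm spt}\, \|V_k\|$ in a suitable small ball $B_\sigma^{n+1}(p)$ is realized as the union of at least $N \geq 3$ embedded $C^{1,\alpha}$ hypersurfaces-with-boundary---one associated to each $H_i$---all sharing the common $C^{1,\alpha}$ boundary $\gamma_k$ that approximates $L \cap B_\sigma^{n+1}(p)$ (if some $q_i \geq 2$, the additional sheets over the same $H_i$ are, by the strong maximum principle for the CMC equation, either disjoint from or merely tangential to each other away from $\gamma_k$, and in any case do not affect the following observation).

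Since distinct $H_i$ meet pairwise transversely along $L$, the corresponding sheets of $V_k$ meet pairwise transversely along $\gamma_k$; every point of $\gamma_k$ is therefore a classical singularity of $V_k$ in the sense of Definition~\ref{df:classicalsingularity}, directly contradicting the standing hypothesis ${\rm sing}_C\, V_k = \emptyset$ and concluding the proof. The principal obstacle is the uniform $C^{1,\alpha}$ extension of the graphical decomposition of $V_k$ all the way to the spine $L$ (not only on relatively compact subsets of $H_i \setminus L$): this requires a delicate covering of a neighborhood of $L$ by balls of shrinking radii and a repeated application of the Sheeting Theorem at each such scale, combined with the induction bookkeeping that guarantees that the multiplicities $q_i$ appearing at every scale lie within the range where Sheeting is already available. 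A secondary technical point---handled by the CMC maximum principle---is the need to verify that, when $q_i \geq 2$, the multiple sheets over a single $H_i$ do not spoil the classical-singularity structure at $\gamma_k$.
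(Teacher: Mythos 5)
Your high-level strategy (contradiction, Hausdorff convergence to $\mathbf{C}$, production of a classical singularity in $V_k$ for large $k$) matches the paper, and the preliminary steps — Allard compactness for the limit, constancy of multiplicity on each open half-hyperplane, inductive use of the Sheeting Theorem away from the spine — are all correct. The step that fails is the one you yourself flag as ``the principal obstacle'' and then wave past.

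The Sheeting Theorem cannot be made to give a uniform $C^{1,\alpha}$ graphical decomposition of $V_k$ over $H_i$ that \emph{extends with controlled gradient up to the spine $L$}. Its hypothesis is smallness of excess relative to a multiplicity-$q$ \emph{plane}, and this holds in balls of radius $\sim r$ centred at points of $H_i$ at distance $\gtrsim r$ from $L$; in a ball centred \emph{on} $L$ the varifold is close to the full cone $\mathbf{C}$, not to a plane, and the hypothesis fails. Re-applying the theorem at smaller and smaller scales produces $C^{1,\alpha}$ estimates whose constants hold only on interior subsets, degrading as the boundary of each ball is approached, so they do not patch into a single $C^{1,\alpha}$ hypersurface-with-boundary. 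In particular nothing in this scaling argument shows that the various sheets admit a $C^{1,\alpha}$ boundary curve at all, let alone that the sheets over \emph{different} $H_i$'s share a \emph{common} boundary $\gamma_k$. Both facts are exactly what the classical-singularity definition requires, and both are the delicate conclusions the proof must establish, not assume. Stability also enters your argument only implicitly through Schoen--Simon, yet without a direct use of the stability inequality the Minimum Distance Theorem is simply false (a sequence of Scherk surfaces converges to a pair of transverse planes, satisfying all your other hypotheses).

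What the paper does instead is substantially heavier. It sets up Simon's fine blow-up machinery adapted to this CMC setting: the $L^2$/nonconcentration estimates of Lemma~\ref{L2-est-1} (proved via monotonicity and a Hardt--Simon type argument), the ``no $\delta$-gaps'' lemma (proved by combining the stability inequality — to control $\int |A|^2$, hence $\int_{(L)_\tau}|A|$ via Cauchy--Schwarz — with a slicing/co-area argument that produces a contradictory lower bound on $\int_{(L)_\tau}|A|$ using the turning angle of $\mathbf{C}$), and then an iterated excess-decay lemma for the fine excess $Q_V(\mathbf{C})$ (the analogue of Lemmas~\ref{multi-scale}--\ref{intermediate}). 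Only after this decay is established does one conclude that near a density $\geq q+1/2$ point of $V_k$, rescalings converge to a cone of the form $\sum |P_i|$ with at least three distinct half-hyperplanes, which is what identifies a classical singularity. Your proposal goes directly from ``Sheeting in the bulk'' to ``classical singularity on the spine,'' omitting all of this; the gap is not a technical loose end but the entire content of the theorem.
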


\begin{thm}[Higher Regularity Theorem]
\label{thm:higher-reg}
Let $q$ be a positive integer and let $V \in {\mathcal S}_H$ be such that 
$$V \res \left(B_{1/2}^{n}(0) \times {\R} \right) = \sum_{j=1}^{q} |{\rm graph} \, u_{j}|$$
where $u_{j} \in C^{1, \alpha} \, (B_{1/2}^{n}(0); {\mathbb R})$ for some $\alpha \in (0, 1/2),$ and $u_{1} \leq u_{2} \leq \ldots \leq u_{q}$. Then 
$${\rm spt} \, \|V\| \cap \left(B_{1/2}^{n}(0) \times {\R} \right) = \cup_{j=1}^{\tilde{q}} {\rm graph} \, \tilde{u}_{j}$$
for some $\tilde{q}\leq q$ and distinct functions $\tilde{u}_{j}  \, : \, B_{1/2}(0) \to {\mathbb R}$ with $\tilde{u}_{1} \leq \tilde{u}_{2}  \leq \ldots \leq \tilde{u}_{\tilde{q}}$ where: 
\begin{itemize}
\item[(i)] $\tilde{u}_j \in C^2(B_{1/2}^{n}(0); {\mathbb R})$ and solves the CMC equation on $B_{1/2}^{n}(0)$  (and hence by elliptic regularity $\tilde{u}_{j} \in C^\infty(B_{1/2}^{n}(0); {\mathbb R})$) for each $j \in \{1, 2, \ldots, \tilde{q}\}$.
\item[(ii)] if $\tilde{q} \geq 2$, the graphs of $\tilde{u}_{j}$ touch at most in pairs, i.e. if there exist $x  \in B_{1/2}^{n}(0)$ and $i \in \{1, 2, ... \tilde{q}-1\}$ such that $\tilde{u}_i(x) = \tilde{u}_{i+1}(x)$ then $D\tilde{u}_i(x) = D\tilde{u}_{i+1}(x)$ and $\tilde{u}_j (x) \neq \tilde{u}_i(x)$ for all $j \in \{1, 2, ... \tilde{q}\} \setminus \{i, i+1\}$.
\end{itemize}
\end{thm}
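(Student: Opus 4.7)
The proof proceeds by induction on $q$. In the base case $q = 1$, the graph of $u_1$ lies entirely in $\Reg{V}$, so by hypothesis 4 of Theorem \ref{thm:mainregularityrestated} the function $u_1$ weakly solves the CMC equation $\operatorname{div}(Du_1/\sqrt{1+|Du_1|^2}) = \pm h$ (with sign determined by the orientation $\hat\nu$), and standard quasilinear elliptic regularity yields $u_1 \in C^\infty(B_{1/2}^n(0))$. For the inductive step, assuming the result for all positive integers less than $q$, I would collapse repeated consecutive values among $u_1 \leq \ldots \leq u_q$ to obtain ordered distinct functions $\tilde u_1 \leq \ldots \leq \tilde u_{\tilde q}$ with $\tilde q \leq q$ (each carrying a positive-integer multiplicity), and set $T := \bigcup_i \{\tilde u_i = \tilde u_{i+1}\}$. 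By hypothesis 3 of Theorem \ref{thm:mainregularityrestated} and Remark \ref{oss:zeromeasuretouchingsing}, $\mathcal{H}^n(T) = 0$, so $B_{1/2}^n(0) \setminus T$ is open and dense. There each $\tilde u_j$ describes a portion of $\Reg{V}$, and by hypothesis 4 each weakly solves the CMC equation with constant right-hand side; hence $\tilde u_j \in C^\infty(B_{1/2}^n(0) \setminus T)$ by classical elliptic regularity.

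A preliminary technical step is to improve the H\"older exponent $\alpha$ (delivered by the Sheeting Theorem at some value $< 1/2$) to some $\alpha \geq 1/2$, as required for the $W^{2,2}$ estimate below. The initial exponent is limited by the lower-order $\sqrt{\|H\|_{L^p}}$ term built into the excess $\hat E$ that arises from the monotonicity error when $H \neq 0$. Now that the mean curvature is known pointwise to be a fixed constant on each sheet, I would rerun the excess-decay iteration treating the constant-mean-curvature contribution as a genuine quadratic correction of the base hyperplane rather than as an $L^p$ perturbation; this removes the $\sqrt{\|H\|_{L^p}}$ loss and lifts $\alpha$ past $1/2$.

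The main obstacle is extending the PDE across $T$, since hypothesis 3 only provides $\mathcal{H}^n(T) = 0$ and not $\mathcal{H}^{n-1}(T) = 0$, precluding a direct cutoff/capacity argument. Working locally near a point of $T$, I would focus on a pair of adjacent touching sheets $\tilde u_i \leq \tilde u_{i+1}$. By the maximum-principle discussion of Remark \ref{oss:maxprincsmooth}, they must carry opposite orientations at the touching point, so on $B_{1/2}^n(0) \setminus T$ one has $\tilde u_i$ solving the CMC equation with constant $-|h|$ and $\tilde u_{i+1}$ with $+|h|$. Setting $U = (\tilde u_i + \tilde u_{i+1})/2$ and $v = (\tilde u_{i+1} - \tilde u_i)/2 \geq 0$, the pair $(U, v)$ satisfies a quasilinear elliptic system whose equation for $v$ has source $|h|$ modulo smooth dependence on $(U, DU, v, Dv)$. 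Using $\alpha \geq 1/2$ I would prove, by Caccioppoli-type testing against cutoffs of $v$, that $v, U \in W^{2,2}_{\text{loc}}$ (the ``semi-difference is $W^{2,2}$'' step). Since $v \geq 0$ attains its minimum along $T$, it follows that $v = |Dv| = 0$ $\mathcal{H}^n$-a.e.\ on $T$, so the weak formulation of the equation for $v$ extends across $T$ with no contribution from $T$; a Schauder bootstrap then delivers $v, U \in C^\infty$, whence $\tilde u_i, \tilde u_{i+1} \in C^\infty(B_{1/2}^n(0))$.

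Finally, the pairwise-touching conclusion (ii) follows from the same maximum-principle reasoning: if three consecutive sheets were to meet at an interior point, then at least two of the resulting smooth CMC graphs would be ordered, satisfy the same CMC equation, and touch; by the strong maximum principle applied to their non-negative difference they would have to coincide on an open set, contradicting distinctness.
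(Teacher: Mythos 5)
Your skeleton matches the paper's: improve the H\"older exponent to $\alpha \geq 1/2$, establish $W^{2,2}_{\rm loc}$ for the semi-difference, extend the PDE across the touching set $T$ using $\mathcal{H}^n(T)=0$ plus the vanishing of $v$ and $Dv$ on $T$, and rule out triple touching via Hopf. However, there is one genuine gap and one more minor omission.

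The gap is in the H\"older improvement step. You propose to "rerun the excess-decay iteration treating the constant-mean-curvature contribution as a genuine quadratic correction of the base hyperplane rather than as an $L^p$ perturbation" and claim this removes the $\sqrt{\|H\|_{L^p}}$ loss and "lifts $\alpha$ past $1/2$." This does not work, and the paper explicitly says so: the $\sqrt{\|H\|_{L^p}}$ lower-order term in $\hat{E}$ is forced by the \emph{error term in the monotonicity formula}, which is present whenever $H \neq 0$ even if $H$ is a known constant; the paper states that a second run of the varifold-level excess-decay argument with $H$ constant "is still $<\frac{1}{2}$." The actual mechanism in the paper (Section~\ref{higherholderexponent}) is entirely different and works at the PDE level on the graph functions: a Schauder-type a~priori estimate for the semi-difference $v$ on $B\setminus T$ via L.~Simon's scaling/blow-up contradiction argument (Lemma~\ref{lem:SimonslemmaforSchauder}, Proposition~\ref{Prop:Schauderforsemidifference}), used to prove De~Giorgi-type decays for $v$ (Proposition~\ref{Prop:DeGiorgitypedecayforthesemidifference}) and independently for the \emph{weighted} average $q_1u_1 + q_2u_2$ (Proposition~\ref{Prop:DeGiorgitypedecayfortheweightedaverage}). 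The blow-up lemma in turn relies on a delicate step showing that a $C^{1,\alpha}$ function that is harmonic wherever it is nonzero is harmonic everywhere (via a Bochner argument with a truncation $\gamma_\delta$). None of this appears in your proposal, and without it the $W^{2,2}$ step cannot get off the ground, since the Caccioppoli estimate (Lemma~\ref{lem:boundsonsecondder}) needs the integrability of $\mathrm{dist}(x,T)^{2(2\alpha-1)}$, which requires $\alpha \geq 1/2$.

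A second, smaller omission: in both the ``collapse to distinct $\tilde{u}_j$'' step and the PDE extension step, you neglect the multiplicity structure. The paper's decomposition into distinct sheets (Lemma~\ref{lem:Hopf}) is not a naive deduplication of the $u_j$'s: because of multiplicity jumps across $T$ (as in Remark~\ref{oss:jumpsattouchingsing}, Figure~\ref{fig:jumpsattouching}), the intermediate $u_j$'s can switch which sheet of $\spt{V}$ they describe, so the well-definedness of the $\tilde{u}_j$'s and the constancy of $\tilde{q}$ on a connected domain requires the Hopf argument plus the inductive assumption (H3). Similarly, the PDE extension in Section~\ref{extensionPDEv} does not simply extend the equation for $v$ across $T$; it extends the \emph{multiplicity-weighted} first variation identity involving $q_1(x)$ (which is $0$ on $T$ since $Dv=0$ there), and then subtracts the extended equation~(\ref{eq:PDevwithmultiplicity}) from the global first variation formula~(\ref{eq:firstvarformulaglobal}) to isolate the CMC equation for $u_2$ alone. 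Your ``Schauder bootstrap on $v$ and $U$'' skips this bookkeeping, which is what handles the jump in $q_1$ across $T$.
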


\noindent Thus the Higher Regularity Theorem says that the touching singularities of $V$ are always two-fold and they are in $\Greg{V}$.

\medskip

The induction scheme for the proofs of the above theorems is as follows. Let $q \geq 2$ be an integer, and assume the following:

\noindent
{\sc induction hypotheses:}\\
\noindent
(H1) Theorem~\ref{thm:sheeting} holds with any $q^{\prime} \in \{1, \ldots, q-1\}$ in place of $q$.\\
\noindent
(H2) Theorem~\ref{thm:min-dist} holds whenever $\Theta \, (\|{\mathbf C}\|, 0) \in \{3/2, \ldots, q-1/2, q\}.$  \\
\noindent
(H3) Theorem~\ref{thm:higher-reg} holds with any $q^{\prime} \in \{1, \ldots, q-1\}$ in place of $q$.\\

Completion of induction is achieved by carrying out, assuming (H1), (H2), (H3), the following four steps in the order they are listed:
\begin{itemize}
\item[(i)] Prove Theorem~\ref{thm:sheeting} (Sections \ref{coarseandlinear} and \ref{sketchsheeting});
\item[(ii)] prove Theorem~\ref{thm:min-dist} in case $\Theta \, (\|{\mathbf C}\|, 0) = q+1/2$ (Section \ref{min-dist-sec});
\item[(iii)]  prove Theorem~\ref{thm:min-dist} in case $\Theta \, (\|{\mathbf C}\|, 0)  = q+1$  (Section \ref{min-dist-sec});
\item[(iv)]  prove Theorem~\ref{thm:higher-reg} (Section \ref{higherreg}).
 \end{itemize}
  
The base case $q=1$ of Theorem~\ref{thm:sheeting} is a direct consequence of Allard's regularity theorem \cite{Allard}, and  the case $\Theta \, (\|{\mathbf C}\|, 0) = 3/2$ of Theorem~\ref{thm:min-dist} follows from a theorem of Simon \cite[Theorem 4]{Simon2}\footnote{Incidentally, neither of these requires the stability hypothesis, and they both hold for stationary integral varifolds of arbitrary co-dimension.}. We wish to point out that, within step (i), there is a large ``substep'' (Section \ref{coarseandlinear}) that is still part of the inductive scheme and is needed to develop the necessary ``linear theory'' for the Sheeting Theorem.

The case $\Theta \, (\|{\mathbf C}\|, 0) = 2$ of Theorem~\ref{thm:min-dist} follows by taking $q=1$ in the argument for step (iii) above, and using the case $q=1$ of Theorem~\ref{thm:sheeting} and the case $\Theta \, (\|{\mathbf C}\|, 0) = 3/2$ of Theorem~\ref{thm:min-dist} in place of the induction hypotheses (H1), (H2) respectively. In the case $q=1$ Theorem~\ref{thm:higher-reg} is void and just needs to be replaced with the consequence of Allard's regularity theorem and the CMC assumption to obtain $C^2$ regularity and the validity of the CMC equation (i.e.~it becomes the standard higher regularity for the base case $q=1$ of Theorem~\ref{thm:sheeting}).

\medskip

The three theorems above will be combined with the following  elementary proposition, used at a number of places in the induction argument (precisely at a number of places in the proof of the sheeting theorem and in the proof of the minimum distance theorem), in order to complete the proof of Theorem \ref{thm:mainregularity}.

\begin{Prop}
\label{Prop:elementaryconsequence}
Let $V \in {\mathcal S}_H(\Omega)$, where $\Omega$ is an open subset of $B_{2}^{n+1}(0)$, and for $2\leq q \in \N$ let $S_{q} = \{Z \, : \, \Theta \, (\|V\|, Z) \geq q\}$. Assume that $(H1)$, $(H2)$, $(H3)$ are satisfied and assume further that $S_{q} \cap \Omega = \emptyset$: then 

(i) $(\text{sing} V \setminus \text{sing}_T V) \cap \Omega = \emptyset$ if $n \leq 6,$ $(\text{sing} V \setminus \text{sing}_T V) \cap \Omega$ is discrete if $n=7$ and $\text{dim}_{\mathcal H} \,\left( \left(\text{sing} V \setminus \text{sing}_T V\right)  \cap \Omega\right)\leq n-7$ for $n \geq 8$. 

(ii) $\text{sing}_T V \cap \Omega$ is locally contained in a smooth submanifold of dimension $n-1$.

\end{Prop}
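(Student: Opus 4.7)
The plan is to combine the three inductive theorems (H1), (H2), (H3) with a Federer-style dimension-reduction argument, guided by a case analysis of tangent cones. The hypothesis $S_q \cap \Omega = \emptyset$ together with upper semi-continuity of density (valid under hypothesis 1 via the monotonicity formula since $p > n$) is precisely what makes each of the three inductive theorems applicable at every relevant multiplicity level $q' \leq q - 1$.

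First, I would set up tangent cone analysis. At any $X_0 \in \spt{V} \cap \Omega$, tangent varifolds exist by the monotonicity formula and standard compactness; they are stationary integral cones because $\|H\|_{L^p}$ rescales with positive power $1 - \tfrac{n}{p}$ under blowup; and they inherit the structural hypotheses 2 and 3 as well as the stability hypothesis 5. Moreover, $\Theta(\|\mathbf{C}\|, 0) = \Theta(\|V\|, X_0) < q$ for every tangent cone $\mathbf{C}$ at any such $X_0$.

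Second, I would classify $\mathbf{C}$ into three cases. \emph{Case A}: $\mathbf{C} = q' |P|$ is a multiplicity-$q'$ hyperplane with $q' \leq q-1$. Then at a sufficiently small scale around $X_0$, the hypotheses of (H1) with $q'$ are met, yielding a decomposition of $V$ near $X_0$ as a sum of $q'$ $C^{1,\alpha}$ graphs; applying (H3) with $q'$ upgrades this to a union of $\tilde q \leq q'$ smooth CMC graphs touching at most pairwise, so $X_0 \in \Greg{V}$. \emph{Case B}: $\spt{\mathbf{C}}$ is the union of three or more $n$-dimensional half-hyperplanes meeting along an $(n-1)$-subspace. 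Then $\Theta(\|\mathbf{C}\|,0) \in \{3/2, 2, \dots, q-1/2\} \subset \{3/2, \dots, q-1/2, q\}$, so (H2) gives a positive Hausdorff-distance lower bound between $\mathbf{C}$ and any member of $\mathcal{S}_H$ at unit scale, contradicting tangent cone convergence. \emph{Case C}: $\mathbf{C}$ is of neither type; this is the input for dimension reduction.

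Third, I would carry out Federer dimension reduction on $\Sigma := (\text{sing}\, V \setminus \text{sing}_T V) \cap \Omega$. For any $X_0 \in \Sigma$, Cases A and B already exclude planar tangent cones of multiplicity $\leq q-1$ and classical-type cones of three or more half-hyperplanes. The standard iteration—passing to spine-quotient cones in lower ambient dimensions while maintaining admissibility of the three theorems at reduced multiplicities and half-integer densities—terminates only when the iterated tangent cone becomes a multiplicity-$1$ hyperplane, which by Case A puts $X_0 \in \Greg{V}$, contradicting $X_0 \in \Sigma$. This yields $\dim_{\mathcal{H}}(\Sigma) \leq n - 7$, $\Sigma = \emptyset$ for $n \leq 6$, and $\Sigma$ discrete for $n = 7$, giving (i). For (ii), at each $X_0 \in \text{sing}_T V \cap \Omega$ Case A with $q'$ equal to the integer density at $X_0$ and (H3) give that $\spt{V}$ near $X_0$ is the union of two smooth CMC graphs $u_1 \leq u_2$ touching at $X_0$; Remark \ref{oss:maxprincsmooth} then produces via the maximum principle an index $\ell$ with $D^2_{\ell\ell}(u_2 - u_1)(X_0) \neq 0$, and the implicit function theorem locally contains the touching set in the smooth $(n-1)$-submanifold $\{D_\ell(u_2 - u_1) = 0\}$.

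The main obstacle is the careful execution of the Federer dimension reduction in Case C: one must verify that iterated tangent cones remain in a class to which (the lower-dimensional specialisations of) (H1), (H2), and (H3) apply, that densities and spine dimensions strictly improve at each reduction step, and that in the terminal ambient dimensions Cases A and B really do exhaust all possibilities, so as to produce the genuine contradiction that closes the argument.
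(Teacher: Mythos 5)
Your proposal matches the paper's own proof. For (i), the paper simply cites the standard tangent cone analysis via Federer stratification and Simons' stability theorem (from \cite{WicAnnals}), which your Cases A--C make explicit (planar cones handled by (H1)+(H3), cones of half-hyperplanes ruled out by (H2), the remainder by dimension reduction); for (ii), the paper applies exactly the maximum principle/Hopf lemma and implicit function theorem argument to the two smooth CMC sheets furnished implicitly by (H1) and (H3), just as you describe.
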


\begin{proof}[Proof of Proposition \ref{Prop:elementaryconsequence}]
\textit{Statement (i)}. This follows using a standard tangent cone analysis, by means of Federer stratification theorem and Simons' stability result, see \cite[Section 6 Remarks 2 and 3]{WicAnnals}.

\noindent \textit{Statement (ii)}.Given two $C^2$ functions $u_1 \leq u_2$ both satisfying the CMC equation (with the same modulus for the mean curvature) and such that the set $T=\{u_1=u_2\}$ has vanishing $\mathcal{H}^n$-measure and $u_1$ and $u_2$ are tangential at all points in $T$, then the mean curvature vector of $\text{graph}u_1$ points downwards and the mean curvature vector of $\text{graph}u_2$ points upwards. In particular the CMC equations read 
$$\text{div}\left(\frac{Du_1}{\sqrt{1+|Du_1|^2}}\right)=-h \,,\,\, \text{div}\left(\frac{Du_2}{\sqrt{1+|Du_2|^2}}\right)=h$$
for $h>0$. This follows from the maximum principle, more precisely from Hopf boundary point lemma applied to the difference\footnote{In Lemma \ref{lem:Hopf} we will be concerned with an instance of this fact without the assumption that the functions $u_1$ and $u_2$ are globaly $C^2$, so we do not give the details of the argument here as they will appear in Section \ref{higherreg}.} $u_1-u_2$. We want to show first af all that the set $\{D(u_1-u_2)=0\}$ is contained in a submanifold of dimension $\leq n-1$. By the condition of opposite signs for the mean curvature, we get that for any $x \in T$ there exists $i  \in \{1, \ldots, n\}$ such that $D^2_{ii} (u_1 -u_2)$ is non-zero at $x$ and therefore the implicit function theorem gives that $D_i(u_1 - u_2)=0$ is locally around $x$ a smooth submanifold of dimension $\leq n-1$. In particular we have that $\text{sing}_T V$ is contained, locally around any point, in a smooth submanifold of dimension $\leq n-1$. 

As a side remark, note that at each point in $T$ we necessarily have an index $i$ for which $D^2_{ii} (u_1 -u_2)$ is non-zero, and in case we have more than one index (say $L$ of them) we can apply the implicit function theorem $L$ times and obtain that $T$ is contained in a submanifold of dimension $n-L$.

\end{proof}

In the proof of our Sheeting Theorem \ref{thm:sheeting} we will need to make use of the following adaptation of \cite[Theorem 2]{SS}. It is through the application of this result at various places in the argument that the stability assumption predominantly enters our proof.

\begin{thm}[Schoen-Simon Sheeting Theorem with codimension $7$ singular set]
\label{thm:SS}
If in Theorem~\ref{thm:sheeting} we assume, in place of the hypotheses 1, 2, 3 (of Theorem~\ref{thm:mainregularityrestated}),  that 

(i) $\text{sing} \, V \setminus \text{sing}_T\,  V  = \emptyset$ in case $n < 6$, $\text{sing} V \setminus \text{sing}_T \, V$ is discrete in case $n=7$ or 
${\rm dim}_{\mathcal H} \, (\text{sing} V \setminus \text{sing}_T V) \leq n-7$ in case $n\geq 8$  

and that  

(ii)  $\text{sing}_T\,  V \subset \Greg{V},$

and keep all other assumptions, then the conclusion of Theorem~\ref{thm:sheeting}  holds.
\end{thm}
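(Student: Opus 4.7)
The plan is to adapt Schoen--Simon's argument from \cite[Theorem 2]{SS} directly to the CMC setting, exploiting the fact that hypotheses (i) and (ii) here replace their original small-singular-set assumption with an almost identical one (only with $\text{sing}_T V \subset \Greg V$ in place of true embeddedness on the $\mathcal{H}^{n-7}$-small complement). The crucial preliminary step is to pass from the volume-preserving stability of assumption 5 of Theorem~\ref{thm:mainregularityrestated} to the unconstrained strong stability of the functional $J = \mathscr{a} + h_0 \mathscr{vol}$ on $\Greg V$. This is given locally by Remark~\ref{oss:weakimpliesstrongforJ}: around every point there is a ball in which $\int |A|^2 \zeta^2 \, d\mathcal{H}^n \leq \int |\nabla \zeta|^2 \, d\mathcal{H}^n$ holds for \emph{all} $\zeta \in C^1_c$ with support in $\Greg V$, which is exactly the type of stability inequality Schoen--Simon's estimates are built on.

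The next task is to import the Schoen--Simon machinery to the present situation. First, because $\text{sing}_T V \subset \Greg V$ is, by Remark~\ref{oss:maxprincsmooth}, locally contained in an $(n-1)$-submanifold, an $\mathcal{H}^{n-1}$-capacity cutoff argument (using $n \geq 2$) allows one to pass from test functions supported in $\Reg V$ to test functions supported in $\Greg V \cup (\mathcal{U} \setminus \text{sing} V)$; combined with the codimension-7 hypothesis on $\text{sing} V \setminus \text{sing}_T V$, this yields the Schoen--Simon stability inequality on all of $\spt V$ away from a set that is harmless for integration. One then runs their Caccioppoli-type estimates, adjusted for the fact that now the first variation picks up a term $\int \vec{H}\cdot X \, d\|V\|$; this extra contribution is controlled by $\|H\|_{L^p}$ via H\"older's inequality, producing estimates of exactly the form needed provided the excess $\hat{E}$ is enlarged by a lower-order term of order $\|H\|_{L^p}^{1/p}$ (or more precisely $\sqrt{\|H\|_{L^p}}$, as in the statement of Theorem~\ref{thm:sheeting}). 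The subsequent tilt-excess decay, Simon-type blow-up and iteration then proceed as in \cite{SS}, with the blow-up limits being harmonic because the lower-order $H$ contribution vanishes under the blow-up rescaling; the exponent $\alpha < \tfrac{1}{2}(1-n/p)$ arises precisely from the scaling of this lower-order term in the iteration.

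The main technical obstacle, as always in carrying Schoen--Simon's argument across to higher-multiplicity varifolds in the present setting, is handling the interaction between the $L^p$ mean curvature term and the monotonicity-based a priori estimates needed at higher multiplicity (a difficulty absent in the original minimal, unit-multiplicity setting but essentially resolved by the Hardt--Simon estimates that will be recalled in Section~\ref{HardtSimon}). A secondary subtlety is that the touching singularities cannot be excised in the usual sense since they are not isolated; however, because they lie in $\Greg V$ and are locally $(n-1)$-dimensional, they are negligible for all the integrals that appear in the Caccioppoli inequalities, and the two tangential smooth sheets can each be handled separately when testing stability. Once all these ingredients are assembled, the conclusion of Theorem~\ref{thm:sheeting} follows verbatim, yielding the $C^{1,\alpha}$ sheet decomposition with the stated estimate.
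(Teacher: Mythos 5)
Your general strategy (adapt Schoen--Simon's Caccioppoli/excess-decay machinery, upgrade to strong stability for $J$ via Remark~\ref{oss:weakimpliesstrongforJ}, treat the $L^{p}$ mean curvature contribution as a controlled lower-order term) is the right one, and it matches the paper's approach in broad strokes. However there are two substantive problems with your proposal.

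First, your ``$\mathcal{H}^{n-1}$-capacity cutoff argument'' to pass test functions across $\text{sing}_T V$ cannot work. By Remark~\ref{oss:maxprincsmooth}, $\text{sing}_T V$ is locally contained in, and can genuinely fill out, an $(n-1)$-dimensional submanifold of the hypersurface; it therefore has positive locally finite $\mathcal{H}^{n-1}$ measure. A standard cutoff/capacity argument for the stability inequality on an $n$-dimensional hypersurface removes sets of $\mathcal{H}^{n-2}$ measure zero, not sets of codimension one. This is precisely the obstruction the paper flags explicitly: the touching set is too large to be discarded the way the (codimension~$\geq 7$) set $\text{sing}\,V \setminus \text{sing}_T V$ is. Moreover the step is not even needed, since hypothesis~5 already posits stability for test functions supported away only from $\text{sing}\,V \setminus \Greg V$ -- the stability is assumed on the abstract immersion $\Greg V$, not merely on $\Reg V$.

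Second, you do not supply the actual device that handles the $(n-1)$-dimensional touching set. The paper's key observation is that the specific Schoen--Simon test function $\varphi(1-(\nu\cdot\nu_0)^2)^{1/2}$ depends on $\nu$ only through $(\nu\cdot\nu_0)^2$, which is invariant under $\nu\mapsto -\nu$; hence it is unambiguously defined and locally Lipschitz on all of $\Greg V$ (the abstract immersion), including at touching singularities where $\nu$ is only defined up to sign. It then admits a non-negative ambient Lipschitz extension by Kirszbraun, the stability inequality extends from $C^1$ to Lipschitz test functions by approximation, and the codimension-$7$ cutoff removes $\text{sing}\,V \setminus \text{sing}_T V$. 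None of this appears in your write-up; your replacement -- ``negligible for all the integrals that appear in the Caccioppoli inequalities'' -- is too weak, since the touching set must be traversed by the test functions, not merely ignored in $n$-dimensional integrals. Relatedly, the paper also has to handle a second, non-ambient use of \cite[Lemma 1]{SS}, with a test function supported on a single graphical sheet $G_i$ of a partial decomposition; there the required strong stability is supplied by the general fact that CMC graphs are strongly stable (Appendix~\ref{stabilitygraphs}), and you do not address this case at all. Finally, the reference to Hardt--Simon estimates of Section~\ref{HardtSimon} is misplaced: those estimates enter the inductive proof of Theorem~\ref{thm:sheeting}, not the proof of Theorem~\ref{thm:SS}, which proceeds purely along Schoen--Simon lines once the stability inequality is available.
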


\begin{proof}[proof of Theorem \ref{thm:SS}]

Let $W = |\Reg{V}|,$   i.e.\ the multiplicity 1 varifold associated with $\Reg{V}.$ Then of course ${\rm spt} \, \|W\| = {\rm spt} \, \|V\|.$ It suffices to show that under the hypotheses of Theorem~\ref{thm:SS} that 

$$W \res \left(B_{1/2}^{n}(0) \times {\R} \right) = \sum_{j=1}^{\tilde{q}} |{\rm graph} \, \tilde{u}_{j}|$$
where $\widetilde{q} \leq q$, $\tilde{u}_{j} \in C^{1, \alpha} \, (B_{1/2}^{n}(0); {\mathbb R})$ and $\tilde{u}_{1} \leq \tilde{u}_{2} \leq \ldots \leq \tilde{u}_{\tilde{q}}$, with
$$\|\tilde{u}_{j}\|^2_{C^{1, \alpha}(B_{1/2}^{n}(0))} \leq C \left(\int_{B_{1}^{n}(0) \times {\mathbb R}} |x^{n+1}|^{2} \, d\|V\|(X) + \frac{1}{\eps} \left(\int_{B_{1}^{n}(0) \times {\R} } |h|^{p} \, d\|V\|(X) \right)^{\frac{1}{p}}\right)$$ 
for some fixed constants $\alpha  = \alpha(n,p, q, H) \in (0, 1/2),$ $C = C(n, p, q, H) \in (0, \infty)$ and each $j=1, 2, \ldots, q$.

Since $\Greg{W}=\Greg{V}$ is a smooth CMC immersion, the first variation formula $\int_W \text{div}_W X d\mathcal{H}^n \res W = -\int_W \vec{H}|_W \cdot X d\mathcal{H}^n \res W$ holds for every $X\in C^1_c(B_2^{n+1}(0)\setminus (\text{sing} V \setminus \text{sing}_T V);\R^{n+1})$, where $\vec{H}=H  \hat{\nu}$ (for a constant $H \in \R$) is the (classical) mean curvature of $\Reg{V}$.
By (i) and (ii) $\text{sing}V \setminus \Greg{V}$ ($=\text{sing} V \setminus \text{sing}_T V$) has codimension $7$ or higher, therefore a standard cutoff argument (which requires only $\mathcal{H}^{n-1}\left(\text{sing} V \setminus \text{sing}_T V\right)=0$ and the Euclidean volume growth for $W$) allows to check that the first variation formula $\int_W \text{div}_W X d\mathcal{H}^n \res W = -\int_W \vec{H}|_W \cdot X d\mathcal{H}^n \res W$ holds for every $X\in C^1_c(B_2^{n+1}(0);\R^{n+1})$, where $\vec{H}$ is the (classical) mean curvature of $\Reg{V}$.

The proof of the above  can then be obtained by following the arguments in \cite{SS} very closely. We cannot obtain Theorem \ref{thm:SS} directly from \cite{SS} because the singular set would include also 
$\text{sing}_T V$, which can however have dimension $(n-1)$ and this is not allowed by the assumptions in \cite{SS}. However, the proof in \cite{SS} carries over to our setting.
The approximate graph decomposition constructed in \cite{SS} will cover $\text{sing}_T V$ as well. In particular the (smooth) ordered graphs of the approximate decomposition will be weakly ordered, i.e.~they can touch tangentially but cannot cross. We will now describe how to adapt the arguments from \cite{SS}.

The first step is to obtain the analogue of \cite[Lemma 1]{SS} (which, in our case, has $\mu_1=0$) for an arbitrary $\varphi \in C^1_c(B^n_1 \times \R)$. It is enough to obtain this for $\varphi\geq 0$, since only this case will be used later. For the proof of the inequality we need, following \cite[proof of Lemma 1]{SS}, to use the strong stability inequality with the test function $\varphi(1-(\nu\cdot\nu_0)^2)^{1/2}$. In order to justify its use, we note that $\nu$ is defined up to sign on $\Greg{W}\cap \text{sing}_T \, V$ and thus $\varphi(1-(\nu\cdot\nu_0)^2)^{1/2}$ is well-defined on $\Greg{W}$, moreover it is locally Lipschitz on $\Greg{W}$ because $|\nabla\,(1-(\nu\cdot\nu_0)^2)^{1/2}|\leq |A|$; therefore it admits an ambient non-negative Lipschitz extension by Kirszbraun theorem; the strong stability inequality extends from $C^1$ functions compactly supported away from $\text{sing} V \setminus \text{sing}_T V$ to $C^1$ functions with arbitrary compact support in view of the assumption that $\text{sing} V \setminus \text{sing}_T V$ has codimension $7$ (standard cutoff argument, as in \cite{SS}); moreover the strong stability inequality extends from $C^1$ to Lipschitz compactly supported functions, by a simple approximation. The use of the strong stability inequality with the test function $\varphi(1-(\nu\cdot\nu_0)^2)^{1/2}$ leads to \cite[(2.1)]{SS} and the rest of \cite[proof of Lemma 1]{SS} can be carried through with routine modifications considering  the abstract immersed hypersurface and pulling-back the functions on it.

The arguments in \cite[Sections 3, 4]{SS} can now be followed to conclude the proof: we can construct a partial graph decomposition (where the partial CMC graphs are allowed to touch) and then show that the ``excess'' decays, so that all of $\spt{W}$ must be covered by the graph decomposition in the end. We stress that the arguments for the excess decay require (compare \cite[proof of Lemma 3]{SS}):

(i) the use of (the analogue of) \cite[Lemma 1]{SS} with ambient non-negative compactly supported test functions (of the form $\zeta [\log\left(2^{1/\eps}\lambda^{1}\overline{\varphi}_0\right)]_+$, with notations as \cite[p. 763]{SS});

(ii) the use of (the analogue of) \cite[Lemma 1]{SS} with a special non-ambient test function, that is compactly supported on a single sheet $G_i$ of a (previously constructed) partial graph decomposition (of the form $\zeta\psi_i$, with the notations of \cite[p. 763]{SS}).

The first case has been covered in the previous discussion. In the second case, we can use Lemma 1 straight from \cite{SS}, since we are dealing with a smooth CMC graph on some connected open set $\Omega$ and \cite[Lemma 1]{SS} only needs to assume the validity of the strong stability inequality on $G_i$, which is true for arbitrary CMC graphs, as explained in Appendix \ref{stabilitygraphs} below; so the second case is also covered.

The arguments described so far lead to the decay result at the origin (for the fine excess $E$) given by \cite[Lemma 4]{SS}. We can ensure that the $L^2$-excess $\hat{E}$ is uniformly small in $B_2(0)$ and in $B_1(X)$ for any choice of $X \in B_{\frac{1}{2}}(0)$. The functional $J$ that we are addressing does not satisfy assumption \cite[(1.6)]{SS}; on the other hand, we observe that, upon pushing-forward the given varifold $V$ by a translation $T_X$ (so that an arbitrary point $X$ becomes $0$) and restricting to the unit ball, we have that the translated varifold $(T_X)_{\sharp}\,V \res B_1(0)$ fulfils the same assumptions for the same functional $J$ (indeed, the notions of being CMC and stable are independent of the choice of coordinates). Therefore we obtain the decay result at an arbitrary $X \in B_{\frac{1}{2}}(0)$ and can complete the proof following \cite[p. 775]{SS}.
\end{proof}

\begin{oss}[\textit{some differences with \cite{WicAnnals}}] In our Sheeting Theorem we obtain the same qualitative $C^{1,\alpha}$ conclusion as in \cite{WicAnnals}, however the geometry of the problem allows the graphs to touch tangentially without coinciding identically (unlike \cite{WicAnnals}, where if two graphs touch at a point they must agree globally): the touching can clearly happen as shown by the example of two spheres with equal radii touching at a point, or two half-cylinders touching along an affine subspace. Given two tangential $C^{1,\alpha}$ graphs such that the varifold associated to the sum of the two graphs taken each with multiplicity $1$ is in $\mathcal{S}_H$, it is not straightforward to prove that the two graphs are $C^2$ and separately CMC. A second aspect to keep in mind is that we are also allowing multiplicity, so we cannot expect that each graph obtained in Theorem \ref{thm:sheeting} is separately CMC, (unlike \cite{WicAnnals} where each sheet solves the minimal surface equation separately), see Remark \ref{oss:jumpsattouchingsing}: in that example we have three graphs, the bottom and top ones are smooth half-circles but the middle one is only $C^{1,1}$ and agrees partly with the bottom graph and partly with the top one. It is for these reasons that we need to implement, in the induction procedure, the Higher Regularity Theorem, where we look at the support of the varifold and show that it admits a graph decomposition in which each sheet is separately a smooth CMC hypersurface and sheets can touch only in pairs (in the example of Remark \ref{oss:jumpsattouchingsing} we would be neglecting the middle graph from the previous decomposition, as it is redundant for the description of the support). We wish to remark that the assumption that the touching set has zero measure (assumption 3 in Theorem \ref{thm:mainregularity}) plays a crucial role in the proof of Theorem \ref{thm:higher-reg} but is not needed directly in the proof of Theorem \ref{thm:sheeting}, where it is only used inductively by assuming (H3). Compare Remark \ref{oss:withoutzeromeasureC11only} at this stage. We also point out that in the proof of Theorem \ref{thm:sheeting} the inductive assumption (H3) and Proposition \ref{Prop:elementaryconsequence} allow the use of the stability assumption around points of multiplicity $\leq Q-1$, which permits the use of a version of \cite{SS}, i.e. Theorem \ref{thm:SS}.

A further difference with \cite{WicAnnals} lies in the fact that the Sheeting Theorem \ref{thm:sheeting} is obtained with $\alpha <\frac{1}{2}$ (this cannot be changed by a technical improvement, it is intrinsic in the strategy followed). In \cite{WicAnnals} the actual value of $\alpha$ is irrelevant (graphs do not touch and solve separately the minimal surface equation, therefore a $C^{1,\alpha}$ graph is automatically smooth by elliptic regularity). In the present work, at the stage of the graph decomposition for the support of the varifold, i.e.~in the Higher Regularity Theorem \ref{thm:higher-reg}, the fact two $C^{1,\alpha}$ graphs might touch along a $\mathcal{H}^n$-null set does not permit to use the CMC equation separately for each graph and therefore improving the regularity from $C^{1,\alpha}$ with $\alpha<\frac{1}{2}$ to $C^\infty$ requires a lot of work: the main intermediate steps consist in improving the H\"older exponent first to $\alpha\in [\frac{1}{2},1)$ and then obtaining $W^{2,2}$ estimates. These steps are detailed in Section \ref{higherreg}.

In view of the fact that the main thread in Sections \ref{coarseandlinear}, \ref{sketchsheeting} and \ref{min-dist-sec} follows the arguments in \cite{WicAnnals}, we will not give the full details of the proofs but will only describe the main steps and point out the necessary modifications that need to be implemented in our situation.  
\end{oss}

\begin{oss}
\label{oss:C11estimate} 
We point out that, assuming inductively the validity of (H1), (H2), (H3), we are able to improve the regularity of the functions $u_j$ appearing in (H1): they are in fact $C^{1,1}$ (and in general not better), with norm controlled by the $L^2$-excess $\hat{E}^2_{V,\eps,1}$, i.e.~the quantity appearing on the right-hand side of the $C^{1,\alpha}$-estimate in Theorem \ref{thm:sheeting}. On a first reading the reader may prefer to skip the proof, presented in the following lines. To see how to get this improved regularity, we point out that (H1) and (H3) deliver two sets of functions, respectively $u^1\leq ... \leq u^q$ that are $C^{1,\alpha}$ with small gradients and  $\tilde{u}^1\leq ... \leq \tilde{u}^{\tilde{q}}$ that are smooth and solve the CMC equation and $\spt{V \res (B_1\times \R)}$ is given by the union of the graphs of the functions $\{u^j\}_{j=1}^q$ and also by the union of the graphs of the functions $\{\tilde{u}^\ell\}_{\ell=1}^{\tilde{q}}$. Moreover the functions $\{\tilde{u}^\ell\}$ can touch at most in pairs by (H3) and the set $\{\tilde{u}^\ell=\tilde{u}^{\ell+1}\}$ (for any $\ell$) is contained locally in a smooth $(n-1)$-dimensional submanifold. By higher regularity for the CMC PDE we obtain sup bounds $|D \tilde{u}^\ell|+|D^2 \tilde{u}^\ell| \leq C(n,q,H_0)$ for every $\ell$. In particular we can deduce that, denoting by $T=\pi\{x\in B: \tilde{u}^\ell(x)=\tilde{u}^{\ell+1}(x) \text{ for some } \ell\}$, the functions $u^j$ are smooth on $B \setminus T$ and agree locally with exactly one of the $\tilde{u}^\ell$, in particular we have $|D u^j|+|D^2 u^j| \leq C(n,q,H_0)$ on $B \setminus T$ for every $j$. From this we will deduce that actually each $u^j$ is $W^{2,\infty}$ with norm bounded by $C(n,q,H_0)$. The regularity must be proven, in a neighbourhood of every point $x\in T$, for every function $u^j$ such that for some $\ell$ the graphs of $\tilde{u}^\ell$ and $\tilde{u}^{\ell+1}$ touch at $p=(x,u^j(x))$. Note that there is a neighbouhood of $x$ over which the graphs $\tilde{u}^1, ..., \tilde{u}^{\ell-1}$ and the graphs $\tilde{u}^{\ell+2}, ..., \tilde{u}^{\tilde{q}}$ are completely separated from the graphs of $\tilde{u}^{\ell}$ and $\tilde{u}^{\ell+1}$. On such a neighbourhood $U$ of $x$ we consider the function $f=Du^j - D\tilde{u}^\ell$ and note that it is $0$ on the whole of $\{\tilde{u}^\ell=\tilde{u}^{\ell+1}\}\cap U$; indeed, for $y\in \Uc$ either $u^j(y)=\tilde{u}^\ell(y)$ or $u^j(y)=\tilde{u}^{\ell+1}(y)$ by continuity of $u^j$ and moreover $D\tilde{u}^\ell = D\tilde{u}^{\ell+1}$ whenever $\tilde{u}^\ell = \tilde{u}^{\ell+1}$. We know that $f\in C^{0,\alpha}$ and we will show that $f\in W^{1,\infty}(U)$, which clearly suffices. Let $\chi_{\eps}$ be a standard one-parameter family of cut-off functions that are $0$ in an $\eps$-tubular neighbourhood of $\{\tilde{u}^\ell=\tilde{u}^{\ell+1}\}\cap U$ and $1$ away from a $2\eps$-tubular neighbourhood of $\{\tilde{u}^\ell=\tilde{u}^{\ell+1}\}\cap U$. It is a standard fact that this family can be chosen in such a way that $\int |D \chi_{\eps}|$ is equibounded in $\eps$ by $C \mathcal{H}^{n-1}(T)$ For $\zeta \in C^1_c(U)$ we have

$$-\int_U f D\zeta = -\lim_{\eps\to 0}\int  \chi_{\eps} f D\zeta =\lim_{\eps\to 0}\int  (D\chi_{\eps}) f \zeta + \lim_{\eps\to 0}\int   \chi_{\eps} Df \zeta=$$
$$=\lim_{\eps\to 0}\int  (D\chi_{\eps}) f \zeta + \int_{U\setminus T}Df \zeta $$
and using the facts that $f=0$ on $T$ and $f\in C^{0,\alpha}$ we get that the first term is controlled by $C \eps^{\alpha} \|\zeta\|_{L^\infty}$ so in conclusion
$$-\int_U f D\zeta =  \int_{U\setminus T} Df \zeta .$$
Recalling the definition of $f$ we conclude that the distributional derivative $D^2 u^j$ is an $L^\infty$ function.
\end{oss}

\section{Coarse blow-ups and the linear theory} 
\label{coarseandlinear}

\begin{Prop} \cite[Corollary 3.11]{Alm}[\textbf{Almgren's Lipschitz approximation via multiple-valued graph}]
\label{Prop:AlmgrenDec}
Let $V$ be a $n$-varifold in $B^{n+1}_2(0)$ with generalized mean curvature in $L^p(\|V\|)$ for $p>n$. Let $q\in \N$ and $\sigma>0$ be fixed. Assume that  

\begin{equation}
 \label{eq:circaqsheet}
\frac{1}{\om_n 2^n} \|V\|\left( B^{n+1}_2(0) \right) < q+\frac{1}{2};\,\,\,\,\, q-\frac{1}{2}<\frac{1}{\om_n} \|V\|\left( B^{n}_1(0) \times \R \right) < q+\frac{1}{2}.
\end{equation}

Moreover we need to assume smallness of the following ``modified $L^2$-height excess'', namely assume that for $\eps=\eps(n,q, \sigma)$ small enough we have

$$\int\limits_{B^{n}_1(0) \times \R} |x^{n+1}|^2 d\|V\|+  \left(\int\limits_{B^{n}_1(0) \times \R} |h|^p d\|V\|\right)^{\frac{2}{p}}<\eps.$$

Then there exist Lipschitz functions $u^j:B_\sigma \to \R$ for $j\in\{1, 2, ... q\}$ (the \textit{multiple-valued graph}) such that $u^{j} \, : \, B_{\s} 
\to {\mathbb R},$ $j=1, 2, \ldots, q$, with $u^{1} \leq u^{2} \leq \ldots \leq u^{q}$ and 
\begin{equation*}\label{blow-up-2}
{\rm Lip} \, u^{j} \leq 1/2 \;\;\; \mbox{for each} \;\;\;  j \in \{1, 2, \ldots, q\}
\end{equation*}
and a measurable subset $\Sigma$ of $B_{\s}$ (the \textit{bad set}) such that 
\begin{equation}\label{blow-up-4}
{\rm spt} \, \|V\| \cap ((B_{\s} \setminus \Sigma)\times \R) = 
\cup_{j=1}^{q} {\rm graph} \, u^{j} \cap ((B_{\s} \setminus \Sigma)\times \R)
\end{equation}
and 
\begin{equation}\label{blow-up-3}
\|V\|(\Sigma\times \R) + {\mathcal H}^{n} \, (\Sigma) \leq C\left( \int\limits_{B^{n}_1(0) \times \R} |\nabla^{V}x^{n+1}|^2 d\|V\|+ \left(\int\limits_{B^{n}_1(0) \times \R} |h|^p d\|V\|\right)^{\frac{2}{p}} \right)
\end{equation}
where $C  = C(n, q, \s) \in (0, \infty).$ 
\end{Prop}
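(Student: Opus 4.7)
The plan is to follow Almgren's classical construction, with the adaptations needed to pass from the stationary setting to the present $L^{p}$-mean-curvature setting. All arguments are standard once the tilt-excess is controlled appropriately.

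First I would establish the basic quantitative machinery. Hypothesis (\ref{eq:circaqsheet}) together with the monotonicity formula (\cite[17.6]{SimonNotes}), which is available because $H\in L^{p}(\|V\|)$ with $p>n$, gives a uniform upper bound $\Theta(\|V\|,X)\leq C(n,q,H)$ on $B^{n}_{1}(0)\times\R$ and Euclidean volume growth $\|V\|(B^{n+1}_{\r}(X))\leq C\r^{n}$. Next I would convert the $L^{2}$-height excess into a tilt excess: testing the first variation with a vector field of the form $X=\varphi^{2}x^{n+1}\,e_{n+1}$ yields
\begin{equation*}
\int\varphi^{2}\,|e_{n+1}-(e_{n+1})^{T}|^{2}\,d\|V\|\;\leq\;C\int(x^{n+1})^{2}|\nabla\varphi|^{2}\,d\|V\|+C\left(\int|h|^{p}\,d\|V\|\right)^{2/p},
\end{equation*}
so, after a standard cutoff and in view of the identity $|\nabla^{V}x^{n+1}|^{2}=|(e_{n+1})^{T}|^{2}$, the quantity on the right of (\ref{blow-up-3}) dominates the tilt excess on $B_{\s}\times\R$.

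Second, I would define the bad set. For a small threshold $\eta=\eta(n,q,\s)>0$, let
\begin{equation*}
\Sigma\;:=\;\Bigl\{x\in B_{\s}\,:\,\sup_{0<\r<1-\s}\bigl(\r^{-n}\!\!\!\int_{B^{n}_{\r}(x)\times\R}\!\!|e_{n+1}-(e_{n+1})^{T}|^{2}d\|V\|+\r^{2-2n/p}\|h\|_{L^{p}(\|V\|)}^{2}\bigr)>\eta\Bigr\}.
\end{equation*}
A Besicovitch covering argument applied to the tilt integral produces the desired bound for $\mathcal{H}^{n}(\Sigma)$ and, via the density estimate from Step 1, also for $\|V\|(\Sigma\times\R)$, yielding (\ref{blow-up-3}).

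Third, over the good set $B_{\s}\setminus\Sigma$ I would apply Allard's regularity theorem at scales adapted to each point: smallness of the height excess, of $\|h\|_{L^{p}}$ and the mass ratio $<q+\tfrac12$ together force $\mathrm{spt}\,\|V\|\cap(B_{\r(x)}(x)\times\R)$ to be the union of at most $q$ embedded $C^{1,\alpha}$ graphs over $B_{\r(x)}(x)$ with $C^{1}$ norm less than $1/4$. Using the mass ratio hypothesis (the lower bound $>q-1/2$ combined with the upper bound) and the ordering by height, the number of local graphs is exactly $q$ a.e.\ on the good set; one labels them $u^{1}\leq\ldots\leq u^{q}$ pointwise. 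Since the labelling is preserved under overlap of local cylinders, the functions $u^{j}$ are defined consistently on all of $B_{\s}\setminus\Sigma$, satisfy $|Du^{j}|\leq 1/2$, and (\ref{blow-up-4}) holds. A standard Kirszbraun/McShane extension to $B_{\s}$ preserves the Lipschitz constant.

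The main obstacle is Step 3: establishing, in our multiple-valued setting, that the $q$ sheets can be \emph{globally and consistently} ordered across the good set. Locally Allard gives $q$ (or fewer) graphs, but to patch them one needs to rule out the possibility that ``sheet $j$'' on one side of a neighbourhood becomes ``sheet $j+1$'' on the other side. This is handled by observing that at every point of the good set the tilt is small at every scale, so no two distinct local sheets can cross; the good set being open and the ordering propagating along paths, the global labelling is well defined. Once this is done, the Lipschitz bound and (\ref{blow-up-4}) are immediate, and the measure estimate (\ref{blow-up-3}) follows from Step 2.
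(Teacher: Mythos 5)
The paper does not prove this Proposition; it cites it from Almgren's regularity paper and points to White's presentation, so there is no ``paper's own proof'' to compare against beyond the cited reference. That said, your proposed proof contains a genuine gap in Step~3.

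You appeal to Allard's regularity theorem to conclude that, on the good set, $\mathrm{spt}\,\|V\|$ decomposes into (at most) $q$ embedded $C^{1,\alpha}$ graphs. But Allard's theorem concludes $C^{1,\alpha}$ graphical structure only under a mass-ratio hypothesis of the form $\omega_{n}^{-1}\rho^{-n}\|V\|(B_{\rho})<1+\eps$; it says nothing whatsoever when the mass ratio lies near an integer $q\geq 2$. The ``multiplicity-$q$ Allard theorem'' that you invoke---small height excess and mass close to $q$ implies $q$ ordered $C^{1,\alpha}$ sheets---is precisely the content of the Sheeting Theorem (Theorem~\ref{thm:sheeting}), which is one of the main results of the paper, requires the full strength of the stability hypothesis, and is proved by a long induction that \emph{uses} the present Proposition as one of its ingredients. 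Invoking such a statement here is thus both unjustified (the Proposition is stated for arbitrary integral $n$-varifolds with $H_{V}\in L^{p}$, without any stability or no-classical-singularities hypothesis) and circular relative to the overall architecture of the paper. Indeed, without stability, one cannot expect $C^{1,\alpha}$ regularity on any a priori large set: Brakke's example shows that an $L^{\infty}$-mean-curvature integral varifold can have a singular set of positive $\mathcal{H}^{n}$ measure.

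Almgren's actual construction delivers only Lipschitz (not $C^{1,\alpha}$) sheets and is purely measure-theoretic. After bounding the tilt maximal function as in your Steps~1 and~2, one does not pass through any regularity theorem; instead one shows directly, using the monotonicity formula together with slicing of the current/varifold over the good set, that for $\mathcal{H}^{n}$-a.e.\ base point $x$ in the good set the fiber $\{x\}\times\R$ meets $\mathrm{spt}\,\|V\|$ in finitely many points whose total multiplicity is $q$, and that the small uniform tilt maximal bound forces the Lipschitz estimate between nearby fibers. The functions $u^{1}\leq\cdots\leq u^{q}$ are then ordered height functions with repetitions allowed where fewer than $q$ distinct sheets are present; no claim of embeddedness or $C^{1,\alpha}$ regularity of the individual sheets is made, or needed, at this stage. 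Your Steps~1 and~2 (tilt control via the first-variation identity, bad-set definition via a maximal function, Besicovitch covering) are the right preliminary moves; the error is in replacing Almgren's measure-theoretic fiber analysis with an illegal appeal to Allard.
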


\noindent For a nice presentation of this result (extended to other contexts as well) see \cite[Theorem 4]{WhiteStratification}.

\medskip

We\footnote{Note the following scaling properties: the invariance is to be understood with respect to the operation (geometrically natural homothethy on the graph of $f:\R^n \to \R$) $\tilde{f}(x):= \frac{f(rx)}{r}$. Under this change of variable we find (i) $\int_{B_1} |\tilde{f}|^2 = r^{-n-2}\int_{B_r}|f|^2$ and (ii) $\left(\int_{B_1} |\tilde{H}|^p\right)^{\frac{1}{p}}=r^{(1-\frac{n}{p})}\left(\int_{B_r}|H|^p\right)^{\frac{1}{p}}$, where $H$ and $\tilde{H}$ denote respectively the mean curvatures of the graphs of $f$ and $\tilde{f}$ in $\R^{n+1}$.} will now recall the concept of \textit{coarse blow-up} for a sequence $\{V_k\}$ of varifolds where each $V_k$ in the class $\mathcal{S}_{H_k}$ for $H_k \leq H_0$. Assume that (\ref{eq:circaqsheet}) with $V_k$ in place of $V$ holds for every $k$. Moreover we assume that a certain ``modified $L^2$-height excess'' is going to zero. The notion of excess that we use is the following (for $\eps\leq 1$ and $\r \leq 1$):
$$\hat{E}^2_{V,\eps,\r}:= \r^{-n-2}\!\!\!\!\int\limits_{B^{n}_{\r}(0) \times \R} |x^{n+1}|^2 d\|V\|\,\,+\,\, \r^{1-\frac{n}{p}}\frac{\left(\int\limits_{B^{n}_{\r}(0) \times \R} |h|^p d\|V\|\right)^{\frac{1}{p}}}{\eps} .$$

\begin{oss}
 \label{oss:comparetwoexcesses}
Since $\eps$ will always be small, the smallness of the excess $\hat{E}_{V, \eps. 1}$ will always guarantee the smallness of the quantity appearing in the assumption in Proposition \ref{Prop:AlmgrenDec}. The reason tfor the choice of the power $1/p$ in the second term of $\hat{E}_{V, \eps. 1}$ (as opposed to the exponent $2/p$ which is customary in \cite{Allard}, \cite{Alm}, \cite{WicAnnals}) will become clear in the forthcoming arguments. This necessary choice will reflect into the fact that, in a first moment, we will get a Sheeting Theorem with $C^{1,\alpha}$-regularity for $\alpha<\frac{1}{2}$.
\end{oss}

We assume that there exists a sequence $\eps_k \to 0$ such that

$$\hat{E}_k:=\hat{E}_{V_k, \eps_k,1} <{\eps}_k \to 0.$$
Let $\s \in (0, 1)$ and apply Almgren's approximation Proposition \ref{Prop:AlmgrenDec}: we obtain for each sufficiently large $k$ Lipschitz functions $u_{k}^{j} \, : \, B_{\s} \to {\mathbb R}$ ($j=1, 2, \ldots, q$) and a measurable subset $\Sigma_{k}$ of $B_{\s}$ with the properties explained above.
Set 
\begin{equation}\label{blow-up-3-1}
v_{k}^{j}(x) = {\hat E}_{k}^{-1}u_{k}^{j}(x)
\end{equation} 
for $x \in B_{\s},$ and write $v_{k} = (v_{k}^{1}, v_{k}^{2}, \ldots, v_{k}^{q}).$ 
Then  $v_{k}$ is Lipschitz on $B_{\s};$ and by (\ref{blow-up-4}) and (\ref{blow-up-3}),
\begin{equation}\label{blow-up-5}
\int_{B_{\s}} |v_{k}|^{2} \leq C, \;\;\; C = C(n, q, \s) \in (0, \infty).
\end{equation} 
Furthermore, by taking $X = x^{n+1}\zeta^{2}e^{n+1}$ for a suitable choice of $\zeta \in C^{1}_{c}(B_{1} \times {\mathbb R})$ in the first variation formula \cite[16.4]{SimonNotes} for an arbitrary $V$ with generalized mean curvature $H \in L^p(\|V\|)$ and a straightforward computation, we also have (compare \cite[22.2]{SimonNotes})

\begin{equation}
\label{eq:tiltcontrolledbyhight}
{\r}^{-n}\!\!\!\int\limits_{B_{\r/2} \times \R} |\nabla^V x^{n+1}|^ 2 d\|V\| \leq C(n) \left[ {\r}^{-n-2}\int_{B_{\r}}|x^{n+1}|^2 d\|V\| +{\r}^{2(1-\frac{n}{p})}\left(\int_{B_{\r}}|h|^p\right)^{\frac{2}{p}}\right].
\end{equation}

\begin{oss}
\label{oss:lipschitzgraphtiltexcess}
Note that, if $V$ is the graph of a Lipschitz function $f$, then $|\nabla^V x^{n+1}|^2=\frac{|Df|^2}{1+|Df|^2}$.
\end{oss}

As a consequence of Remark \ref{oss:lipschitzgraphtiltexcess}, and using ${\rm Lip} \, u^{j} \leq 1/2$ and (\ref{blow-up-3}), we find that the sequence $V_k$ satisfies
\begin{equation}\label{blow-up-6}
\int_{B_{\s}}|Dv_{k}|^{2} \leq C, \;\;\; C = C(n, q,\s) \in (0, \infty).
\end{equation}
In view of the arbitrariness of $\s \in (0, 1),$ by (\ref{blow-up-5}), (\ref{blow-up-6}), Rellich's theorem and a diagonal sequence argument, we obtain a function $v \in W^{1, 2}_{\rm loc} \, (B_{1}; {\mathbb R}^{q}) \cap L^{2} \, (B_{1}; {\mathbb R}^{q})$ and a subsequence $\{k_{j}\}$ of $\{k\}$ such that 
$v_{k_{j}} \to v$ as $j \to \infty$ in $L^{2} \, (B_{\s}; {\mathbb R}^{q})$ and weakly in $W^{1, 2} \, (B_{\s}; {\mathbb R}^{q})$ for every $\s \in (0, 1).$ 

\medskip

\noindent
{\bf Definitions: (1) Coarse blow-ups}: Let $v \in W^{1,2}_{\rm loc} \, (B_{1}; {\mathbb R}^{q}) \cap L^{2} \, (B_{1}; {\mathbb R}^{q})$ correspond, in the manner described above, to (a subsequence of) a sequence $\{V_{k}\}$ of integral $n$-varifolds ($V_k \in \mathcal{S}_{H_k}$) on $B_{2}^{n+1}(0)$ satisfying (\ref{eq:circaqsheet}) and with ${\hat E}_{k} \to 0$. We shall call $v$ a {\em coarse blow-up} of the sequence $\{V_{k}\}$.

\noindent
{\bf (2) The Class ${\mathcal B}_{q}$}: Denote by ${\mathcal B}_{q}$ the collection of all coarse blow-ups of sequences 
of varifolds $\{V_{k}\} \subset {\mathcal S}_{H_k}$ satisfying (\ref{eq:circaqsheet}) with $V_k$ in place of $V$ and for which 
${\hat E}_{k} \to 0.$

We have the following: 

\begin{thm}[\textbf{Linearization theorem}]\label{coarse} If the induction hypotheses (H1), (H2), (H3) hold, then 
${\mathcal B}_{q}$ is a \textit{proper blow-up class}, i.e.~it satisfies conditions $\B{1}$-$\B{7}$ below.
\end{thm}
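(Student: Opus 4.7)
The plan is to verify each of the axioms $\B{1}$--$\B{7}$ of a proper blow-up class in turn, following closely the scheme of \cite{WicAnnals} but with two new ingredients: the use of the exponent $1/p$ (rather than $2/p$) in the mean-curvature term of the excess $\hat{E}$, which forces $h$-errors to be lower order in the first-variation identities; and the repeated use of the inductive hypotheses (H1), (H2), (H3) to handle regions of multiplicity strictly less than $q$. The axioms expressing invariance of $\mathcal{B}_q$ under translations $(x,y)\mapsto(x,y+c)$, under orthogonal transformations of the $x$-variable, and under dilations $(x,y)\mapsto(\rho x, \rho y)$ centred at points $z\in B_1$ are essentially immediate, since $\mathcal{S}_H$ is invariant under these operations, $\hat{E}$ scales in the natural homogeneous way, and these operations commute with the construction $v_k^j = \hat{E}_k^{-1}u_k^j$.

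The heart of the argument is to establish the harmonicity and continuity axioms. Testing the first variation formula of $V_k$ with $X=\zeta(x)e_a$ for $1\leq a\leq n$ and $\zeta\in C^1_c(B_\sigma)$, then invoking Proposition \ref{Prop:AlmgrenDec} together with (\ref{blow-up-3}) to control the bad set $\Sigma_k$, yields
\[
\int_{B_\sigma}\sum_{j=1}^q Dv_k^j\cdot D\zeta\,dx = o(1),
\]
because the mean-curvature error is bounded by $C\hat{E}_k^{-1}\bigl(\int|h|^p\,d\|V_k\|\bigr)^{1/p}\leq C\eps_k\to 0$ by our very choice of exponent in $\hat{E}$. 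In the limit this yields weak harmonicity of $\overline{v}:=q^{-1}\sum_j v^j$ on $B_1$. A Caccioppoli-type inequality (obtained by testing with $X=x^{n+1}\zeta^2 e_{n+1}$ and absorbing $h$-terms via the $L^p$ smallness) then upgrades the $W^{1,2}$ convergence $v_k\to v$ to strong convergence on compact subsets, and the pointwise ordering $v_k^1\leq\cdots\leq v_k^q$ passes to the limit $v$, supplying the required continuity and ordering axioms.

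The inductive hypotheses (H1), (H2), (H3) enter in verifying the axiom on closure of $\mathcal{B}_q$ under restriction to, and re-blowing up about, a point where the support of $v$ exhibits strictly fewer than $q$ distinct sheets. At any such point $z$ a subsequence of the rescaled $V_k$ satisfies $(\om_n 2^n)^{-1}\|V_k\|(B_2)<q-1/2+o(1)$, so (H1), (H3), and Theorem \ref{thm:SS} jointly furnish a smooth CMC graph decomposition on the rescaled varifolds, producing a limit in $\mathcal{B}_{q'}$ for some $q'\leq q-1$. Hypothesis (H2) is used to exclude non-planar density-$Q$ tangent cones with $Q\in\{3/2,\ldots,q-1/2,q\}$, thereby forcing all multiplicity-$q$ tangent cones of $V_k$ to be hyperplanes, which is what keeps Allard's theorem applicable along the iteration.

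The step I expect to be most delicate is the passage from the pointwise ordering of the Lipschitz graphs $v_k^j$ to the continuity of the limit $v^j$ at points where sheets coalesce. The hypotheses permit touching singularities along sets of full Hausdorff dimension $n-1$, and the individual sheets in Almgren's decomposition are only Lipschitz a priori; consequently, the verification relies critically on hypothesis \textbf{(III)} (which ensures $\mathcal{H}^n(\text{sing}_T\,V_k)=0$, so that the bad set $\Sigma_k$ does not accumulate on touching singularities) together with the finer decomposition afforded inductively by (H1)--(H3). This is a genuinely new difficulty compared to \cite{WicAnnals}, where the strong maximum principle rules out touching singularities outright.
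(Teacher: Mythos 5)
Your proposal handles the straightforward axioms reasonably well, and your observation that the choice of exponent $1/p$ in $\hat{E}$ makes the mean-curvature errors $O(\eps_k)$ after dividing by $\hat{E}_k^2$ is exactly the mechanism the paper relies on for $\B{3}$ and for passing the first-variation identities to the limit. Your account of how (H1), (H3) and Theorem~\ref{thm:SS} supply a smooth CMC graph decomposition in regions of density $<q$ is also on the right track, and the point that $\mathcal{H}^n(\text{sing}_T V_k)=0$ controls the bad set near touching singularities is genuine.

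However, there is a substantial gap: your proposal does not address property $\B{7}$ (the Minimum Distance property) at all, and this is by far the heaviest part of the argument. Without $\B{7}$, the class $\mathcal{B}_q$ would not be proper (see the counterexample in Remark~\ref{oss:B7mindistance} with two affine functions $\ell,\ell'$ and $v=(\min,\max)$), and Theorem~\ref{harmonic} would fail. Proving $\B{7}$ requires an entirely separate second-order (``fine'') blow-up argument: one assumes a coarse blow-up of the form~(\ref{linear-blow-up}) with at least two distinct slopes on both half-spaces, introduces the fine excess $Q_V(\mathbf{C})$ relative to a non-planar cone $\mathbf{C}\in\mathcal{C}_q$, derives Simon-type a priori estimates near the spine of $\mathbf{C}$ (Lemma~\ref{L2-est-1}: graph decomposition off a tubular neighbourhood of the axis, $|\zeta^{n+1}|^2$ plus $\hat{E}^2|\zeta^n|^2$ controlled by $Q_V^2$, the weighted ``dist$^2/|X-Z|^{n+2-\mu}$'' monotonicity estimate, a ``no $\delta$-gaps'' statement, and non-concentration near the axis), and then runs a multi-scale excess decay (Lemmas~\ref{intermediate} and~\ref{multi-scale}) to produce a sequence of rotations and dilations of some $V_k$ converging to a cone supported on four or more distinct half-hyperplanes with density $q$, contradicting (H2). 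This also requires the auxiliary non-concentration estimate of Lemma~\ref{non-concentration} (used in the ``easy case'' Lemma~\ref{step1} with a choice $\mu\in(n/p,1)$) and the regularity of the fine blow-ups $(\varphi,\psi)\in\mathcal{B}_q^F$ up to the boundary $\{x^n=0\}$, which comes from two separate first-variation arguments with vertical and horizontal vector fields giving $C^3$ control of the boundary traces $\kappa_1,\kappa_2$. None of this appears in your proposal, even in outline.

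A secondary gap concerns $\B{4}$: what you describe (``closure of $\mathcal{B}_q$ under restriction to, and re-blowing up about, a point with fewer than $q$ sheets'') is roughly the $\B{4}$~II alternative. You do not address the $\B{4}$~I alternative, the Hardt--Simon inequality, which is the genuinely quantitative part. Its derivation (Section~\ref{HardtSimon}) uses the almost-monotonicity formula with error $\|H\|_{L^p}$ to obtain (\ref{eq:replacement(7.3)}) and (\ref{eq:replacementThm7.1a}), then rescales around density-$q$ points $Z_{k'}\to (z,\cdot)$ and passes the resulting inequality to the $k'\to\infty$ limit after dividing by $\hat{E}_{k'}^2$; the mean curvature term again drops out because of the $1/p$-exponent. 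This is a short argument compared to $\B{7}$ but it is not immediate, and your sketch of testing with $X=\zeta(x)e_a$ actually computes horizontal first variations (relevant for the continuity of the boundary traces $\kappa_2$ in the fine blow-up), not the Hardt--Simon inequality or the harmonicity of $v_a$ (the latter uses vertical test fields $X=\zeta e^{n+1}$).
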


\begin{df} \cite[Section 4]{WicAnnals}[\textit{proper blow-up class}]
 Fix an integer $q \geq 1$. A collection ${\mathcal B}$ of functions $v  = (v^{1}, v^{2}, \ldots, v^{q})\, : \, B_{1} \to {\mathbb R}^{q}$ is said to be a \emph{proper blow-up class} if it satisfies the following properties for some fixed constant $C \in (0, \infty)$:
\begin{itemize}
\item[(${\mathcal B{\emph 1}}$)] ${\mathcal B} \subset  W^{1, 2}_{\rm loc} \, (B_{1}; {\mathbb R}^{q}) \cap L^{2} \, (B_{1}; {\mathbb R}^{q}).$
\item[(${\mathcal B{\emph 2}}$)] If $v \in {\mathcal B}$, then $v^{1} \leq v^{2} \leq \ldots \leq v^{q}$.
\item[(${\mathcal B{\emph 3}}$)] If $v \in {\mathcal B}$, then $\Delta \, v_{a} = 0$ in $B_{1}$ where $v_{a} = q^{-1}\sum_{j=1}^{q} v^{j}.$
\item[(${\mathcal B{\emph 4}}$)] For each $v \in {\mathcal B}$ and each $z \in B_{1}$, either (${\mathcal B}{\emph 4 \, I}$) or (${\mathcal B}{\emph 4 \, II}$) below is true: 
\begin{itemize}
\item[(${\mathcal B{\emph 4 \,I}}$)] The Hardt-Simon inequality $$\sum_{j=1}^{q} \int_{B_{\r/2}(z)} R_{z}^{2-n} \left(\frac{\partial \, \left((v^{j} - v_{a}(z))/R_{z}\right)}{\partial \, R_{z}}\right)^{2} \leq C \, \r^{-n-2}\int_{B_{\r}(z)} |v - \ell_{v,\, z}|^{2}$$ 
holds for each $\r \in (0, \frac{3}{8}(1- |z|)]$, where $R_{z}(x) = |x - z|,$ $\ell_{v, \, z}(x) = v_{a}(z) + Dv_{a}(z) \cdot (x -z)$ 
and $(v - \ell_{v, \, z}) = (v^{1} - \ell_{v,\,z}, v^{2} - \ell_{v, \, z}, \ldots, v^{q} - \ell_{v, \, z}).$
\item[(${\mathcal B{\emph 4 \, II}}$)] There exists $\s  = \s(z) \in (0, 1 - |z|]$ such that $\Delta \, v = 0$ in $B_{\s}(z).$
\end{itemize}
\item[(${\mathcal B{\emph 5}}$)] (Invariances) If $v \in {\mathcal B}$, then
\begin{itemize}
\item[(${\mathcal B{\emph 5 \, I}}$)]${\widetilde v}_{z, \s}(\cdot) \equiv \|v(z + \s(\cdot))\|_{L^{2}(B_{1}(0))}^{-1}v(z + \s(\cdot)) \in {\mathcal B}$ for each $z \in B_{1}$ and $\s \in (0, \frac{3}{8}(1 - |z|)]$ whenever $v \not\equiv 0$ in $B_{\s}(z);$
\item[(${\mathcal B{\emph 5 \, II}}$)] $v \circ \gamma \in {\mathcal B}$ for each orthogonal rotation $\gamma$ of ${\mathbb R}^{n}$ and
\item[(${\mathcal B{\emph 5 \, III}}$)] $\|v - \ell_{v}\|_{L^{2}(B_{1}(0))}^{-1}\left(v- \ell_{v}\right) \in {\mathcal B}$ whenever $v  - \ell_{v}\not\equiv 0$ in $B_{1}$, where  $\ell_{v}(x) = v_{a}(0) + Dv_{a}(0) \cdot x$ for $x \in {\mathbb R}^{n}$ and $v- \ell_{v}$ is abbreviation for $(v^{1} - \ell_{v},v^{2} -  \ell_{v}, \ldots, v^{q} - \ell_{v}).$ 
\end{itemize}
\item[(${\mathcal B{\emph 6}}$)] (Compactness) If $\{v_{k}\}_{k=1}^{\infty} \subset {\mathcal B}$ then there exists a subsequence $\{k^{\prime}\}$ 
of $\{k\}$ and a function $v \in {\mathcal B}$ such that $v_{k^{\prime}} \to v$ locally in $L^{2}(B_{1})$ and locally weakly in $W^{1, 2}(B_{1}).$
\item[(${\mathcal B{\emph 7}}$)] (Minimum Distance property) If $v \in {\mathcal B}$ is such that for each $j=1, 2, \ldots, q$, there exist linear functions 
$L^{j}_{1}, L^{j}_{2} \; : \; {\mathbb R}^{n} \to {\mathbb R}$ with $L^{j}_{1}(y, 0) = L^{j}_{2}(y, 0)$ for 
$y \in  {\mathbb R}^{n-1},$    
$v^{j}(y, x^{n}) = L^{j}_{1}(y, x^{n})$ if $x^{n} < 0$ and $v^{j}(y, x^{n}) = 
L^{j}_{2}(y, x^{n})$ if $x^{n} \geq 0$, then 
$v^{1} = v^{2} = \ldots = v^{q} = L$ for some linear function $L \; : \; {\mathbb R}^{n} \to {\mathbb R}.$ 
\end{itemize}
\end{df}

Before proving Theorem \ref{coarse}, let us take a close look at the significance of its conclusion. The following regularity theorem says that functions in a proper blow-up class are nothing but harmonic functions. For the proof we refer to \cite{WicAnnals}. 

\begin{thm}[\textbf{\cite[Theorem 4.1]{WicAnnals} Sheeting theorem for proper blow-up classes (linear theory)}]\label{harmonic}
If ${\mathcal B}$ is a proper blow-up class for some $C \in (0, \infty)$, then 
each $v \in {\mathcal B}$ is harmonic in $B_{1}$. Furthermore, if $v \in {\mathcal B}$ and there is a point $z \in B_{1}$ such that 
(${\mathcal B{\it{4} \, I}}$) is satisfied, then $v^{1} = v^{2} = \ldots = v^{q}.$
\end{thm}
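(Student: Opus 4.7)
The plan is a tangent-map analysis inside $\mathcal{B}$ coupled with a Federer-type dimension reduction, so that the minimum distance property $\B{7}$ can close the argument at the smallest stratum. Let $\Omega\subset B_1$ be the (open) set of $z\in B_1$ at which alternative $\B{4\,II}$ holds; $v$ is harmonic on $\Omega$ by definition. At any $z\in B_1\setminus\Omega$ the Hardt--Simon inequality $\B{4\,I}$ is in force, and by combining $\B{5\,I}$ (rescaling around $z$) with $\B{5\,III}$ (subtracting the linear part) one forms normalized rescalings $w_\sigma\in\mathcal{B}$ of $v-\ell_{v,z}$ at $z$ with $\|w_\sigma\|_{L^2(B_1)}=1$. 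Compactness $\B{6}$ extracts a subsequential limit $\varphi\in\mathcal{B}$ with $\|\varphi\|_{L^2(B_1)}=1$. Passing the Hardt--Simon inequality of $\B{4\,I}$ to the limit forces $\partial(\varphi^j/|x|)/\partial|x|=0$ in $L^2$, so $\varphi$ is positively homogeneous of degree one on $B_1$, with $\varphi_a(0)=0$ and $D\varphi_a(0)=0$.

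The heart of the argument is the classification of such $1$-homogeneous $\varphi\in\mathcal{B}$: I claim $\varphi\equiv 0$. By $\B{3}$, $\varphi_a$ is harmonic and $1$-homogeneous, hence linear; combined with $\varphi_a(0)=0$ and $D\varphi_a(0)=0$ it vanishes. Set $\mathcal{S}(\varphi):=\{y\in B_1:\varphi^1(y)<\varphi^q(y)\}$ and suppose $\mathcal{S}(\varphi)\neq\emptyset$. A Federer-type dimension reduction---iteratively taking tangent maps at points of the top stratum of $\mathcal{S}(\varphi)$ via $\B{5\,I}$ and $\B{5\,II}$, extracting limits via $\B{6}$, and noting that each such tangent map stays in $\mathcal{B}$, remains $1$-homogeneous from the origin, and acquires a new translation-invariant direction---terminates at an element $\psi\in\mathcal{B}$ depending only on a single variable $x^n$. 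By its $1$-homogeneity, $\psi$ is then piecewise linear across the hyperplane $\{x^n=0\}$, precisely the configuration of $\B{7}$; the minimum distance property collapses it to a single linear function. This rules out non-trivial top-stratum singular points, forcing $\mathcal{S}(\varphi)=\emptyset$; hence $\varphi^1=\cdots=\varphi^q=\varphi_a\equiv 0$, contradicting $\|\varphi\|_{L^2(B_1)}=1$.

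The quantitative form of this contradiction is a Campanato-type decay at each $z\in B_1\setminus\Omega$: there exist $\mu>0$ and $C$ with
\[
\sigma^{-n-2}\int_{B_\sigma(z)}|v-\ell_{v,z}|^{2}\,dx\le C\sigma^{2\mu}\|v\|_{L^2(B_1)}^{2}
\]
for all small $\sigma$, yielding $C^{1,\mu}$-regularity of $v$ at $z$ together with $v^1(z)=\cdots=v^q(z)$ and $Dv^j(z)=Dv_a(z)$. In particular $v$ is continuous on $B_1$. Combined with $v^1\le\cdots\le v^q$ from $\B{2}$, the non-negative harmonic functions $v^j-v^1$ on each connected component of $\Omega$ whose boundary in $B_1$ meets $B_1\setminus\Omega$ vanish on that boundary portion, so by the strong maximum principle $v^1=\cdots=v^q$ on such components; any remaining component of $\Omega$ that is disjoint from $B_1\setminus\Omega$ must equal all of $B_1$, in which case $B_1\setminus\Omega=\emptyset$ and $v$ is harmonic on $B_1$ directly. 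Either way $v$ is harmonic on $B_1$, and furthermore if $\B{4\,I}$ holds at some $z\in B_1$ the argument gives $v^1=\cdots=v^q$ on $B_1$, proving the second assertion.

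The main obstacle is executing the dimension reduction rigorously inside the abstract class $\mathcal{B}$: at each reduction step one must verify that the iterated rescaled limit stays in $\mathcal{B}$, retains $1$-homogeneity from the origin, truly acquires a new translation-invariant direction at the chosen top-stratum point (so the dimension of the singular stratum drops strictly), and terminates in the configuration required by $\B{7}$ rather than some other degenerate shape. This is precisely where the axioms of a proper blow-up class are designed to mimic Federer stratification for stationary integral varifolds, and where the minimum distance property $\B{7}$ plays its decisive role.
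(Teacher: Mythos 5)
The theorem is cited from \cite{WicAnnals} (see the surrounding remark ``For the proof we refer to \cite{WicAnnals}''), so the paper itself contains no proof for comparison; your plan is a reconstruction, and its broad skeleton (tangent-map analysis, Federer-type dimension reduction, closing via $\B{7}$, Campanato iteration, unique continuation/maximum principle) is in the right spirit. However, there is a genuine gap at the homogeneity step, and it is the load-bearing one. You claim that ``passing the Hardt--Simon inequality of $\B{4\,I}$ to the limit forces $\partial(\varphi^j/|x|)/\partial|x|=0$ in $L^2$.'' That is not what the inequality gives. Writing $w_\sigma(x) = (v-\ell_{v,z})(z+\sigma x)/N_\sigma$ with $N_\sigma^2 = \sigma^{-n}\int_{B_\sigma(z)}|v-\ell_{v,z}|^2$ (so $\|w_\sigma\|_{L^2(B_1)} = 1$), and noting that $(w_\sigma)_a(0)=0$ and $D(w_\sigma)_a(0)=0$ so that $\ell_{w_\sigma,0}\equiv 0$, the rescaled Hardt--Simon inequality at radius $\rho=3/8$ reads
\begin{equation*}
\sum_{j}\int_{B_{3/16}}R^{2-n}\left(\frac{\partial(w_\sigma^j/R)}{\partial R}\right)^{2}\;\leq\;C\,(3/8)^{-n-2}\int_{B_{3/8}}|w_\sigma|^{2}\;\leq\;C',
\end{equation*}
i.e.\ a uniform \emph{bound} on the radial-derivative integral, not vanishing. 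By weak lower semicontinuity, the limit $\varphi$ inherits exactly this bound and nothing more; $\varphi$ may well have a non-homogeneous part. To obtain a homogeneous tangent function one must pick a smart sequence of scales, and justifying that such a sequence exists requires a frequency/monotonicity argument (showing, e.g.\ that the increment $I(\rho)-I(\rho')$ of the left-hand quantity tends to zero while the normalization $N_\sigma^2/\sigma^2$ stays bounded below); that is precisely the kind of non-trivial a priori estimate the plan is missing.

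Two further issues. First, $\B{6}$ yields only local $L^2$ and weak $W^{1,2}_{\rm loc}$ convergence, so $\|\varphi\|_{L^2(B_1)}=1$ is not automatic; you need a non-concentration argument near $\partial B_1$ to forbid the mass escaping, otherwise the ``contradiction with $\|\varphi\|_{L^2}=1$'' evaporates. Second, as you acknowledge, the dimension reduction is asserted rather than executed: inside the abstract class $\mathcal{B}$ one must show that the iterated rescaled limits remain in $\mathcal{B}$, stay degree-one homogeneous, strictly acquire a translation-invariant direction, and land exactly on the cylindrical/piecewise-linear configuration of $\B{7}$; none of these is routine from $\B{1}$--$\B{6}$ alone. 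Since the $1$-homogeneity of $\varphi$ is both unjustified and indispensable to every subsequent step of the reduction (homogeneity is what makes a translation-invariant limit piecewise linear across a hyperplane), the argument as written does not establish the decay estimate on which the rest of the plan rests.
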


\begin{oss}
\label{oss:B7mindistance}
Note the significance of property $({\mathcal B{\emph 7}})$ for Theorem~\ref{harmonic}; an obvious counterexample to the theorem in the absence of this property  is the set of all functions $v = (v^{1}, v^{2}) \, : \, B_{1} \to {\mathbb R}^{2}$
 such that $v^{1} = \min \{\ell, \ell^{\prime}\}$ and $v^{2} = \max \{\ell, \ell^{\prime}\}$ for some affine functions $\ell, \ell^{\prime} \, : \, {\mathbb R}^{n} \to {\mathbb R}$ with $\|\ell\|_{L^{2}(B_{1})}, \|\ell^{\prime}\|_{L^{2}(B_{1})} \leq 1.$ 
 
Note that in view of the compactness property (${\mathcal B{\emph 6}}$), the property 
(${\mathcal B{\emph 7}}$) implies the following: Let $H  = (H^{1}, H^{2}, \ldots, H^{q}) \, : \, {\mathbb R}^{n} \to {\mathbb R}^{q}$ be such that $H^{1} \leq H^{2} \leq \ldots \leq H^{q}$, at least two of $H^{1}, H^{2}, \ldots, H^{q}$ are distinct, and for each $j=1, 2, \ldots, q$, there exist linear functions 
$L^{j}_{1}, L^{j}_{2} \; : \; {\mathbb R}^{n} \to {\mathbb R}$ with $L^{j}_{1}(y, 0) = L^{j}_{2}(y, 0)$ for 
$y \in  {\mathbb R}^{n-1}$ such that     
$H^{j}(y, x^{n}) = L^{j}_{1}(y, x^{n})$ if $x^{n} < 0$ and $H^{j}(y, x^{n}) = 
L^{j}_{2}(y, x^{n})$ if $x^{n} \geq 0$. Then
there exists $\eps = \eps(H, {\mathcal B}) \in (0, 1)$ such that $\int_{B_{1/2}} |v - H|^{2} \geq \eps$ for each 
$v \in {\mathcal B}.$  This explains why property (${\mathcal B{\emph 7}}$) is called the Minimum Distance property. 
\end{oss}

\begin{proof}[\textbf{first part of the proof of Theorem \ref{coarse}}: properties $\B{1}$-$\B{2}$-$\B{3}$-$\B{5}$-$\B{6}$]
 
The case $q=1$ of Theorem \ref{coarse} corresponds to Allard's regularity theorem 
see \cite[p. 880]{WicAnnals} and \cite[p. 115]{SimonNotes}: in this case one only needs to show that the function $v$ is harmonic.

For general case (under the inductive assumptions) it is rather straightforward to verify properties $\B{1}$, $\B{2}$, $\B{5}$, $\B{6}$ and for property $\B{3}$ one needs to use the first variation formula very similarly to the case $q=1$ \footnote{All these properties would be true even without stability and structural assumptions on $V$.}.
\end{proof}

It takes substantially more effort to establish that ${\mathcal B}_{q}$  satisfies the remaining properties $\B{4}$ and $\B{7}$---in particular $\B{7}$ (the Minimum Distance Property) will require a rather lengthy argument. For the sake of clarity we prefer to keep the proofs of $\B{4}$ and $\B{7}$ in distinct subsections.

\subsection{Proof of Theorem \ref{coarse}: verification of $\B{4}$ }
\label{HardtSimon}

To show that property $\B{4}$ holds for ${\mathcal B}_{q}$ (with a constant $C$ depending only on $n$, $q$ and $H_0$), one argues as follows: Fix $v \in {\mathcal B}_{q}$ and $z \in B_{1}$. If $$v - \ell_{v, z} \equiv (v^{1} - \ell_{v, z}, v^{2} - \ell_{v, z}, \ldots, v^{q} - \ell_{v, z}) = 0$$ there is nothing further to prove, so assume $v - \ell_{v, z} \neq 0$. Then by $({\mathcal B{\emph5III}})$, 
$\widetilde{v} = \|v - \ell_{v, z}\|^{-1}_{L^{2}(B_{1})}(v - \ell_{v, z}) \in {\mathcal B}_{q}$. Let $\{V_{k}\} \subset {\mathcal S}$ be a sequence of varifolds whose coarse blow-up is $\widetilde{v}$. Consider the following two alternatives, one of which must hold: 
\begin{itemize}
\item[(1)] there exists $\s > 0$ such that for all sufficiently large $k$, $Z \in  B_{\s}(z)  \times \R \implies 
\Theta \, (\|V_{k}\|, Z) < q;$ 
\item[(2)] there exists a subsequence $\{k^{\prime}\}$ of $\{k\}$ and points 
$Z_{k^{\prime}}  = (z_{k^{\prime}}^{1}, z_{k^{\prime}}^{\prime}) \in {\rm spt} \, \|V_{k^{\prime}}\|$ with 
$$\Theta \, (\|V_{k^{\prime}}\|, Z_{k^{\prime}}) \geq q$$ such that $z_{k^{\prime}}^{\prime} \to z.$  
\end{itemize}
These two alternatives correspond respectively to the validity of $({\mathcal B{\emph 4II}})$ and $({\mathcal B{\emph 4I}})$, as we will show in the rest of the subsection.

\medskip

We begin by showing that, if (1) is true, then alternative $({\mathcal B{\emph 4II}})$ must hold. Indeed (1) implies, by means of Proposition \ref{Prop:elementaryconsequence}, that for all sufficiently large $k$, ${\rm sing} \, V_{k} \res (B_{\s}(z) \times \R) =  \emptyset$ if $n \leq 6$, ${\rm sing} \, V_{k} \res (B_{\s}(z) \times \R)$ is discrete if $n=6$ and 
 $${\rm dim}_{\mathcal H} \, \left({\rm sing} \, V_{k} \res (B_{\s}(z) \times \R)\right) \leq n-7$$ if $n \geq 7.$  Thus in this case, we may apply Theorem~\ref{thm:SS} to conclude that, for all sufficiently large $k$, $V_{k} \res (B_{\s/2}(z) \times \R)$ is given by $q$ (weakly) ordered graphs of $C^{1,\alpha}$ functions $u_k^j$ on $B_{\s/2}(z)$ (for $j=1, ... q$), with $\|u_k^j\|_{C^{1,\alpha}(B_{\s/2}(z))}\leq C \hat{E}_k$ by the associated elliptic estimates (for a constant $C$ independent of $k$). As a consequence, upon extracting a subsequence that we do not relabel, we have that, for every $j$, $\hat{E}_k^{-1} u_k^j$ converges in $C^1(B_{\s/2}(z))$ to a $C^{1,\alpha}$-function $\tilde{v}^j$ (the limit is necessarily the coarse blow up).
 
On the other hand, in view of (H3), we have that locally around every point $$\spt{V_{k} \res (B_{\s/2}(z) \times \R)}$$ is given by the union of one or two smooth graphs (with small gradient) satisfying the CMC equation individually (here we are working on the subsequence implicitly extracted above). Consider the set $U$ of $x \in B_{\s}(z)$ such that $\spt{V_{k} \res (B_{\s/2}(z) \times \R)}$ is embedded at $(x,u_j(x))$ for every $j\in\{1, ..., q\}$. Such a set is open by definition and its complement has Hausdorff dimension $1$, by (H3) and by remark \ref{oss:maxprincsmooth}. On each connected component of $U$ we have that $\spt{V_{k} \res (U \times \R)}$ is given by $\tilde{q}\leq q$ (weakly) ordered graphs of smooth functions $\tilde{u}_k^j$ on $B_{\s/2}(z)$ (for $j=1, ... \tilde{q}$), individually solving the CMC equation. At this stage $\tilde{q}$ might depend on the chosen connected component of $U$, however again by the structure given by (H3) we have that if $y \in \spt{V_{k} \res (B_{\s/2}(z) \times \R)}$ and $\pi(y)$ is on the (topological) boundary of two distinct connected components of $U$, then $\tilde{q}$ cannot change when we pass from one connected component to the other. This allows to obtain a unique well-defined extension to $B_{\s/2}(z)$ of the functions $\tilde{u}_k^j$ (for $j=1, ... \tilde{q}$) such that the union of their graphs describes $\spt{V_{k} \res (B_{\s/2}(z) \times \R)}$. We remark that $\tilde{u}_k^j$ individually solve the CMC equation with $h_k$ on the r.h.s. and with $h_k \to 0$. Hence in this case we conclude with the help of higher order elliptic estimates that $\hat{E}_k^{-1} \tilde{u}_k^j$ converges (upon extracting a subsequence of the previous one, that we again do not relabel) in $C^2(B_{\s/2}(z))$ to a $C^{2,\alpha}$-function $\tilde{\tilde{v}}^j$ and by passing the CMC equation to the limit and recalling that $\hat{E}_k^{-1} h_k \to 0$ we obtain that $\Delta \tilde{\tilde{v}}^j=0$ for every $j$. The harmonicity implies the fact that if $\tilde{\tilde{v}}^j(x)=\tilde{\tilde{v}}^{j+1}(x)$ for a point $x \in B_{\s/2}(z)$ and an index $j\in\{1, ..., \tilde{q}-1\}$, then $\tilde{\tilde{v}}^j\equiv \tilde{\tilde{v}}^{j+1}$ on $B_{\s/2}(z)$.

Comparing the two sequences (with the same indexation) $\hat{E}_k^{-1} u_k^j$ and $\hat{E}_k^{-1} \tilde{u}_k^j$ we have, by construction, that, for every $k$ the support of the graphs of $u_k^j$ agrees with the union of the graphs of the functions $\tilde{u}_k^j$. The convergence respectively in $C^1$ and $C^2$ for the two sequences $\hat{E}_k^{-1} u_k^j$ and $\hat{E}_k^{-1} \tilde{u}_k^j$ forces therefore the union of the graphs of the functions $\tilde{v}^j$ to agree with the union of the graphs of the functions $\tilde{\tilde{v}}^j$; the latter is however a disjoint union of $C^2$ graphs, as established before. Then necessarily for each $\tilde{v}^j$ there is a $\tilde{\tilde{v}}^\ell$ with the same graph, in particular all the functions $\tilde{v}^j$ are harmonic and alternative $({\mathcal B{\emph 4II}})$ must hold.

\medskip

If on the other hand (2) holds, we will use an argument due to Hardt-Simon \cite{HS} to conclude that alternative $({\mathcal B{\emph 4I}})$ must hold.

We will need some preliminary estimates. We follow the reasoning leading to \cite[Theorem 7.1(a)]{WicAnnals} with the due changes. The almost monotonicity formula in our case is given by \cite[17.6(1)]{SimonNotes}, in which we set $\Lambda=C(n,q,p)\|H\|_{L^p}$ in view of \cite[17.9(2)]{SimonNotes}; therefore \cite[(7.1)]{WicAnnals} is replaced by an inequality $\leq$ with an additional term $C(n,q,p)\|H\|_{L^p}$ on the right-hand side. We then follow the argument given in the estimate \cite[(7.2)]{WicAnnals} (which uses the condition $\Theta \, (\|V\|, Z) \geq q$) by replacing (in the second line of \cite[(7.2)]{WicAnnals}) the excess $\hat{E}^2_V$ appearing there with $$\int\limits_{B^{n}_1(0) \times \R} |\nabla^V x^{n+1}|^2 d\|V\|+ \left(\int\limits_{B^{n}_1(0) \times \R} |H|^p d\|V\|\right)^{\frac{2}{p}}.$$ (This term originates in (\ref{blow-up-3}) and enters the argument in the computation at the top of \cite[page 877]{WicAnnals}, replacing the term $\hat{E}^2_V$ appearing there.) The replacements for \cite[(7.1)]{WicAnnals} and \cite[(7.2)]{WicAnnals} that we have just pointed out imply that 
\begin{equation}
 \label{eq:replacement(7.3)}
\int\limits_{B^{n+1}_{3/8}(Z)} \frac{|(X-Z)^\bot|^2}{|X-Z|^{n+2}} d\|V\|(X) \leq C\left(  \int\limits_{B^{n}_1(0) \times \R} |x^{n+1}|^2 d\|V\|+  \left(\int\limits_{B^{n}_1(0) \times \R} |H|^p d\|V\|\right)^{\frac{1}{p}} \right), 
\end{equation}
which replaces \cite[(7.3)]{WicAnnals}. The right-hand side is $C\hat{E}^2_{V,1,1}$ in our notation.

The replacement for \cite[(7.4)]{WicAnnals} is quite immediate, no change is needed except that we need to use the almost monotonicity \cite[17.6 (2)]{SimonNotes} and therefore the estimate becomes
\begin{equation}
 \label{eq:replacement(7.4)}
\int\limits_{B^{n+1}_{1/4}(Z)} \frac{|(X-Z)^\bot|^2}{|X-Z|^{n+2}} d\|V\|(X) \geq C|z^{n+1}|^2 - C\hat{E}^2_{V,1,1}, 
\end{equation}
i.e.~$\hat{E}^2_{V,1,1}$ appears instead of the term appearing in \cite[(7.4)]{WicAnnals}, because in our situation we use (\ref{eq:tiltcontrolledbyhight}).

Putting together the two inequalities (\ref{eq:replacement(7.3)}) and (\ref{eq:replacement(7.4)}), we have the following statement, that replaces \cite[Theorem 7.1(a)]{WicAnnals}: there exists a number $\eps_1>0$ such that, if (\ref{eq:circaqsheet}) holds and $\hat{E}^2_{V,\eps,1}\leq \eps$, then for any $Z$ such that $\Theta(\|V\|, Z)\geq q$ we have 
\begin{equation}
 \label{eq:replacementThm7.1a}
|z^{n+1}|^2 \leq C \hat{E}^2_{V,1,1}, 
\end{equation}

\medskip

Let us now go back to the proof of $({\mathcal B{\emph 4I}})$, assuming that alternative (2) holds. For any fixed $\rho \in \left(0, \frac{3}{8}(1 - |z|)\right)$ we use (\ref{eq:replacement(7.3)}) with $\eta_{Z_{k^{\prime}}, \r \, \#} \, V_{k^{\prime}}$ in place of $V$ and $0$ in place of $Z$ and we follow (without changes) the computation at the top of \cite[page 883]{WicAnnals}. Combining these two inequalities we deduce that, for all sufficiently large values of $k^{\prime}$, (here $u_{k}$, $\Sigma_{k}$ are as in Proposition \ref{Prop:AlmgrenDec}) the following inequality holds (this replaces \cite[(8.9)]{WicAnnals})

 \begin{eqnarray*}\label{step2-7}
&&\sum_{j=1}^{q}\int_{B_{\r/2}(z^{\prime}_{k^{\prime}}) \setminus \Sigma_{k^{\prime}}}
\left(\frac{R_{z^{\prime}_{k^{\prime}}}^{2}}{(u_{k^{\prime}}^{j} - z_{k^{\prime}}^{n+1})^{2} + R_{z_{k^{\prime}}^{\prime}}^{2}}\right)^{\frac{n+2}{2}}
R_{z_{k^{\prime}}^{\prime}}^{2-n} \left(\frac{\partial \, 
\left((u_{k^{\prime}}^{j} - z_{k^{\prime}}^{n+1})/R_{z_{k^{\prime}}^{\prime}}\right)}{\partial \, R_{z_{k^{\prime}}^{\prime}}}\right)^{2}\nonumber\\ 
&&\hspace{.1in}\leq C\, \left[\r^{-n-2}\int_{B_{\r}(z^{\prime}_{k^{\prime}})\times \R}|x^{n+1}|^{2}d\|V_{k^{\prime}}\|(X) + \r^{1-\frac{n}{p}}\left(\int_{B_{\r}(z^{\prime}_{k^{\prime}})\times \R}|H_{k^{\prime}}|^p d\|V_{k^{\prime}}\|\right)^{\frac{1}{p}}\right]
\end{eqnarray*}
where $C = C(n, q, \|H\|_{L^p}) \in (0, \infty)$; for any fixed $y \in {\mathbb R}^{n}$, $R_{y}(x) = |x - y|$ and $\frac{\partial}{\partial \, R_{y}}$ denotes the radial derivative $\frac{x - y}{R_{y}} \cdot D $. One then notes that the first factor (in the left-hand side integrand) goes to $1$ as $k' \to \infty$ (in the almost everywhere sense) thanks to (\ref{eq:replacementThm7.1a}). The final step is then to divide both sides of the above inequality by ${\hat E}^{2}_{k^{\prime}}$ and pass to the limit, which yields the Hardt--Simon inequality in $({\mathcal B\emph{4I}})$. For the right-hand side observe that, under our smallness assumption $\hat{E}^2_{V,\eps,1}\leq \eps$, we have $\left(\int\limits_{B^{n}_1(0) \times \R} |H|^p d\|V\|\right)^{\frac{1}{p}}\leq \eps \hat{E}_{V, \eps, 1}^2$, so after dividing by ${\hat E}^{2}_{k^{\prime}}$, the term involving the mean curvature on the right-hand-side goes to $0$ in the limit and only the term involving the $L^2$-height remains. Follow \cite[Section 8]{WicAnnals} for details.

\subsection{Proof of Theorem \ref{coarse}: verification of $\B{7}$ }
\textit{\textbf{Non-concentration of tilt-excess}}.
The following lemma establishes a certain a priori estimate for integral $n$-varifolds $V$ on $B_{2}^{n+1}(0)$ with $h \in L^p(\|V\|)$ and with small ``modified height-excess'' relative to a plane $P.$ This estimate is inspired by Simon's estimates \cite{Simon2} and says that if the density ratio of $V$ at unit scale is between $q-1/2$ and $q+1/2$ and if there are points of ${\rm spt} \, \|V\|$ with density $\geq q$ ``evenly distributed'' in a certain precise sense near an $(n-1)$-dimensional subspace $L$, then the tilt-excess of $V$ relative to $P$ in a small neighborhood of $L$ is at most a small fraction of the total excess of $V.$ This result, used  in the proof that ${\mathcal B}_{q}$ satisfies property $({\mathcal B\emph{7}})$, may also be of independent interest. The precise claim, assuming without loss of generality that $P = {\mathbb R}^{n} \times \{0\}$, is as follows:

\begin{lem}\label{non-concentration}
Let $q$ be a positive integer, $\tau \in (0, 1/16)$ and $\mu \in (\frac{n}{p}, 1).$There exists a number $\eps_{1} = \eps_{1}(n, q, \tau, \mu)~\in~(0,1/2)$ such that if $V$ is an integral $n$-varifold on $B_{2}^{n+1}(0)$ with $h \in L^p(\|V\|)$ and with 
$$(\omega_{n}2^{n})^{-1}\|V\|(B_{2}^{n+1}(0)) < q + 1/2, \;\;\;  q-1/2 \leq \omega_{n}^{-1}\|V\|(B_{1} \times \R) < q + 1/2 \;\; {\rm and}$$  
$$\hat{E}^2_{V,\eps_1,1}:=\int_{B^{n}_1(0) \times \R} |x^{n+1}|^{2}d\|V\|(X) + \frac{\left(\int\limits_{B^{n}_1(0) \times \R} |H|^p d\|V\|\right)^{\frac{1}{p}}}{\eps_1} \leq {\eps}_{1},$$
and if $L$ is an $(n-1)$-dimensional subspace of $\{0\} \times {\mathbb R}^{n}$ such that 
$$L \cap B_{1/2} \subset \left(\{Z  \in {\rm spt} \, \|V\|\, : \, \Theta \, (\|V\|, Z) \geq q\}\right)_{\tau}, \;\; {\rm then}$$

$\displaystyle \int_{(L)_{\tau} \cap (B_{1/2} \times {\mathbb R} )} |\nabla^{V} \, x^{n+1}|^{2} d\|V\|(X)$

$\displaystyle \hspace{1in}  \leq C\tau^{1-\mu}\left[\int_{B_{1} \times {\mathbb R} } |x^{n+1}|^{2}d\|V\|(X) + \left(\int_{B_{1} \times {\mathbb R} }|H|^p d\|V\|(X)\right)^{1/p} \right].$

\noindent
Here  $C = C(n, q, \mu) \in (0, \infty),$ so
in particular $C$ is independent of $\tau,$ and for a subset $A$ of 
${\mathbb R}^{n+1}$, we use the notation $(A)_{\tau} = \{X \in {\mathbb R}^{n+1} \, : \, {\rm dist} \, (X, A) \leq \tau\}.$
\end{lem}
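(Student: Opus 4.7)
The strategy adapts Simon's non-concentration-of-tilt-excess technique from \cite{Simon2} to the present $L^p$-mean-curvature setting. First, using the hypothesis that $L \cap B_{1/2}$ is contained in the $\tau$-neighborhood of the density-$\geq q$ points, a Vitali-type argument produces a collection $\{Z_j\}_{j=1}^N \subset \spt{V}$ with $\Theta(\|V\|, Z_j) \geq q$, $\text{dist}(Z_j, L) \leq \tau$, the balls $\{B_\tau(Z_j)\}$ pairwise disjoint, and $\{B_{5\tau}(Z_j)\}$ covering $(L)_\tau \cap (B_{1/2} \times \R)$; the count is $N \leq C\tau^{1-n}$ since $L$ is $(n-1)$-dimensional. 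By the height bound (\ref{eq:replacementThm7.1a}), each such $Z_j$ satisfies $|z_j^{n+1}|^2 \leq C\hat{E}^2_{V,1,1}$, so modulo an error of size $\hat{E}$ the $Z_j$ lie on $\{x^{n+1}=0\}$.

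Next, I would apply the first variation formula with the test vector field $(y^{n+1} - z_j^{n+1})\zeta(Y)^2 e_{n+1}$, where $\zeta$ is a standard cut-off supported in $B_{5\tau}(Z_j)$ and equal to one on $B_{4\tau}(Z_j)$. A routine computation produces the Caccioppoli-type inequality
$$\int_{B_{4\tau}(Z_j)} |\nabla^V x^{n+1}|^2 d\|V\| \leq C\tau^{-2}\int_{B_{5\tau}(Z_j)} |x^{n+1}-z_j^{n+1}|^2 d\|V\| + \mathcal{R}_j^{H},$$
with the mean-curvature cross term $\mathcal{R}_j^{H}$ controlled by H\"older's inequality and the $L^p$ bound on $|H|$.

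The crucial step, which furnishes the $\tau$-gain, is to upgrade the localized $L^2$-height on each ball $B_{5\tau}(Z_j)$ to a bound of the form $C\tau^{n+2+2\beta}\hat{E}^2_{V,1,1}$ plus a mean-curvature remainder, for any $\beta$ strictly less than $1$. This will follow from applying the Hardt--Simon perpendicular identity (\ref{eq:replacement(7.3)}) at each $Z_j$ at scale $\tau$ and iterating dyadically between scales $\tau$ and $1$: the density $\geq q$ at $Z_j$ combined with almost-monotonicity forces the blow-down of $V$ at $Z_j$ to be a $q$-plane through $Z_j$ in an integrated $L^2$ sense at every scale, with the rate governed jointly by the global $L^2$ height and the $L^p$ mean-curvature error coming from \cite[17.6]{SimonNotes}. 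Multiplying by $\tau^{n-1}$ and summing the $N \leq C\tau^{1-n}$ contributions (using bounded overlap of $\{B_{5\tau}(Z_j)\}$) delivers exactly the factor $\tau^{1-\mu}$, with the constraint $\mu > n/p$ arising from the need to absorb the dyadic mean-curvature errors via H\"older's inequality along the geometric sum.

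The main obstacle is the dyadic height-decay step: one must quantify sub-quadratic $L^2$-decay of height at density-$\geq q$ points in terms of the $L^p$ (rather than $L^\infty$) mean curvature, and bookkeep the exponents so that the H\"older loss $\mu$ can be taken arbitrarily close to $n/p$. This is precisely what accounts for the slightly non-standard $1/p$-exponent in the definition of $\hat{E}^2_{V,\eps,\r}$ (flagged in Remark \ref{oss:comparetwoexcesses}) and, in turn, for the initial restriction $\alpha < 1/2$ in the Sheeting Theorem \ref{thm:sheeting}.
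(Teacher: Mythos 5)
Your general framework (cover by $\sim\tau^{1-n}$ balls centred near the $\tau$-net of density-$\geq q$ points, Caccioppoli the tilt, sum up) is the right skeleton and matches the paper's proof, but the pivotal height-decay step is stated in a form that cannot hold and misattributes the provenance of the constraint $\mu>n/p$. You claim the localized $L^{2}$-height satisfies
$$\int_{B_{5\tau}(Z_j)}|x^{n+1}-z_j^{n+1}|^{2}\,d\|V\|\leq C\tau^{n+2+2\beta}\hat{E}^{2}_{V,1,1}+\text{(m.c.\ remainder)},$$
with $\beta>0$, coming from a dyadic iteration of (\ref{eq:replacement(7.3)}) from scale $1$ down to scale $\tau$. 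This is too strong: the combination ``$\Theta(\|V\|,Z_j)\geq q$ plus volume ratio $<q+1/2$'' does not force decay of the height density-ratio \emph{faster} than unit rate (exponent $n+2$); such super-quadratic decay would be in the spirit of a \L ojasiewicz/uniqueness-of-tangent-cone statement, which one does not have and does not need. In fact the estimate the argument produces and that the paper uses is a \emph{loss}, not a gain: for each density-$\geq q$ point $Z$,
$$\int_{B_{4\tau}(Z)}|x^{n+1}-z^{n+1}|^{2}\,d\|V\|\leq C\,\tau^{\,n+2-\mu}\left(\int_{B_1\times\R}|x^{n+1}|^{2}d\|V\|+\Bigl(\int_{B_1\times\R}|H|^{p}d\|V\|\Bigr)^{1/p}\right),$$
obtained by a \emph{single} application of the first variation formula with a singular test vector field (as in the proof of Theorem~7.1 of \cite{WicAnnals}, pp.~877--879), not by iterating Hardt--Simon dyadically; (\ref{eq:replacement(7.3)}) and (\ref{eq:replacementThm7.1a}) enter that single step as auxiliary inputs. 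The condition $\mu>n/p$ then comes from the requirement that the singular kernel (roughly $|X-Z|^{-(n-\mu)}$) be in $L^{q}(\|V\|)$, $1/p+1/q=1$, so that H\"older can absorb the new term $\int h\cdot\psi$ produced by the mean curvature --- it does not come from summing geometric errors along a dyadic scale sequence. With the exponent $n+2-\mu$ in hand, (\ref{eq:tiltcontrolledbyhight}) yields $\tau^{n-\mu}$ per ball, and the $N\leq C\tau^{1-n}$-ball cover gives the claimed $\tau^{1-\mu}$; there is no separate ``multiply by $\tau^{n-1}$'' step. As written, your scheme would produce $\tau^{1+2\beta}$ (from an estimate you cannot establish), which is neither what the lemma asserts nor provable by these means.
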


\begin{proof}
 
The argument follows \cite[Section 7]{WicAnnals}. The number $\mu\in(0,1)$ needs to satisfy $(n-\mu) q <n$, i.e. $\mu>\frac{n}{p}$, in order to ensure the $L^q$-summability of the test function $\psi$ in \cite[page 878]{WicAnnals}, where $q$ is such that $\frac{1}{p} + \frac{1}{q}=1$ (the $L^q$-integrability follows by a decomposition of the ball in diadic annuli). Observe that, in following the argument in the formula at the bottom of \cite[p. 878]{WicAnnals} we find (from the first variation formula) an extra term $\left|\int h \cdot \psi\right|$ on the right-hand side. This term can be bounded using H\"older's inequality and the facts that $h \in L^p$ for $p>n$ and $\psi$ is in $L^q$. The term $\|H\|_{L^p}$ is controlled by $\eps_1 \hat{E}_{V, \eps_1, 1}^2$. This should be combined with (\ref{eq:replacement(7.3)}) and (\ref{eq:replacementThm7.1a}) to obtain, for every $Z=(z^\prime, z^{n+1}) \in \spt{V}$ such that $\Theta(\|V\|,Z)\geq q$ the following:

$\displaystyle \int\limits_{B^{n+1}_{4\tau}(Z)}\!\!\!|x^{n+1} - z^{n+1}|^2 d\|V\|(X)$

$\displaystyle \hspace{1in} \leq C \tau^{n+2-\mu} \left( \int\limits_{B_1 \times \R}|x^{n+1}|^2 d\|V\|(X) + \left(\int\limits_{B_1 \times \R} |H|^p d\|V\|(X)\right)^{\frac{1}{p}} \right).$

In view of the hypothesis, for each $Y \in L \cap B_{1/2}$ we can find $z^{n+1}\in\R$ so that the previous estimate implies

$\displaystyle \int\limits_{B^{n+1}_{3\tau}(Y)}\!\!\!|x^{n+1} - z^{n+1}|^2 d\|V\|(X)$

$\displaystyle \hspace{1in} \leq C \tau^{n+2-\mu} \left( \int\limits_{B_1 \times \R}|x^{n+1}|^2 d\|V\|(X) + \left(\int\limits_{B_1 \times \R} |H|^p d\|V\|(X)\right)^{\frac{1}{p}} \right)$

\noindent
and using (\ref{eq:tiltcontrolledbyhight}) after a suitable translation we find that for any $Y \in L \cap B_{1/2}$

$\displaystyle \int\limits_{B^{n+1}_{\frac{3\tau}{2}}(Y)}\!\!\!|\nabla^V x^{n+1}|^2 d\|V\|(X)$

$\displaystyle \hspace{.5in} \leq C \tau^{n-\mu} \left( \int\limits_{B_1 \times \R}|x^{n+1}|^2 d\|V\|(X) + \left(\int\limits_{B_1 \times \R} |H|^p d\|V\|(X)\right)^{\frac{1}{p}} \right)$ 

$\displaystyle \hspace{2.5in}+ C \tau^{n+2(1-\frac{n}{p})}\left(\int\limits_{B_1 \times \R} |H|^p d\|V\|(X)\right)^{\frac{2}{p}}$
 
$\displaystyle \hspace{1.2in} \leq  C \tau^{n-\mu} \left( \int\limits_{B_1 \times \R}|x^{n+1}|^2 d\|V\|(X) + \left(\int\limits_{B_1 \times \R} |H|^p d\|V\|(X)\right)^{\frac{1}{p}} \right).$

\noindent
We may cover the set $(L)_{\tau} \cap \left( B_{1/2} \times \R \right)$ with balls $B_{\frac{3\tau}{2}}^{n+1}(Y_j)$ for $j=1, ... ,N$ with $N\leq C(n) \tau^{1-n}$ and $Y_j \in L \cap B_{1/2}$ and with a fixed maximal number of balls overlapping (also a constant $C(n)$). Then we can conclude the proof by adding up the previous inequality (written for each $B_{\frac{3\tau}{2}}^{n+1}(Y_j)$) for $j=1, ... N$.
\end{proof}

\textit{\textbf{Easy case of the Minimum Distance Property}}. We next establish the validity of $\B{7}$ in the ``easy case'', namely the case where the coarse blow up $v$ is given by $q$ linear functions on each of the two sides of a hyperplane and on one of the two sides these $q$ linear functions agree, i.e.\ the following:

\begin{lem}
\label{step1} 
Let $v = (v^{1}, v^{2}, \ldots, v^{q})$ be such that 
\begin{equation}\label{linear-blow-up}
v^{j}(y, x^{n}) = L_{1}^{j}(y, x^{n})  \quad {\rm if} \quad  x^{n} <0 \quad {\rm and} \quad  v^{j}(y, x^{n})= L_{2}^{j}(y, x^{n}) \quad {\rm if} \quad x^{n} \geq 0.
\end{equation}
If either $L_{1}^{1} = L_{1}^{2} = \ldots =L_{1}^{q}$ or $L_{2}^{1} = L_{2}^{2} = \ldots =L_{2}^{q}$, then there exists a linear function $L$ such that $v^{j} = L$ for each $j=1, 2, \ldots, q.$
\end{lem}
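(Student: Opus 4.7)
Assume without loss of generality that $L_{1}^{1} = \cdots = L_{1}^{q} = L$ for a common linear $L$ (the other hypothesis being symmetric), so each $v^{j} \equiv L$ on $\{x^{n} < 0\}$. Continuity across $\{x^{n} = 0\}$ — the matching condition $L_{1}^{j}(y,0) = L_{2}^{j}(y,0)$ — together with linearity on each side forces $L_{2}^{j}(y,x^{n}) = L(y,x^{n}) + c_{j}\,x^{n}$ for constants $c_{j}\in\mathbb{R}$, and property $\B{2}$ gives $c_{1}\leq c_{2}\leq\cdots\leq c_{q}$. By $\B{3}$ the average $v_{a} = q^{-1}\sum_{j} v^{j}$ is harmonic on $B_{1}$; since $v_{a}\equiv L$ on the half-ball $\{x^{n}<0\}$, harmonic unique continuation gives $v_{a}\equiv L$ on all of $B_{1}$, and matching across the crease forces $\sum_{j} c_{j} = 0$. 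Applying $\B{5III}$ (after normalizing by $\|v-\ell_{v}\|_{L^{2}(B_{1})}^{-1}$ if necessary) I would further reduce to the case $L \equiv 0$, reducing the lemma to the assertion that $v^{j} = c_{j}(x^{n})_{+}$ with $\sum c_{j} = 0$ and $c_{1}\leq\cdots\leq c_{q}$ implies $c_{j} = 0$ for every $j$.

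For the main step I would invoke property $\B{4}$ at the origin $z=0$. A direct calculation in polar coordinates $x = R_{0}\,\omega$ shows that $(v^{j} - v_{a}(0))/R_{0} = c_{j}(\omega^{n})_{+}$ is homogeneous of degree zero, so the Hardt--Simon integrand in $\B{4I}$ vanishes identically; $\B{4I}$ is therefore trivially satisfied (both sides zero) and yields no information. Consequently one must be in the $\B{4II}$ alternative: $\Delta v = 0$ componentwise on some $B_{\sigma}(0)$. But the distributional Laplacian of $c_{j}(x^{n})_{+}$ equals $c_{j}\,\mathcal{H}^{n-1}\mres\{x^{n} = 0\}$, a nontrivial Radon measure on every ball around the origin unless $c_{j} = 0$. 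Thus $c_{j} = 0$ for each $j$, and the lemma follows.

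The hard part will be verifying that it is indeed the $\B{4II}$ alternative (rather than merely the vacuous $\B{4I}$ one) that obtains at the origin. Tracing through the verification of $\B{4}$ for $\mathcal{B}_{q}$ in the previous subsection, $\B{4II}$ holds at $z=0$ precisely in the ``alternative (1)'' scenario: for some $\sigma > 0$ and all sufficiently large $k$, the approximating varifold $V_{k}$ has no point of density $\geq q$ in $B_{\sigma}^{n+1}(0)$. In that case Theorem~\ref{thm:SS} combined with induction hypothesis (H3) yields a smooth CMC graph decomposition of $V_{k}\res B_{\sigma/2}^{n+1}(0)$ into at most $q-1$ sheets, and $C^{2}$ elliptic estimates on the normalized sheets (together with $\hat{E}_{k}^{-1}\|H_{k}\|_{L^{p}}\to 0$) pass $\Delta v = 0$ to the blow-up limit on $B_{\sigma/2}(0)$. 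The opposite ``alternative (2)'' scenario — density-$q$ points $Z_{k}\in\mathrm{spt}\,\|V_{k}\|$ converging to the origin — will be ruled out by observing that translation invariance of the piecewise-linear structure of $v$ along the crease direction (exploited via $\B{5I}$ applied at points $(y_{0}, 0)\in B_{1/2}$) would force such accumulation at every point of $L := \{x^{n} = x^{n+1} = 0\}\cap B_{1/2}$, thereby triggering the hypothesis of Lemma~\ref{non-concentration} and yielding a bound on the tilt-excess of $V_{k}$ in $(L)_{\tau}$ which, when compared in the blow-up scale with the explicit lower bound $\int_{(L)_{\tau}^{n}\cap B_{1/2}^{n}}|Dv|^{2} \geq C(\sum_{j} c_{j}^{2})\,\tau$ coming from the ``Y''-structure of $v$, contradicts $\sum_{j} c_{j}^{2} > 0$ upon iteration at finer scales using the $1$-homogeneity of $v$ under the rescalings $\B{5I}$.
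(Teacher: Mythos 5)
Your main step contains a logical non-sequitur. Property $(\mathcal{B}\textit{4})$ is a disjunction: for each $z$, either $(\mathcal{B}\textit{4I})$ or $(\mathcal{B}\textit{4II})$ holds. You correctly observe that $(v^{j} - v_{a}(0))/R_{0} = c_{j}(\omega^{n})_{+}$ is radially constant so the left side of the Hardt--Simon inequality vanishes, but the right side is \emph{not} zero --- it equals $C\rho^{-n-2}\sum_{j}c_{j}^{2}\int_{B_{\rho}\cap\{x^{n}>0\}}|x^{n}|^{2}$, which is positive whenever some $c_{j}\neq 0$. Thus $(\mathcal{B}\textit{4I})$ reads $0\leq(\text{positive})$ and is trivially \emph{satisfied}, which means $(\mathcal{B}\textit{4})$ is verified and nothing can be inferred about $(\mathcal{B}\textit{4II})$. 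Put differently: the $\mathcal{B}$-axioms minus $(\mathcal{B}\textit{7})$ are consistent with the piecewise-linear $v$, as the counterexample class in Remark~\ref{oss:B7mindistance} shows, so $(\mathcal{B}\textit{7})$ genuinely cannot be deduced from the other axioms and must be proved at the level of the underlying varifolds. You acknowledge this yourself in the third paragraph, but the salvage does not close either.

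The exponents in your proposed non-concentration comparison do not match. Lemma~\ref{non-concentration}, after dividing by $\hat{E}_{k}^{2}$ and passing to the blow-up limit, yields an \emph{upper} bound $\int_{(L)_{\tau}\cap B_{1/2}}|Dv|^{2}\leq C\tau^{1-\mu}$ with $\mu\in(n/p,1)$, hence $1-\mu\in(0,1)$. The explicit lower bound from $v = c_{j}(x^{n})_{+}$ is $\int_{(L)_{\tau}\cap B_{1/2}}|Dv|^{2}\gtrsim\tau\sum_{j}c_{j}^{2}$. Since $\tau^{1-\mu}\gg\tau$ for small $\tau$, the two bounds are compatible for every $\tau$ no matter how large $\sum c_{j}^{2}$ is, so no contradiction arises. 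Iterating at finer scales via $(\mathcal{B}\textit{5I})$ gains nothing: $v$ is homogeneous of degree one, so $v(\sigma\cdot)=\sigma v$ and the $L^{2}$-normalization returns the identical function --- the picture is scale-invariant.

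The paper avoids both problems by not trying to compare excess magnitudes. It inserts the vector field $\tilde{\zeta}e^{n}$ (with $\tilde{\zeta}$ independent of $x^{n+1}$) into the first variation formula for $V_{k}$, subtracts $\int D_{n}\zeta=0$ to obtain an \emph{identity} quadratic in $Du_{k}$, and then uses Lemma~\ref{non-concentration} only to show that, after dividing by $\hat{E}_{k}^{2}$, the contributions from $\Sigma_{k}$ and from $\{x^{n}\leq\tau\}$ are $O(\tau^{1-\mu})$-errors that vanish as $k\to\infty$ then $\tau\to 0$. What survives is the $C^{1}$-convergent good region $\{x^{n}\geq\tau\}$, giving $\sum_{j}\int_{B_{1/4}\cap\{x^{n}\geq 0\}}|D_{n}v^{j}|^{2}D_{n}\zeta = 0$ for every $\zeta\in C^{1}_{c}(B_{1/4})$; choosing $\zeta$ with $\int_{\{x^{n}\geq 0\}}D_{n}\zeta\neq 0$ forces $\sum c_{j}^{2}=0$. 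The point is that $\tau^{1-\mu}$ is harmless when it estimates an error term in an identity that is then sent to zero, whereas it is useless as a ceiling to compare against the ${\sim}\tau$ content of $|Dv|^{2}$ near the crease. You would do well to abandon the $(\mathcal{B}\textit{4})$-dichotomy route and instead set up the first-variation identity directly.
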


\begin{proof} Assume without loss of generality that $L_{1}^{1} = L_{1}^{2} = \ldots = L_{1}^{q} = 0.$ Note that in view of property $({\mathcal B\emph {3}})$, it suffices to show that $L_{2}^{1} = L_{2}^{2} = \ldots = L_{2}^{q}$, so assume, for a contradiction, that 
$$L_{2}^{j} \neq L_{2}^{j+1} \quad \mbox{for some} \quad j \in \{1, 2, \ldots, q-1\}.
\leqno{\quad\quad(\dag)}$$ 

Let $\{V_{k}\} \subset {\mathcal S}_H$ be a sequence of varifolds whose coarse blow-up is $v$  and let $\tau \in (0, 1/8)$ be arbitrary. Assumption $(\dag)$ implies, by the argument establishing property $({\mathcal B\emph{4}})$ for ${\mathcal B}_{q}$, 
that for all sufficiently large $k$, $Z \in (B_{1} \times {\mathbb R}) \cap \{x^{n} > \tau\} \implies \Theta \, (\|V_{k}\|, Z) < q,$  so by Proposition \ref{Prop:elementaryconsequence} and Theorem~\ref{thm:SS},  it follows 
that $V_{k} \res ((B_{9/16} \times {\mathbb R}) \cap \{x^{n} > \tau/4 \})$ is given by ordered graphs of $C^{1,\alpha}$ functions $u_k^1\leq \ldots \leq u_k^q$ with small gradients. Thus, with $u_{k}^{j}$ giving rise to a coarse blow-up as in (\ref{blow-up-3-1}), we have 
\begin{equation}\label{no-overlap-3}
 V_{k} \res(((B_{9/16} \times {\mathbb R}) \cap \{x^{n} > \tau/4\}) =   \sum_{j=1}^{q} |{\rm graph} \, u_{k}^{j}| \res 
((B_{9/16} \times {\mathbb R}) \cap \{x^{n} > \tau/4\})
\end{equation}
where $u_{k}^{j}$ are $C^{1,\alpha}$
on $B_{9/16} \cap \{x^{n} > \tau/4\}$ 
and satisfy
\begin{equation}\label{no-overlap-4}
\|u_{k}^{j}\|_{C^{1,\alpha}\left({B_{1/2}\cap \{x^{n} >\tau/4\}}\right)}  \leq C_{\tau}{\hat E}_{k}^{2},
\end{equation}
where $C_{\tau}$ is a constant depending only on $n$ and $\tau$.

To derive the necessary contradiction, take now $\psi(X) = \widetilde{\z}(X)e^{n}$ in the first variation formula \cite[16.4]{SimonNotes} to deduce that
\begin{equation}\label{no-overlap-main}
\int \nabla^{V_{k}} \, x^{n} \cdot \nabla^{V_{k}} \,{\widetilde \z}(X) d\|V_{k}\|(X)= \int h_k\, \widetilde{\z}\, \hat{\nu} \cdot e^n \,d\|V_{k}\|(X)
\end{equation}
for each $k=1, 2, \ldots$ and each ${\widetilde \z} \in C^{1}_{c}(B_{1} \times {\mathbb R}).$ Choosing ${\widetilde \z}$ to be independent of
$x^{n+1}$, i.e. $\widetilde \z(x^{\prime}, x^{n+1}) = \z(x^{\prime})$ in a neighborhood of ${\rm spt} \,\|V_{k}\| \cap (B_{1/4}\times {\mathbb R})$, where 
$\z  \in C^{1}_{c}(B_{1/4})$ is arbitrary, this leads to the identity 
\begin{equation}\label{no-overlap-4-1}
\sum_{j=1}^{q}\int_{B_{1/4}}\sqrt{1 + |D{u}_{k}^{j}|^{2}}\left(D_{n}\z - \frac{D_{n}{u}_{k}^{j} (D\z \cdot D{u}_{k}^{j})}{1 + |D{u}_{k}^{j}|^{2}}\right) = F_{k}, \;\;\; \mbox{where}
\end{equation}
\begin{eqnarray*}
F_{k} = -\int_{(B_{1/4} \cap \Sigma_{k})\times {\mathbb R} } \nabla^{V_{k}} \, x^{n} \cdot \nabla^{V_{k}} \,{\widetilde \z}(X) d\|V_{k}\|(X) &&\\
&&\hspace{-3in}+ 
\sum_{j=1}^{q}\int_{B_{1/4} \cap \Sigma_{k}}\sqrt{1 + |D{u}_{k}^{j}|^{2}}\left(D_{n}\z - \frac{D_{n}{u}_{k}^{j} (D\z \cdot D{u}_{k}^{j})}{1 + |D{u}_{k}^{j}|^{2}}\right) 
\end{eqnarray*}
$$+  \int\limits_{B_{1/4} \times {\mathbb R} } h_k\, \widetilde{\z}\, \hat{\nu} \cdot e^n \,d\|V_{k}\|(X)$$
with $\Sigma_{k}$ as in Proposition \ref{Prop:AlmgrenDec}. Since $\int_{B_{1/4}} D_{n} \z = 0$, it follows from (\ref{no-overlap-4-1}), subtracting $\int_{B_{1/4}} D_{n} \z$ on the left-hand side, that 
\begin{equation}\label{no-overlap-4-2}
\sum_{j=1}^{q}\int_{B_{1/4}}\frac{|D{u}_{k}^{j}|^{2}}{1 + \sqrt{1 + |D{u}_{k}^{j}|^{2}}} D_{n}\z - \frac{D_{n}{u}_{k}^{j} (D\z \cdot D{u}_{k}^{j})}{\sqrt{1 + |D{u}_{k}^{j}|^{2}}} = F_{k}.
\end{equation}
It is not difficult to check using (\ref{no-overlap-3}), (\ref{no-overlap-4}) and the definition of $\Sigma_{k}$ that 
\begin{equation}\label{no-overlap-4-2-1}
B_{1/4} \cap \Sigma_{k} \subset B_{1/4} \cap \{x^{n} < \t/2\},
\end{equation}
and also that for all sufficiently large $k$,  (this uses a Besicovitch covering and the construction of $\Sigma$ in the proof of Proposition \ref{Prop:AlmgrenDec}, see \cite[page 892]{WicAnnals} for details)
\begin{eqnarray}\label{no-overlap-4-3}
&&\|V_{k}\|((B_{1/4} \cap \Sigma_{k})\times {\mathbb R} ) + {\mathcal H}^{n}(B_{1/4} \cap  \Sigma_{k})\nonumber\\ 
&&\hspace{.1in}\leq C \!\!\!\!\!\!\!\!\!\!\!\!\int\limits_{(B_{1/2}\times {\mathbb R}) \cap \{x^{n} < \t\} } \!\!\!\!\!\!\!\!\!|\nabla^{V_{k}} \, x^{n+1}|^{2} d\|V_{k}\|(X) \,+ C \, \left(\!\!\!\!\!\!\int\limits_{(B_{1/2}\times {\mathbb R}) \cap \{x^{n} < \t\} }\!\!\! \!\!\!|h_k|^p d\|V_{k}\|(X)\right)^{2/p}
\end{eqnarray}
where $C \in (0, \infty)$ is a fixed constant depending only on $n$ and $q$. Moreover, it follows from the fact that ${\hat E}_{k}^{-1}{u}_{k} \to 0$ in $L^{2}$ on $B_{9/16} \cap \{x^{n} \leq -\t/2\}$ that 
\begin{equation*}
{\hat E}_{k}^{-2}\int_{(B_{9/16} \times {\mathbb R}) \cap \{x^{n} \leq -\t/2\}} |x^{n+1}|^{2}d\|V_{k}\|(X) \to 0
\end{equation*}
and consequently, by (\ref{eq:tiltcontrolledbyhight}) and Remark \ref{oss:comparetwoexcesses}, that 

\begin{equation}\label{no-overlap-2}
{\hat E_{k}}^{-2}\int_{(B_{1/2}\times {\mathbb R}) \cap \{x^{n} \leq -\t\}} |\nabla^{V_{k}} \, x^{n+1}|^{2}d\|V_{k}\|(X) \to 0.
\end{equation}
Note also that by arguing by contradiction with the help of Proposition \ref{Prop:elementaryconsequence} and Theorem~\ref{thm:SS}, it follows from the assumption $(\dag)$ that for all sufficiently large $k$,
\begin{equation}\label{no-overlap-4-1-0}
\{x^n=0\} \cap B_{1/2}  \subset \left(\{Z  \in {\rm spt} \, \|V_{k}\|\, : \, 
\Theta \, (\|V_{k}\|, Z) \geq q\}\right)_{\t},
\end{equation}
and hence Lemma~\ref{non-concentration} is applicable to the $V_{k}$'s. Inequality (\ref{no-overlap-4-3}), Lemma~\ref{non-concentration} (with a choice of $\mu  > \frac{n}{p}$) 
and (\ref{no-overlap-2}) (using that the integrands in the first two terms in the definition of $F_k$ are bounded independently of $k$ by $C(n,q)|D\zeta|$) then imply that
\begin{equation}
\label{no-overlap-5}
{\hat E}_{k}^{-2}|F_{k}| \leq C (\sup \, |D\z| \t^{1-\mu} + {\eps}_k)
\end{equation}
for all sufficiently large $k$, where $C = C(n, q) \in (0, \infty)$.

Using the abbreviation $w_{k} = \sum_{j=1}^{q}\frac{|D{u}_{k}^{j}|^{2}}{1 + \sqrt{1 + |D{u}_{k}^{j}|^{2}}} D_{n}\z - \frac{D_{n}{u}_{k}^{j} (D\z \cdot D{u}_{k}^{j})}{\sqrt{1 + |D{u}_{k}^{j}|^{2}}}$ for the left-hand side of (\ref{no-overlap-4-2}), we have the straightforward estimate (recall Remark \ref{oss:lipschitzgraphtiltexcess})
$$\int_{B_{1/4} \setminus \Sigma_{k} \cap \{x^{n} \leq \t\}} |w_{k}| \leq C \sup \, |D\z| \int_{(B_{1/2}\times {\mathbb R} ) \cap \{x^{2} \leq \t\}} |\nabla^{V_{k}} \, x^{n+1}|^{2} d\|V_{k}\|(X),$$
and by (\ref{no-overlap-4-3})
$$ \int_{B_{1/4} \cap \Sigma_{k}} |w_{k}|  \leq C \sup \, |D\z| \int_{(B_{1/2}\times {\mathbb R}) \cap \{x^{n} \leq \t\}} |\nabla^{V_{k}} \, x^{n+1}|^{2} d\|V_{k}\|(X),$$
where $C = C(n)$, so that again by Lemma~\ref{non-concentration} (with $\mu$ as before) and (\ref{no-overlap-2}), 
\begin{equation}\label{no-overlap-6}
{\hat E_{k}}^{-2} \left(\int_{B_{1/4} \setminus \Sigma_{k} \cap \{x^{n} \leq \t\}} |w_{k}| + \int_{B_{1/4} \cap  \Sigma_{k}} |w_{k}|\right) \leq C\sup \, |D\z|\t^{1-\mu}
\end{equation}
for all sufficiently large $k$, where $C = C(n)$. Finally, by (\ref{no-overlap-4}), which guarantees the $C^1$- convergence to the coarse blow-up, 
\begin{equation}\label{no-overlap-7}
\lim_{k \to \infty} \, {\hat E_{k}}^{-2} \int_{B_{1/4} \cap \{x^{n} \geq \t\}} w_{k} = -\frac{1}{2}\sum_{j=1}^{q}\int_{B_{1/4} \cap \{x^{n} \geq \t\}} |D_{n}v^{j}|^{2}D_{n} \z
\end{equation}
which also uses the fact that $D_{i} v^{j} \equiv 0$ for $i=1, \ldots, (n-1)$ and 
$j= 1, 2, \ldots, q.$ Dividing (\ref{no-overlap-4-2}) by ${\hat E}_{k}^{2}$ and first letting $k \to \infty$ and 
then letting $\t \to 0$ imply, in view of  (\ref{no-overlap-5}), (\ref{no-overlap-6}) and 
(\ref{no-overlap-7}), that 
$$\sum_{j=1}^{q}\int_{B_{1/4} \cap \{x^{n} \geq 0\}} |D_{n}v^{j}|^{2} D_{n}\z = 0$$
for any $\z \in C^{1}_{c}(B_{1/4}).$ Since $v^{j} = L_{2}^{j}$ on $\{x^{n} \geq 0\}$, this contradicts (for any choice of $\z \in C^{1}_{c}(B_{1/4})$ with 
$\int_{B_{1/4} \cap \{x^{n} \geq 0\}} D_{n}\z \neq 0$) our assumption that  $L_{2}^{j} \neq L_{2}^{j+1}$ 
for some $j \in \{1, 2, \ldots, q-1\}$.
\end{proof}

\textit{\textbf{Hard case of the Minimum Distance Property}}. Let $v$ be as in (\ref{linear-blow-up}). The second case of the proof that ${\mathcal B}_{q}$ satisfies property $({\mathcal B\emph{7}})$ is to rule out the possibility that there are at least two distinct linear functions defining $v$ on either side, i.e.~to establish the following\footnote{This lemma is false already in the minimal case $H=0$ if we drop either the stability hypothesis or the no-classical-singularities hypothesis on the varifolds $V_{k}$ giving rise to $v,$ as can be seen by considering in the former case an appropriate sequence of minimal surfaces in the Scherk's family of surfaces and in the latter an appropriate sequence of pairs of transverse planes.}: 
 
\begin{lem}
\label{step2}
Let $v = (v^{1}, v^{2}, \ldots, v^{q})$ be as in (\ref{linear-blow-up}). There exist no indices $i, j \in \{1, 2, \ldots, q-1\}$ such that $L_{1}^{i} \neq L_{1}^{i+1}$ and $L_{2}^{j} \neq L_{2}^{j+1};$ consequently (in view of Lemma~\ref{step1}), $v^{j} = L$ for some linear function $L$ and each $j=1, 2, \ldots, q,$ and ${\mathcal B}_{q}$ satisfies $({\mathcal B\emph{7}})$.
\end{lem}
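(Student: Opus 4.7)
My plan is to argue by contradiction. Suppose there exist indices $i, j \in \{1, \ldots, q-1\}$ with $L_1^i \neq L_1^{i+1}$ and $L_2^j \neq L_2^{j+1}$, and let $\{V_k\} \subset \mathcal{S}_H$ be a sequence whose coarse blow-up is $v$.

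The first step is to extract algebraic constraints by adapting the first-variation computation from the proof of Lemma~\ref{step1}---but now keeping contributions from both half-spaces $\{x^n<0\}$ and $\{x^n\geq 0\}$ active. Plugging test fields $\widetilde\zeta\, e^m$ ($m = 1,\ldots,n$) into the first variation, using Lemma~\ref{non-concentration} to control the tilt-excess concentrated near $\{x^n=0\}$, and dividing by $\hat E_k^2$ before passing to the limit in the analogue of~(\ref{no-overlap-4-2}), integration by parts across $\{x^n = 0\}$ produces the scalar identities
\[
\sum_j a_n^j = \sum_j b_n^j, \qquad \sum_j (a_n^j)^2 = \sum_j (b_n^j)^2,
\]
where $\mathbf{a}^j = DL_1^j$ and $\mathbf{b}^j = DL_2^j$ (the first being also forced by the harmonicity $\B{3}$ of the average). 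The continuity built into the hypothesis of~$\B{7}$ forces $a_m^j = b_m^j$ for $m<n$, while the ordering~$\B{2}$ on each half-space forces the horizontal slopes to be independent of $j$. After applying~$\B{5III}$ and a vertical translation I may normalize to $v^j(y,x^n) = a_n^j x^n + c^j$ on $\{x^n<0\}$ and $v^j(y,x^n) = b_n^j x^n + c^j$ on $\{x^n \geq 0\}$, with $\sum_j c^j = 0$.

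The ordering of the $v^j$'s then yields $a_n^1 \geq \cdots \geq a_n^q$ and $b_n^1 \leq \cdots \leq b_n^q$, each strict at some consecutive pair by the splitting assumption. Combining the two power-sum identities with these opposite orderings---which suffices directly for $q=2$, and for $q \geq 3$ would be extended by an iterative ``peeling'' argument that uses~$\B{5I}$ to rescale about matched pairs and reduce the effective multiplicity---the multisets $\{a_n^j\}$ and $\{b_n^j\}$ must coincide, yielding $a_n^j = b_n^{q+1-j}$. Geometrically, $\bigcup_j \mathrm{graph}(v^j) \subset \mathbb{R}^{n+1}$ is then a union of full affine $n$-planes meeting transversally along $(n-1)$-dimensional subspaces of the form $\{x^n = 0,\ x^{n+1} = c^j\}$.

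The final step---which I expect to be the main obstacle---is to convert this algebraic conclusion into a geometric contradiction for the $V_k$'s. I plan to select a sequence of points $Z_k \in \mathrm{spt}\|V_k\|$ of density $\geq q$ converging to an interface point at which distinct slopes meet, and rescale about $Z_k$ at suitable scales $\rho_k \to 0$ so that $\eta_{Z_k,\rho_k\,\sharp} V_k \in \mathcal{S}_H$ has supports approaching a stationary integral cone $\mathbf{C}$ consisting of at least three $n$-dimensional half-hyperplanes meeting along a common $(n-1)$-subspace, with $\Theta(\|\mathbf{C}\|,0) \leq q$, in Hausdorff distance at unit scale. The inductive Minimum Distance Theorem~(H2)---applicable precisely at these densities---then supplies a fixed positive lower bound on this Hausdorff distance, giving the required contradiction. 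Executing this properly requires exploiting the no-classical-singularities hypothesis on the $V_k$'s (ruling out the transverse crossings at any finite scale of $V_k$ itself) and the inductive Higher Regularity Theorem~(H3) together with Allard's theorem to supply the $C^2$ structure needed to justify the rescaling. In combination with Lemma~\ref{step1}, this establishes property~$\B{7}$ for $\mathcal{B}_q$.
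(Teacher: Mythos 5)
Your endgame is correct in outline---produce, by rescaling the $V_k$'s about points of density $\geq q$, a sequence converging to a non-planar classical cone and invoke the inductive Minimum Distance hypothesis (H2)---and this is indeed exactly what the paper does (the contradiction comes from (\ref{convergence})). However, you flag the rescaling step as ``the main obstacle,'' and that is precisely where the proposal has a genuine gap: you offer no mechanism for choosing the scales $\rho_k$ and rotations so that $\eta_{Z_k,\rho_k\,\#}V_k$ actually converges to a classical cone rather than collapsing again onto a multiplicity plane. The coarse blow-up $v$ only controls $V_k$ at unit scale, and between the coarse scale and the classical-cone scale the varifolds may degenerate in ways that the coarse information cannot see. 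Overcoming this is the bulk of the paper's argument: it requires the \emph{fine} blow-up machinery built around the cone class $\mathcal{C}_q$ and the fine excess $Q_V(\mathbf{C})$, specifically the a priori estimates of Lemma~\ref{L2-est-1} (graphical decomposition away from the axis with absolute-value-of-distance identity, a Hardt--Simon style weighted estimate, the no-$\delta$-gaps condition, and non-concentration near the axis), the fine blow-up class $\mathcal{B}_q^F$ and its $C^2$ boundary regularity, and the multi-scale excess decay of Lemma~\ref{multi-scale} (proved via Lemma~\ref{intermediate} using Hypotheses $(\star)$ and $(\star\star)$) which is iterated indefinitely to produce the controlled scales $\sigma_j\to 0$ and rotations $\Gamma_j$ in (\ref{convergence}). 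None of this is reproducible from the sketch given.

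Separately, your ``algebraic first step''---deriving the power-sum identities $\sum_j a_n^j=\sum_j b_n^j$, $\sum_j(a_n^j)^2=\sum_j(b_n^j)^2$ and concluding via a peeling argument that the multisets of slopes agree---is not part of the paper's proof, and is not needed: the machinery shows the rescalings converge to \emph{some} cone $\mathbf{W}\in\mathcal{C}_q$ supported on at least four distinct half-hyperplanes, without ever establishing that the coarse blow-up $v$ is a union of full transverse planes. Moreover that step is not obviously complete on its own terms: the ``iterative peeling'' for $q\geq 3$ invoking $(\mathcal{B}\emph{5I})$ is left entirely unjustified, and even if it went through, knowing the limiting cone of the coarse blow-up is a union of planes is still several steps removed from a contradiction, since you would still need the fine decay to prevent the $V_k$'s from degenerating at small scales. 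So while the normalization (\ref{average})--(\ref{normalize}) and the reduction to finding a classical cone match the paper, the core quantitative argument that makes the rescaling converge is missing.
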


Recall that this is a lemma for Theorem \ref{coarse}, so we are working under those assumptions. To describe the basic idea of the proof of Lemma~\ref{step2}, let $v \in {\mathcal B}_{q}$ and let the linear functions $L_{1}^{j}, L_{2}^{j}\, : \, {\mathbb R}^{n} \to {\mathbb R}$, $1\leq j \leq q,$ be as defined in (\ref{linear-blow-up}), and assume, contrary to  the assertion of Lemma~\ref{step2}, that we have for some $i, j \in \{1, 2, \ldots, q\}$, 
\begin{equation}\label{distinct}
L_{1}^{i} \neq L_{1}^{i+1} \quad \mbox{and}  \quad L_{2}^{j} \neq L_{2}^{j+1}. 
\end{equation}
In view of properties $({\mathcal B\emph{3}})$ and $({\mathcal B\emph{5}})$, we may assume without loss of generality that 
\begin{equation}\label{average}
\sum_{j=1}^{q} L_{k}^{j} = 0 \quad \mbox{for $k=1, 2$,} \quad \mbox{and that}
\end{equation}
\begin{equation}\label{normalize}
\|v\|^{2}_{L^{2}(B_{1})} \left(= \sum_{j=1}^{q} \|L_{1}^{j}\|^{2}_{B_{1} \cap \{x^{n} < 0\}} + \|L_{2}^{j}\|^{2}_{B_{1} \cap \{x^{n} \geq 0\}}\right) = 1.
\end{equation}
Let  $\{V_{k}\} \subset {\mathcal S}_{H_0}$ be a sequence of varifolds whose coarse blow-up is $v$. The proof of Lemma~\ref{step2} is accomplished by showing that 
under the hypotheses (\ref{distinct}), (\ref{average}), (\ref{normalize}), one can find $k$ sufficiently large such that there exists a point $Z_{k} \in {\rm sing} \, V_{k}$ with the property that a suitable sequence of re-scalings followed by rotations of $V_{k}$ about $Z_{k}$ converges to a cone supported on at least four distinct half-hyperplanes meeting along a common boundary, contradicting directly the induction hypothesis $(H2)$. 

\medskip

We will use the following notation for the rest of this subsection, where we shall explain in more detail how this contradiction is obtained. 

\noindent
 {\bf (N1)} Let ${\mathcal C}_{q}$ denote the set of hypercones ${\mathbf C}$ of ${\mathbb R}^{n+1}$ of the form 
${\mathbf C} = \sum_{j=1}^{q} |H_{j}| + |G_{j}|,$ where for each $j \in \{1, 2, \ldots, q\}$, $H_{j}$ is the half-hyperplane defined by $H_{j} = \{x^{n} < 0 \;\; \mbox{and} \;\; x^{n+1} = \lambda_{j}x^{n}\},$ 
$G_{j}$ the half-hyperplane defined by $G_{j} = \{x^{n} > 0 \;\; \mbox{and} \;\; x^{n+1} = \mu_{j}x^{n}\},$ with $\lambda_{j}, \mu_{j}$ constants,  
$\lambda_{1} \geq \lambda_{2} \geq \ldots \geq \lambda_{q}$ and $\mu_{1} \leq \mu_{2} \leq \ldots \leq \mu_{q}.$ We do \emph{not} assume that the cones in ${\mathcal C}_{q}$ are stationary in ${\mathbb R}^{n+1}.$

\noindent
{\bf (N2)} For $V \in {\mathcal{S}_H}$ and ${\mathbf C} \in {\mathcal C}_{q}$ define a height excess (``\textbf{fine excess}'') $Q_{V}({\mathbf C})$ of $V$ relative to ${\mathbf C}$ by
\begin{eqnarray*}
Q_{V}({\mathbf C})  = \left(\int_{(B_{1/2} \setminus \{|x^{n}| < 1/16\}) \times {\mathbb R}} {\rm dist}^{2}(X, {\rm spt} \, \|V\|) \,d\|{\mathbf C}\|(X)\right.&&\nonumber\\ 
&&\hspace{-3in}+ \; \left.\int_{B_{1}\times {\mathbb R}} {\rm dist}^{2} \, (X, {\rm spt}\, \|{\mathbf C}\|) \, d\|V\|(X)\right)^{1/2}. 
\end{eqnarray*}

Let ${\mathbf C}_{k} \in {\mathcal C}_{q}$ be the cone corresponding to ${\hat E}_{V_{k}} v$, i.e. 
$${\mathbf C}_{k} = \sum_{j=1}^{q} |{\rm graph} \, {\hat E}_{V_{k}}L_{1}^{j}| \res \{x^{n} < 0\} + |{\rm graph} \, {\hat E}_{V_{k}}L_{2}^{j}|\res\{x^{n} \geq 0\}.$$ 

It is possible to verify with the help of Proposition \ref{Prop:elementaryconsequence} and Theorem~\ref{thm:SS} that for any given $\eps, \gamma \in (0, 1)$, after passing to appropriate  subsequences of $\{V_{k}\}$ and $\{{\mathbf C}_{k}\}$ without changing notation, there exist points $Z_{k} \in {\rm spt} \, \|V_{k}\|$ with $\Theta \, (\|V_{k}\|, Z_{k}) \geq q$ and $Z_{k} \to 0$ such that for each $k$, Hypotheses~\ref{hyp} below  are satisfied with $\widetilde{V}_{k} \equiv \eta_{Z_{k}, 2(1 - |Z_{k}|) \, \#} \, V_{k}$ in place of $V$ and ${\mathbf C}_{k}$ in place of ${\mathbf C}$ (see the proof of \cite[Corollary 14.2]{WicAnnals} for details). Note that ${\hat E}_{{\widetilde V}_{k,\eps_k,1}} \to 0,$ and that the coarse blow-up of the sequence $\{\widetilde V_{k}\}$ (relative to $\{{\mathbf C}_{k}\}$) is still $v$. 

\begin{hyp}[Contradiction Hypotheses]\label{hyp}
\begin{itemize}
\item[]
\item[(1)] $V \in {\mathcal S}_H$, \; $\Theta \, (\|V\|, 0) \geq q$, \;$(\omega_{n}2^{n})^{-1}\|V\|(B_{2}^{n+1}(0)) < q + 1/2$, \; $\omega_{n}^{-1}\|V\|( B_{1}{\times \mathbb R}) < q + 1/2.$ 
\item[(2)] ${\mathbf C}  \in {\mathcal C}_{q}.$ 
\item[(3)] ${\hat E}_{V,\eps,1}^{2} \equiv \int_{B_{1} \times {\mathbb R}} |x^{n+1}|^{2} d\|V\|(X)+ \frac{1}{\eps}\left(\int_{B_{1} \times {\mathbb R}} |h|^{p} d\|V\|(X) \right)^{\frac{1}{p}} < \eps.$
\item[(4)] $\{Z \, : \, \Theta \, (\|V\|, Z) \geq q\} \cap \left((B_{1/2} \setminus \{|x^{n}| < 1/16\})\times {\mathbb R}  \right) = \emptyset.$
\item[(5)] $Q_{V}^{2}({\mathbf C}) < \gamma {\hat E}_{V,\eps,1}^{2}$.
\end{itemize} 
\end{hyp}

One can also check, in view of (\ref{average}) and (\ref{normalize}), that passing to a subsequence of $\{{\widetilde V}_{k}\}$ without changing notation, the following hypothesis is satisfied with $M=1$ and with ${\widetilde V}_{k}$ in place of $V$:

  \medskip
  
\noindent
{\bf Hypothesis ($\star$)}:
\begin{equation*}
{\hat E}_{V,\eps,1}^{2} < \frac{3}{2}M\inf_{\{P = \{x^{n+1} = \lambda x^{n}\} \in G_{n} \, : \, \lambda \in {\mathbb R}\}} \, \int_{B_{1}\times \R} {\rm dist}^{2} \, (X, P) d\|V\|(X).
\end{equation*}
The contradiction will be found by iteratively applying Lemma \ref{multi-scale} below, starting with $V = \widetilde{V}_{k}$ and ${\mathbf C} = {\mathbf C}_{k}$ for suitably large fixed $k$, to produce a sequence of numbers $\s_{j} \to 0$, a sequence of rotations $\G_{j}$ of ${\mathbb R}^{n+1}$ and a cone ${\mathbf W}$ supported on four or more \emph{distinct} half-hyperplanes meeting along a common axis and with $\Theta \, (\|{\mathbf W}\|, 0)  = q$ such that  
\begin{equation}\label{convergence}
\eta_{\s_{j} \, \#} \G_{j \, \#}\widetilde{V_{k}} \to {\mathbf W} 
\end{equation}
as varifolds as $j \to \infty$, directly contradicting the induction hypothesis $(H2)$ as desired.  In this lemma (upon which we comment in Remark \ref{oss:multiscale}) and subsequently, $M_{0}$ denotes a certain explicit constant $>1$ that depends only on $n$ and $q$. 

\begin{lem}\label{multi-scale} 
Let $q \geq 2$ be an integer. For $j=1, 2, \ldots, 2q-3,$ let $\th_{j} \in (0, 1/4)$ be such that $\th_{1} > 8\th_{2} > 64\th_{3} > \ldots >8^{2q- 4}\th_{2q-3}$. 
There exist numbers $$\eps = \eps(n, q,  \th_{1}, \th_{2}, \ldots, \th_{2q-3}) \in (0, 1/2),$$ 
$$\gamma= \gamma(n, q, \th_{1}, \th_{2} \, \ldots, \th_{2q-3}) \in (0, 1/2)$$ such that if the varifold $V$ and the cone ${\mathbf C}$ satisfy Hypotheses~\ref{hyp} and Hypothesis~($\star$) with  $M = M_{0}$, and if the induction hypotheses $(H1)$, $(H2)$, $(H3)$ hold,  
then there exist an orthogonal rotation $\G$ of ${\mathbb R}^{n+1}$ and a cone ${\mathbf C}^{\prime} \in {\mathcal C}_{q}$ satisfying
\begin{enumerate}
 \item[(a)]  $\displaystyle |e_{n+1} - \Gamma(e_{n+1})|^2 \leq \kappa \left(Q_{V}^2({\mathbf C})+\frac{1}{\eps} \left(\int\limits_{B_1 \times \R}|h|^p d\|V\|\right)^{\frac{1}{p}}\right), $ 
 
 $\displaystyle |e_{j} - \Gamma(e_{j})|^2 \leq \kappa {\hat E}_{V,\eps,1}^{-2}\left(Q_{V}^2({\mathbf C})+\frac{1}{\eps} \left(\int\limits_{B_1 \times \R}|h|^p d\|V\|\right)^{\frac{1}{p}}\right)$ 
 
 for each  $j=1, 2, \ldots, n;$
 
 \item[(b)] $\displaystyle {\rm dist}_{\mathcal H}^{2} \, ({\rm spt} \, \|{\mathbf C}^{\prime}\| \cap (B_{1} \times {\mathbb R}), {\rm spt} \, \|{\mathbf C}\| \cap (B_{1} \times {\mathbb R}))$ 
 
 $\displaystyle \hspace{2in} \leq C_{0}\left(Q_{V}^2({\mathbf C})+\frac{1}{\eps} \left(\int\limits_{B_1 \times \R}|h|^p d\|V\|\right)^{\frac{1}{p}}\right)$
\end{enumerate}

and the following for some $j \in \{1, 2, \ldots, 2q-3\}:$
  
\begin{enumerate}
 \item[(c)] $\displaystyle \th_{j}^{-n-2}\int_{\G\left(\left(B_{\th_{j}/2} \setminus \{|x^{n}| \leq \th_{j}/16\}\right) \times {\mathbb R}\right)} {\rm dist}^{2} \, (X, {\rm spt} \, \|V\|) \, d\|\G_{\#} \, {\mathbf C}^{\prime}\|(X)$

$\displaystyle  \hspace{.5in} + \; \th_{j}^{-n-2}\int_{\G(B_{\th_{j}} \times {\mathbb R})} {\rm dist}^{2} \, (X, {\rm spt} \, \|\G_{\#} \, {\mathbf C}^{\prime}\|) \, d\|V\|(X)$ 

 $\hspace{1in}+\theta_j^{1-\frac{n}{p}} \frac{1}{\eps} \left(\int_{\G(B_{\th_{j}} \times {\mathbb R})}|h|^p \, d\|V\|(X)\right)^{1/p}$ 
 
 $\displaystyle \hspace{1.8in} \leq \nu_{j}\th_{j}^{1-\frac{n}{p}}\left(Q_{V}^2({\mathbf C})+\frac{1}{\eps} \left(\int\limits_{B_1 \times \R}|h|^p d\|V\|\right)^{\frac{1}{p}}\right);$

 \item[(d)] $\displaystyle \left(\th_{j}^{-n-2}\int_{B_{\th_{j}} \times {\mathbb R}}  {\rm dist}^{2}\,(X, P) \, d\|\G^{-1}_{\#} \,V\|(X)\right)^{1/2}$
 
 $\displaystyle \hspace{.5in}\geq C_{1}\,{\rm dist}_{\mathcal H}\, ({\rm spt} \, \|{\mathbf C}\| \cap (B_{1} \times {\mathbb R}), P \cap (B_{1} \times {\mathbb R}))$ 

$\displaystyle \hspace{2in} - C_{2}\left(Q_{V}^2({\mathbf C})+\frac{1}{\eps} \left(\int\limits_{B_1 \times \R}|h|^p d\|V\|\right)^{\frac{1}{p}}\right)^{\frac{1}{2}};$
 
 for any $P \in G_{n}$ of the form $P = \{x^{n+1} = \lambda x^{n}\}$ for some $\lambda \in (-1, 1);$
 
 \item[(e)] $\displaystyle \{Z \, : \, \Theta \, (\|\G_{\#}^{-1} \, V\|, Z) \geq q\} \setminus \left((B_{\th_{j}/2} \cap \{|x^{n}| < \th_{j}/16\}) \times {\mathbb R}\right) = \emptyset;$
 
 \item[(f)] $\displaystyle \left(\omega_{n}\th_{j}^{n}\right)^{-1}\|\G_{\#}^{-1} \, V\|(B_{\th_{j}} \times {\mathbb R}) < q + 1/2.$
\end{enumerate}  
     
Here $C_{1} = C_{1}(n)$; the constants $\kappa, C_{0}, C_{2} \in (0, \infty)$ depend only on $n$ in case $q=2$ and only on $n$, $q$ and $\th_{1}, \th_{2}, \ldots, \th_{2q-4}$ in case $q \geq 3$; $\nu_{1} = \nu_{1}(n, q);$ and, 
in case $q \geq 3,$  $\nu_{j} = \nu_{j}(n, q,  \th_{1}, \ldots, \th_{j-1})$ for each $j=2, 3, \ldots, 2q-3.$ In particular, $\nu_{j}$ is independent of $\th_{j}, \th_{j+1}, \ldots, \th_{2q-3}$ for each $j = 1, 2, \ldots, 2q-3$. 
\end{lem}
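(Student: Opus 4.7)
My plan is to argue by contradiction, following the inductive blow-up scheme of \cite{WicAnnals} but with modifications to accommodate the CMC error term. Suppose the conclusion fails for the fixed scales $\theta_1 > 8\theta_2 > \cdots > 8^{2q-4}\theta_{2q-3}$: then there exist sequences $\gamma_k, \epsilon_k \to 0$, varifolds $V_k$ and cones $\mathbf{C}_k \in \mathcal{C}_q$ satisfying Hypotheses~\ref{hyp} and ($\star$) with $M=M_0$ for which no pair $(\Gamma, \mathbf{C}')$ yields all of (a)--(f). Set $E_k := \hat{E}_{V_k, \epsilon_k, 1}$ and $Q_k := Q_{V_k}(\mathbf{C}_k)$; by Hypothesis~(5), $Q_k^2/E_k^2 \to 0$, while by ($\star$), $E_k$ is comparable to the $L^2$ distance of $V_k$ to any hyperplane through $0$.

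The first main step is to construct a \emph{fine blow-up}. By Hypothesis~(4), $(H1)$, $(H3)$, Proposition~\ref{Prop:elementaryconsequence}, and Theorem~\ref{thm:SS}, away from $\{|x^n| < 1/16\}$ the varifold $V_k$ decomposes into $q$ weakly ordered $C^{1,\alpha}$ sheets over each half-hyperplane of $\mathbf{C}_k$, with norms controlled by $E_k$. Rescaling these sheets by $Q_k^{-1}$ and subtracting the affine functions defining $\mathbf{C}_k$ yields, after extraction, limits $w^{\pm,j}$ on $B_1 \cap \{\pm x^n > 0\}$. Since $\|H_{V_k}\|_{L^p}^{1/p}/Q_k \to 0$ (because $E_k^{-2}\epsilon_k^{-1}\|H_{V_k}\|_{L^p}^{1/p}\leq 1$ and $Q_k/E_k\to 0$), the mean curvature term is absorbed when passing the first variation formula to the limit, and each $w^{\pm,j}$ is harmonic and inherits a Hardt--Simon inequality at the origin, derived exactly as in \S\ref{HardtSimon}. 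Lemma~\ref{non-concentration} (with $\mu > n/p$) then controls the trace of $w$ on $\{x^n=0\}$ and matches the piecewise-linear structure of the $\mathbf{C}_k$ to a limiting piecewise-linear boundary datum.

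The linear theory of Theorem~\ref{harmonic}, combined with standard Campanato-type decay for harmonic functions with matched affine boundary data on a hyperplane, then produces a new cone $\mathbf{C}'_\infty \in \mathcal{C}_q$ and a rotation $\Gamma_\infty$ that achieves a definite geometric reduction of the relevant excess at \emph{some} scale $\theta_j$; the need to allow $2q-3$ candidate scales reflects the $2q-3$ possible slope-degeneracy patterns in $\mathbf{C}_k$ (several slopes on one side colliding in the limit), forcing an inductive case analysis on which successive pair of slopes first closes up. Pulling back to $V_k$ for large $k$ and invoking elementary consequences of Allard's theorem and of Hypotheses~(1)--(4) yields the conclusions (a), (b), (d), (e), (f), while the linear decay together with the scaling of $\int|h|^p$ yields the $\theta_j^{1-n/p}$ factor in (c). This contradicts the standing assumption and proves the lemma.

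The principal obstacle is the mean curvature error: the quantity $\epsilon^{-1}(\int|h|^p)^{1/p}$ scales as $\theta^{1-n/p}$, not $\theta^2$, under rescaling, so the decay in (c) is strictly worse than in \cite{WicAnnals}. Ensuring this lower-order contribution remains subordinate to the main $Q_V^2$ term through every iteration of the lemma is precisely what dictates the geometric ratio $\theta_j > 8\theta_{j+1}$ and the normalization $1/\epsilon$ in $\hat{E}_{V,\epsilon,1}$; it is also the ultimate source of the restriction $\alpha < 1/2$ in Theorem~\ref{thm:sheeting}. A secondary technical point is that the two bounds in (a) have different normalizations ($|e_{n+1}-\Gamma e_{n+1}|^2$ without, and $|e_j-\Gamma e_j|^2$ with, the prefactor $\hat{E}_{V,\epsilon,1}^{-2}$): this separation must be tracked carefully in the fine blow-up, since the vertical tilt is governed by the cone-to-cone distance $Q_k$, whereas the rotation within the tangent plane is governed only by the absolute height $E_k$.
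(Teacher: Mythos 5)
Your proposal captures the correct flavor of the argument (a fine blow-up, harmonicity of the limits, Hardt--Simon inequality, boundary regularity), and you correctly identify the slope-degeneracy phenomenon as the reason for allowing $2q-3$ candidate scales. However, there is a genuine structural gap: you attempt to run a \emph{single} direct contradiction/blow-up argument for Lemma~\ref{multi-scale} itself, whereas the paper's proof is necessarily a two-step reduction, and the intermediate step cannot be bypassed.

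The difficulty is that the fine-blow-up machinery you invoke --- the ordered $C^{1,\alpha}$ sheet decomposition with the property that the height of each sheet \emph{equals} the distance to $\mathrm{spt}\,\|\mathbf{C}\|$, the non-concentration of excess near the axis, the Hardt--Simon inequality, and ultimately the harmonicity and boundary regularity of $(\varphi,\psi)$ --- is established in Lemma~\ref{L2-est-1} \emph{only} under the extra nondegeneracy Hypothesis~($\star\star$). If along your contradiction sequence the cones $\mathbf{C}_k$ have two slopes on one side collapsing towards each other (i.e.\ $Q_{V_k}^2(\mathbf{C}_k)$ is much larger than $Q_{V_k}^\star(p-1)$ in the notation of (N4)), then the graphical decomposition of Lemma~\ref{L2-est-1}(a) degenerates: a point over one half-hyperplane of $\mathbf{C}_k$ can be closer to a different half-hyperplane, so the identity $\mathrm{dist}(\cdot,\mathrm{spt}\,\|\mathbf{C}_k\|) = (1+\lambda_j^2)^{-1/2}|u_j|$ fails and the rescaled sheets $Q_k^{-1}u_j^{(k)}$ need not converge. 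Your proposed blow-up limit $w^{\pm,j}$ may therefore not exist, let alone be harmonic, and Lemma~\ref{non-concentration} cannot be applied to control the trace on $\{x^n=0\}$.

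What the paper actually does is split the argument: first prove Lemma~\ref{intermediate}, which gives decay at a \emph{single} prescribed scale $\theta$ and has Hypothesis~($\star\star$) among its hypotheses (so the blow-up is well-posed); then deduce Lemma~\ref{multi-scale} from Lemma~\ref{intermediate} by a purely combinatorial case analysis indexed by the number $p$ of distinct half-hyperplanes in $\mathbf{C}$. At scale $\theta_1$, either ($\star\star$) holds with $\beta(\theta_1)$ (apply Lemma~\ref{intermediate} and stop) or it fails, in which case one \emph{replaces} the cone by a $\mathbf{C}^{(1)}\in\mathcal{C}_q(p-1)$ with $Q_V^2(\mathbf{C}^{(1)})\le Q_V^2(\mathbf{C})/\beta(\theta_1)$ and passes to scale $\theta_2$; and so on, for at most $2q-3$ steps (by the final step the cone lies in $\mathcal{C}_q(4)$, where ($\star\star$) holds automatically via alternative (i)). This is also the mechanism that produces the crucial constant structure --- $\nu_j = \nu_j(n,q,\theta_1,\ldots,\theta_{j-1})$ and in particular $\nu_j$ \emph{independent} of $\theta_j,\ldots,\theta_{2q-3}$, with the factor $1/\beta(\theta_{j-1})$ absorbed into $\nu_j$ at each replacement --- which a single-shot contradiction argument where all $\theta_j$ are fixed at the outset cannot deliver. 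Without this independence the iterative scale-selection in Remark~\ref{oss:multiscale} breaks down. So while your intuition about ``which successive pair of slopes first closes up'' is exactly right, you have not actually implemented the reduction to the nondegenerate case, and the blow-up step as written will fail along degenerating subsequences.
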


\begin{oss}
\label{oss:multiscale}
Allowing multiple scales is a natural idea used in \cite{WicAnnals} to overcome certain issues arising from the presence of higher multiplicity.
In view of the fact that for each $j\in \{1, 2, \ldots, 2q-3\}$ the constant $\nu_{j}$ in the lemma is independent of the scales $\th_{j}, \th_{j+1}, \ldots, \th_{2q-3},$ it is possible to choose the scales 
$\th_{1}, \th_{2}, \ldots, \th_{2q-3}$ depending only on $n$ and $q$ such that $\nu_{j}\th_{j}^{2} < 4^{-1}$ for each $j=1, 2, \ldots, 2q-3.$ 
With this choice of scales and under the hypotheses of the lemma, part (c) says that the excess (fine height excess plus the mean curvature term) of suitably rotated and rescaled $V$ relative to the new cone $\in {\mathcal C}_{q}$ improves by a factor $4^{-1}$ at each iteration of the lemma.  Parts (b), (d), (e), (f) of the conclusion imply that the original hypotheses (i.e.\ Hypothesis~\ref{hyp} and Hypothesis~($\star$)) are satisfied at the new scale, so the lemma can be iterated indefinitely to produce controlled scales $\s_{j} \to 0$, rotations $\G_{j}$ and 
a non-planar cone ${\mathbf W} \in {\mathcal C}_{q}$ such that the desired conclusion (\ref{convergence}) holds.
\end{oss}

The first step in proving Lemma~\ref{multi-scale} is to reduce it to a result (Lemma~\ref{intermediate} below) in the same spirit but with stronger hypotheses and stronger conclusions; specifically, to show that the conclusions of Lemma~\ref{multi-scale} hold at a fixed given \emph{single} scale $\th \in (0,  1/4)$ (rather than at one of a finite set of scales
$\{\th_{1}, \th_{2}, \ldots, \th_{2q-3}\}$ allowed in Lemma~\ref{multi-scale}) provided we make one additional hypothesis. To describe this hypothesis, it is convenient to use the following notation: 

\medskip

\noindent
{\bf (N3)} For $p \in \{2, 3, \ldots, 2q\}$, let ${\mathcal C}_{q}(p)$ denote the set of  hypercones ${\mathbf C}  = \sum_{j=1}^{q} |H_{j}| + |G_{j}| \in {\mathcal C}_{q}$ as defined in {\bf (N1)} above such that the number of {\em distinct} half-hyperplanes in the set 
$$\{H_{1}, \ldots, H_{q}, G_{1}, \ldots, G_{q}\}$$ 
is $p$. Then, of course, ${\mathcal C}_{q} = \cup_{p=2}^{2q} \, {\mathcal C}_{q}(p).$

\noindent
{\bf (N4)} For $q \geq 2$ and $p \in \{4, \ldots, 2q\}$, let 
$$Q_{V}^{\star}(p) = \inf_{{\mathbf C} \in \cup_{k=4}^{p} {\mathcal C}_{q}(k)} \, Q_{V}({\mathbf C})$$
where $Q_{V}({\mathbf C})$ is as defined in {\bf (N2)} above.

\medskip

With this notation, consider  for small $\beta \in (0, 1)$ to be chosen depending only on $n$ and $q$ and for $V \in {\mathcal S}_H$ the following:

\medskip

\noindent
{\bf Hypothesis ($\star\star$)}: \emph{Either
\begin{itemize}
\item[(i)] ${\mathbf C} \in {\mathcal C}_{q}(4)$ or
\item[(ii)] $q \geq 3$, ${\mathbf C} \in {\mathcal C}_{q}(p)$ for some $p \in \{5, \ldots, 2q\}$, $Q_{V}^{2}({\mathbf C}) < \beta \left(Q_{V}^{\star}(p-1)\right)^{2}.$
\end{itemize}}

The idea is that under Hypotheses~\ref{hyp} \emph{and} Hypothesis~($\star\star$) for suitably small $\eps$, $\gamma$ and $\beta$, the sheets of $V$ away from a small tubular neighborhood $T$ of the singular axis of ${\mathbf C}$ organize themselves as sets of ordered graphs over the distinct half-hyperplanes making up ${\rm spt} \, \|{\mathbf C}\|,$ with the absolute value of the height function defining the graphs equal to the distance to ${\rm spt} \, \|\mathbf C\|$. (See conclusion (a) of Lemma~\ref{L2-est-1} below.) This prevents in a strong way the possibility of concentration of the fine excess $Q_{V}({\mathbf C})$ in any small region outside $T$.

With Hypothesis ($\star\star$) in place in addition to the other hypotheses as in Lemma~\ref{multi-scale}, the claim now is the following:

\begin{lem}
\label{intermediate}
For any given $\th \in (0, 1/4)$, there exists 
$\eps = \eps(n, q, \th) \in (0, 1/2)$, $\gamma = \gamma(n, q, \th) \in (0, 1/2)$, $\beta = \beta(n, q, \th) \in (0, 1/2)$ such that if $V \in {\mathcal S}_H$, ${\mathbf C} \in {\mathcal C}_{q}$ satisfy Hypotheses~\ref{hyp}, Hypothesis~($\star$) with $M = M_{0}^{2}$ and Hypothesis~($\star\star$), and if the induction hypotheses $(H1)$, $(H2)$, $(H3)$ hold, then there exist an orthogonal rotation $\Gamma$ of ${\mathbb R}^{n+1}$ and a cone ${\mathbf C}^{\prime} \in {\mathcal C}_{q}$ such that 

conclusions (a) and (b) of Lemma~\ref{multi-scale} hold with the same formulation and with $\kappa$ depending only on $n$, $q$ and $\th$, 

conclusions (d), (e) and (f) of Lemma~\ref{multi-scale} hold with $\theta$ in place of $\theta_j$ and with $C_{1} = C_{1}(n)$ and $C_{0}$, $C_{2}$ depending only on $n$, $q$ and $\th$, 

and moreover

 $\displaystyle \th^{-n-2}\int_{\G\left(\left(B_{\th /2} \setminus \{|x^{n}| \leq \th /16\}\right) \times {\mathbb R}\right)} {\rm dist}^{2} \, (X, {\rm spt} \, \|V\|) \, d\|\G_{\#} \, {\mathbf C}^{\prime}\|(X)$

$\displaystyle \hspace{1in} + \; \th^{-n-2}\int_{\G(B_{\th} \times {\mathbb R})} {\rm dist}^{2} \, (X, {\rm spt} \, \|\G_{\#} \, {\mathbf C}^{\prime}\|) \, d\|V\|(X)$
 
$\displaystyle \hspace{2in} \leq \nu\th^{2}\left(Q_{V}^2({\mathbf C})+\frac{1}{\eps} \left(\int\limits_{B_1 \times \R}|h|^p d\|V\|\right)^{\frac{1}{p}}\right)$

where $\nu = \nu(n, q)$.
\end{lem}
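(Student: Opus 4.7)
The proof should proceed by a contradiction-compactness argument that linearizes the CMC equation around the reference cone $\mathbf{C}$, following the overall scheme of the analogous step in \cite{WicAnnals} but with care to handle the mean-curvature term. First, under Hypothesis $(\star\star)$ together with $(H1)$, $(H2)$, $(H3)$ and Theorem~\ref{thm:SS}, I would establish a graphical decomposition: over each distinct half-hyperplane $H$ of $\mathrm{spt}\,\|\mathbf{C}\|$, the restriction of $V$ to the corresponding region $\{x \in B_{1/2} : \mathrm{dist}(x, H) \leq 1/16\} \setminus \{|x^n| < 1/16\}$ splits as $q_H$ weakly-ordered $C^{1,\alpha}$ graphs $u^{(H,j)}$ over $H$ with $C^{1,\alpha}$-norm controlled by $E_V := \bigl(Q_V^2(\mathbf{C}) + \tfrac{1}{\eps}(\int_{B_1\times\R}|h|^p\,d\|V\|)^{1/p}\bigr)^{1/2}$. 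The role of Hypothesis~$(\star\star)$ here is precisely to prevent the fine excess from concentrating in any one region (via the strict minimality expressed through $Q_V^*(p-1)$), so that the graphical decomposition is valid up to a thin neighborhood of the common axis $\{x^n=0\}$.

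Suppose the conclusion fails. Choose sequences $V_k \in \mathcal{S}_H$ and $\mathbf{C}_k \in \mathcal{C}_q$ with $\eps_k,\gamma_k,\beta_k \to 0$ such that no admissible pair $(\Gamma_k, \mathbf{C}'_k)$ satisfies the stated estimates at scale $\theta$. Setting $E_k := E_{V_k}$ and defining the rescaled sheets $w_k^{(H,j)} := u_k^{(H,j)}/E_k$, the uniform $C^{1,\alpha}$ bounds yield, along a subsequence, locally $C^1$-convergence to limit functions $w^{(H,j)}$ on $H \cap (B_{1/2} \setminus \{|x^n| < 1/16\})$. Hypothesis~\ref{hyp}(3) forces $(\int|h_k|^p)^{1/p} \leq \eps_k E_k^2$, so after dividing the CMC equation by $E_k$ the rescaled mean curvature tends to zero in $L^p$. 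Consequently each limit $w^{(H,j)}$ is \emph{harmonic} on its open domain, and the collection $\{w^{(H,j)}\}$ forms a ``coarse-type'' blow-up relative to $\mathbf{C}$.

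The main step, and the principal obstacle, is to control $\{w^{(H,j)}\}$ across the axis $\{x^n=0\}$. Lemma~\ref{non-concentration} (which depends on assumption \ref{hyp}(4) placing density-$\geq q$ points along the axis at scale $\tau$) gives non-concentration of tilt-excess near the axis: the $L^2$ norm of $\nabla^V x^{n+1}$ in a $\tau$-neighborhood of $L=\{x^n=0\}$ is at most $C\tau^{1-\mu}$ times the full excess, for any $\mu \in (n/p,1)$. Combining this with Hypothesis~$(\star)$ (which, for $M=M_0^2$, prevents the blow-up from being purely a rotation of the axial direction) I would show that each $w^{(H,j)}$ extends to a harmonic function across $\{x^n=0\}$ with $L^2$-bounded boundary trace, and that the traces on the two sides of the axis match in the averaged sense dictated by property $({\mathcal B}3)$ applied component-wise. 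Standard harmonic boundary regularity then gives a quadratic $L^2$-decay of $w$ at the axis, with explicit decay rate $\theta^2$.

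From the decay of $w$ at scale $\theta$, the new cone $\mathbf{C}'$ and rotation $\Gamma$ are constructed by fitting affine functions to $w$ over each half-hyperplane, interpreted as tilts and slopes of the perturbed cone; the tilt direction yields $\Gamma$ via $e_{n+1} - \Gamma(e_{n+1}) \approx$ average axis-slope of $w$, giving conclusion (a) with the bound $|e_j - \Gamma(e_j)|^2 \leq \kappa E_V^2/\hat{E}_{V,\eps,1}^2$ (the factor $\hat{E}_{V,\eps,1}^{-2}$ accounting for the extra normalization in tangential directions via Hypothesis~$(\star)$). Conclusion (b) follows from $\|\mathbf{C}_k - \mathbf{C}'_k\|_{C^0} \leq C E_k$; conclusion (d) from the fact that any $P$ of the form $\{x^{n+1}=\lambda x^n\}$ lies at a fixed distance from $\mathbf{C}$ whenever $\mathbf{C}$ is non-planar (Hypothesis~$(\star\star)$); and conclusions (e), (f) follow, upon choosing $\eps,\gamma,\beta$ small, from upper semicontinuity of density and the monotonicity formula \cite[17.6]{SimonNotes} respectively. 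The main novel point compared to \cite{WicAnnals} is verifying that the mean-curvature contribution to $E_V^2$ is absorbed with the correct exponent $\theta^{1-n/p}$ in the final inequality; this is arranged precisely because the term $\tfrac{1}{\eps}(\int|h|^p)^{1/p}$ in $E_V^2$ scales one better than the height term under the dilation $\eta_{0,\theta}$, which is why the defining exponent $1/p$ (rather than $2/p$) was chosen in $\hat{E}_{V,\eps,1}^2$.
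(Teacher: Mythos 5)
Your blow-up skeleton is the correct one and matches the paper's scheme (rescale by $E_k$, pass to harmonic limits on each half-hyperplane, then extract decay and build $\Gamma$, $\mathbf{C}'$ from the linearized data), and your reading of the roles of $(\star\star)$, Lemma~\ref{non-concentration}, and the $1/p$ versus $2/p$ exponent is sound. But there is a genuine gap at the heart of the argument: the limiting harmonic functions do \emph{not} extend harmonically across $\{x^n=0\}$, as you claim. The $\varphi_j$ (from $\{x^n<0\}$) and $\psi_j$ (from $\{x^n>0\}$) live on opposite half-discs, correspond to half-hyperplanes with \emph{different} slopes $\ell_j$ and $m_j$, and have boundary traces $\kappa_1(Y)-\ell_j\kappa_2(Y)$ and $\kappa_1(Y)-m_j\kappa_2(Y)$; there is no even/odd reflection principle that glues them, and $(\mathcal{B}3)$ is a property of coarse blow-ups (averaging over multiplicities), not of the fine blow-up class arising here, so it does not give the needed matching.

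What you are missing is the boundary regularity mechanism that the paper uses in its place. After establishing (via the one-sided version of Lemma~\ref{L2-est-1}(b),(c),(d)) that $\varphi$ and $\psi$ are H\"older up to the axis with traces determined by two unknown functions $\kappa_1,\kappa_2$ on $\{x^n=0\}$, one runs \emph{two separate first-variation identities} for the $V_k$, one with a vertical test field $\tilde\zeta e^{n+1}$ and one with a horizontal axis-orthogonal field $\tilde\zeta e^n$. Dividing by $E_k$ (respectively $\hat E_k E_k$) and passing to the limit yields that the two linear combinations $\Phi = 2q\kappa_1 - (\sum(\ell_j+m_j))\kappa_2$ and $\Psi = (\sum(\ell_j+m_j))\kappa_1 - (\sum(\ell_j^2+m_j^2))\kappa_2$ are $C^3$ on the axis. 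The invertibility of this system for $\kappa_1,\kappa_2$ hinges on a lower bound for the Jacobian $J = 2q\sum(\ell_j^2+m_j^2) - (\sum(\ell_j+m_j))^2 = \tfrac12\sum_{i,j}((\ell_i-\ell_j)^2+(m_i-m_j)^2+2(\ell_i-m_j)^2)$, which is bounded away from zero precisely because the slope spreads $|\ell_1-\ell_q|$, $|m_1-m_q|$ are bounded below — a consequence of Hypothesis~$(\star\star)$ (and $(\star)$) ensuring the cone is genuinely non-planar, not just of ``no concentration.'' Only after $\kappa_1,\kappa_2$ are known to be smooth can one apply one-sided boundary regularity for harmonic functions to get the $C^2$ up-to-boundary estimate that drives the excess decay. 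Without this step the claimed ``quadratic decay at the axis'' is unjustified.
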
 

Once Lemma~\ref{intermediate} is in place it is not difficult to deduce Lemma~\ref{multi-scale}; that is to say to remove Hypothesis~($\star\star$) from Lemma~\ref{intermediate},  provided we do not insist that the conclusions hold at a single smaller scale (unless $q=2$) and instead allow ourselves the freedom of a fixed set of finitely many  (in fact $2q-3$) scales $\th_{1}, \th_{2}, \ldots, \th_{2q-3}$ at one of which the conclusions are required to hold. This is done by inducting on the unique number $p \in \{4, 5, \ldots, 2q\}$ for which ${\mathbf C} \in {\mathcal C}_{q}(p)$. We refer to \cite[pp 943-946]{WicAnnals} for details, describing here only the key idea to prove Lemma \ref{multi-scale} assuming Lemma \ref{intermediate}. 

\begin{proof}[Sketch of the proof of Lemma \ref{multi-scale} (assuming Lemma \ref{intermediate}).]
For the sake of simplicity, we make two technical simplifications, i.e.~we ignore the effect of the rotation $\Gamma$ appearing in Lemma \ref{intermediate}  and we assume that ${\mathbf C}^{\prime}={\mathbf C}$ (in other words we pretend that the excess improvement given by Lemma \ref{intermediate} is verified with repect to exactly the same cone ${\mathbf C}$ in Lemma \ref{intermediate}), and we focus on the procedure needed to establish assertion (c), in order to make evident how the multiple scales in Lemma \ref{multi-scale} are a natural way to deal with high multiplicity. We use the notation
$$Q^2_V(C,\rho)=\rho^{-n-2}\int_{(B_{\rho/2} \setminus \{|x^n|\leq \frac{\rho}{16}\})\times \R} \text{dist}^2(X,\spt{V})d\|C\|(X)+$$
$$+\rho^{-n-2}\int_{B_{\rho/2}\times \R} \text{dist}^2(X,\spt{C})d\|V\|(X).$$
With these technical simplifications and notation, the content of Lemma \ref{intermediate} (c) is as follows: given $\theta$ there are $\eps(\theta)$ and $\beta(\theta)$ such that, assuming the validity of Hypothesis~($\star\star$) with $\beta=\beta(\theta)$, we have
 $$Q^2_V(C,\theta) \leq \nu \theta^2 \left(Q^2_V(C,1) + \frac{1}{\eps}\left(\int_{B_1}|h|^p d\|V\|\right)^{1/p}\right),$$
 with $\nu>0$ independent of $\theta$. Moreover it is straightforward that
 $$\frac{\theta^{1-\frac{n}{p}}}{\eps}\left(\int_{B_\theta}|h|^p d\|V\|\right)^{1/p}\leq \frac{\theta^{1-\frac{n}{p}}}{\eps}\left(\int_{B_1}|h|^p d\|V\|\right)^{1/p}.$$
Putting together, Lemma \ref{intermediate} says that the following excess improvement holds for some $\tilde{\nu}$ independent of $\theta$ and with $\eps=\eps(\theta)$, assuming the validity of Hypothesis~($\star\star$) with $\beta(\theta)$: 
\begin{equation} 
\label{eq:excessimprovementintermediatesimplified}
Q^2_V(C,\theta) + \frac{\theta^{1-\frac{n}{p}}}{\eps}\left(\int_{B_\theta}|h|^p d\|V\|\right)^{1/p} \leq \tilde{\nu} \theta^{1-\frac{n}{p}}  \left(Q^2_V(C,1) + \frac{1}{\eps}\left(\int_{B_1}|h|^p d\|V\|\right)^{1/p}   \right).
\end{equation}

Now, in order to prove Lemma \ref{multi-scale}, let $p\leq 2q$ be such that ${\mathbf C} \in C(p)$; we start with $\theta_1$ and from Lemma \ref{intermediate} we obtain $\eps(\theta)$ and $\beta(\theta)$ such that, if  Hypothesis~($\star\star$) is fulfilled (at scale $1$) with $\beta(\theta_1)$, then the excess improvement (\ref{eq:excessimprovementintermediatesimplified}) holds with $\theta=\theta_1$ and $\eps=\eps(\theta_1)$, so in this case Lemma \ref{multi-scale} is proved with $\nu_1=\tilde{\nu}$. On the other hand, if Hypothesis~($\star\star$) with $\beta(\theta_1)$ does not hold, then we can choose a cone $C^{(1)}\in C(p-1)$ (at scale $1$) such that 
\begin{equation}
\label{negationstarstar}
Q_V^2(C^{(1)}) \leq \frac{Q_V^2(C)}{\beta(\theta_1)};
\end{equation}
we pass now to the scale $\theta_2$ and ask whether Hypothesis~($\star\star$) holds or not with $\beta=\beta(\theta_2)$ and with $C^{(1)}$ on the left hand side and $Q^*_V(p-2)$ on the right hand side. If yes, then we have that the excess improvement (\ref{eq:excessimprovementintermediatesimplified}) holds with $\theta=\theta_2$, $\eps=\eps(\theta_2)$ and with $C^{(1)}$ instead of $C$ on the left- and right-hand side of (\ref{eq:excessimprovementintermediatesimplified}). Using the fact that we are working under the validity of (\ref{negationstarstar}), we then conclude (using $\beta<1$)
\begin{equation}
Q^2_V(C^{(1)},\theta_2) + \frac{\theta_2^{1-\frac{n}{p}}}{\eps(\theta_2)}\left(\int_{B_{\theta_2}}|h|^p\right)^{1/p} \leq \frac{\tilde{\nu}}{\beta(\theta_1)}\theta_2^{1-\frac{n}{p}}  \left(Q^2_V(C,1) + \frac{1}{\eps(\theta_2)}\left(\int_{B_1}|h|^p\right)^{1/p}   \right), 
\end{equation}
i.e.~the desired excess improvement holds at scale $\theta_2$, with $\eps=\eps(\theta_2)$ and $\nu_2 = \nu_2(\theta_1)=\frac{\tilde{\nu}}{\beta(\theta_1)}$. This is the desired conclusion for Lemma \ref{multi-scale} at scale $\theta_2$, and it was obtained under the assumptions that Hypothesis~($\star\star$) does not hold with $C$ and $\beta(\theta_1)$ and it holds with $C^{(1)}$ and $\beta(\theta_2)$. Continuing with the analysis of possibilities, we address the next alternative, i.e.~that Hypothesis~($\star\star$) fails both with $C$, $p$, $\beta=\beta(\theta_1)$ and with $C^{(1)}$ in place of $C$, $p-2$ in place of $p$ and $\beta=\beta(\theta_2)$; in view of the latter we can choose $C^{(2)}\in C(p-2)$ such that 
\begin{equation}
\label{negationstarstarII}
Q_V^2(C^{(2)}) \leq \frac{Q_V^2(C^{(1)})}{\beta(\theta_2)}.
\end{equation}
Then we consider $\theta_3$ and ask whether Hypothesis~($\star\star$) holds or not with $\beta=\beta(\theta_3)$, $C^{(2)}$ on the left hand side and $Q_V^*(p-3)$ on the right hand side. If it does, we obtain, as before, the desired excess improvement at scale $\theta_3$ with $\nu_3=\nu_3(\theta_1, \theta_2)=\frac{\tilde{\nu}}{\beta(\theta_1)\beta(\theta_2)}$, namely

$\displaystyle Q^2_V(C^{(2)},\theta_3) + \frac{\theta_3^{1-\frac{n}{p}}}{\eps(\theta_3)}\left(\int_{B_{\theta_3}}|h|^p d\|V\|\right)^{1/p}$ 

$\displaystyle \hspace{2in} \leq \nu_3 \theta_3^{1-\frac{n}{p}}  \left(Q^2_V(C,1) + \frac{1}{\eps(\theta_3)}\left(\int_{B_1}|h|^p d\|V\|\right)^{1/p}   \right);$

\noindent
otherwise we continue the procedure, which can go on for at most $2q-3$ steps (i.e.~we get at most to the scale $\theta_{2q-3}$), which happens in the case that Hypothesis~($\star\star$) fails, for every $j=1, ..., 2q-4$, with $\beta=\beta(\theta_j)$, with the cone $C^{(j-1)}$ on the left-hand side and with $Q_V^*(p-j)$ on the right-hand side. In this case, however, from the last negation of Hypothesis~($\star\star$), we choose a cone $C^{(j)}\in C(p-2q+4)$, which necessarily fulfils Hypothesis~($\star\star$), namely alternative (i), because $p\leq 2q$. Remark that by construction the constant $\nu_j$ (appearing in the excess improvement at scale $\theta_j$) depends only on the scales $\theta_1, ... \theta_{j-1}$. In conlusion the desired excess decay in assertion (c) holds at one of the chosen scales.
 
Keeping track of the choices of $C^{(j)}$ we can also see how assertion (b) can be verified (we are still working under the simplified assumption that the excess improvement given by Lemma \ref{intermediate} is verified with repect to exactly the same cone ${\mathbf C}$, the actual proof would require to absorbe the additional errors). Then by the triangle inequality we have 
$$\text{dist}^2_{\mathcal{H}}(C^{(1)},C) \leq 2 \left( \text{dist}^2_{\mathcal{H}}(C^{(1)},V) +\text{dist}^2_{\mathcal{H}}(V,C)\right);$$
using (\ref{negationstarstar}) and the fact that $Q^2_V(\tilde{C})\geq \text{dist}^2_{\mathcal{H}}(V,\tilde{C})$, we get

$$\text{dist}^2_{\mathcal{H}}(C^{(1)},C) \leq 2 \left(1+\frac{1}{\beta(\theta_1)} \right)Q^2_V(C).$$
Similarly, using (\ref{negationstarstarII}) we get
$$\text{dist}^2_{\mathcal{H}}(C^{(2)},C) \leq 2 \left(1+\frac{1}{\beta(\theta_1)\beta(\theta_2)} \right)Q^2_V(C),$$
and so on for any $C^{(j)}$.
\end{proof}

The method used for the proof of Lemma~\ref{intermediate} ---the so called blow-up method---is inspired by the pioneering work of Simon \cite{Simon2} where this method was first used to prove asymptotic decay results near non-isolated singularities for minimal submanifolds in certain multiplicity 1 classes. 

The main a priori estimates needed for the blow up argument in the present context are collected in the lemma below.

\begin{lem}
\label{L2-est-1}
Let $q$ be an integer $\geq 2,$ $\tau \in (0, 1/8),$ $\delta \in (0, 1/8)$ and $\mu \in (\frac{n}{p},1)$. For each $\r \in (0, 1/4]$, there exist numbers $\eps_{0} = \eps_{0}(n , q, \delta, \tau, \r) \in (0,1),$ 
$\gamma_{0} = \gamma_{0}(n, q, \delta, \tau, \r) \in (0, 1)$ and $\beta_{0} = \beta_{0}(n, q, \tau, \r) \in (0, 1)$ such that the following is true:
Let $V \in {\mathcal S}_H$, ${\mathbf C} \in {\mathcal C}_{q}$ satisfy
Hypotheses~\ref{hyp}, Hypothesis~($\star$) and Hypothesis~($\star\star$) with $M = M_{0}^{3}$ and $\eps_{0}$,$\gamma_{0}$, $\beta_{0}$ in place of $\eps$, $\gamma$, $\beta$ respectively. Suppose also that the induction hypotheses $(H1)$, $(H2)$, $(H3)$ hold. 
Write ${\mathbf C} = \sum_{j=1}^{q}|H_{j}| + |G_{j}|$ where for each $j \in \{1, 2, \ldots, q\}$, $H_{j}$ is the half-space defined by
$H_{j} = \{x^{n} < 0 \;\; \mbox{and} \;\; x^{n+1} = \lambda_{j}x^{n}\},$ 
$G_{j}$ the half-space defined by $G_{j} = \{x^{n} > 0 \;\; \mbox{and} \;\; x^{n+1} = \mu_{j}x^{n}\},$ with $\lambda_{j}, \mu_{j}$ constants,  
$\lambda_{1} \geq \lambda_{2} \geq \ldots \geq \lambda_{q}$ and $\mu_{1} \leq \mu_{2} \leq \ldots \leq \mu_{q}$; for 
$(y, x^{n}) \in {\mathbb R}^{n}$ and $j = 1, 2, \ldots, q$, define $h_{j}(y, x^{n}) = \lambda_{j}x^{n}$ and 
$g_{j}(y, x^{n}) = \mu_{j}x^{n}.$ Then, 
after possibly replacing ${\mathbf C}$ with another cone ${\mathbf C}^{\prime} \in {\mathcal C}_{q}$ with 
${\rm spt} \, \|{\mathbf C}^{\prime}\| = {\rm spt} \, \|{\mathbf C}\|$ and relabelling ${\mathbf C}^{\prime}$ as ${\mathbf C}$, the following must hold for each $Z = (\eta, \z^{n}, \z^{n+1}) \in {\rm spt} \, \|V\| \cap ( B_{3/8} \times {\mathbb R} )$ with $\Theta \, (\|V\|, Z) \geq q$: 
\begin{description}
 \item[(${a}$)] $\;\;\;V \res ({\mathbb R} \times (B_{3/4} \setminus \{|x^{n}| < \t\}))= \sum_{j=1}^{q} |{\rm graph} \, (h_{j} + u_{j})| + |{\rm graph} \, (g_{j} + w_{j})| $,
 
 where, for each $j=1, 2, \ldots, q,$ 
 
$u_{j} \in C^{1,\alpha} \, (B_{3/4} \,\cap \,\{x^{n} < -\t\})$;

$w_{j} \in C^{1,\alpha} \, (B_{3/4} \, \cap \, \{x^{n} > \t\});$ 

$h_{1} + u_{1} \leq h_{2} + u_{2} \leq \ldots \leq h_{q} + u_{q}$ ; 

$ g_{1} + w_{1} \leq g_{2} + w_{2} \leq \ldots \leq g_{q} + w_{q}$; 

${\rm dist} \, ((h_{j}(y, x^{n}) + u_{j}(x^{n},y), x^{n}, y), {\rm spt} \,\|{\mathbf C}\|) =  (1 + \lambda_{j}^{2})^{-1/2}|u_{j}(y, x^{n})|$

for $(y, x^{n}) \in B_{3/4} \cap \{x^{n} < -\tau\};$

${\rm dist} \, ((g_{j}(y, x^{n}) + w_{j}(x^{n},y), x^{n}, y), {\rm spt} \,\|{\mathbf C}\|) = (1 + \mu_{j}^{2})^{-1/2}|w_{j}(y, x^{n})| $ 

for $(y, x^{n}) \in B_{3/4} \cap \{x^{n} > \tau\};$

if $h_{j}(x) + u_{j}(x) = h_{j+1}(x) + u_{j+1}(x)$ for some $x$ then $h_j \equiv h_{j+1};$

if $g_{j}(x) + w_{j}(x) = g_{j+1}(x) + w_{j+1}(x)$ for some $x$ then $g_j \equiv g_{j+1};$

\item[($a^{\prime}$)] $\;\;\; \cup_{j=1}^{q} {\rm graph} \, (h_{j} + u_{j})= \cup_{j\in \tilde{J}_1} {\rm graph} \, (h_{j} + \tilde{u}_{j})$,

$\;\;\; \cup_{j=1}^{q} {\rm graph} \, (g_{j} + w_{j})= \cup_{j\in \tilde{J}_2} {\rm graph} \, (g_{j} + \tilde{w}_{j})$
 
 where $\tilde{J}_1,\tilde{J}_2  \subset \{1, ..., q\}$, $\#\tilde{J}_1 = \tilde{q}_1 \leq q$, $\#\tilde{J}_2 = \tilde{q}_2 \leq q$,
 
 and, for each $j\in \tilde{J}_1$ or $\tilde{J}_2$,
 
 $\tilde{u}_{j} \in C^{2} \, (B_{3/4} \,\cap \,\{x^{n} < -\t\})$;

$\tilde{w}_{j} \in C^{2} \, (B_{3/4} \, \cap \, \{x^{n} > \t\});$ 

 $h_{j} + \tilde{u}_{j}$ and $g_{j} + \tilde{w}_{j}$ solve the CMC equation on their domains;

\item[($b$)]\;\;\; $|\zeta^{n+1}|^{2} + {\hat E}_{V, \eps,1}^{2}|\zeta^{n}|^{2} \leq C \int_{B_{1} \times {\mathbb R}} {\rm dist}^{2} \, (X, {\rm spt} \, \|{\mathbf C}\|) \, d\|V\|(X)$ 

$\hspace{3in} + \frac{C}{\eps}\left(\int\limits_{B_{1} \times {\mathbb R}} |h|^p d\|V\|\right)^{\frac{1}{p}};$

\item[($c$)]\;\;\;\;$\int_{B_{5\r/8}^{n+1}(Z)} \frac{{\rm dist}^{2} \, (X, {\rm spt} \, \|T_{Z \, \#} \, {\mathbf C}\|)}{|X - Z|^{n+2 - \mu}} \, d\|V\|(X)$ 

$\hspace{1in}\leq \widetilde{C}\r^{-n-2+\mu}\int_{B_{\r}(\eta, \z^{n} )\times {\mathbb R}} {\rm dist}^{2} \, (X, {\rm spt} \,\|T_{Z\, \#} \, {\mathbf C}\|) \, d\|V\|(X)$

$\hspace{3in}+ \tilde{C}\r^{1-\frac{n}{p}}\frac{1}{\eps}\left(\int\limits_{B_{\rho} \times {\mathbb R}} |h|^p d\|V\|\right)^{\frac{1}{p}}.$

\item[($d$)]\;\;\;\;$B^{n+1}_{\delta}(y, 0) \cap \{Z \, : \, \Theta \, (\|V\|, Z) \geq q\} \neq \emptyset$ 

for each point $(y, 0) \in ({\mathbb R}^{n-1} \cap B_{1/2})\times \{0\}$.  

\item[($e$)]\;\;\;\; $\int_{B_{1/2}^{n+1}(0) \cap \{|(x^{n}, x^{n+1})| < \sigma\}} {\rm dist}^{2} \, (X, {\rm spt} \, \|{\mathbf C}\|) \, d\|V\|(X)$ 

$\leq C_{1}\sigma^{1-\mu}\left(\int_{B_{1} \times {\mathbb R}} {\rm dist}^{2} \, (X, {\rm spt} \, \|{\mathbf C}\|) \, d\|V\|(X) + \frac{1}{\eps}\left(\int\limits_{B_{1} \times {\mathbb R}} |h|^p d\|V\|\right)^{\frac{1}{p}}\right)$

for each $\sigma \in [\delta, 1/4)$. 
\end{description}

Here $T_{Z} \, : \, {\mathbb R}^{n+1} \to {\mathbb R}^{n+1}$ is the translation $X \mapsto X + Z$;
$C = C(n, q) \in (0, \infty)$, $\widetilde{C} = \widetilde{C}(n, q,  \mu) \in (0, \infty)$ and $C_{1} = C_{1}(n, q, \mu) \in (0, \infty)$ (In particular, $C$, $\widetilde{C},$ $C_{1}$ do not depend on $\r$ or $\tau$.) 
\end{lem}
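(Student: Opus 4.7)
\smallskip

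The plan is to establish the graph decomposition (a) first, deduce (a$'$) from it by invoking (H3), and then handle (b)--(e) by translating and rescaling the standard Hardt--Simon / non-concentration machinery already used in the verification of $(\mathcal{B}4\mathrm{I})$ and in Lemma \ref{non-concentration} to the conical setting. Specifically, I would begin by observing that on the region $\{|x^{n}| > \tau\} \cap (B_{3/4} \times {\mathbb R})$, the hypothesis $Q_{V}({\mathbf C}) < \gamma_0 \hat{E}_{V,\eps,1}$ together with Hypothesis ($\star\star$) and Hypothesis \ref{hyp}(4) ensure (after covering the region by balls of size comparable to $\tau/4$ centered at points at distance $\geq \tau/2$ from the axis $\{x^{n}=0\}$) that in each such ball the varifold is $L^{2}$-close to one of the half-hyperplanes $H_{j}$ or $G_{j}$ with density ratio at most $q+1/2$ and no point of density $\geq q$ nearby. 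Applying the Sheeting Theorem \ref{thm:sheeting} (valid by induction hypothesis (H1) for the relevant density layer, and by Theorem~\ref{thm:SS} or the base case for $q=1$) in rotated coordinates aligned with the corresponding half-hyperplane, and piecing together the decompositions using the relabelling freedom of ${\mathbf C}^{\prime}$ (which only requires renaming the half-planes of ${\mathbf C}$ preserving $\mathrm{spt}\,\|{\mathbf C}\|$), yields (a) with the graph ordering and the distance identifications (the latter follow from the smallness of the $C^{1,\alpha}$-norms of the $u_{j}, w_{j}$, so the normal projection onto each half-hyperplane is a diffeomorphism). The tangential contact conclusions (e.g.\ $h_{j} \equiv h_{j+1}$ when two graphs meet) come from invoking (H3) inside each connected graph region, since touching of a $C^{1,\alpha}$ sheet with a neighboring one forces the two corresponding $C^{2}$ CMC graphs of the support to share slope at the touching point and hence coincide by the maximum principle argument of Remark~\ref{oss:maxprincsmooth}.

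Part (a$'$) is then immediate: on each side $\{x^{n} < -\tau\}$ and $\{x^{n} > \tau\}$ I would apply the Higher Regularity Theorem \ref{thm:higher-reg} (via (H3) for the relevant $q^{\prime}\leq q$) to the decomposition produced in (a). This re-expresses the support of $V$ in that region as at most $q$ ordered $C^{2}$ graphs, each separately solving the CMC equation (by the CMC equation being inherited on the support), and the graphs over each half-hyperplane of ${\mathbf C}$ are recovered via a coordinate change which only affects smoothness through the affine structure of the half-hyperplane.

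For (b), the idea is to apply, at any point $Z \in \mathrm{spt}\,\|V\| \cap (B_{3/8}\times {\mathbb R})$ with $\Theta(\|V\|,Z)\geq q$, the inequality \eqref{eq:replacement(7.3)} together with the improved lower bound \eqref{eq:replacement(7.4)} adapted to the cone ${\mathbf C}$ rather than a single hyperplane: this yields $|\zeta^{n+1}|^{2} \leq C\bigl(\int_{B_1 \times {\mathbb R}} \mathrm{dist}^{2}(X,\mathrm{spt}\,\|{\mathbf C}\|)\,d\|V\| + \eps^{-1}(\int |h|^{p})^{1/p}\bigr)$ by exploiting the fact that the half-hyperplanes of ${\mathbf C}$ all contain the axis $\{x^{n}=x^{n+1}=0\}$ and pass through the origin. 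The bound on $\hat{E}_{V,\eps,1}^{2}|\zeta^{n}|^{2}$ then follows by combining this $\zeta^{n+1}$ bound with the tilt-height comparison \eqref{eq:tiltcontrolledbyhight} applied at scale $\sim 1$ and the fact that the angle between adjacent half-hyperplanes of ${\mathbf C}$ is bounded below in terms of the (non-planar, by Hypothesis ($\star\star$) and the $M_{0}^{3}$ in Hypothesis ($\star$)) geometry of ${\mathbf C}$, which is the mechanism by which displacement of a density-$q$ point in the $x^{n}$-direction is charged against the excess via the Hardt--Simon monotonicity. Part (c) is a direct application of the almost monotonicity formula \cite[17.6]{SimonNotes} written at $Z$, combined with $\Theta(\|V\|,Z)\geq q$ and $\Theta(\|T_{Z\#}{\mathbf C}\|,0)\geq q$, in exactly the form \eqref{eq:replacement(7.3)} but with $|(X-Z)^{\perp}|$ replaced by $\mathrm{dist}(X,\mathrm{spt}\,\|T_{Z\#}{\mathbf C}\|)$; the passage from $|(X-Z)^{\perp}|$ to distance to the translated cone uses that tangent cones to $V$ at $Z$ lie in the closure of (suitably) rescaled copies of $T_{Z\#}{\mathbf C}$ by the inductive Minimum Distance Theorem (H2), so that the (n+2)-power Jacobian factor in the integrand can be replaced at the cost of absorbable error.

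Finally, (d) follows by a contradiction argument: if some point $(y,0)$ had no density-$\geq q$ point within $\delta$, then applying (a) inside $B_{\delta}(y,0)$ combined with the Sheeting Theorem and (H1) would upgrade the graphical decomposition across the axis (since there would be no multiplicity-$q$ barrier), and passing to a coarse blow-up and applying property $(\mathcal{B}7)$ (the minimum distance property already verified for ${\mathcal B}_{q-1}$, or Lemmas \ref{step1}--\ref{step2} inductively) would contradict Hypothesis ($\star\star$). Part (e) is then established exactly as in Lemma \ref{non-concentration} by covering $B_{1/2} \cap \{|(x^{n},x^{n+1})|<\sigma\}$ by balls of radius $\sim\sigma$ centered at density-$\geq q$ points (whose existence in sufficient density is guaranteed by (d)), applying the localized Hardt--Simon inequality (c) in each, and summing with the cover-multiplicity estimate $\lesssim \sigma^{1-n}$. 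The main obstacle, I expect, will be the relabelling step in (a): carefully matching the $q$ graphs produced by the Sheeting Theorem over each half-hyperplane of ${\mathbf C}$ with the prescribed multiplicity assignment of ${\mathbf C}$ while preserving the support, and doing so consistently as one crosses between the two sides $\{x^{n}<-\tau\}$ and $\{x^{n}>\tau\}$, requires tracking the ordering carefully and exploiting Hypothesis ($\star\star$) to guarantee that distinct half-hyperplanes of ${\mathbf C}$ do not get conflated in the decomposition.
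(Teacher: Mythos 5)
Your high-level strategy is correct and essentially matches the paper's: the paper itself defers (b)--(e) to the analogous estimates in Section 10 of \cite{WicAnnals} (with the modifications necessitated by the mean-curvature term), and gives a short indication that (a), (a$'$) follow from the induction hypotheses via Proposition~\ref{Prop:elementaryconsequence}, Theorem~\ref{thm:SS}, and the Higher Regularity Theorem. Your treatments of (a$'$), (d) and (e), and the overall mechanism of (b), are in line with that.

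Two points of concern. First, in (a), your explanation of the implication ``$h_{j}(x)+u_{j}(x)=h_{j+1}(x)+u_{j+1}(x)$ for some $x$ $\Rightarrow$ $h_{j}\equiv h_{j+1}$'' via the maximum principle of Remark~\ref{oss:maxprincsmooth} is not the right mechanism. That implication is a statement about the relabelling/choice of the cone ${\mathbf C}'$: it asserts that two consecutive sheets of the decomposition can touch only if they are graphed over the \emph{same} half-hyperplane of ${\mathbf C}$. This follows from the smallness of the $u_{j}, w_{j}$ (so touching forces $h_{j}(x)\approx h_{j+1}(x)$, which by the distinctness of the slopes $\lambda_{j}$ forces $h_{j}=h_{j+1}$) and is a matter of bookkeeping in the construction of ${\mathbf C}'$; it is not a consequence of the Hopf lemma applied to the support, which would say something different (that two $C^{2}$ CMC sheets touching at a point are tangential there, not that the half-hyperplanes they are graphed over coincide).

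Second, and more seriously, your sketch of (c) does not describe a valid argument. The weighted estimate in (c) involves $\mathrm{dist}^{2}(X,\mathrm{spt}\,\|T_{Z\#}{\mathbf C}\|)$ rather than $|(X-Z)^{\perp}|^{2}$, and you cannot pass from one to the other by invoking (H2) to assert that tangent cones at $Z$ are close to $T_{Z\#}{\mathbf C}$; the two integrands are genuinely different pointwise quantities, and knowing something about blow-ups at $Z$ gives no control on a weighted $L^{2}$ norm at a fixed positive scale. The actual derivation, following Simon \cite{Simon2} and \cite[Section~10]{WicAnnals}, proceeds by testing the first variation of $V$ against vector fields constructed from the distance-to-cone function and the radial distance from the axis of ${\mathbf C}$, integrated against cut-offs supported in dyadic annuli around $Z$, and combining these identities with the monotonicity formula at $Z$ (plus the bound $\Theta(\|V\|,Z)\geq q = \Theta(\|T_{Z\#}{\mathbf C}\|,0)$). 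The ``$(n+2)$-power Jacobian factor'' you mention does not appear in this argument, and there is no step of ``replacing'' $|(X-Z)^{\perp}|$ by distance-to-cone at the cost of an absorbable error; the two quantities are controlled by different mechanisms.
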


For conclusions ($a$) and ($a^{\prime}$) of Lemma ~\ref{L2-est-1} (which are regularity statements for $V$ in a region a fixed distance away from the singular axis of the cone ${\mathbf C}$), it turns out that the argument is fairly directly based on the induction hypotheses $(H1)$, $(H2)$, $(H3)$. Notice that the role of ($a'$) when compared to ($a$) is similar to the role of (H3) when compared to (H1), namely we remove some of the graphs obtained in ($a$) that do not affect the resulting support of $V_k \res \{|x^n|>\tau\}$ and gain, in doing so, the $C^2$ regularity and the fact that $h_j+\tilde{u}_j$ and $g_j+\tilde{w}_j$ solve the CMC equation. The subset $\tilde{J}_1$ is such that $\cup_{j \in \tilde{J}}H_j=\cup_{j =1}^q H_j$ (similarly for $\tilde{J}_2$ and $G_j$). 
For the other conclusions however (which require information on the part of $V$ very near the axis of ${\mathbf C}$),  a considerable amount of delicate additional technical arguments are needed to implement Simon's \cite{Simon2} basic strategy. See \cite[Section~10]{WicAnnals} for the proofs.

\medskip

With the help of Lemma~\ref{L2-est-1}, the blow up argument needed to establish Lemma~\ref{intermediate} proceeds as follows (the proof continues until the end of this subsection). 

\textbf{\textit{Proof of Lemma~\ref{intermediate}}}. Let $\{\e_{k}\}$,$\{\gamma_{k}\}$ and $\{\beta_{k}\}$ be sequences of positive numbers such that 
$$\e_{k}, \gamma_{k}, \b_{k} \to 0.$$ 
Consider  sequences of varifolds $V_{k} \in {\mathcal S}_H$ and cones ${\mathbf C}_{k} \in {\mathcal C}_{q}$ such that, for each $k=1, 2, \ldots,$ with $V_{k}$, ${\mathbf C}_{k}$  in place of $V$, ${\mathbf C}$ respectively, Hypotheses~\ref{hyp} hold with $\e_{k}$, $\g_{k}$ in place of $\e$, $\g$; Hypothesis ($\star$) holds with $M = M_{0}^{3}$ and Hypothesis~($\star\star$) holds with $\b_{k}$ in place of $\b$. 

Let $\{\d_{k}\}, \{\t_{k}\}$ be sequences of decreasing positive numbers converging to $0$, and let

$$E_{k}= \left(\int_{B_{1} \times {\mathbb R}} {\rm dist}^{2} \, (X, {\rm spt} \, \|{\mathbf C_{k}}\|) \, d\|V_{k}\|(X)+\frac{1}{\eps_k}\left(\int_{B_{1} \times {\mathbb R}} |h|^p \, d\|V_{k}\|(X)\right)^{\frac{1}{p}}\right)^{1/2}.$$ 
By passing to appropriate 
subsequences of $\{V_{k}\}$, $\{{\mathbf C}_{k}\}$, and possibly replacing ${\mathbf C}_{k}$ with a cone 
${\mathbf C}_{k}^{\prime} \in {\mathcal C}_{q}$ with ${\rm spt} \, \|{\mathbf C}_{k}^{\prime}\| = {\rm spt} \, \|{\mathbf C}_{k}\|$ without changing notation, we obtain functions 
$u^{(k)}_{j} \in C^{2}\left(B_{3/4} \cap \{x^{n}  < -\t_{k}\}\right)$ and $w^{(k)}_{j} \in C^{2}\left(B_{3/4} \cap \{x^{n} > \t_{k}\}\right)$, $1 \leq j \leq q$,  satisfying the conclusions of Lemma~\ref{L2-est-1} with $V_{k}$ in place of $V$, ${\mathbf C}_{k}$ in place of ${\mathbf C}$,  $u^{(k)}_{j}$, $w^{(k)}_{j}$ in place of $u_{j}$, $w_{j}$ and with $\d_{k}$, $\t_{k}$ in place of $\d$, $\t$ respectively. By Lemma~\ref{L2-est-1}(a), (a') and elliptic estimates, there exist harmonic functions $\varphi_{j} \in C^{2}\left(B_{3/4} \cap \{x^{n} < 0\}\right)$ and $\psi_{j} \in C^{2}\left(B_{3/4} \cap \{x^{n} > 0\}\right)$, $1 
\leq j \leq q$ such that 
$$E_{k}^{-1} u_{j}^{(k)} \to \varphi_{j} \quad {\rm and} \quad E_{k}^{-1} w_{j}^{(k)} \to \psi_{j}$$  
where the convergence is locally in $C^{1}$ in the respective domains; assertion (a) and $C^{1,\alpha}$-estimates allow to find a $C^1$-limit, while assertion (a') and $C^{2,\alpha}$ estimates on the $\tilde{u}_j$ and $\tilde{w}_j$ allow to prove the harmonicity of $\varphi_j$ and $\psi_j$ with an argument similar to the one given in Section \ref{HardtSimon}. It also follows from Lemma~\ref{L2-est-1}(e) that $\varphi_{j} \in L^{2}(B_{1/2} \cap \{x^{n} < 0\})$ and $\psi_{j} \in L^{2} \, (B_{1/2} \cap \{x^{n} > 0\})$, and that the convergence above is also in  $L^{2}(B_{1/2} \cap \{x^{n} < 0\})$ and $L^{2}\, (B_{1/2} \cap \{x^{n} > 0\})$ respectively. Write $\varphi = (\varphi_{1}, \varphi_{2}, \ldots, \varphi_{q})$ and 
$\psi = (\psi_{1}, \psi_{2}, \ldots, \psi_{q})$.

\begin{df}
 Let  ${\mathcal B}_{q}^{F}$ be the (smaller) class of pairs of functions $(\varphi, \psi)$ (the \emph{\textbf{fine blow-ups}}) arising as above corresponding to sequences $V_{k}$, ${\mathbf C}_{k}$ satisfying all of the hypotheses 
required as above but with $M = M_{0}^{2}$ in Hypothesis ($\star$). 
\end{df}

What is required next in order to prove Lemma~\ref{intermediate} is sufficiently strong uniform regularity estimates for the functions $\varphi$, $\psi$ with $(\varphi, \psi) \in {\mathcal B}_{q}^{F}$ in fixed smaller regions in their domains, e.g.\  in  
$B_{1/4}  \cap \{x^{n}< 0\}$ and $B_{1/4} \cap \{x^{n} > 0\}$ respectively, necessarily up to the boundary $B_{1/4} \cap \{x^{n} = 0\}.$  For instance a  uniform $C^{1, \alpha}$ estimate up to the boundary $B_{1/4} \cap \{x^{n} = 0\}$ for some fixed $\alpha \in (0, 1)$ suffices. 

\medskip

\textit{First step: H\"older continuity up to the boundary.} The first step in obtaining such regularity estimates is a uniform continuity estimate up to the boundary for $\varphi$ and $\psi$ (compare \cite[proof of Lemma 12.1]{WicAnnals}). This is obtained as follows, essentially as a consequence of Lemma~\ref{L2-est-1}(b), (c), (d). For any $Y\in B_{5/16} \cap \{x^n=0\}$ we can find, thanks to (d) (upon extracting a subsequence from the sequence $V_k$ giving rise to the fine blow up $(\varphi, \psi)$, that we do not relabel) $Z_k=(\eta_k, \zeta_k^n, \zeta_k^{n+1})$ such that $Z_k \to Y$ and we set (taking, in the following, subsequential limits that we do not relabel)

$${\k}_1(Y):=\lim_{k \to \infty}  \frac{\zeta_k^{n+1}}{E_k},\,\,\,  {\k}_2(Y):=\lim_{k \to \infty} \hat{E}_k\frac{\zeta_k^n}{E_k};$$
these limits are finite thanks to (b) in Lemma~\ref{L2-est-1}, moreover we can see that they depend only on $Y$ and not on the choice of $\{Z_k\}$, so the notation is appropriate. Indeed, using Lemma~\ref{L2-est-1}(c) for the chosen subsequence $V_k$, $Z_k=(\eta_k, \zeta_k^n, \zeta_k^{n+1})$ (in place of $V$ and $Z=(\eta, \zeta^n, \zeta^{n+1})$) and with $\mu \in (\frac{n}{p},1)$, dividing by $E_k^2$ and passing to subsequential limit, we obtain the validity of inequality (\ref{continuity-1}) below; the finiteness of the integrals forces the continuity of $\varphi$ and $\psi$ up to the boundary with boundary values respectively $(\k_{1}(Y) - \ell_{j}\k_{2}(Y))$ and $(\k_{1}(Y) - m_{j}\k_{2}(Y))$. The choice of a different subsequence $Z_k=(\eta_k, \zeta_k^n, \zeta_k^{n+1}) \to Y$ will not alter the fine blow up $(\varphi, \psi)$, which has been fixed, so necessarily ${\k}_1$ and ${\k}_2$ depend only on $Y$. Summarizing, we have established:

\noindent
\emph{There exist 
two functions $\k_{1}, \k_{2}\,  : \, B_{5/16} \cap \{x^{n} = 0\} \to {\mathbb R}$  with
\begin{equation*}\label{continuity-1-1}
|{\k}_{1}(Y)|, |{\k}_{2}(Y)| \leq C \quad for \quad Y \in B_{5/16} \cap \{x^{n} = 0\}
\end{equation*}
and numbers $\ell_{j}, m_{j}$ $(1 \leq j \leq q)$ with $\ell_{1} \geq \ldots \geq \ell_{q}$, 
$m_{1} \leq \ldots \leq m_{q},$  
\begin{eqnarray}\label{excess-6}
c \leq {\rm max} \, \{|\ell_{1}|, |\ell_{q}|\} \leq c_{1}, \;\;\; c \leq {\rm max} \,\{|m_{1}|, |m_{q}|\} 
\leq c_{1} \;\;\; \mbox{and}&&\nonumber\\
&&\hspace{-3in}{\rm min} \, \{|\ell_{1} - \ell_{q}|, |m_{1} - m_{q}|\} \geq 2c 
\end{eqnarray}
where $C = C(n, q)$, $ c = c(n, q)$, $c_{1} = c_{1}(n, q),$ such that} 

\begin{equation}
\label{continuity-1}
\sum_{j=1}^{q} \int_{B_{\r/2}(Y) \cap \{x^{n} <0\}} \frac{|\varphi_{j}(x) - (\k_{1}(Y) - \ell_{j}\k_{2}(Y))|^{2}}
{|x- Y|^{n+2-\mu}} \, dx 
\end{equation}
$$+ \;\;\sum_{j=1}^{q}\int_{B_{\r/2}(Y) \cap \{x^{n}> 0\}}\frac{|\psi_{j}(x) - (\k_{1}(Y)- 
m_{j}\k_{2}(Y))|^{2}}{|x - Y|^{n+2-\mu}} \, dx\leq $$
$$\leq C_{1} \r^{-n-2+\mu}\sum_{j=1}^{q}\int_{B_{\r}(Y) \cap \{x^{n} <0\}} |\varphi_{j} - ({\k}_{1}(Y) - \ell_{j}{\k}_{2}(Y))|^{2}+$$
$$+\, C_{1}\r^{-n-2+\mu}\sum_{j=1}^{q} \int_{B_{\r}(Y) \cap 
\{x^{n} > 0\}} |\psi_{j} - ({\k}_{1}(Y) - m_{j}{\k}_{2}(Y))|^{2} +\, C_{1}\r^{1-\frac{n}{p}},$$

\emph{where $C_{1} = C_{1}(n, q) \in (0, \infty).$} 

\medskip

This estimate implies (see \cite[Lemma~12.1]{WicAnnals}) that $\varphi$, $\psi$ are respectively uniformly 
H\"older continuous in $B_{5/16} \cap \{x^{n} \leq 0\}$, $B_{5/16} \cap \{x^{n} \geq 0\}$ with 
a fixed H\"older exponent $\b = \b(n, p, q) \in (0, 1/2)$ and the functions 
$\varphi(Y) \equiv (\k_{1}(Y) - \ell_{1}\k_{2}(Y), \ldots, \k_{1}(Y) - \ell_{q}\k_{2}(Y)),$   
$\psi(Y) \equiv  (\k_{1}(Y) - m_{1}\k_{2}(Y), \ldots, \k_{1}(Y) - m_{q}\k_{2}(Y))$ are respectively the continuous boundary values of $\varphi$, $\psi$ on $B_{5/16} \cap \{x^{n} = 0\}.$ We point out that the choice of $\mu$ in \cite[Lemma~12.1]{WicAnnals} is $\mu=\frac{1}{2}$, which is not generally allowed here, where we choose $\mu \in(\frac{n}{p},1)$; when replicating the proof of \cite[Lemma~12.1]{WicAnnals}, the different value of $\mu$ will affect the value of $\gamma$ appearing there. Moreover, due to the last term on the right-hand side of (\ref{continuity-1}), inequality \cite[(12.5)]{WicAnnals} must be written with an extra term $\sigma^{1-\frac{n}{p}}$ on the left-hand side and an additional term $C_1 \left(\frac{\sigma}{\rho}\right)^{1-\frac{n}{p}} \rho^{1-\frac{n}{p}}$ : this will also alter the value of $\beta$, which will be $0<\beta<\min\left\{\frac{1-\frac{n}{p}}{2}, -\frac{1}{2 \log_4 \gamma}\right\}$;  apart from this specific value, the statement of \cite[Lemma~12.1]{WicAnnals} holds in the present setting with exactly the same formulation.

\medskip

\textit{Second step: higher regularity up to the boundary.} In order to improve regularity of $(\varphi, \psi)$, it then suffices to show that the boundary values 
$\varphi(Y)$, $\psi(Y)$ as functions of $Y \in B_{1/4} \cap \{x^{n} = 0\}$ are more regular. This is achieved by employing two separate first variation arguments based on the fact that the varifolds $V_{k}$  giving rise to the blow-up $(\varphi, \psi)$ are CMC. The first one is the first variation induced by a vector field $\tilde{\zeta} e^{n+1}$ (``vertical vector field''), while the second one by a vector field $\tilde{\zeta} e^{n}$ (``horizontal and orthogonal to the axis of $C$''), following the notations in \cite[Theorem~12.2]{WicAnnals}. We point out that when writing the analogue of \cite[(12.14)]{WicAnnals} we have $\int h_k \tilde{\zeta}e^{n+1} \cdot  \hat{\nu} d\|V_k\|$ instead of $0$ on the right-hand side and, keeping track of this term, we have to replace the right-hand side of inequality \cite[(12.16)]{WicAnnals} with $C(\sup|D\zeta| \tau^{1-\mu}+\eps_k E_k) E_k$. Dividing by $E_k$ and sending $k\to \infty$ at fixed $\tau$, and then letting $\tau \to 0$, we obtain \cite[(12.24)]{WicAnnals}. Analogue changes must be made for the arument involving the second type of first variation, thanks to which we obtain the analogue of \cite[(12.33)]{WicAnnals} with the right-hand side replaced by $C(\sup|D\zeta| \tau^{\frac{1-\mu}{2}}\hat{E}_k+\eps_k E_k) E_k$. Dividing by $\hat{E}_k E_k$ and sending $k\to \infty$ at fixed $\tau$, and then letting $\tau \to 0$, we obtain \cite[(12.39)]{WicAnnals}. From \cite[(12.24)]{WicAnnals} and \cite[(12.39)]{WicAnnals} it can be shown, following \cite{WicAnnals}, that the two linear combinations of $\k_{1}$, $\k_{2}$ defined by 

$$\Phi(Y) = 2q\kappa_{1}(Y) - \left(\sum_{j=1}^{q}  (\ell_{j} + m_{j})\right)\kappa_{2}(Y) \quad {\rm and}$$

$$\Psi(Y) = \left(\sum_{j=1}^{q}(\ell_{j} + m_{j})\right)\kappa_{1}(Y)  - \left(\sum_{j=1}^{q} (\ell_{j}^{2} + m_{j}^{2})\right)\kappa_{2}(Y)$$
for $Y \in B_{9/32} \cap \{x^{n} = 0\}$ are smooth functions satisfying

$$\|\Phi\|_{C^{3}(B_{9/32} \cap \{x^{n} = 0\})} +  \|\Psi\|_{C^{3}(B_{9/32} \cap \{x^{n} = 0\})}\leq $$
$$\leq C\left(\int_{B_{1/2} \cap \{x^{n} \leq 0\}} |\varphi|^{2} + \int_{B_{1/2} \cap \{x^{n} \geq 0\}} |\psi|^{2} \right)^{1/2},$$
where $C = C(n, q) \in (0, \infty).$ Since the Jacobian $J$ associated with the above linear system for $\k_{1}$, $\k_{2}$ is given by 
\begin{eqnarray*}
&&J = 2q\sum_{j=1}^{q}(\ell_{j}^{2} + m_{j}^{2}) - \left(\sum_{j=1}^{q}(\ell_{j} + m_{j})\right)^{2}\nonumber\\ 
&&\hspace{1in}= \frac{1}{2}\sum_{i=1}^{q}\sum_{j=1}^{q} \left((m_{i} - m_{j})^{2} + (\ell_{i} - \ell_{j})^{2} + 2(\ell_{i} - m_{j})^{2}\right),
\end{eqnarray*}
$J$ satisfies, by (\ref{excess-6}), $\widetilde{C} \geq J \geq C >0$ for constants $\widetilde{C} =\widetilde{C}(n, q)$, $C = C(n, q).$  
It follows that $\k_{j}$, $j=1, 2$, and hence $\left.\varphi\right|_{B_{9/32} \cap \{x^{n} =0 \}}$ and 
$\left.\psi\right|_{B_{9/32} \cap \{x^{n} = 0\}},$ are $C^{3}$ on $B_{9/32} \cap \{x^{n} = 0\},$ 
and moreover  
\begin{eqnarray*}
&&\|\varphi\|_{C^{3}(B_{9/32} \cap \{x^{n} = 0\})} +  \|\psi\|_{C^{3}(B_{9/32} \cap \{x^{n} = 0\})}\nonumber\\ 
&&\hspace{1in}\leq C\left(\int_{B_{1/2} \cap \{x^{n} \leq 0\}} |\varphi|^{2} + \int_{B_{1/2} \cap \{x^{n} \geq 0\}} |\psi|^{2}\right)^{1/2}
\end{eqnarray*}
for a constant $C = C(n, q)$. By standard boundary regularity theory for harmonic functions, this leads to the desired improved regularity for $(\varphi, \psi)$, specifically to the conclusion that 
$\varphi \in C^{2} \, (B_{1/4} \cap \{x^{n} \leq 0\}), \;\;\; \psi \in C^{2} \, (B_{1/4} \cap 
\{x^{n} \geq 0\})$ with 
\begin{eqnarray*}
&&\|\varphi\|_{C^{2}(B_{1/4} \cap \{x^{n} \leq 0\})} + \|\psi\|_{C^{2}(B_{1/4}\cap \{x^{n} \geq 0\})}\nonumber\\ 
&&\hspace{1in}\leq C\left(\int_{B_{1/2} \cap \{x^{n} < 0\}}|\varphi|^{2} + \int_{B_{1/2} \cap \{x^{n} > 0\}} |\psi|^{2}\right)^{1/2}
\end{eqnarray*}
where $C = C(n, q).$  Lemma~\ref{intermediate} follows from this estimate in  a fairly standard way, exploiting the corresponding decay property up to the boundary for the harmonic functions $\varphi, \psi$.

\section{Proof of the Sheeting Theorem  subject to the induction hypotheses} 
\label{sketchsheeting}

Once it is established that ${\mathcal B}_{q}$ is a proper blow up class, it follows from Theorem~\ref{harmonic} that each $v \in {\mathcal B}_{q}$ is given by a set of $q$ (classical) harmonic functions on $B_{1}.$ Moreover, given $v \in {\mathcal B}_{q},$ a sequence of varifolds $\{V_{k}\} \subset {\mathcal S}_H$ giving rise to $v$ and any $\s \in (0, 1)$, we have the dichotomy that either $\Theta \, (\|V_{k}\|, Z) < q$ for all $k$ sufficiently large and $Z \in B_{\s}(0) \times {\mathbb R}$, or (by Lemma~\ref{harmonic} and by the argument described in the proof of Theorem \ref{coarse} in establishing property $({\mathcal B}\emph{4})$ for ${\mathcal B}_{q}$) that $v^{1} = v^{2} = \ldots  = v^{q}$ on $B_{1}.$ 
Given this fact, it is fairly straightforward to establish, using elliptic estimates for harmonic functions and with the help of Proposition \ref{Prop:elementaryconsequence} and Theorem~\ref{thm:SS}, the following lemma:
\begin{lem}\label{excess-s}
Let $q$ be an integer $\geq 2$, $\th \in (0, 1/4)$ and suppose that the induction hypotheses $(H1)$, $(H2)$, $(H3)$ hold. There exists a number $\b_{0} = \b_{0}(n, q, \th) \in (0, 1/2)$ such that if $V \in {\mathcal S}_H$, $(\omega_{n}2^{n})^{-1}\|V\|(B_{2}^{n+1}(0)) < q + 1/2$, $q-1/2 \leq (\omega_{n})^{-1}\|V\|(B_{1} \times {\mathbb R}) < q + 1/2$, and $$\int_{B_{1} \times {\mathbb R}} {\rm dist}^{2} \, (X, P) \, d\|V\|(X) + \frac{\left( \int_{B_{1} \times {\mathbb R}}|h|^p d\|V\|(X)\right)^{\frac{1}{p}}}{\b_{0}}< \b_{0}$$
for some affine hyperplane $P$ of ${\mathbb R}^{n+1}$ with ${\rm dist}^{2}_{\mathcal H} \,(P \cap (B_{1} \times {\mathbb R}), B_{1} \times \{0\}) < \b_{0},$ then the following hold:
\begin{itemize}
\item[(a)] \underline{Either}  $V \res(B_{1/2} \times {\mathbb R}) = \sum_{j=1}^{q} |{\rm graph} \, u_{j}|$
where $u_{j} \in C^{2} \,(B_{1/2}; {\mathbb R})$ for $j=1, 2, \ldots, q$ with  
$u_{1} \leq u_{2} \leq \ldots \leq u_{q}$ on $B_{1/2},$ $u_{j_{0}} < u_{j_{0}+1}$ on $B_{1/2}$ for some $j_{0} \in \{1,2, \ldots, q-1\},$ and for each $j \in \{1,2, \ldots, q\}$, 
\begin{eqnarray*}
&&{\rm sup}_{B_{1/2}} \, |u_{j} - p|^{2} + |D \, u_{j} - D \, p|^{2} + |D^{2} \, u_{j}|^{2}\nonumber\\ 
&&\hspace{1.0in}\leq C\int_{B_{1} \times {\mathbb R}} {\rm dist}^{2} \, (X, P) \, d\|V\|(X) + C\frac{\left( \int_{B_{1} \times {\mathbb R}}|h|^p d\|V\|(X)\right)^{\frac{1}{p}}}{\b_0}
\end{eqnarray*}
where $C = C(n, q) \in (0, \infty)$ and $p \, : \, {\mathbb R}^{n} \to {\mathbb R}$ is the affine function such that ${\rm graph} \,  p = P;$ 
\underline{or}, there exists an  affine hyperplane $P^{\prime}$ with 
\begin{eqnarray*}
&&{\rm dist}_{\mathcal H}^{2} \, \left( P^{\prime} \cap (B_{1} \times {\mathbb R}), P \cap (B_{1} \times {\mathbb R})\right)\nonumber\\ 
&&\hspace{0.5in}\leq C_{1}\int_{B_{1} \times {\mathbb R}} {\rm dist}^{2} \,(X, P) \,  d\|V\|(X) + \frac{\left( \int_{B_{1} \times {\mathbb R}}|h|^p d\|V\|(X)\right)^{\frac{1}{p}}}{\b_{0}} \;\;\; and
\end{eqnarray*}
\begin{eqnarray*}
&&\th^{-n-2}\int_{B_{\th} \times {\mathbb R}} {\rm dist}^{2} \, (X, P^{\prime}) \, d\|V\|(X)\nonumber\\ 
&&\hspace{0.5in}\leq C_{2}\th^{2}\left(\int_{B_{1} \times {\mathbb R}} {\rm dist}^{2} \,(X, P) \,  d\|V\|(X) + \frac{\left( \int_{B_{1} \times {\mathbb R}}|h|^p d\|V\|(X)\right)^{\frac{1}{p}}}{\b_{0}}\right)\,\,\,\,\,
\end{eqnarray*}
where $C_{1} = C_{1}(n, q) \in (0, \infty)$ and $C_{2} = C_{2}(n, q) \in (0, \infty).$

\item[(b)] $\left(\omega_{n}(2\th)^{n}\right)^{-1}\|V\|(B_{2\th}^{n+1}(0)) < q + 1/2$ and 
$$q-1/2 \leq (\omega_{n}\th^{n})^{-1} \|V\|(B_{\th} \times {\mathbb R}) < q + 1/2.$$
\end{itemize}
\end{lem}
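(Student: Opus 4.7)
\emph{Plan.} I would proceed by a compactness (coarse blow-up) argument and contradiction, using the harmonicity of elements of $\mathcal{B}_q$ established in Theorem~\ref{harmonic}. Fix $\theta \in (0,1/4)$ and suppose the conclusion fails. Then there exist a sequence $\beta_k \downarrow 0$, affine hyperplanes $P_k$ with $\text{dist}_{\mathcal{H}}(P_k \cap (B_1 \times \mathbb{R}), B_1\times\{0\}) < \beta_k$, and varifolds $V_k\in\mathcal{S}_H$ satisfying the mass and smallness hypotheses of the lemma with $\beta_0 = \beta_k$, yet for which neither alternative of (a) nor (b) holds. After an orthogonal rotation of bounded angle and a vertical translation (each contributing only lower order terms to the excess), I may assume $P_k = \{x^{n+1} = 0\}$. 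Applying Proposition~\ref{Prop:AlmgrenDec} and the blow-up construction of Section~\ref{coarseandlinear} then yields, upon passing to a subsequence, a coarse blow-up $v = (v^1, \ldots, v^q) \in \mathcal{B}_q$, whose components are harmonic on $B_1$ by Theorem~\ref{harmonic}.

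\emph{Dichotomy.} Since $v^1 \le \cdots \le v^q$ are harmonic, the strong maximum principle applied to the non-negative harmonic function $v^{j+1} - v^j$ yields the alternative: either all $v^j$ coincide on $B_1$ (with common value a harmonic function $\bar v$), or $v^j < v^{j+1}$ throughout $B_1$ for each $j$ with $v^j \not\equiv v^{j+1}$. In the \emph{separated} case, the pairwise gaps are bounded below by a positive constant on $B_{9/16}$. By upper semicontinuity of density together with the $L^2$ closeness of $V_k$ to the $q$-fold multiplicity hyperplane, for all large $k$ no point of $\text{spt}\|V_k\| \cap (B_{9/16} \times \mathbb{R})$ has density $\geq q$. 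Proposition~\ref{Prop:elementaryconsequence} then provides the required dimension bounds on the singular set, and Theorem~\ref{thm:SS} yields a $C^{1,\alpha}$ graph decomposition $V_k \res (B_{1/2} \times \mathbb{R}) = \sum_{j=1}^q |\text{graph}\, u_k^j|$. The inductive hypothesis (H3), combined with the strict pairwise separation of the $u_k^j$ for large $k$ forced by the separation of the $v^j$, ensures that each $u_k^j$ is individually $C^2$ and satisfies the CMC equation, and Schauder estimates for this equation deliver the $C^2$ bounds required by the first alternative of (a); conclusion (b) is immediate from the graph structure.

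\emph{Coincident case.} If all $v^j$ equal a single harmonic function $\bar v$, then $\ell(x) := \bar v(0) + D\bar v(0)\cdot x$ satisfies the standard harmonic decay
\begin{equation*}
\theta^{-n-2}\int_{B_\theta} |\bar v - \ell|^2 \le C\theta^{2} \int_{B_1}|\bar v|^2 \le C'\theta^2.
\end{equation*}
Let $E_k$ denote the quantity inside the parentheses in the smallness hypothesis. Choosing $P_k'$ to be the affine hyperplane whose defining function is $E_k\ell$ (a small tilt of $P_k$), the distance bound for $P_k'$ follows from the $C^0$ bound on $\ell$, and the $L^2$ convergence $E_k^{-1}u_k^j \to \bar v$ together with the Almgren bad-set control (\ref{blow-up-3}) transports the harmonic decay above into the claimed $\theta^2$-decay of the excess of $V_k$ against $P_k'$ on $B_\theta$; the mass ratio bound (b) follows from the monotonicity formula. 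Both cases then contradict the failure assumption.

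\emph{Main obstacle.} The principal technical care is tracking the mean-curvature contribution, which enters the excess with exponent $1/p$ rather than the classical $2/p$ (cf.\ Remark~\ref{oss:comparetwoexcesses}). On the one hand, one needs this term to vanish at the blow-up level so that $v$ is genuinely harmonic; on the other, it must be correctly absorbed into the $C^2$ estimate of the first alternative of (a) and into the right-hand side of the excess decay in the second alternative. A related subtlety is the upgrade from the $C^{1,\alpha}$ decomposition of Theorem~\ref{thm:SS} to the $C^2$ decomposition of conclusion (a): while (H3) provides $\tilde q \le q$ $C^2$ CMC graphs describing $\text{spt}\|V_k\|$, one must use the strict separation of the $v^j$ on $B_{1/2}$ to force $\tilde q = q$ and each original $u_k^j$ to be individually $C^2$ and CMC (rather than merely $C^{1,1}$, as in Remark~\ref{oss:C11estimate}).
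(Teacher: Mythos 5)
The main gap is in your justification of the density bound in the separated case. You assert that ``upper semicontinuity of density together with the $L^2$ closeness of $V_k$ to the $q$-fold multiplicity hyperplane'' implies that no point of ${\rm spt}\,\|V_k\|\cap(B_{9/16}\times\mathbb{R})$ has density $\geq q$ for all large $k$. This does not follow: the varifold limit is the multiplicity-$q$ plane, whose density is \emph{exactly} $q$ everywhere, so upper semicontinuity is perfectly consistent with each $V_k$ having density-$\geq q$ points. (A pair of transverse planes meeting at a tiny angle has a density-$2$ point on the intersection axis for every term of the sequence, and converges to the multiplicity-$2$ plane.) The mechanism actually needed is the Hardt--Simon argument used to verify property $({\mathcal B}4)$ for ${\mathcal B}_q$ (Section~\ref{HardtSimon}): if density-$\geq q$ points of a subsequence $V_{k'}$ cluster at some $z\in\overline{B_{9/16}}$, the coarse blow-up $v$ satisfies the Hardt--Simon inequality $({\mathcal B}4\,\mathrm{I})$ at $z$, and the second assertion of Theorem~\ref{harmonic} then forces $v^1=\cdots=v^q$, contradicting the separated-case hypothesis. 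Only after this, plus a finite cover of $\overline{B_{9/16}}$, does one obtain the absence of density-$\geq q$ points for all large $k$, which is precisely what licenses Proposition~\ref{Prop:elementaryconsequence} and then Theorem~\ref{thm:SS}. You also cannot deduce the density bound from the sheeting decomposition (e.g.\ by observing that the sheets never all coincide), since the sheeting theorem can only be invoked once the density bound is already in place; that route is circular.

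A secondary imprecision: ``strict pairwise separation of the $u_k^j$'' overstates the case. In the separated alternative only those adjacent pairs with $v^j\not\equiv v^{j+1}$ are uniformly separated on compact subsets; within a block of indices carrying a common value of the blow-up, the corresponding $u_k^j$'s can still touch, and by the multiplicity-jump phenomenon of Remark~\ref{oss:jumpsattouchingsing} an intermediate $u_k^j$ may then be only $C^{1,1}$ (cf.\ Remark~\ref{oss:C11estimate}). The separation of at least one pair is what gives ``$u_{j_0}<u_{j_0+1}$ on $B_{1/2}$'' and is what allows $(H3)$ to be applied inductively to each separated block; it does not by itself render every $u_k^j$ classically $C^2$. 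The strong maximum principle is, however, exactly the right tool for producing the fixed index $j_0$, which the paper's ``no density-$\geq q$ points'' formulation does not immediately provide, so that part of your argument is a genuine (and correct) supplement.
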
 

For any given $z \in B_{3/4}$, we may apply this lemma iteratively starting with $\eta_{(0, z), 1/4 \, \#} \, V$ in place of $V$, stopping the iteration when we reach a scale at which the first alternative in (a) occurs.  Notice that when this happens, the lemma shows that $V \res ( B_{\s}(z) \times {\mathbb R})$ for some some $\s \in (0, 1/8)$ is the union of embedded graphs of $q$ smooth functions over $B_{\s}(z) \subset {\mathbb R}^{n}$ each with small gradient. If the iteration proceeds indefinitely, then ${\rm spt} \, \|V\|$ contains precisely one point above $z$, and $V$ at that point has a unique multiplicity $q$ tangent hyperplane that is almost parallel to ${\mathbb R}^{n} \times \{0\}$, to which $V$ decays upon rescaling (by the decay of $\hat{E}^2_{V,\beta,\th}$ that holds in case the second alternative of (a) is verified). With a little extra work, it is not difficult to see that these  facts lead to Theorem~\ref{thm:sheeting} with $\alpha \in\left(0, \frac{1}{2}\left(1-\frac{n}{p}\right)\right)$.

\section{Proof of the Minimum Distance Theorem subject to the induction hypotheses}
\label{min-dist-sec}

The next part of our simultaneous proof by induction is the completion of induction for the Minimum Distance Theorem, i.e. steps (ii) and (iiii) listed in Section \ref{mainsteps}. With regard to this, the idea is to complete step (ii) first, using among other things the result of step (i), and then use both step (i) and step (ii)  to complete step (iii). 

In both cases, the argument is by contradiction and uses a version of the blow-up method described in the preceding section, the argument this time being much closer to its original form used by Simon in \cite{Simon2}. More specifically, the goal is to show if there is a sequence of varifolds $V_{j} \in {\mathcal S}_H$ converging to a cone ${\mathbf C}$ as in the Minimum Distance Theorem (with the appropriate density at the origin, namely, $\Theta \, (\|{\bf C}\|, 0) = q + 1/2$ in step (ii) and $\Theta \, (\|{\mathbf C}\|, 0) = q + 1$ in  step (iii)), then for sufficiently large $j$, $V_{j}$ must have a classical singularity, in direct contradiction to the definition of ${\mathcal S}_H$. It is in completing step (ii) and step (iii) that the no-classical-singularities hypothesis is used non-inductively for the first and the only time in the entire proof. 

To describe steps (ii) and (iii) in more detail, let ${\mathbf C}$ be as in the Minimum Distance Theorem \ref{thm:min-dist}. Then ${\mathbf C} = \sum_{j=1}^{m} q_{j}|P_{1}|$  for some $m$ ($\geq 3$) distinct half-hyperplanes $P_{1}, P_{2}, \ldots, P_{m}$ meeting along a common boundary $L$ and  for some positive integers $q_{1}, q_{2}, \ldots, q_{m}$. If $\eta_{j}$ is the unit vector along $P_{j}$ orthogonal to $L$, it is an easy exercise to show that stationarity of ${\mathbf C}$ in ${\mathbb R}^{n+1}$ is equivalent to the requirement that 
$$\sum_{j=1}^{m} q_{j} \eta_{j} = 0,$$ 
and to show, using this fact, that
$$\Theta \, (\|{\mathbf C}\|, 0) \in \{q+1/2, q+1\} \implies  q_{j} \leq q \quad \mbox{for each} \quad j=1, 2, \ldots, m.$$  
Thus, in view of step (i), we have, whenever the induction hypotheses (H1), (H2), (H3) hold and a cone ${\mathbf C}$ and a varifold $V$ are as in Theorem~\ref{thm:min-dist} with $\Theta \, (\|{\mathbf C}\|, 0) \in \{q+1/2, q+1\}$,   then $V$ is regular away from any $\e$-neighborhood of the axis of ${\mathbf C}$  provided $V$ is sufficiently close to ${\mathbf C}$ (depending on ${\mathbf C}$ and $\e$). 

Consider now step (ii), namely, the case when $\Theta \, (\|{\mathbf C}\|, 0) = q + 1/2.$ In this case, the first task in preparation for the blow up argument is to establish the analogues of Simon's estimates (as in the conclusion of Lemma~\ref{L2-est-1} above) for $V \in {\mathcal S}_H$ satisfying the appropriate set of contradiction hypotheses (analogous to 
the hypotheses of Lemma~\ref{L2-est-1}) corresponding to step (ii), including the hypothesis that $V$ is close as a varifold to ${\mathbf C}$ in the unit ball $B_{1}^{n+1}(0).$   

Unlike in the case of Lemma~\ref{L2-est-1} above, the estimates this time are established following Simon's argument in \cite{Simon2} very closely; the reason why it is possible to do so even though, strictly speaking, the ``multiplicity 1 class'' hypothesis needed in \cite{Simon2} does not hold in the present situation is because by hypothesis (of the Minimum Distance Theorem with $\Theta \, (\|{\mathbf C}\|, 0)  = q + 1/2$), the density ratio of $V$ is not much more than $q+1/2$, and that implies,  by $(H1)$ and step (i) (which provides the Sheeting Theorem for multiplicity up to and including $q$), complete regularity of $V$ in any small toroidal region around and close to the axis of ${\mathbf C}$ whenever $V$ is close in distance to ${\rm spt} \, \|{\mathbf C}\|$ in a slightly larger torus. This fact serves the purpose that Allard's regularity theorem served in the argument of \cite{Simon2}. See \cite[Theorem~16.2, Corollary~16.3 and Corollary~16.4]{WicAnnals}.

\medskip

Note also that the crucial \textit{``no $\delta$ gaps'' condition}, i.e.\ the analogue of  Lemma~\ref{L2-est-1}(d),  holds in the present setting. For if not, then there is a ball $B_{\delta}  = B_{\delta}^{n+1}(Y)$ of fixed size $\delta$ about some point $Y$ on the axis of ${\mathbf C}$  
in which all singularities of $V$ have density $< q + 1/2$, implying, in view of Proposition \ref{Prop:elementaryconsequence}, which uses the Almgren--Federer Generalized stratification theorem, the induction hypotheses $(H1)$, $(H2)$, $(H3)$ and Step (i), that $V \res B_{\delta}$ is ``generalized regular'' up to a singular set of codimension 7. The following argument leads to a contradiction whenever $V$ in distance is sufficiently close to ${\mathbf C}$ (depending on $\delta$). 

Note first of all that in view of the fact that $V \res B_{\delta}$ is ``generalized regular'' up to a singular set of codimension 7 allows us to set the multiplicity equal to $1$ almost everywhere without losing the fact that the new varifold $M$ belongs to $\mathcal{S}_H$, as explained in the first paragraph of the proof of Theorem \ref{thm:SS}. A standard capacity argument shows that ${\rm sing} \, V \setminus \Greg{V}$ is ``removable for the stability inequality'' in the sense that the inequality $\int_{{\rm \Greg} \, V} |A|^{2}\z^{2} \leq \int_{{\rm \Greg} \, V} |\nabla \, \z|^{2}$ holds for every 
$\z \in C^{1}_{c}(U)$ (and not just for $\z \in C^{1}_{c}(\Greg{V})$ (as initially required). In particular, we may take a standard cut-off function $\z$ in this improved stability inequality  to deduce that
$$\int_{M \cap B_{3/4}^{n+1}(0)} |A|^{2} \leq C$$ for a fixed constant $C= C(n, \Lambda)$ independent of $M,$ where $\Lambda$ is any upper bound for ${\mathcal H}^{n} \, (M \cap B_{1}^{n+1}(0))$. This, the Cauchy--Schwarz inequality, and the volume growth bounds for $M$ (implied by the monotonicity formula) imply  that if $L$ is an $(n-1)$-dimensional subspace of ${\mathbb R}^{n+1}$, then for each fixed $\Lambda >0$, 
\begin{equation}\label{secondff}
\sup_{M \in {\mathcal M}, {\mathcal H}^{n}(M \cap B_{1}^{n+1}(0)) \leq \Lambda} \, \int_{M \cap B_{3/4}^{n+1}(0) \cap (L)_{\t}} |A| \to 0
\end{equation}
as $\t \to 0$. 

On the other hand, if ${\mathbf C}$ is as in Theorem~\ref{thm:min-dist}, $L$ is the singular axis of ${\mathbf C}$, $p \, : \, {\mathbb R}^{n+1} \to L$ is the orthogonal projection, and if, contrary to Theorem~\ref{thm:min-dist}, there is a sequence $M_{j} \in {\mathcal M}$ with $\omega_{n}^{-1}{\mathcal H}^{n}(M_{j} \cap B_{1}^{n+1}(0)) \leq \Theta(\|{\mathbf C}\|, 0) + 1/2$ and ${\rm dist}_{\mathcal H} \, (M_{j} \cap B_{1/2}^{n+1}(0), {\rm spt} \, \|{\mathbf C}\|\cap B_{1/2}^{n+1}(0)) \to 0$, then for sufficiently large $j,$ $p^{-1}(y) \cap M_{j} \cap (L)_{\t}$ is a non-empty finite collection of disjoint, 
smooth embedded curves $\gamma_{j}^{k}$,  $1 \leq k \leq \ell_{j},$ for ${\mathcal H}^{n-1}$-a.e. $y \in L \cap B_{1/4}^{n+1}(0)$, and by Theorem~\ref{thm:sheeting} (inductive assumption), 
if $\nu_{j}$ denotes a choice of unit normal to $M_{j}$, then 
\begin{equation}\label{lower-bound}
\theta \leq \inf_{1 \leq k \leq \ell_{j}} \left|\nu_{j}(P_{j}^{k, 1}) - \nu_{j}(P_{j}^{k, 2})\right| \leq 
\int_{p^{-1}(y) \cap M_{j} \cap (L)_{\t}} |A| \, d{\mathcal H}^{1}
\end{equation}
where $P_{j}^{k, 1}, P_{j}^{k, 2}$ are the end points of $\gamma_{j}^{k}$ and $\theta$ is a fixed positive constant determined by ${\rm spt} \, \|{\mathbf C}\|$.  Integrating over $y \in L \cap B_{1/4}^{n+1}(0)$ and using the co-area formula, we see that this leads to a contradiction with  (\ref{secondff}) for sufficiently large $j$. \footnote{Notice that this simple argument clearly breaks down  if we do not have the (inductive) assumptions $(H1)$ for $q$ and $(H3)$ for $q-1$. Indeed, even  under the hypothesis ${\mathcal H}^{n-1} \, ({\rm sing} \, V) = 0$ which is stronger than the no-classical-singularities hypothesis and which together with Theorem~\ref{thm:sheeting}  makes it possible to justify (\ref{lower-bound}) for ${\mathcal H}^{n-1}$-a.e. $y \in L \cap B_{1/4}^{n+1}(0)$, the above argument breaks down because in this case, a priori  we do not know even whether $\int_{{\rm reg} \, V \cap B_{3/4}^{n+1}(0) \cap (L)_{\t}}|A| < \infty$, let alone the statement (\ref{secondff}). }

\medskip

Once Simon's estimates are established, a blow up argument analogous to the one described above in the context of Lemma~\ref{multi-scale} can be used to show that when $V \in {\mathcal S}_H$ is sufficiently close (as a varifold) to ${\mathbf C}$ in $B_{1}^{n+1}(0)$, $V$ must contain a classical singularity somewhere near  the origin, contradicting directly the definition of ${\mathcal S}_H$ and thereby completing step (ii). See \cite[Section~16]{WicAnnals} for details. 

\medskip

Armed with step~(ii), we can repeat the blow up argument, in an identical fashion to how it is used in  step~(ii) but with the assumption that 
$\Theta \, (\|{\mathbf C}\|, 0) = q + 1,$ to complete step (iii).

\section{Proof of the Higher Regularity Theorem subject to the induction hypotheses}
\label{higherreg}

The last part of our simultaneous proof by induction is the completion of induction for the Higher Regularity Theorem, i.e.~step (iv) listed in Section \ref{mainsteps}. We inductively assume the validity of (H1), (H2), (H3) and step (i), step (ii), step (iii). 

In view of this, by assumption we have that $V$ decomposes as $q$ sheets:
$$V \res \left(B_{1/2}^{n}(0) \times {\R} \right) = \sum_{j=1}^{q} |{\rm graph} \, u_{j}|,$$
with $u_{j} \in C^{1, \alpha} \, (B_{1/2}^{n}(0); {\mathbb R})$ and $u_{1} \leq u_{2} \leq \ldots \leq u_{q}$, $\alpha \in (0,\frac{1}{2})$. Whenever $\spt{V}$ is embedded at $(x,u_j(x))$ then the CMC equation is valid (in weak form) for $u_{j}$ in a neighbourhood of $x$, implying $C^2$ regularity for $u_i$ in that neighbourhood (and even $C^\infty$ by bootstrapping). Moreover keep in mind that for $V$ as in assumptions of Theorem \ref{thm:higher-reg} the density $\theta$ is everywhere integer-valued and we can inductively assume that $\spt{V} \cap \{\theta \leq q-1\} \subset \Greg{V}$. Note that we cannot hope that the $u_j$'s are separately smooth and CMC, since the assumptions allow the possibility presented in example \ref{oss:jumpsattouchingsing}: in such a situation, we would have $u_1 \leq u_2 \leq u_3$ with $u_1$ and $u_3$ smooth and CMC but $u_2$ is only $C^{1,1}$ and it coincides partly with $u_1$ and partly wth $u_3$. This is why in order to express the higher regularity result, we need to find a representation of $\spt{V}$ by neglecting multiplicities.

\subsection{Absence of $\ell$-fold touching singularities for $\ell \geq 3$}
\label{Hopf}

Our first claim is that, for $H\neq 0$ and $X \notin \text{reg} \, V$, whenever there exist (under the assumptions of Theorem \ref{thm:higher-reg}) sheets touching at $X$ then \textit{exactly two sheets} touch at $X$ when we discard multiplicities  and moreover \textit{the mean curvature vector points upwards on the top sheet and downwards on the bottom sheet} on the embedded parts. In particular three-fold, four-fold etc.~touching singularities are ruled out.

\begin{lem}
 \label{lem:Hopf}
Under the assumptions of Theorem \ref{thm:higher-reg} and assuming the validity of (H1), (H2), (H3), let $X=(x, X^{n+1}) \in \spt{V}$ be a point of density $q$ where $\spt{V}$ is not embedded. Assume that $H\neq 0$. Then
\begin{enumerate}
 \item $\spt{V}={\rm graph} \, u_1 \cup {\rm graph} \, u_q$ and $X=(x, X^{n+1}) \in \text{sing}_T \, V;$ 
 \item in a neighbourhood of $X$ and away from $\text{sing} \, V$, we have that $\vec{H} \cdot \hat{e}^{n+1} >0$ on ${\rm graph} \, u_{q}$ and $\vec{H} \cdot \hat{e}^{n+1} <0$ on ${\rm graph} \, u_{1}$.
\end{enumerate}
\end{lem}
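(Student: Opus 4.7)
The plan is to combine density counting at $X$, the $\mathcal{H}^n$-null hypothesis on the same-density touching set, the inductive Higher Regularity Theorem (H3), and a Hopf-type maximum principle for the CMC operator. I would first observe that, since $\Theta(\|V\|,X)=q$ and each $\text{graph}\,u_{j}$ has multiplicity one, all $q$ graphs must pass through $X$: $u_{1}(x)=\ldots=u_{q}(x)=X^{n+1}$. Hypothesis 2 of Theorem~\ref{thm:mainregularityrestated} (no classical singularities) together with the ordering $u_{1}\leq\ldots\leq u_{q}$ forces the graphs to be tangent to a common hyperplane at $x$, so $Du_{j}(x)=Du_{1}(x)$ for every $j$; after a rotation I may assume this common gradient vanishes. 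Hypothesis 3 of Theorem~\ref{thm:mainregularityrestated} further gives that the density-$q$ set, which by ordering equals $T:=\{y:u_{1}(y)=u_{q}(y)\}$, satisfies $\mathcal{H}^{n}(T)=0$ in a neighbourhood of $x$, so $\Omega:=\{u_{1}<u_{q}\}$ is open and dense.

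Next I would propagate the mean-curvature sign using (H3). At any $Y\in\spt{V}$ near $X$ with $\Theta(\|V\|,Y)\leq q-1$, upper semi-continuity and integrality of the density give a ball $B\ni Y$ on which $\Theta\leq q-1$ everywhere; only the graphs $u_{j}$ whose altitude is close to $Y^{n+1}$ contribute to $V\res B$, so $V\res B$ is a sum of at most $q-1$ multiplicity-one graphs, and (H3) decomposes $\spt{V}\cap B$ as $C^{2}$ CMC graphs touching at most in pairs. The inductive version of the present lemma (part of (H3) for $q'\leq q-1$) then gives that the top graph of this local decomposition satisfies $\operatorname{div}(Du/\sqrt{1+|Du|^{2}})=+|H|$ and the bottom satisfies $=-|H|$. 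Restricting to $\Omega$, this shows that $u_{1}$ and $u_{q}$ are $C^{2}$ CMC on $\Omega$, with signs $-|H|$ and $+|H|$ respectively at every touching point inside $\Omega$. To extend the sign to a component of $\Omega$ on which $u_{q}$ happens to be density-$1$ isolated, I would exploit the tangential touching $u_{q}(x)=u_{q-1}(x)$ with $Du_{q}(x)=Du_{q-1}(x)=0$: the nonnegative $C^{1,\alpha}$ difference $w:=u_{q}-u_{q-1}$ satisfies a divergence-form linear elliptic equation on the component whose right-hand side is the difference of the (constant) CMC right-hand sides of $u_{q}$ and $u_{q-1}$; if these were equal, the strong maximum principle applied at the interior minimum $x$ (where $w=0=Dw$) would force $w\equiv 0$ near $x$, contradicting $u_{q-1}<u_{q}$ on the component. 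Hence they are opposite, giving $u_{q}$ sign $+|H|$ and $u_{q-1}$ sign $-|H|$; symmetrically on the bottom. This will prove conclusion~(2) on the embedded part.

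Last, I would rule out middle $u_{j}$ (with $1<j<q$) that do not coincide locally with $u_{1}$ or $u_{q}$. If such a $j$ exists, there is $y$ arbitrarily close to $x$ with $u_{1}(y)<u_{j}(y)<u_{q}(y)$. Applying the argument of the preceding paragraph to the lower sub-stack $\{u_{1},\ldots,u_{j}\}$ (of size $\leq q-1$, hence subject to (H3)) gives that $u_{j}$ is the top sheet of that sub-stack near $x$, hence satisfies the CMC equation with right-hand side $+|H|$; applying it to the upper sub-stack $\{u_{j},\ldots,u_{q}\}$ gives that $u_{j}$ is the bottom sheet, hence satisfies it with $-|H|$. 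Since $|H|\neq 0$, this is a contradiction, so every middle $u_{j}$ coincides pointwise with $u_{1}$ or $u_{q}$ near $x$, whence $\spt{V}=\operatorname{graph} u_{1}\cup\operatorname{graph} u_{q}$ in a neighbourhood of $X$ and $X\in\text{sing}_{T}\,V$.

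The hardest step will be the Hopf-type propagation of the CMC sign across a component of $\Omega$ on which $u_{q}$ is density-$1$ isolated: the boundary of such a component may be very irregular, so the classical Hopf boundary-point lemma cannot be used as a black box, and one must instead exploit the global $C^{1,\alpha}$ regularity of $u_{q}$ and $u_{q-1}$ on the whole domain and apply the strong maximum principle for the divergence-form linearisation at the interior tangential-touching point $x$.
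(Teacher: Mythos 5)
Your overall plan parallels the paper's, but the step you yourself flag as hardest has a genuine gap. You propose applying the strong maximum principle at the touching point $x$ to the difference $w = u_q - u_{q-1}$. But $x \in T = \{u_1 = u_q\}$, which is the boundary of the open set on which you know $w$ satisfies the linearised PDE: the strong maximum principle requires the equation to hold in a full neighbourhood of the minimum point, and here it holds only on $\Omega$, not across $T$; the global $C^{1,\alpha}$ regularity of the sheets does not give you the PDE at $x$. Extending the PDE across $T$ is precisely the hard content of Sections~\ref{higherholderexponent}--\ref{extensionPDEv}, requiring the H\"older exponent to first be improved past $1/2$ and then a $W^{2,2}$ estimate and a removability argument --- a circularity you cannot afford at this stage of the induction. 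The paper avoids ever touching $x$: inside a connected component $A$ of $B \setminus C$ (with $C$ the projection of the density-$q$ set) one inscribes a maximal Euclidean ball $B_R(y) \subset A$ whose boundary meets $C$ at some $p$. The difference of two of the smooth sheets $\tilde{u}_j, \tilde{u}_{j+1}$ (supplied by the inductive (H3)) satisfies the linearised PDE throughout $B_R(y)$, which is disjoint from $T$, and the classical Hopf boundary point lemma applies at $p \in \partial B_R(y)$ since the interior ball condition is automatic; the strictly positive outward normal derivative it produces contradicts the vanishing on $C$ of the gradient of the difference, which follows from tangentiality (no classical singularities). Your objection that ``the boundary of such a component may be very irregular'' is thus moot: Hopf is run on the smooth boundary of an inscribed ball, not on $\partial A$.

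A further caution about your final step: applying the inductive conclusion to the ``sub-stacks'' $\{u_1, \ldots, u_j\}$ and $\{u_j, \ldots, u_q\}$ is not justified, because a sub-collection of the $u_i$'s need not form a varifold in $\mathcal{S}_H$ --- Remark~\ref{oss:jumpsattouchingsing} exhibits a middle sheet with a multiplicity jump, so a sub-stack can carry a nonzero varifold boundary and its generalized mean curvature can fail to be in $L^p$. The paper instead applies (H3) to $V$ itself near a component of $B\setminus C$, where the density is $\leq q-1$; this produces the smooth CMC $\tilde{u}_j$'s directly, and no sub-stack is ever formed.
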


\begin{oss}
The same argument shows that if $H=0$ then under the assumptions of Theorem \ref{thm:higher-reg} any two graphs $u_i, u_{i+1}$ either coincide identically (i.e.\ $u_i \equiv u_{i+1}$) or have empty intersection (i.e.\ $u_i < u_{i+1}$). Then the minimal surface PDE very directly implies that each $u_j$ is smooth.
\end{oss}

\begin{proof}
Recall that any two graphs touching must touch tangentially (otherwise we would create a classical singularity). Denoting with $\pi:B_{1/2}^n(0)\times \R \to B_{1/2}^n(0)$ the standard projection, let $C=\pi\{y\in \spt{V}: \theta(y)=q\}$ be the closed set in $B_{1/2}^n(0)$ above which the density of the varifold is $q$, i.e. the set of $x\in B_{1/2}^n(0)$ such that $u_1(x)=...=u_q(x)$. If $C$ is the whole ball $B_{1/2}^n(0)$ then $V$ is embedded in the cylinder $B_{1/2}^n(0)\times \R$ (and we can use standard bootstrapping to infer that $u_1\equiv...\equiv u_q(x)$ is a smooth function), against the assumption. 

Therefore $C \subsetneq B_{1/2}^n(0)$. Denote by $B$ the ball $B_{1/2}^n(0)$ and consider the open set $(B_{1/2}^n(0) \setminus C)\times \R$: here we have the validity of the inductive assumptions, as the density is $\leq q-1$. Consider $x \in B$ such that ${\rm spt} \, \|V\|$ is embedded at $(x,u_j(x))$ for every $j$. The set $U$ of such points $x$ is open and dense (by Allard's theorem). Then we can ``neglect multiplicities'' and define $\tilde{q}(x)$ as the number of points without multiplicity above $x$. By embededness $\tilde{q}$ is locally constant on $U$, i.e. constant on each connected component of $U$. This allows to define $\tilde{u}_1 < ... < \tilde{u}_{\tilde{q}}$ on $U$, with $\tilde{q}$ constant on each connected component of $U$ and $\spt{V \res (U \times \R)}$ described by the union of the graphs of $\tilde{u}_j$. We will however show that $\tilde{q}$ actually extends to a locally constant function on $B \setminus C$. Indeed, clearly $U \subset B \setminus C$ and on $(B \setminus C) \times \R$ we can use the inductive assumptions, thanks to which we know that  $\spt{V \res (B \setminus C)\times \R} \subset \Greg{V}$: by definition of $\Greg$ this means that at any point $y$ in $(B \setminus C)\times \R$ where $V$ is not embedded the structure of $\spt{V}$ is, in a neighbourhood, exactly the union of two $C^2$ graphs touching tangentially on a set of $0$-measure. Therefore if $\pi(y)$ is on the (topological) boundary of any two connected components of $U$, the number $\tilde{q}$ cannot change when we pass from one connected component to the other. This allows to extend uniquely the definition of the functions $\tilde{u}_1 \leq ... \leq \tilde{u}_{\tilde{q}}$ to $B \setminus C$ so that the union of their graphs is equal to $\spt{V \res (B \setminus C)\times \R}$ and moreover each $\tilde{u}_j$ is $C^2$ on $B\setminus C$ and satisfies the CMC PDE. In particular $\tilde{q}$ is locally constant on $B \setminus C$. The great advantage in passing to the functions $\tilde{u}_1 \leq ... \leq \tilde{u}_{\tilde{q}}$ is that these are smooth and CMC on $B\setminus C$. Note that necessarily we have $u_1=\tilde{u}_1$ and $u_q=\tilde{u}_{\tilde{q}}$.

Take any connected component $A$ of $B \setminus C$ on which  $\tilde{q} \geq 2$ (such a component exists otherwise $V$ is embedded everywhere). We will show that there exists $m\in \{1, ... \tilde{q}-1\}$ such that $\tilde{u}_1|_A= ... \tilde{u}_m|_A$ and $\tilde{u}_{m+1}|_A = \tilde{u}_{\tilde{q}}|_A$ ($m$ might depend on which connected component has been chosen). For $y \in A$ consider open balls centred at $x$ and contained in $A$ and pick the supremum $R$ of the radii of such balls: then $B_R(y) \subset A$ and there exists $p\in \p B_R(y) \cap C$. On the ball $B_R$, by the inductive assumptions, each $\tilde{u}_j$ is smooth and solves the CMC equation 
$$\text{div} \left( \frac{D \tilde{u}_j}{\sqrt{1+|D\tilde{u}_j|^2}} \right) =|h| \text { or } -|h|$$
classically, with the sign on the right-hand side depending on whether the mean curvature vector $\vec{H}$ points upwards or downwards, i.e. whether respectively $\vec{H} \cdot \hat{e}^{n+1} >0$ or $\vec{H} \cdot \hat{e}^{n+1} <0$ on $|\text{graph}(\tilde{u}_j|_{B_R(y)})|$. For any couple $\tilde{u}_i, \tilde{u}_j$ (with $i\neq j$) we then consider the possibility that the mean curvatures on these two graphs point in the same direction. Subtracting the PDE for $\tilde{u}_{j+1}$ from the one for $\tilde{u}_j$ we find that $\tilde{u}_j - \tilde{u}_{j+1}$ is $0$ on $C$ and solves an elliptic PDE on $B_R(y)$, namely (\ref{eq:PDEv}) with the right-hand side replaced by $0$. Unless $\tilde{u}_j \equiv \tilde{u}_{j+1}$ on $B_R(y)$ we can apply Hopf boundary point lemma, yielding that the derivative of $\tilde{u}_j - \tilde{u}_{j+1}$ along the outward normal to the boundary of the ball at $\xi$ must be strictly positive: this contradicts the vanishing of the gradient of $\tilde{u}_j - \tilde{u}_{j+1}$ on $C$ (tangentiality of the graphs, equivalently absence of classical singularities).

Summing up, we have by now that $\spt{V}$ in the cylinder $B_{1/2}^n(0)\times \R$ is represented by the union of the graphs of $u_1=\tilde{u}_1$ and $u_q=\tilde{u}_{\tilde{q}}$ (since every other $\tilde{u}_j$ coincides with one of these two on $B\setminus C$ and on $C$ clearly all graphs coincide). Moreover there exists a non-empty open set $A$ on which $\tilde{u}_1 < \tilde{u}_q$. We can conclude that actually $u_1 = u_q$ only on the set $C$; on $B_{1/2}^n(0) \setminus C$ we have $u_1 < u_q$. Indeed Lemma \ref{lem:constancysecond} forces $V=q_1(x)|\text{graph}(u_1)|+(q-q_1(x))|\text{graph}(u_q)|$, with $q_1$ integer-valued and locally constant on the embedded part. The set $\{u_1=u_q\}$ is therefore exactly the set $C$. \footnote{Actually we can even prove that $C$ has $0$-measure. All points above $C$ in the support of the varifold are either embedded or two-fold touching singularities. Embeddedness is an open condition, so above $\p C$ we must have touching singularities, and $\p C \neq \emptyset$. So let $p\in\p C$ and consider $(p,u_1(p))\in\text{sing}_T V$. In a neighbouhood of $p$ the set $C$ must have then $0$-measure: if that were not the case, then we would contradict assumption 3 of the class $\mathcal{S}_H$, in that we would produce on $C$ the presence of a twofold toucing singularity $(p,u_1(p))$ for which the zero-measure condition fails. Now consider the following: at every point of $C$ we either have (if the point is in the interior) an open neighbourhood that is all contained in $C$ or (if the point is on the boundary) an open neighbourhood in which $C$ has $0$-measure. This gives rise to an open cover of $C$ and we can extract a finite cover. Denote by $N$ the open set that is the union of all open neighbouhoods that lie in the interior, clearly $N \subsetneq C$. Then for every point $p\in \p N$ we have that $p$ must be contained in another set $L$ from the cover that was centred at the boundary of $C$, so $C$ should have $0$-measure in $L$. On the other hand $L \cap N$ is open and contained in the interior of $C$, contradiction.}

The previous argument settles part 1 of the lemma and moreover shows that, for any two sheets touching, the mean curvature vectors must point in opposite directions. However, if the top sheet $\tilde{u}_{j+1}$ has mean curvature vector pointing downwards and the bottom sheet $\tilde{u}_j$ has mean curvature pointing upwards, then the CMC equation is solved by $\tilde{u}_j$ with right hand side $|h|$ and by $\tilde{u}_{j+1}$ with right hand side $-|h|$. So the same argument yields that $\tilde{u}_j - \tilde{u}_{j+1}$ is a subsolution of the same elliptic PDE of which it was solution before, more precisely $\tilde{u}_j - \tilde{u}_{j+1}$ solves the PDE (\ref{eq:PDEv}) with the right-hand side replaced by $-2|h|\int \zeta$. Hopf lemma applies again, so part 2 is proved as well.
\end{proof}

\subsection{Average and semi-difference and their PDEs}

In order to complete the proof of Theorem \ref{thm:higher-reg} we need to prove $C^2$ regularity of each sheet across the touching points, i.e. we need to show that these touching points belong to $\Greg{V}$. Note that we only know that the set of points where sheets touch (tangentially) is a set of ${\Hc}^n$-measure zero, but such an estimate is too weak to allow any capacity argument for the extension of the CMC equation to each sheet. In other words, with our knowledge so far there could still be (for a single sheet) a singular part of the generalized mean curvature that is concentrated on the touching set (with the mean curvature of the other sheet that annihilates the first). In the following subsections we will rule out this possibility. By Lemma \ref{lem:Hopf} we only need to consider the case of two distinct sheets, so for the sequel we will work in the following situation.

\begin{hyp}
We consider two sheets $u_1 \leq u_2$, each of them is $C^{1,\alpha}(B^n_{1}(0))$ for $\alpha<\frac{1}{2}$, they can touch tangentially (the touching singularities $\text{sing}_T$ are contained in this touching set) and they cannot cross. Away from the touching set $\{(x,u_1(x)): u_1(x)=u_2(x)\}$ we have $C^{2}$-regularity and constant mean curvature in the classical sense. 
\end{hyp}

\begin{oss}
\label{oss:twosheetstouching}
By Lemma \ref{lem:Hopf} on any open set $B^n_\rho \times \R$ where there are no singular points, the two sheets have mean curvature vectors pointing in opposite directions, precisely the top sheet $u_2$ must have mean curvature pointing upwards and the bottom sheet $u_1$ must have mean curvature pointing downwards. The varifold $V$ is given by integration on the union of the two sheets endowed with an integer multiplicity. The multiplicity is $q \in \N$ on the touching set, it is denoted by $q_1(x)$ on $u_1$ (on the set where $u_1 < u_2$) and, by Lemma \ref{lem:constancysecond}, it is $q-q_1(x)$ on $u_2$ (on the set where $u_1 < u_2$). Moreover $q_1(x)$ is locally constant on the $C^2$ embedded portions.
\end{oss}

The two auxiliary functions that will play a key role in this section are the average $u_a:=\frac{u_1+u_2}{2}$ and the semi-difference $v:=\frac{u_2-u_1}{2}$ of the two sheets.

\medskip

\textbf{PDE for the semi-difference}
Consider $v=\frac{u_2-u_1}{2}$, well-defined on $B_1^n(0)$ and assume without loss of generality that $(0,0) \in B_1^n(0) \times \R$ is a touching singularity of the varifold. Then $v(0)=0$ and $Dv(0)=0$. The set 
\begin{equation}
T=\{x \in B_1^n : v(x)=0, Dv(x)=0\}
\end{equation}
is the projection on $B_1^n$ of $\{(x,u_1(x)): u_1(x)=u_2(x)\}$ (i.e. where the two sheets agree - we know that they must be tangential there).

We are assuming that on the embedded $C^2$ parts the two sheets are CMC with mean curvatures having the same absolute value $H>0$. Moreover for $u_1$ we have $\vec{H}=-H  \hat{\nu}$  and for $u_2$ we have $\vec{H}=H  \hat{\nu}$, with $ \hat{\nu} \cdot \vec{e}^{n+1} >0$ (i.e. normal vectors $ \hat{\nu}$ to the sheets pointing upwards).

Then consider the (possibly several) connected components of $B^n_1 \setminus T$, these are open sets above which $u_1$ and $u_2$ are $C^2$ and classically CMC. Then we can neglect multiplicities on the two sheets (since on such a connected component each sheet is counted with constant multiplicity) and deduce the first variation formulae

\begin{eqnarray}
\label{eq:fistvarformulae}
&&\int_{B_1^n} \frac{D_i u_1}{\sqrt{1 + |D u_1|^2}} D_i \zeta =H \int_{B_1^n} \zeta \nonumber\\
&&\int_{B_1^n} \frac{D_i u_2}{\sqrt{1 + |D u_2|^2}} D_i \zeta =-H \int_{B_1^n} \zeta 
\end{eqnarray}
for all test functions $\zeta$ compactly supported in $B^n_1 \setminus T$. Then we write

$$F^i(p)=\frac{p_i}{\sqrt{1+|p|^2}}\,\,\, \text{ for } p \in \R^n$$

$$F^i(Du_2) - F^i(D u_1) = \int_0^1 \frac{d}{dt}[F^i(t Du_2 + (1-t) Du_1)] dt=$$
$$=  \int_0^1 \frac{\p F^i}{\p p_j}(t Du_2 + (1-t) Du_1) (D_ju_2 - D_j u_1) dt =  \int_0^1 \frac{\p F^i}{\p p_j}(t Dv +  Du_1) (D_j u_2 - D_j u_1) dt = $$
$$= 2 D_j v \int_0^1 \frac{\p F^i}{\p p_j}(Du_a -(1-t) Dv) dt . $$
Set $a_{ij}(p,q) =  \int_0^1 \frac{\p F^i}{\p p_j}(p -(1-t) q) dt$ and, taking the difference of the two equations in (\ref{eq:fistvarformulae}), we find

$$\int_{B^n_1} a_{ij}(Du_a, Dv) D_j v D_i \zeta =  -H \int_{B^n_1} \zeta$$
for all test functions $\zeta$ compactly supported in $B^n_1 \setminus T$. 

Since $\frac{\p F^i}{\p p_j} = \frac{\delta_{ij}}{\sqrt{1+|p|^2}} - \frac{p_i p_j}{(1+|p|^2)^{\frac{3}{2}}}$, $a_{ij}(p,q) = \delta_{ij} + b_{ij}(p,q)$ with $|b_{ij}(p,q)| \leq C(|p|^2+|q|^2)$, which will allow us to view the PDE obtained for $v$, namely, 

\begin{equation}
 \label{eq:PDEv}
 \int_{B^n_1} (\delta_{ij}+b_{ij}(Du_a, Dv)) D_j v D_i \zeta =  -H \int_{B^n_1} \zeta
\end{equation}
as a perturbation of (the weak form of) $\Delta v = H$. Recall that we only use test functions $\zeta$ compactly supported in $B^n_1 \setminus T$. By assumption $T$ has zero $\mathcal{H}^n$-measure and $v=0$, $Dv=0$ on $T$. 

\medskip

\textbf{PDE for the average}. Consider $u_1$ with multiplicity $q_1(x)$ and $u_2$ with multiplicity $q_2(x)=q-q_1(x)$, with $q_j(x)$ constant on any open set disjoint from $T$. The first variations give

\begin{equation}
 \label{eq:firstvarformulaglobal}
\int \left( \frac{q_1(x) D_i u_1}{\sqrt{1+|Du_1|^2}} + \frac{q_2(x) D_i u_2}{\sqrt{1+|Du_2|^2}}\right) D_i \zeta = H \int (q_1(x)-q_2(x))\zeta.
\end{equation}
Writing $F^i(p) = \frac{p_i}{\sqrt{1+|p|^2}}$ for $p \in \R^n$ we have

$$q_2(x) F^i(D u_2) - q_1(x) F^i(-D u_1) = \int_0^1 \frac{d}{dt}[(tq_2(x) + (1-t)q_1(x))F^i(t Du_2 - (1-t)Du_1)]dt = $$

$$\int_0^1  (q_2(x)-q_1(x))F^i((2t-1)Du_a +Dv) dt + 2 \int_0^1 (q_1(x) + t(q_2(x) - q_1(x)) \frac{\p F^i}{\p p_j}((2t-1)Du_a +Dv) D_j u_a dt$$
Set $a_{ij}(p,q)=2 \int_0^1  (q_1(x) + t(q_2(x) - q_1(x)) \frac{\p F^i}{\p p_j}((2t-1)p +q)  dt$. Then the initial equation (\ref{eq:firstvarformulaglobal}) becomes

$$\int_\Omega \left(a_{ij}(Du_a, Dv)D_ju_a + (q_2(x)-q_1(x))\int_0^1 \frac{\delta_{ij}(2t-1)D_ju_a+D_iv}{\sqrt{1+|(2t-1)Du_a + Dv|^2}}dt\right)D_i\zeta = H\int_\Omega (q_2(x)-q_1(x))\zeta.$$
Setting $b_{ij}(p,q)=a_{ij}(p,q)+(q_2(x)-q_1(x))\delta_{ij}\int_0^1 \frac{(2t-1)}{\sqrt{1+|(2t-1)Du_a + Dv|^2}}dt$, we can rewrite the equation as follows:

$$\int_\Omega b_{ij}(Du_a, Dv)D_ju_a  D_i \zeta=$$ $$= H\int_\Omega (q_2(x)-q_1(x))\zeta -  \int_\Omega(q_2(x)-q_1(x))\left(\int_0^1 \frac{1}{\sqrt{1+|(2t-1)Du_a + Dv|^2}}dt\right) D_iv D_i\zeta$$

\subsection{Sheeting theorem - higher H\"older regularity for the gradients}
\label{higherholderexponent}

Next we need to improve exponent $\alpha$ of the $C^{1,\alpha}$ regularity, that was obtained in the Sheeting Theorem \ref{thm:sheeting} with $\alpha <\frac{1}{2}$:

\begin{Prop}
\label{Prop:alphaimproved}
Under the assumption that $V \in \mathcal{S}_H$ and 
$$V \res \left(B_{1/2}^{n}(0) \times {\R} \right) = \sum_{j=1}^{q} |{\rm graph} \, u_{j}|,$$
with $u_{j} \in C^{1, \alpha} \, (B_{1/2}^{n}(0); {\mathbb R})$ and $u_{1} \leq u_{2} \leq \ldots \leq u_{q}$ for some $\alpha (0, \frac{1}{2})$, we have that $u_{j} \in C^{1, \alpha} \, (B_{1/2}^{n}(0); {\mathbb R})$ for any $\alpha\in(0,1)$.
\end{Prop}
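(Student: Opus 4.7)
The plan is to reduce the claim to improved H\"older regularity of the auxiliary functions $u_a := (u_1+u_2)/2$ and $v := (u_2-u_1)/2$ introduced in the previous subsection, and then exploit the elliptic PDEs satisfied by these functions. By Lemma \ref{lem:Hopf}, around any touching singularity only two distinct sheets of $\spt V$ are present, so after discarding redundant sheets we may work locally with two ordered functions $u_1\le u_2$ of class $C^{1,\alpha_0}$ (for the $\alpha_0<1/2$ produced by Theorem~\ref{thm:sheeting}), both of which are classically $C^2$ and CMC off the contact set $T:=\{u_1=u_2\}=\{v=0\}$, and which touch tangentially on $T$ (so $Dv\equiv 0$ on $T$). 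Since the CMC equation is uniformly elliptic for functions with small gradient, away from $T$ each $u_j$ is $C^\infty$ by standard bootstrap, and the whole difficulty is concentrated on $T$.

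The first decisive step is to upgrade the weak PDE (\ref{eq:PDEv}) for $v$, which a priori is only tested against $\zeta\in C^\infty_c(B_1\setminus T)$, to hold for every $\zeta\in C^\infty_c(B_1)$. The crucial structural fact is that the flux $F_i:=(\delta_{ij}+b_{ij}(Du_a,Dv))D_jv$ is proportional to $Dv$ and therefore is of class $C^{0,\alpha_0}$ and \emph{vanishes identically on} $T$. I will use the cutoffs $\chi_\eps:=\Phi_\eps(v)$, where $\Phi_\eps$ is a smooth scalar cutoff equal to $0$ on $(-\infty,\eps]$ and to $1$ on $[2\eps,\infty)$, so that $\chi_\eps\equiv 0$ on an open neighbourhood of $T$. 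The resulting integration-by-parts error is
\[
\int \zeta\,F_i\,D_i\chi_\eps = \frac{1}{\eps}\int_{\{\eps<v<2\eps\}} a^{ij}D_iv\,D_jv\,\zeta,
\]
which is nonnegative and quadratic in $Dv$. Testing (\ref{eq:PDEv}) against $(v-\eps)_+\phi^2$ yields a Caccioppoli inequality showing that the integrated weighted quantity $\int_0^1 \eps^{-2}\int_{\{\eps<v<2\eps\}}|Dv|^2\,d\eps$ is finite; a pigeonhole therefore selects a sequence $\eps_k\to 0$ along which the error term vanishes, extending (\ref{eq:PDEv}) to all $\zeta\in C^\infty_c(B_1)$.

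Once this extension is in hand, $v$ is a weak solution of a linear uniformly elliptic divergence-form equation with $C^{0,\alpha_0}$ coefficients (close to the identity, since $|Du_a|, |Dv|$ are small) and constant right-hand side $-H$; moreover $v\ge 0$ with $|\{v=0\}|=0$. I will then run a Campanato-type excess-decay iteration at each point $x_0\in T$: the natural (parabolic) rescaling $v_r(x):=v(x_0+rx)/r^{2}$ satisfies an equation whose coefficients converge to $\delta_{ij}$ and whose right-hand side converges to $-H$; the limit is a nonnegative function solving $\Delta w=-H$ with $w(0)=Dw(0)=0$, hence is quadratic with a sharp $C^{1,1}$ bound. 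Transferring the decay back to $v$ via the Campanato characterisation of H\"older spaces gives $v\in C^{1,\alpha}_{\rm loc}$ for every $\alpha<1$. The analogous argument for $u_a$ uses the PDE derived for it, whose extra lower-order term involving $Dv$ is now controlled by the just-proven improved regularity of $v$; since $u_1=u_a-v$ and $u_2=u_a+v$, this proves the proposition.

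The main obstacle will be Step one, the removability of $T$ for (\ref{eq:PDEv}): hypothesis 3 in Theorem \ref{thm:higher-reg} provides only $\mathcal H^n(T)=0$, with no Minkowski or $\mathcal H^{n-1}$ control available at this stage, so a naive distance-based cutoff is insufficient. It is essential that the error term from a $v$-level-set cutoff is \emph{quadratic} in $Dv$ (rather than merely linear), which coupled with the Caccioppoli energy bound permits the pigeonhole argument above to close.
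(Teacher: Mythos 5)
Your overall plan---reduce to the semi-difference $v$ and an average, exploit their PDEs, and drive a Campanato-type decay at touching points---is in the same spirit as the paper's Section~\ref{higherholderexponent}. But the mechanism you propose in Step one to remove $T$ from the PDE for $v$ does not close, and this is a genuine gap, not a technicality. Testing (\ref{eq:PDEv}) against $(v-\eps)_+\phi^2$ gives, uniformly in $\eps$,
\[
c\int_{\{v>\eps\}}|Dv|^2\phi^2 \;\le\; 2\sup|a^{ij}|\int|Dv|\,(v-\eps)_+\,\phi\,|D\phi| \;+\; H\int(v-\eps)_+\phi^2 \;\le\; C,
\]
which merely reproduces the trivial bound $\int_{\{v>0\}}|Dv|^2\phi^2<\infty$ already implied by $v\in C^{1,\alpha}$. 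It does \emph{not} bound the quantity you need, $\int_0^1\eps^{-2}\int_{\{\eps<v<2\eps\}}|Dv|^2\,d\eps$, which by Fubini is comparable to $\int_{\{v>0\}}|Dv|^2/v$. To produce a factor $1/v$ in the Caccioppoli integrand one is forced toward a logarithmic test function $\zeta=\phi^2\log_+(v/\eps)$, and then the best one obtains is $\int_{\{v>\eps\}}|Dv|^2/v\lesssim|\log\eps|$, which is \emph{exactly} borderline: with $\eps_k=2^{-k}$ it gives $\sum_{k\le K}2^{k}\int_{\{2^{-k}<v<2^{1-k}\}}|Dv|^2\lesssim K$, so the pigeonhole only delivers a bounded subsequence of error terms, never one tending to zero. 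Without Minkowski-type control of $T$ (and only $\mathcal H^n(T)=0$ is available), $\int|Dv|^2/v$ can genuinely fail to be finite when $\alpha<1/2$.

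The paper's route is structured precisely to avoid this. It never extends (\ref{eq:PDEv}) across $T$ at this stage; instead it proves an a priori Schauder estimate for $v$ (Proposition~\ref{Prop:Schauderforsemidifference}, via Lemma~\ref{lem:SimonslemmaforSchauder}) by L.~Simon's scaling/contradiction method, using only test functions compactly supported away from $T$. The crucial technical point is that the blow-up limit $w$ is \emph{harmonic} off a closed anchor set (the inhomogeneity $H$ scales away), and its harmonicity is propagated across that set via a Bochner-formula $L^2$ bound on $D^2w$ combined with Lemma~\ref{lem:distributionalder}---a removability argument for $\Delta w=0$, which is far more tractable than removability for the original quasilinear equation with $H\neq 0$. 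Moreover, for the second linear combination the paper uses the \emph{weighted} average $q_1u_1+q_2u_2$, not your plain average $u_a$: its PDE (\ref{eq:PDEweightedaverage}) holds for every $\zeta\in C^\infty_c(B)$ \emph{automatically}, because it is read off the varifold first-variation identity (\ref{eq:firstvarformulaglobal}), which is valid across $T$ thanks to the $L^p$ hypothesis on the generalized mean curvature. Your plain average only satisfies a PDE away from $T$ and would hit the same removability wall. Finally, note the paper's ordering is forced: the $W^{2,2}$ and removability steps in Sections~\ref{semidifferenceisW22}--\ref{extensionPDEv} require $\alpha\ge\tfrac12$ as \emph{input} (see Lemma~\ref{lem:boundsonsecondder}, where finiteness of $\int\text{dist}(\cdot,T)^{2(2\alpha-1)}$ hinges on $2\alpha-1\ge 0$), so removability cannot precede the H\"older improvement.
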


In particular we will need $\alpha\geq \frac{1}{2}$ at a later stage in this section. Note that it suffices, in view of Lemma \ref{lem:Hopf}, to focus on two sheets $u_1$ and $u_2$, for which we may assume the knowledge of $C^{1,\alpha}$ regularity for some $\alpha <\frac{1}{2}$: the aim is to conclude that then actually the H\"older exponent can be improved. Roughly speaking the strategy for the proof of Proposition \ref{Prop:alphaimproved} is to show a De Giorgi type decay for each separate sheet by obtaining such a decay for the semi-difference $v$ and for the average $u_a$ at all touching points. 

\medskip

Consider the semi-difference $v$ on $B_1=B^n_1(0)$ (without loss of generality we are considering graphs on $B_1(0)$, which can always be obtained by homothetic rescaling) and let $K:=\{v=0, Dv=0\}$ and assume without loss of generality that $0 \in K$. We saw in (\ref{eq:PDEv}) that $v$ satisfies a PDE of the form

\begin{equation}
\label{eq:PDEsemidifferenceforschauder}
 \int_{B_1} \left( \delta_{ij}+b_{ij}(x) \right) D_j v D_i \zeta = -H \int_{B_1} \zeta \,\,\,\,\,\, \text{ for } \zeta \in C^1_c(B_1 \setminus K)\,\,\,\,\,\,\, \text{ with } H>0.
\end{equation}
In the case of (\ref{eq:PDEv}), the coefficients $b_{ij}(x)$ are, more precisely, the $C^{0,\alpha}$ functions $b_{ij}(Du_a(x), Dv(x))$.
In order to produce a De Giorgi type decay for $v$ we will use a contradiction argument/blow up method for which the following a priori estimate is needed.

\begin{Prop}[Schauder estimates for the semi-difference]
 \label{Prop:Schauderforsemidifference}
There exist $\beta, H_0 >0$ such that if $v \in C^{1, \alpha}(B_1)$ satisfies the PDE (\ref{eq:PDEsemidifferenceforschauder}) on $B_1 \setminus K$ with $\sup_{B_1}|b_{ij}|+[b_{ij}]_{\alpha,B_1}\leq \beta$ and $H \leq H_0$ then

$$[Dv]_{\alpha, B_{1/2}} \leq C(\|v\|_{L^2(B_1)} + H),$$
where $C$ depends only on $n, H_0, \beta$.

\end{Prop}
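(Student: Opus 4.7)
The natural strategy is the classical blow-up/contradiction method for Schauder estimates, combined with a removability argument for the critical set $K=\{v=0, Dv=0\}$. I would argue as follows: suppose the conclusion fails. Then one can find sequences $v_k \in C^{1,\alpha}(B_1)$, coefficients $b_{ij}^{(k)}$ with $\sup |b_{ij}^{(k)}| + [b_{ij}^{(k)}]_{\alpha,B_1} \leq \beta$ and constants $H_k \leq H_0$ satisfying the PDE (\ref{eq:PDEsemidifferenceforschauder}) away from $K_k = \{v_k=0,Dv_k=0\}$, with $M_k := [Dv_k]_{\alpha, B_{1/2}} \geq k(\|v_k\|_{L^2(B_1)} + H_k) \to \infty$. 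After choosing points nearly attaining $M_k$ and performing the standard parabolic-type rescaling, one normalizes $\tilde v_k := v_k / M_k$ so that $[D\tilde v_k]_{\alpha,B_{1/2}} = 1$ while $\|\tilde v_k\|_{L^2(B_1)} + H_k/M_k \to 0$. By uniform $C^{1,\alpha}$ bounds and Arzelà--Ascoli, a subsequence converges in $C^{1,\beta}_{\mathrm{loc}}$ ($\beta<\alpha$) to a limit $\tilde v_\infty$ with $\tilde v_\infty \equiv 0$ in $L^2(B_1)$ but $[D\tilde v_\infty]_\alpha = 1$ (by a careful choice of rescaling centers), the sought-after contradiction.

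For this scheme to produce a contradiction one must ensure that the limit $\tilde v_\infty$ inherits a genuine PDE on all of $B_1$, not merely on the complement of a limiting critical set $K_\infty$. This requires a removability statement: the PDE (\ref{eq:PDEsemidifferenceforschauder}) holds, in fact, for every $\zeta \in C^1_c(B_1)$. Testing against $(1-\chi_\epsilon)\zeta$ with $\chi_\epsilon$ a standard cutoff supported in a $2\epsilon$-neighborhood of $K \cap \mathrm{spt}\,\zeta$ and equal to $1$ on the $\epsilon$-neighborhood, the main term converges using $|N_\epsilon(K)| \to 0$ (since $\mathcal{H}^n(K)=0$), while the error
\[
\left| \int \zeta\,(\delta_{ij}+b_{ij})\, D_j v\, D_i \chi_\epsilon \right| \leq C\|\zeta\|_\infty \sup_{N_{2\epsilon}(K)} |Dv| \int |D\chi_\epsilon|
\]
must be driven to zero. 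The control on $|Dv|$ is the vital ingredient: since $Dv$ is continuous with $Dv|_K \equiv 0$ and $Dv \in C^{0,\alpha}$, one has the pointwise bound $|Dv(x)| \leq [Dv]_{\alpha} \mathrm{dist}(x,K)^\alpha \leq C\epsilon^{\alpha}$ on $\mathrm{spt}\,D\chi_\epsilon$.

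The main obstacle is exactly this removability step. A naive cutoff gives $\int |D\chi_\epsilon| \sim |N_{2\epsilon}(K)|/\epsilon$, and closing the estimate demands $\epsilon^{\alpha-1}|N_{2\epsilon}(K)| \to 0$, which does not follow from $\mathcal{H}^n(K)=0$ alone in the absence of any Minkowski-type control on $K$. The way around this is to exploit the stronger, second-order vanishing $|v(x)| \leq C\,\mathrm{dist}(x,K)^{1+\alpha}$ (a consequence of $v = Dv = 0$ on $K$ plus $Dv \in C^{0,\alpha}$) and to trade the gradient factor in the error for a solution-value factor via an integration by parts of the form
\[
\int \zeta\, D_j v\, D_i \chi_\epsilon
= - \int \chi_\epsilon\, D_j(\zeta\, D_i v)
+ \text{(boundary term)},
\]
so that the extra power of $\mathrm{dist}(\cdot,K)$ compensates the $1/\epsilon$ in $D\chi_\epsilon$, independently of finer properties of $K$. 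Once the PDE is extended to hold on all of $B_1$, the blow-up limit $\tilde v_\infty$ weakly solves a linear divergence-form equation with constant coefficients close to $\delta_{ij}$ and zero right-hand side, so $\tilde v_\infty$ is smooth harmonic up to a small perturbation; the normalization $\tilde v_\infty \equiv 0$ then contradicts $[D\tilde v_\infty]_\alpha = 1$, and the classical Campanato--Schauder machinery for divergence-form elliptic equations with small Hölder coefficients delivers the stated estimate as a perturbation of the Poisson problem $\Delta v = -H$.
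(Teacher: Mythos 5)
Your overall shape — contradiction, rescaling, blow-up, Campanato machinery — is the right territory, but you have correctly flagged the single hardest point and then proposed a repair that does not close. The gap is the removability of $K$ at the level of $v$ itself. You want to show the PDE (\ref{eq:PDEsemidifferenceforschauder}) holds against every $\zeta\in C^1_c(B_1)$, and after cutting off near $K$ you face an error of size $\sup_{N_{2\eps}(K)}|Dv|\cdot\int|D\chi_\eps|\lesssim \eps^{\alpha}\cdot\eps^{-1}|N_{2\eps}(K)|$, which requires a Minkowski-content bound on $K$. Nothing in the hypotheses supplies one: the proposition makes no measure-theoretic assumption on $K$ at all. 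Your proposed fix via integration by parts, $\int\zeta\,D_jv\,D_i\chi_\eps=-\int\chi_\eps\,D_i(\zeta D_jv)$, reintroduces $D^2v$, precisely the quantity the estimate is supposed to control, so the argument is circular; and if instead you throw the derivative onto $\chi_\eps$ twice to exploit $|v|\lesssim\operatorname{dist}(\cdot,K)^{1+\alpha}$, you pick up $\int|D^2\chi_\eps|\sim\eps^{-2}|N_{2\eps}(K)|$ and land on the same $\eps^{\alpha-1}|N_{2\eps}(K)|$. There is also a secondary issue: with $\tilde v_k=v_k/M_k$ and $C^{1,\beta}_{\rm loc}$ ($\beta<\alpha$) convergence, the seminorm $[D\tilde v_k]_\alpha$ is not lower-semicontinuous, so $\tilde v_\infty\equiv 0$ does not by itself contradict $[D\tilde v_\infty]_\alpha=1$; this is exactly what forces the Simon-style anchored \emph{spatial} rescaling.

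The paper's proof avoids extending the PDE for $v$ entirely. It first proves the absorbing estimate of Lemma \ref{lem:SimonslemmaforSchauder} (the $\delta[Dv]_{\alpha,B_1}$ term on the right), then passes to the full Schauder estimate by the abstract iteration lemma from Simon's scaling-method paper. In the blow-up for the Lemma, one rescales at a scale $\rho_k=|x_k-y_k|$, anchors either at $x_k$ (when $B_{\rho_k}(x_k)$ misses $K_k$, subtracting the first-order jet) or at a point $z_k\in K_k\cap B_{\rho_k}(x_k)$ (when it does not), and obtains a blow-up limit $w\in C^{1,\alpha}(\R^n)$ with $[Dw]_\alpha\le 1$, $|Dw(\zeta)-Dw(0)|>\delta/2$, that is \emph{harmonic away from a closed set where $w$ agrees with an affine function}. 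The removability question is thus posed only for this limit $w$, and is handled by a Bochner/convexity trick: testing $\Delta(\gamma_\delta(|\nabla w|^2))\ge 2\gamma_\delta'(|\nabla w|^2)|D^2w|^2$ with a bump function gives $D^2w\in L^2_{\rm loc}$ on $\{Dw\neq 0\}$ \emph{with no geometric control on the zero set whatsoever}, after which Lemma \ref{lem:distributionalder} promotes $w$ to $W^{2,2}_{\rm loc}$ and hence to an entire harmonic function, and Liouville delivers the contradiction. The point you are missing is that this maneuver only works because the blow-up limit is harmonic (hence real-analytic) away from the bad set — Bochner's formula degrades for the original quasilinear equation for $v$, and there is no analogous identity to close the estimate at the level of $v$ itself. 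You should restructure your argument to prove the absorbing inequality first and do all removability on the harmonic blow-up.
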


The key difficulty here is that the PDE is satisfied only away from the closed set $K$, of which no regularity properties are known. To prove Proposition \ref{Prop:Schauderforsemidifference} we adapt the scaling argument due to L.~Simon (\cite{Simon3}). 

\begin{lem}
\label{lem:SimonslemmaforSchauder} 
  $\forall \delta>0 \, \exists\,  C>0, C=C(\delta,H_0,\beta,\delta)$ such that 

$$[Dv]_{\alpha,B_{1/2}} \leq \delta [Dv]_{\alpha, B_1} + C(|v|_{0, B_1} + |Dv|_{0, B_1} +H). $$

\end{lem}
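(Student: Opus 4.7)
The proof proceeds by contradiction via L.~Simon's scaling/blow-up method. Suppose the inequality fails for some $\delta_0>0$: then there exist sequences $v_k\in C^{1,\alpha}(B_1)$ solving (\ref{eq:PDEsemidifferenceforschauder}) on $B_1\setminus K_k$ (with $K_k=\{v_k=0,Dv_k=0\}$), coefficients $b^{(k)}_{ij}$ with $\sup|b^{(k)}_{ij}|+[b^{(k)}_{ij}]_{\alpha,B_1}\leq \beta$, and constants $H_k\leq H_0$, such that
\[
[Dv_k]_{\alpha,B_{1/2}} > \delta_0 [Dv_k]_{\alpha,B_1} + k\bigl(|v_k|_{0,B_1}+|Dv_k|_{0,B_1}+H_k\bigr).
\]
Dividing each $v_k$ by $[Dv_k]_{\alpha,B_{1/2}}$ (and rescaling $H_k$ accordingly) we may assume $[Dv_k]_{\alpha,B_{1/2}}=1$, $[Dv_k]_{\alpha,B_1}\leq \delta_0^{-1}$, and $|v_k|_0+|Dv_k|_0+H_k\leq 1/k$.

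Choose $x_k,y_k\in\overline{B}_{1/2}$ with $|Dv_k(x_k)-Dv_k(y_k)|\geq\tfrac{1}{2}r_k^\alpha$, $r_k:=|x_k-y_k|$. Since $|Dv_k(x_k)-Dv_k(y_k)|\leq 2|Dv_k|_0\leq 2/k$, necessarily $r_k\leq (4/k)^{1/\alpha}\to 0$. Introduce the blow-ups
\[
\tilde v_k(z):=\frac{v_k(x_k+r_kz)-v_k(x_k)-r_k Dv_k(x_k)\cdot z}{r_k^{1+\alpha}},\qquad z\in B_{R_k},
\]
with $R_k=(1-|x_k|)/r_k\to\infty$. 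Then $\tilde v_k(0)=D\tilde v_k(0)=0$, $[D\tilde v_k]_{\alpha,B_{R_k}}\leq \delta_0^{-1}$, and $|D\tilde v_k(z_k^*)|\geq\tfrac{1}{2}$ at the unit vector $z_k^*:=(y_k-x_k)/r_k$. The uniform H\"older bound gives the polynomial growth $|\tilde v_k(z)|\leq C|z|^{1+\alpha}$ on $B_{R_k}$, so by Arzel\`a--Ascoli, along a subsequence $\tilde v_k\to\tilde v_\infty$ in $C^1_{\rm loc}(\mathbb R^n)$ and $z_k^*\to z_\infty\in S^{n-1}$ with $|D\tilde v_\infty(z_\infty)|\geq\tfrac{1}{2}$, $\tilde v_\infty(0)=D\tilde v_\infty(0)=0$ and $|\tilde v_\infty(z)|\leq C|z|^{1+\alpha}$.

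A direct change of variables in (\ref{eq:PDEsemidifferenceforschauder}) yields, for $\tilde\zeta\in C^1_c(B_{R_k}\setminus \tilde K_k)$ with $\tilde K_k:=(K_k-x_k)/r_k$,
\[
\int\bigl(\delta_{ij}+b^{(k)}_{ij}(x_k+r_kz)\bigr)D_j\tilde v_k\, D_i\tilde\zeta\,dz = -H_kr_k^{1-\alpha}\!\int\tilde\zeta\,dz + \mathrm{err}_k(\tilde\zeta),
\]
where the error from the affine subtraction, after using the cancellation $\int D_i\tilde\zeta=0$ and writing $b^{(k)}_{ij}(x_k+r_kz)=b^{(k)}_{ij}(x_k)+[b^{(k)}_{ij}(x_k+r_kz)-b^{(k)}_{ij}(x_k)]$, satisfies
\[
|\mathrm{err}_k(\tilde\zeta)|\leq r_k^{-\alpha}|Dv_k(x_k)|\cdot [b^{(k)}]_\alpha\, r_k^\alpha\, C_{\tilde\zeta}\leq C_{\tilde\zeta}\beta\,|Dv_k|_0\leq C_{\tilde\zeta}\beta/k\to 0.
\]
Since $r_k^\alpha[b^{(k)}]_\alpha\to 0$ and $|b^{(k)}|_0\leq \beta$, along a further subsequence the rescaled coefficients converge uniformly on compacta to a constant symmetric matrix $\delta_{ij}+b^\infty_{ij}$, which is uniformly elliptic provided $\beta$ is small. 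The right-hand side vanishes since $\alpha<1$ and $H_k\to 0$. Hence $\tilde v_\infty$ solves weakly $\partial_i\bigl((\delta_{ij}+b^\infty_{ij})\partial_j\tilde v_\infty\bigr)=0$ on $\mathbb R^n\setminus \tilde K_\infty$, where $\tilde K_\infty$ is the Kuratowski limit of $\tilde K_k$.

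\emph{The main obstacle} is to upgrade this equation to one valid on all of $\mathbb R^n$, as required for the ensuing Liouville step; classical capacity-based removability arguments would need $\mathcal H^{n-1}$-smallness of $\tilde K_\infty$, whereas our assumption $\mathcal H^n(K_k)=0$ is not preserved under rescaling by $1/r_k$. This gap is closed using the one-sided structure of the problem: $v_k\geq 0$ with $K_k=\{v_k=0\}=\{Dv_k=0\}$, so the vector field $(\delta+b^{(k)})Dv_k\in C^{0,\alpha}(B_1)$ vanishes identically on $K_k$. A careful cutoff/approximation argument---employing a tubular neighbourhood $(K_k)_\sigma$ of $K_k$ together with the vanishing of $Dv_k$ on $K_k$ (so that the boundary-flux terms across $\partial(K_k)_\sigma$ tend to $0$ as $\sigma\to 0$, by dominated convergence and the H\"older vanishing rate of $Dv_k$ near $K_k$)---shows that the distributional divergence of $(\delta+b^{(k)})Dv_k$ on all of $B_1$ equals the constant $H_k$, and hence the PDE for $\tilde v_\infty$ passes to the limit on all of $\mathbb R^n$. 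A standard Liouville theorem for constant-coefficient uniformly elliptic equations (reducible to Laplace's equation by a linear change of coordinates, admissible since $\delta+b^\infty$ is positive definite for $\beta$ small) then forces any solution with growth $|\tilde v_\infty(z)|\leq C|z|^{1+\alpha}$, $\alpha<1$, to be affine. Combined with $\tilde v_\infty(0)=D\tilde v_\infty(0)=0$ this gives $\tilde v_\infty\equiv 0$, contradicting $|D\tilde v_\infty(z_\infty)|\geq\tfrac{1}{2}$ and completing the proof.
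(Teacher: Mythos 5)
Your blow-up framework is essentially the one the paper uses: contradiction sequence, choice of $x_k,y_k$ achieving almost the seminorm, rescaling by $r_k=|x_k-y_k|\to 0$, passage to a $C^1_{\rm loc}$ limit $\tilde v_\infty$ with $[D\tilde v_\infty]_\alpha\leq C$ and $|D\tilde v_\infty(z_\infty)|\geq\tfrac12$, and a Liouville argument to derive the contradiction. The minor cosmetic differences (normalizing by $[Dv_k]_{\alpha,B_{1/2}}$ up front, and using a single blow-up formula instead of the paper's two-case split according to whether $B_{r_k}(x_k)$ meets $K_k$) are fine and would work. The error estimate for the affine subtraction (using $\int D_i\tilde\zeta=0$ and the H\"older oscillation of $b^{(k)}$) is also correct.

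The genuine gap is in the removability step. You propose a tubular-neighbourhood cutoff $\chi_\sigma$ near $K_k$ at scale $1$, and claim the flux error $\int(\delta+b^{(k)})Dv_k\cdot D\chi_\sigma\,\zeta$ vanishes as $\sigma\to 0$ ``by dominated convergence and the H\"older vanishing rate of $Dv_k$ near $K_k$.'' But the natural estimate on this term is
\[
\Big|\int(\delta+b^{(k)})Dv_k\cdot D\chi_\sigma\,\zeta\Big|\ \lesssim\ \sigma^{\alpha}\cdot\sigma^{-1}\cdot\big|(K_k)_{2\sigma}\big|\ =\ \sigma^{\alpha-1}\,\big|(K_k)_{2\sigma}\big|,
\]
and since $\alpha<1$, this does not go to zero without a quantitative rate on $|(K_k)_\sigma|\to 0$. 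The only information available is $\mathcal H^n(K_k)=0$, which gives $|(K_k)_\sigma|=o(1)$ but no rate, and no rate is preserved under the blow-up scaling either. Dominated convergence does not rescue this: $D\chi_\sigma$ has no pointwise limit and blows up like $\sigma^{-1}$ on a set that is merely known to shrink. A level-set cutoff $\psi(|Dv_k|/\sigma)$ would require controlling $D^2v_k$, which is exactly what is not yet known.

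The paper closes this gap by a different mechanism, working on the blow-up limit $w$ rather than on $v_k$. Because the rescaled coefficients become constant, $w$ is (after a linear change of coordinates) harmonic off the limit set $Z$, and vanishes together with $Dw$ on $Z$ once the affine part is subtracted. Applying the Bochner identity $\Delta(|\nabla w|^2)=2|D^2w|^2$ together with a convex level-set truncation $\gamma_\delta(|\nabla w|^2)$ (so that $\Delta\gamma_\delta(|\nabla w|^2)\geq 2\gamma_\delta'(|\nabla w|^2)|D^2w|^2$) and integrating against a bump function gives $D^2w\in L^2_{\rm loc}(\R^n\setminus Z)$ with a bound depending only on $\|Dw\|_{C^0}$. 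Then Lemma~\ref{lem:distributionalder}, applied to $f=D_iw$ (which is continuous, vanishes on $Z$, is $C^1$ off $Z$, and has summable derivative off $Z$), yields $w\in W^{2,2}_{\rm loc}(\R^n)$ and hence $\Delta w=0$ as a distribution on all of $\R^n$. Note that Lemma~\ref{lem:distributionalder} does not require $\mathcal H^n(Z)=0$ at all; the level-set truncation $\gamma_\delta(f)$ entirely sidesteps the perimeter-blowup issue that defeats the distance-based cutoff. Without a step of this kind to obtain local $L^2$ (even $L^1$) control of $D^2w$, the passage from ``harmonic off $Z$'' to ``harmonic on $\R^n$'' is not justified, and this is the place where your proof would fail.
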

 
\begin{proof}[proof of Lemma \ref{lem:SimonslemmaforSchauder}]
 
The proof proceeds by contradiction: assume that we can find $\delta>0$ such that for every $k \in \N$ there is a $v_k$ satisfying the same assumptions and such that the reverse inequality holds, i.e.~for every $k$ we have (with $0\leq H_k\leq H$)

\be
\label{eq:SimonReverseIneq}
[Dv_k]_{\alpha,B_{1/2}} \geq \delta [Dv_k]_{\alpha, B_1} + k(|v_k|_0 + |Dv_k|_0 +H_k). 
\ee
Choose $x_k, y_k \in B_{1/2}$ such that 

\be
\label{eq:choicexkyk}
\frac{|Dv_k(x_k)-Dv_k(y_k)|}{|x_k - y_k|^\alpha} > \frac{1}{2}[Dv_k]_{\alpha, B_{1/2}}. 
\ee
Denote $\rho_k = |x_k - y_k|$. Observe that in view of (\ref{eq:choicexkyk}) and (\ref{eq:SimonReverseIneq}) we have $\frac{1}{2}[Dv_k]_{\alpha, B_{1/2}} \leq \frac{2|Dv_k|_0}{\rho_k^\alpha} \leq \frac{2[Dv_k]_{\alpha, B_{1/2}}}{k \rho_k^\alpha}$. Therefore we must have $\rho_k \to 0$ as $k \to \infty$.  

\medskip

We distinguish two cases: (I) $B_{\rho_k}(x_k) \subset B_1 \setminus K_k$ for all $k$, (II) there exists a subsequence (not relabelled) such that there is a point $z_k$ in the set $K_k \cap B_{\rho_k}(x_k)$.

If we are in case (I) consider the rescaling of $v_k$ as follows: 
$$w_k(x):= \frac{v_k(x_k + \rho_k x)-v_k(x_k)-\rho_k \sum_{i=1}^n D_iv_k(x_k) x_i}{\rho_k^{1+\alpha}[Dv_k]_{\alpha, B_1}} .$$

If we are in case (II) consider the rescaling of $v_k$ as follows: 
$$w_k(x):= \frac{v_k(x_k + \rho_k x)}{\rho_k^{1+\alpha}[Dv_k]_{\alpha, B_1}} $$
and denote $p_k = \frac{z_k - x_k}{\rho_k}$. Then we have $w_k = 0$ and $Dw_k = 0$ at the point $p_k \in B_1(0)$ (the point $p_k$ plays the role of an ``anchor point'' so there is no need to subtract the first jet, which we must do in case (I) in order to make the origin the ``anchor point'').

\medskip

In either case we consider the sequence $w_k$ thus obtained and we will now prove that it converges (up to extraction of a subsequence) in $C^1$ on any compact set to a $C^{1,\alpha}$ function $w$ defined on $\R^n$ and such that $w$ is harmonic on the open set $\{w \neq 0\}$. Then we will show that such a $w$ is harmonic in the whole of $\R^n$.

\medskip

Case (I): by definition we have $w_k(0)=0$, $Dw_k(0)=0$ and let $\zeta_k:=\frac{y_k-x_k}{\rho_k}$ so that $|\zeta_k|=1$. By (\ref{eq:choicexkyk}) and (\ref{eq:SimonReverseIneq})

$$|Dw_k(\zeta_k) - Dw_k(0)| > \frac{\delta}{2},$$
and moreover we know that $[Dw_k]_{\alpha, B_{\frac{1}{2\rho_k}}(0)}\leq 1$ by rescaling properties.  Since $Dw_k(0)=0$ then for $0<\sigma<\frac{1}{2\rho_k}$ and $x \in B_\sigma(0)$ we have $|Dw_k(x)| = |Dw_k(x)-Dw_k(0)| \leq [Dw_k]_{\alpha, B_{\frac{1}{2\rho_k}}(0)} |x-0|^\alpha \leq \sigma^\alpha$. Similarly $|w_k(x)| \leq \sigma^{1+\alpha}$.

We now want to send $k \to \infty$. We have just seen that on any compact set a tail of the sequence satisfies the requirements of Ascoli-Arzel\`{a}'s theorem: this yields a $C^{1,\alpha}$ function $w$ on $\R^n$ to which a subsequence of $w_k$ converges in $C^1$ on any compact set (a diagonal argument is needed here, by using an exhausting sequence of compact sets $\overline{B}_{\frac{1}{2\rho_k}}$). We may assume, up to extracting a further subsequence that we do not relabel, that $\zeta_k \to \zeta$ as $k\to \infty$ with $|\zeta|=1$. The function $w$ satisfies (by the $C^1$ convergence)

\be
\label{eq:inequalitiesforlimitingblowup}
[Dw]_{\alpha, \R^n} \leq 1 \,\,\, \text{ and } \,\, |Dw(\zeta) - Dw(0)| > \frac{\delta}{2}.
\ee

\medskip

Case (II): this time, with the notation $p_k = \frac{z_k - x_k}{\rho_k}$, we have $w_k(p_k)=0$, $Dw_k(p_k)=0$. We have still $\zeta_k:=\frac{y_k-x_k}{\rho_k}$ so that $|\zeta_k|=1$. By (\ref{eq:choicexkyk}) and (\ref{eq:SimonReverseIneq})

$$|Dw_k(\zeta_k) - Dw_k(0)| > \frac{\delta}{2},$$
and moreover we know that $[Dw_k]_{\alpha, B_{\frac{1}{2\rho_k}}(0)}\leq 1$ by rescaling properties. For $0<\sigma<\frac{1}{2\rho_k}$ and $x \in B_\sigma(0)$ we have $|Dw_k(x)| = |Dw_k(x)-Dw_k(p_k)| \leq [Dw_k]_{\alpha, B_{\frac{1}{2\rho_k}}(0)} |x-p_k|^\alpha \leq (\sigma+1)^\alpha$. Similarly $|w_k(x)| \leq (\sigma+1)^{1+\alpha}$. As before we can extract a converging subsequence using Ascoli-Arzel\`{a}'s theorem and get as above $w$ of class $C^{1,\alpha}$ on $\R^n$ with (\ref{eq:inequalitiesforlimitingblowup}) valid.

\medskip

Since $v_k \geq 0$ for all $k$, in case (II) we will have $w \geq 0$ and the set $\{w=0\}$ is the set of $z$ such that there exists a sequence $z_k \to z$ with $x_k+\rho_k z_k \in K_k$. We will consider the open set $\{w > 0\}$ and will show that $w$ is harmonic there. 

In case (I), on the other hand, let $Z_k$ be the closed set such that $x_k+\rho_k Z_k = K_k$; then $w_k$ takes on the set $Z_k$ the same value as the affine function $\frac{-v_k(x_k)-\rho_k \sum_{i=1}^n D_iv_k(x_k) \cdot_i}{\rho_k^{1+\alpha}[Dv_k]_{\alpha, B_1}}$; here $\cdot$ denotes the variable and $\cdot_i$ its $i$-th coordinate. Therefore the limsup $Z$ (as $k \to \infty$) of the sets $Z_k$ will be such that the value of $w$ on $Z$ coincides with the value taken by a certain affine function, which is the $C^1$-limit of those exhibited above. We will consider the open set $\R^n \setminus Z$ and will conclude that $w$ is harmonic there. 

\medskip

Observe that both in case (I) and case (II) we can obtain a PDE for $w$ on the open set $\R^n \setminus Z$ by suitable rescaling the (weak) PDEs for $v_k$ and passing to the limit by the $C^1$ convergence: indeed the open set on which we are focusing comes from dilations of the open sets on which the PDEs for $v_k$ are valid. The limiting process follows the lines of the blow up argument in L. Simon's \cite{Simon3} and will be omitted.

\medskip

We now have $w$ that is harmonic away from a closed set where it coincides with an affine function. By subtracting this affine function we obtain both in cases (I) and (II) that we have a function $w \in C^{1,\alpha}$ that is harmonic wherever it is non-zero. Then $w$ is harmonic in the whole of $\R^n$. To see this we show first of all that $|D^2 w|$ (which is well-defined on $\R^n \setminus Z$ since $w$ is smooth there) is locally $L^2$-summable on $\R^n \setminus Z$.

Consider a convex smooth and monotone increasing function $\gamma_{\delta}:[0, \infty[ \to \R$, for $\delta>0$, such that 
$$\gamma_{\delta}(t) = \left\{\begin{array}{ccc}
                                                   0 & \text{ for } 0 \leq t \leq  \delta  \\
                                                   t-\delta & \text{ for } t \geq 2 \delta
                                                    \end{array} ,\right.$$
and $\gamma_\delta$ is positive increasing. Noting that $w$ is smooth wherever $\gamma_{\delta}(|\nabla w|^2)  \neq 0$ we can compute

$$\Delta \left(\gamma_{\delta} (|\nabla w|^2)  \right) = \text{div}\left( \gamma_{\delta}^\prime  (|\nabla w|^2) \nabla(|\nabla w|^2) \right) =$$
$$=  \gamma_{\delta}^{\prime \prime} (|\nabla w|^2)  \left|\nabla(|\nabla w|^2)\right|^2 + \gamma_{\delta}^\prime  (|\nabla w|^2) \Delta(|\nabla w|^2))$$
(keeping in mind that the higher derivatives of $w$ have only appeared where they are classically well-defined). Using the convexity assumption $\gamma_{\delta}^{\prime \prime} \geq 0$ and Bochner's formula for harmonic functions we conclude that

$$\Delta \left(\gamma_{\delta} (|\nabla w|^2)  \right) \geq 2 \gamma_{\delta}^\prime  (|\nabla w|^2) |D^2_{ij} w|^2.$$
Let $\psi$ be any non-negative bump function that is compactly supported in $B_R \subset \R^n$ and identically $1$ in $Z \cap B_{R-1}$ ($R$ large). Then the previous inequality yields 

$$2 \int_{|Dw|\geq \sqrt{2\delta}} |D^2 w|^2 \psi \leq  2 \int_{\R^n} \gamma_{\delta}^\prime  (|\nabla w|^2)  |D^2 w|^2 \psi \leq \int_{\R^n} \Delta\left(\gamma_{\delta} (|\nabla w|^2)\right)\psi \leq $$ $$\leq \int_{\R^n} \gamma_{\delta} (|\nabla w|^2) \Delta \psi \leq C_R \|Dw\|_{C^0}\int \Delta \psi ,$$
and sending $\delta \to 0$ (since the r.h.s. does not depend on $\delta$) we have the desired local summability on the open set $\{Dw \neq 0\}$. At points in the set $\{Dw = 0\} \setminus Z$ the function $w$ is $C^2$ (and harmonic), in particular $D^2 w=0$ almost everywhere on this set. In view of this we conclude the $L^2$-summability of $D^2 w$ on any domain $B_R \cap (\R^n \setminus Z)$.  We note that actually we would need only $L^1$ summability for the next step.

\medskip

Now with $f=D_i w$ for $i\in \{1, ...n\}$ we can use Lemma \ref{lem:distributionalder} to conclude that the function that takes the value $D_l f$ on $\R^n\setminus \{f=0\}$ and the value $0$ on $\{f=0\}$ is in $L^2_{\text{loc}}(\R^n)$ and is the distributional $l$-derivative of $f=D_i v$ in $\R^n$. In other words we have obtained that $w \in W^{2,2}_{\text{loc}}$ and so $\Delta w =0$ on the whole of $\R^n$: so $w$ is everywhere harmonic (recall that adding back the affine function in case (I) does not change the harmonicity of the blow up function $w$).

\medskip

To conclude the proof of Lemma \ref{lem:SimonslemmaforSchauder} we only need to recall that we have obtained a harmonic function $w$ on $\R^n$ satisfying (\ref{eq:inequalitiesforlimitingblowup}). Then $D_l w$ is harmonic (for any $l$) and with sublinear growth (first inequality in (\ref{eq:inequalitiesforlimitingblowup})), which implies by Liouville theorem that $D_l w$ is constant. But then we contradict the second inequality in (\ref{eq:inequalitiesforlimitingblowup}).
\end{proof}

\begin{proof}[proof of Proposition \ref{Prop:Schauderforsemidifference}]
 The estimate follows as in \cite{Simon3} from Lemma \ref{lem:SimonslemmaforSchauder} by means of the ``simple abstract lemma'' \cite[p. 398]{Simon3}.
\end{proof}

Now we are in a position to prove the De Giorgi-type decay for the ``modified $L^2$-excess'' of $v$.

\begin{Prop}[De Giorgi decay for the semi-difference]
\label{Prop:DeGiorgitypedecayforthesemidifference}
Let $v \in C^{1,\alpha}(B_1)$ (with $0<\alpha<\frac{1}{2}$) solve the PDE (\ref{eq:PDEsemidifferenceforschauder}) on $B_1 \setminus K$. Assume that $\sup_{B_1}|b_{ij}|+[b_{ij}]_{\alpha,B_1}\leq \beta$ and $H \leq H_0$, with $\beta, H_0$ as in Proposition \ref{Prop:Schauderforsemidifference}. Let $\mu \in (0,1)$. For every $\theta \in (0, \frac{1}{4})$ there exists $C,\eps>0$ (depending on $\theta$) such that if $\int_{B_1}|v|^2 + \frac{H^2}{\eps}\leq \eps$ then

\be
\label{eq:DeGiorgiDecayforsemidifference}
\frac{1}{\theta^{n+2}}\int_{B_\theta} |v|^2  + \frac{\theta^{2}}{\eps} H^2 \leq C \theta^{2\mu} \left(\int_{B_1} |v|^2 + \frac{H^2}{\eps}\right).
\ee
\end{Prop}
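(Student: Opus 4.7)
The proof is by contradiction blow-up in the spirit of the proof of Lemma~\ref{lem:SimonslemmaforSchauder}. Fix $\mu\in (0,1)$, suppose the claim fails for some $\theta_{0}\in (0,1/4)$, and take $\eps=1/k$, $C=k$ as the failing constants: there exist data $(v_{k},H_{k},b_{ij}^{(k)},K_{k})$ satisfying the hypotheses with $\hat{E}_{k}^{2}:=\int_{B_{1}}|v_{k}|^{2}+H_{k}^{2}/\eps_{k}\le \eps_{k}=1/k$, yet
\[
\theta_{0}^{-n-2}\int_{B_{\theta_{0}}}|v_{k}|^{2}+\theta_{0}^{2}\,H_{k}^{2}/\eps_{k}>k\,\theta_{0}^{2\mu}\hat E_{k}^{2}. \qquad(\star)
\]
After translation we may assume $0\in K_{k}$ for every $k$. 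Normalize by $\tilde v_{k}:=v_{k}/\hat E_{k}$ and $\tilde H_{k}:=H_{k}/\hat E_{k}$; then $\|\tilde v_{k}\|_{L^{2}(B_{1})}\le 1$, $\tilde H_{k}^{2}/\eps_{k}\le 1$ (so $\tilde H_{k}\to 0$), $\tilde v_{k}(0)=D\tilde v_{k}(0)=0$, and $(\star)$ becomes $\theta_{0}^{-n-2}\int_{B_{\theta_{0}}}|\tilde v_{k}|^{2}+\theta_{0}^{2}\tilde H_{k}^{2}/\eps_{k}>k\theta_{0}^{2\mu}$.

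Proposition~\ref{Prop:Schauderforsemidifference} gives $[D\tilde v_{k}]_{\alpha,B_{1/2}}\le C$ uniformly, and combined with $\tilde v_{k}(0)=D\tilde v_{k}(0)=0$ this yields the pointwise bounds $|\tilde v_{k}(x)|\le C|x|^{1+\alpha}$ and $|D\tilde v_{k}(x)|\le C|x|^{\alpha}$ on $B_{1/2}$. Arzel\`a--Ascoli then produces a subsequential $C^{1}_{\rm loc}(B_{1/2})$-limit $w\in C^{1,\alpha}(B_{1/2})$ with $w(0)=Dw(0)=0$, while the equibounded-in-$C^{0,\alpha}$ sequence $b_{ij}^{(k)}$ converges (along a further subsequence) to some $b_{ij}^{\infty}$ in $C^{0,\alpha'}_{\rm loc}$ ($\alpha'<\alpha$) with $\sup|b_{ij}^{\infty}|+[b_{ij}^{\infty}]_{\alpha}\le \beta$. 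Passing to the limit in the weak form of (\ref{eq:PDEsemidifferenceforschauder}) with test functions supported in $B_{1/2}\setminus\overline{\limsup_{k}K_{k}}$ and using $\tilde H_{k}\to 0$, one sees that $w$ satisfies
\[
L w := D_{i}\bigl((\delta_{ij}+b_{ij}^{\infty})D_{j}w\bigr)=0
\]
weakly on $\Omega:=B_{1/2}\setminus K$, where $K:=\{x\in B_{1/2}\,:\,w(x)=0,\ Dw(x)=0\}\supset \overline{\limsup_{k}K_{k}}$.

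The key obstacle is the removability step: promoting $Lw=0$ to a weak identity on all of $B_{1/2}$, despite the fact that $K$ is closed but carries no a priori size control. The Bochner-plus-cutoff scheme in the proof of Lemma~\ref{lem:SimonslemmaforSchauder} adapts: by linear Schauder theory with H\"older coefficients $w\in C^{2,\alpha}_{\rm loc}(\Omega)$, and since $L$ differs from $\Delta$ only by the $C^{0,\alpha}$ perturbation $b_{ij}^{\infty}$ of norm $\le\beta$, computing $L(\gamma_{\delta}(|Dw|^{2}))$ on $\Omega$ and pairing with a cutoff $\psi\in C^{1}_{c}(B_{1/2})$, the resulting Bochner-type inequality carries error terms of size at most $C\beta\int|D^{2}w|^{2}\gamma_{\delta}'(|Dw|^{2})\psi$ that can be absorbed on the left when $\beta$ is small, leaving a uniform-in-$\delta$ bound on $\int_{\{|Dw|^{2}\ge 2\delta\}}|D^{2}w|^{2}\psi$. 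Letting $\delta\downarrow 0$ yields $D^{2}w\in L^{2}_{\rm loc}(\Omega)$, and since $Dw\equiv 0$ on $K$, Lemma~\ref{lem:distributionalder} upgrades $w$ to $W^{2,2}_{\rm loc}(B_{1/2})$ and extends $Lw=0$ weakly to all of $B_{1/2}$.

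With $w$ a global weak solution of $Lw=0$ in $B_{1/2}$, standard linear Schauder theory gives $w\in C^{2,\alpha}(B_{1/4})$ with $\|w\|_{C^{2,\alpha}(B_{1/4})}\le C\|w\|_{L^{2}(B_{1/2})}\le C$. Since $w(0)=Dw(0)=0$, Taylor's theorem yields $|w(x)|\le C|x|^{2}$ on $B_{\theta_{0}}$, hence $\theta_{0}^{-n-2}\int_{B_{\theta_{0}}}|w|^{2}\le C\theta_{0}^{2}$. Passing to the limit in $(\star)$ using the $C^{1}_{\rm loc}$ convergence, together with $\tilde H_{k}^{2}/\eps_{k}\le 1$ and $\theta_{0}^{2}\le\theta_{0}^{2\mu}$ (valid since $\mu<1$ and $\theta_{0}<1$), the left-hand side is bounded above by $(C+1)\theta_{0}^{2\mu}$ in the limit $k\to\infty$, directly contradicting the lower bound $k\theta_{0}^{2\mu}\to\infty$ on the right. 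This completes the proof.
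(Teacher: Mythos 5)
Your overall strategy is the same as the paper's: a contradiction/compactness argument, normalizing by the modified excess, using Proposition~\ref{Prop:Schauderforsemidifference} to get a uniform $C^{1,\alpha}$ bound, extracting a $C^1$-limit $w$ with $w(0)=Dw(0)=0$, passing the PDE to the limit, and deducing quadratic decay at the origin from interior regularity. In that sense the proof is in the spirit of the paper. One genuine asset of your version is that you are careful about two things the paper glosses over: (a) you identify the limit equation as the variable-coefficient divergence-form equation $Lw=0$ with coefficients $b_{ij}^\infty$, rather than asserting harmonicity outright as the paper does (the paper's claim that $\tilde v$ is harmonic is not justified as stated, since the normalization by the excess does not rescale the coefficients and they need not converge to zero); and (b) you explicitly raise the removability of the set $K$ for the limit PDE, which is indeed a non-trivial step.

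However, the way you handle removability has a gap. You invoke ``the Bochner-plus-cutoff scheme in the proof of Lemma~\ref{lem:SimonslemmaforSchauder}'' for the operator $L=D_i\bigl((\delta_{ij}+b_{ij}^\infty)D_j\cdot\bigr)$ and claim the resulting error terms are $O(\beta)\int |D^2w|^2\gamma_\delta'\psi$, absorbable for small $\beta$. This does not go through as written: the Bochner identity, and the Caccioppoli-type computation obtained by pairing the equation differentiated in $x_k$ with a cutoff, necessarily produce terms containing derivatives of the coefficients $b_{ij}^\infty$, and $b_{ij}^\infty$ is only $C^{0,\alpha}$. There is no pointwise control on $Db_{ij}^\infty$, so these terms cannot be estimated by $C\beta\int|D^2w|^2\gamma_\delta'\psi$. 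In Lemma~\ref{lem:SimonslemmaforSchauder} the Bochner trick succeeds because the rescalings $x\mapsto x_k+\rho_kx$ with $\rho_k\to 0$ force the coefficients to converge locally uniformly to a \emph{constant} matrix, after which a linear change of coordinates reduces to the pure Laplacian; no rescaling of this type is present in your blow-up, so you cannot borrow that computation. (The paper's own $W^{2,2}$ argument in Section~\ref{semidifferenceisW22}, which does perform a Caccioppoli estimate with variable coefficients, avoids this problem by using the chain-rule structure $b_{ij}=b_{ij}(Du_a,Dv)$ together with the estimate $|D^2u_a|,|D^2v|\lesssim\mathrm{dist}(\cdot,T)^{\alpha-1}$, and it crucially needs $\alpha\ge 1/2$ for integrability --- which is exactly what the present Proposition is being used to establish, so that route is not available here either.) Thus either one must argue the limit coefficients vanish (which in the applications can be arranged by a pre-dilation using $b_{ij}(0,0)=0$, but is not forced by the hypotheses of the Proposition), or one needs a different removability mechanism; as stated, this step of your proof does not stand on its own.
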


\begin{proof}[proof of Proposition \ref{Prop:DeGiorgitypedecayforthesemidifference}]
We consider a sequence of $v_k$ satisfying the PDE with $H_k$ on the r.h.s. and such that $\int_{B_1}|v_k|^2 + \frac{H_k^2}{\eps_k} \leq \eps_k \to 0$. Consider the blow-up obtained by dividing by the $L^2$-height excess:

$$\tilde{v}_k := \frac{v_k}{\left(\int_{B_1}|v_k|^2 + \frac{H_k^2}{\eps_k}\right)^{\frac{1}{2}}}$$
and remark that $\|\tilde{v}_k\|_{C^{1,\alpha}(B_{1/2})}$ is bounded by $C$ independently of $k$ thanks to the a priori Schauder type estimate in Proposition \ref{Prop:Schauderforsemidifference}. This allows to find $\tilde{v} \in C^{1,\alpha}(B_{1/2})$ to which a subsequence of $\tilde{v_k}$ converges in $C^1(B_{1/2})$. 
Moreover $\|\tilde{v}_k\|_{L^2(B_{1/2})}$ are equibounded $\leq 1$ and for any $R<1/2$ the norms $\|\tilde{v}_k\|_{W^{1,2}(B_{R})}$ are equibounded, implying the strong $L^2$-convergence on every compact set in $B_{1/2}$. Then $\|\tilde{v}\|_{L^2(K)} \leq 1$ independently of $K$ on any compact set $K$ contained in $B_{1/2}$. So $\tilde{v} \in L^2(B_{1/2})$. Moreover $\tilde{v}$ is harmonic on $B_{1/2}$ (by the rescaling properties of the PDE the function $\tilde{v}_k$ solves the PDE (\ref{eq:PDEv}) with $ H_k \leq \eps_k$ on the r.h.s. and we have $C^1$ convergence so we can pass the PDE to the limit) and by $C^1$ convergence we have that $D\tilde{v} (0)=0$ (since $Dv_k(0)=0$ for every $k$ as $0 \in T$). Harmonic function theory and the vanishing of the gradient at $0$ guarantee the validity of the inequality

$$\frac{1}{\theta^{n+2}}\int_{B_\theta} |\tilde{v}|^2  \leq C \theta^{2\mu} \int_{B_{1/2}} |\tilde{v}|^2 , \,\,\, \text{ with } C=C_n.$$
By strong $L^2$-convergence on $B_{\theta}$ we have, for $k$ large enough (we need to ensure the validity of $\int_{B_\theta} |\tilde{v} - \tilde{v}_k|^2 <  \theta^{n+2+2\mu}$) the inequality

$$\frac{1}{\theta^{n+2}}\int_{B_\theta} |\tilde{v_k}|^2  \leq  \theta^{2\mu} \left(C_n \int_{B_{1/2}} |\tilde{v}|^2+1\right),$$
in other words (recall that $\int_{B_{1/2}}|\tilde{v}|^2 \leq 1$), by definition of $\tilde{v}_k$,
$$\frac{1}{\theta^{n+2}}\int_{B_\theta} |v_k|^2  \leq  (C_n+1) \theta^{2\mu} \left(\int_{B_1} |v_k|^2 + \frac{H_k^2}{\eps_k} \right).$$
The desired inequality (\ref{eq:DeGiorgiDecayforsemidifference}) follows now immediately, since the second term on the l.h.s. of (\ref{eq:DeGiorgiDecayforsemidifference}) is easily bounded.
\end{proof}

The decay result obtained for (\ref{eq:PDEsemidifferenceforschauder}) immediately applies to the PDE (\ref{eq:PDEv}) for the semi-difference, upon ensuring the bounds for $b_{ij}$ and $H$ by performing suitable homothetic dilations.

We will now focus on the average of the two sheets and prove a similar decay property. Rather than a PDE for the average, however, we will seek a PDE for the weighted average $q_1 u_1 + q_2 u_2$. Comparing with the case of the semi-difference, we wish to point out that this time we will have no control over the values of the weighted average at the points corresponding to the ``touching set'', nor on the gradient; on the plus side, however, we will have a PDE that is valid across the touching set. 

The PDE is obtained from the first variation formula (\ref{eq:firstvarformulaglobal}): in a first step we have

$$\int \left( \underbrace{\frac{q_1(x) D_iu_1}{\sqrt{1+|Du_1|^2}}+\frac{q_2(x) D_iu_2}{\sqrt{1+|Du_2|^2}}}\right) D_i \zeta = H \int (q_1(x)-q_2(x)) \zeta ,$$
and we can rewrite the braced expression as
$$q_1(x) D_i u_1 \left(1+ \left(\frac{1}{\sqrt{1+|Du_1|^2}}-1\right)\right) + q_2(x) D_i u_2 \left(1+ \left(\frac{1}{\sqrt{1+|Du_2|^2}}-1\right)\right) $$
so that the equation becomes

$$\int \left( q_1(x) D_iu_1 + q_2(x) D_iu_2 \right) D_i \zeta = H \int (q_1(x)-q_2(x)) \zeta \,+ \, \int O(|Du_1|^3)D_i\zeta \, + \,\int O(|Du_2|^3)D_i\zeta .$$
Rewriting $ q_1(x) D_i u_1 = D_i( q_1(x)  u_1) - u_1 D_i q_1 $ (and similarly for $u_2$) we get
$$\int  D_i(q_1(x) u_1 + q_2(x)  u_2 ) D_i \zeta = H \int (q_1(x)-q_2(x)) \zeta \,+ \, \int O(|Du_1|^3)D_i\zeta \, + \,\int O(|Du_2|^3)D_i\zeta \, + $$
$$+ \, \int (u_1 D_i q_1 + u_2 D_i q_2) D_i \zeta. $$
The last term is however zero, since on the support of the distributions $D_i q_1 , D_i q_2$ (the touching set of the two graphs, see remark \ref{oss:twosheetstouching}) we have $u_1=u_2$ and $D_i q_1 = -D_i q_2$. This gives the PDE for the weighted average $q_1 u_1 + q_2 u_2$:

\begin{equation}
 \label{eq:PDEweightedaverage}
\int  D_i(q_1 u_1 + q_2 u_2 ) D_i \zeta = H \int (q_1 -q_2) \zeta \,+ \, \int O(|Du_1|^3)D_i\zeta \, + \,\int O(|Du_2|^3)D_i\zeta . 
\end{equation}

\medskip

\textit{Choice of excess for the weighted average}. We let, for a reference affine function $L$ and $\rho \in (0,1]$ (here $\eps>0$ is a constant to be chosen in Proposition \ref{Prop:DeGiorgitypedecayfortheweightedaverage} below):
$$E^2_L(\rho):= \rho^{-n-2}\int_{B_{\rho}} |q_1 u_1 + q_2 u_2 -L|^2 + \rho^2 \frac{H^2}{\eps} + \frac{\left(\rho^{3\alpha}\right)^2}{\eps} \left([D u_1]_{\alpha, B_\rho}^3\right)^2+ \frac{\left(\rho^{3\alpha}\right)^2}{\eps} \left([D u_2]_{\alpha, B_\rho}^3\right)^2.$$

\noindent We will prove the following decay at points where $u_1 = u_2$, assuming that $0$ is such a point and that \textit{we have rotated coordinates so that $Du_1 (0) = D u_2 (0) =0$} and $q$ is fixed.

\begin{Prop}[De Giorgi decay for the weighted average]
 \label{Prop:DeGiorgitypedecayfortheweightedaverage}
Let $\mu \in (\frac{1}{2}, 1)$ and $\theta \in (0, \frac{1}{4})$. Assume that $Du_1(0)=Du_2(0)=0$. There exists $\eps={\eps}(n,\mu,\theta)$ and $C=C(n,\mu)$ such that, if $E^2_L(1)\leq \eps$ for a certain $L$, then there exists $L^\prime$ for which 
\be
\label{eq:DeGiorgitypedecayfortheweightedaverage}
E^2_{L^\prime}(\theta) \leq C \theta^{2\mu} E_L^2(1),
\ee
\be
\label{eq:DeGiorgitypedecayfortheweightedaveragetilting}
|L - L'|_{C^1(B_1)}^2\leq C E^2_L(1).
\ee
\end{Prop}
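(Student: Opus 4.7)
The plan is to run a blow-up/contradiction argument analogous to the one used for the semi-difference in Proposition~\ref{Prop:DeGiorgitypedecayforthesemidifference}, but now applied to the weighted average $w := q_1 u_1 + q_2 u_2$. The decisive preparatory observation is that the PDE (\ref{eq:PDEweightedaverage}) is valid for \emph{all} test functions $\zeta \in C^1_c(B_1)$, not merely for those vanishing on the touching set: the cancellation of the term $\int(u_1 D_i q_1 + u_2 D_i q_2) D_i\zeta$ uses exactly that $u_1 = u_2$ on $\text{spt}(Dq_j)$, as explained just above the statement. Combined with $Du_1 = Du_2$ on the same set (by the no-classical-singularities hypothesis), the same cancellation also yields that $w$ itself is globally of class $C^{1,\alpha}(B_1)$; in particular $w$ satisfies on all of $B_1$ a weak PDE of the form ``$\Delta w = (\text{error})$'' whose right-hand side is a sum of three small terms---the mean-curvature contribution $H(q_1 - q_2)$, and the two cubic terms $O(|Du_j|^3)$, $j=1,2$.

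Assume, for contradiction, that the decay fails. Then there exist sequences $\eps_k \downarrow 0$, pairs $u_j^k$ with multiplicities $q_j^k$, constants $H_k$, and affine $L_k$, all satisfying $Du_j^k(0) = 0$ and $E_{L_k}^2(1) \leq \eps_k$, for which (\ref{eq:DeGiorgitypedecayfortheweightedaverage}) fails with the number $k$ in place of the constant $C$, for every affine $L'$. Set $w^k := (q_1^k u_1^k + q_2^k u_2^k - L_k)/E_{L_k}(1)$, so $\|w^k\|_{L^2(B_1)} \leq 1$. The definition of $E_L^2$ is arranged precisely so that each of the three error terms, after division by $E_{L_k}(1)$, defines an operator on $C^1_c(B_1)$ whose norm is $O(\sqrt{\eps_k})$: for the mean curvature one uses $H_k/E_{L_k}(1) \leq \sqrt{\eps_k}$ directly, while for the cubic terms one combines the pointwise bound $|Du_j^k(x)|^3 \leq [Du_j^k]_\alpha^3 |x|^{3\alpha}$ (which is where $Du_j^k(0) = 0$ is used) with $[Du_j^k]_\alpha^3/E_{L_k}(1) \leq \sqrt{\eps_k}$. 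A standard Caccioppoli estimate then gives $\|Dw^k\|_{L^2(B_{1/2})} \leq C$, and Rellich's theorem extracts a subsequence converging in $L^2(B_{1/2})$ and weakly in $W^{1,2}(B_{1/2})$ to a limit $w$ satisfying $\int Dw \cdot D\zeta = 0$ for every $\zeta \in C^1_c(B_{1/2})$; that is, $w$ is harmonic, with $\|w\|_{L^2(B_{1/2})} \leq 1$.

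To close the argument, let $L(w)$ be the first-order Taylor polynomial of $w$ at $0$ and set $L_k' := L_k + E_{L_k}(1)\, L(w)$. The standard interior $C^2$-estimate for harmonic functions, together with $L^2$-convergence on $B_\theta$, yields $\theta^{-n-2}\int_{B_\theta}|q_1^k u_1^k + q_2^k u_2^k - L_k'|^2 \leq C_n\theta^2 E_{L_k}^2(1)$ for all sufficiently large $k$. The remaining terms in $E_{L_k'}^2(\theta)$ are controlled by $\theta^2 E_{L_k}^2(1)$ for the mean-curvature term (since $H_k^2/\eps \leq E_{L_k}^2(1)$) and by $\theta^{6\alpha} E_{L_k}^2(1)$ for the H\"older-seminorm terms (by monotonicity of the seminorm in the domain); hence $E_{L_k'}^2(\theta) \leq C(\theta^2 + \theta^{6\alpha}) E_{L_k}^2(1) \leq C\theta^{2\mu} E_{L_k}^2(1)$ provided $\alpha \geq \mu/3$, and the cubic exponent $\rho^{3\alpha}$ appearing in $E_L^2$ is calibrated precisely for this. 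This contradicts the failure hypothesis for $k$ large. The tilt estimate (\ref{eq:DeGiorgitypedecayfortheweightedaveragetilting}) is then immediate from $L_k' - L_k = E_{L_k}(1)L(w)$ and the harmonic $C^1$-estimate on $L(w)$.

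The principal obstacle is front-loaded: the global validity of the weak PDE for the weighted average and the global $C^{1,\alpha}$-regularity of $w$, both of which rest on the tangentiality $Du_1 = Du_2$ on $\{u_1 = u_2\}$ supplied by the no-classical-singularities hypothesis. Once these are in place, the blow-up argument closely parallels that for the semi-difference; the somewhat elaborate form of $E_L^2$, with the normalizing factor $1/\eps$ in front of the perturbative terms and the sharp exponent $3\alpha$ in the cubic ones, is what ensures that every perturbation is subcritical relative to the target harmonic decay $\theta^{2\mu}$, $\mu \in (\tfrac12,1)$, so that a single pass of blow-up delivers the full improved decay.
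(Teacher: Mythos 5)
Your proposal is correct and follows essentially the same route as the paper: normalize by the excess, pass to the weak limit which is harmonic on $B_{1/2}$, use the harmonic first-order Taylor decay to produce $L'$, and absorb the mean-curvature and cubic H\"older terms of the excess using the exponents built into $E_L^2$. The only stylistic differences are that the paper phrases the blow-up directly rather than as a contradiction, and relies on the elliptic $L^2$ estimate $\|w\|_{W^{1,2}(B_{1/2})} \leq C(\|w\|_{L^2(B_1)} + \|f\|_{L^\infty} + \|g\|_{L^2})$ where you invoke a Caccioppoli inequality; your aside about the global $C^{1,\alpha}$-regularity of $q_1 u_1 + q_2 u_2$ is correct but not needed, since the $W^{1,2}$ bound alone carries the argument.
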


\noindent \textit{Notational remark}: the excess $E^2_L(\rho)$ computed for the function $u_k$ will be denoted in the following proof by $E^2_L(\rho,k)$.

\begin{proof}
Note that the last two terms in the definition of the excess $E$ are defined so that they are naturally rescaling under the geometric transformation $\frac{u(rx)}{r}$ when evaluating the H\"older seminorm at $0$. 

Elliptic $L^2$-theory for a PDE as (\ref{eq:PDEweightedaverage}) of the form $\text{div}(\nabla w) = f + \text{div}g$ with $f \in L^\infty(B_1)$ and $g \in L^2(B_1(0))$ give $\|w\|_{W^{1,2}(B_{1/2})} \leq C \left( \|w\|_{L^2(B_1)}+ \|f\|_{L^\infty(B_1)} + \|g\|_{L^2(B_1)}\right)$, see e.g. \cite[Chapter 8]{GT}. Note that we can subtract a linear function from $q_1 u_1 + q_2 u_2$ and find (since a linear function is harmonic)

$$\int  D_i(q_1 u_1 + q_2 u_2 -L) D_i \zeta = H \int (q_1 -q_2) \zeta \,+ \, \int O(|Du_1|^3)D_i\zeta \, + \,\int O(|Du_2|^3)D_i\zeta . $$
For this PDE the elliptic estimate reads

$$\|q_1 u_1 + q_2 u_2-L\|_{W^{1,2}(B_{1/2})} \leq C \left(\|q_1 u_1 + q_2 u_2-L\|_{L^2(B_1)}+ \right.$$ $$\left. +Hq + \left(\int_{B_1}O\left(|Du_1|^3\right)^2\right)^{1/2}  + \left(\int_{B_1}O\left(|Du_2|^3\right)^2\right)^{1/2}\right)$$
Using $Du_1(0)=0$ and $|Du_1|(x)=|Du_1(x)-Du_1(0)| \leq [Du_1]_{\alpha, B_1}|x|^\alpha$ we can continue 
$$\leq C\left(\|q_1 u_1 + q_2 u_2-L\|_{L^2(B_1)}+ Hq + \left(\int_{B_1}\left([Du_1]_{\alpha,B_1}^3\right)^2\right)^{1/2}+ \left(\int_{B_1}\left([Du_2]_{\alpha,B_1}^3\right)^2\right)^{1/2}\right) \leq $$
$$\leq C\left(\|q_1 u_1 + q_2 u_2-L\|_{L^2(B_1)}+ H +  [Du_1]_{\alpha,B_1}^3 + [Du_2]_{\alpha,B_1}^3\right) \leq K E_L(1). $$

We can thus perform a blow up argument, assuming to have a sequence $\eps_k \to 0$ and $u_1^k, u_2^k$ with $E_{L}^2(1,k) \leq \eps_k$. Define

$$\tilde{u}_k = \frac{q_1 u^k_1 + q_2 u^k_2 -L}{E_{L}(1,k)}.$$
The elliptic estimate just discussed gives that $\|\tilde{u}_k\|_{W^{1,2}(B_{1/2})} \leq K$ and therefore $\tilde{u}_k \to \tilde{u} \in W^{1,2}(B_{1/2})$ strongly in $L^2(B_{1/2})$ and weakly in $W^{1,2}(B_{1/2})$. The blow up function $\tilde{u}$ is harmonic on $B_{1/2}$. Indeed the PDE (\ref{eq:PDEweightedaverage}) for $\tilde{u}_k$ is of the form

$$\int  D_i\tilde{u}_k D_i \zeta = \int f_k  \zeta \,+ \, \int g_k D_i\zeta $$
with $|f_k| \leq \eps_k$ and $|g_k| \leq \eps_k$, for any $\zeta \in C^{\infty}_c(B_{1/2})$. Passing to the limit thanks to the $W^{1,2}$-weak convergence on the left-hand-side we get 

$$\int_{B_{1/2}}  D_i\tilde{u} D_i \zeta = 0, $$
as desired. By the harmonicity of $\tilde{u}$ we get an affine function $\tilde{L}'$ (tangent to the graph of $\tilde{u}$ above the origin) and a constant $\tilde{C}=C(n,\mu)$ such that

$$ \theta^{-n-2}\int_{B_{\theta}} |\tilde{u} -\tilde{L}^\prime|^2  \leq \tilde{C} \theta^{2\mu} \int_{B_{1/2}} |\tilde{u}|^2.$$
Recalling the definition of $\tilde{u}_k$ and by the $L^2$-strong convergence to $\tilde{u}$ we can see that for $k$ large enough it holds

$$\theta^{-n-2}\int_{B_{\theta}} |q_1 u^k_1 + q_2 u^k_2 -L - E_{L}(1,k)\tilde{L}^\prime|^2 \leq \left(\tilde{C} \int_{B_{1/2}} |\tilde{u}|^2 + 1\right) \theta^{2\mu} E^2_{L}(1,k),$$
and we can set $L'$ to be the affine function $L + E_{L}(1,k)\tilde{L}^\prime$ and recall that $\int_{B_{1/2}} |\tilde{u}|^2 \leq K$ (the constant in the elliptic estimate): this bounds the first term in the excess that appears on the left-hand-side of (\ref{eq:DeGiorgitypedecayfortheweightedaverage}). The remaining three terms of the excess on the left-hand-side of (\ref{eq:DeGiorgitypedecayfortheweightedaverage}) are easily bounded by using $\theta \leq 1$ and $[Du_j]_{\alpha, B_\theta} \leq [Du_j]_{\alpha, B_1}$ (for $j=1,2$).

Note that $\nabla \tilde{u}$ is harmonic and so by the mean vaue theorem $|\nabla\tilde{L}^\prime |=|\nabla{\tilde{u}}(0)|$ and $|\tilde{u}(0)|=|\tilde{L}'(0)| \leq \|\tilde{u}\|_{L^2(B_{1/2})}\leq K$, so $|\tilde{L}^\prime |_{C^1(B_1)} \leq \|\tilde{u}\|_{W^{1,2}(B_{1/2})}\leq K$ (the constant in the elliptic estimate). We therefore have also obtained the desired control (\ref{eq:DeGiorgitypedecayfortheweightedaveragetilting}) over the tilting of the plane.
\end{proof}

\begin{oss}[Iteration step]
The purpose of the decay lemma (Proposition \ref{Prop:DeGiorgitypedecayfortheweightedaverage}) is to perform an iterative procedure. Start with the given $u_1$ and $u_2$ with coordinates set such that $u_1=u_2=Du_1=Du_2=0$ at the origin. If the varifold was dilated enough we can also ensure smallness of $H$ and of $[Du_j]_{\alpha, B_1}$ in order to have $E^2_L(1)\leq \eps$ (at this initial step the choice of $L=0$ will do).
Now consider the homotetic dilation of a factor $\theta$ (which dilates the ball $B^{n+1}_\theta(0)$ to the unit ball $B^{n+1}_1(0)$). The corresponding tranformations for the two graphs are $\tilde{u}_1(x)=\frac{u_1(\theta x)}{\theta}$ and $\tilde{u}_2(x)=\frac{u_2(\theta x)}{\theta}$. 

The term $\theta^{-n-2}\int_{B_{\theta}} f^2$ is designed to be scale invariant, i.e. setting $\tilde{f}(x)=\frac{f(\theta x)}{\theta}$ we have by a change of variable $\theta x = y$ the equality $ \int_{B_1}\tilde{f}^2 =\int_{B_1} \frac{f(\theta x)^2}{\theta^2} = \frac{1}{\theta^{n+2}}\int_{B_\theta} f^2$. Therefore for $q_1\tilde{u}_1+q_2\tilde{u}_2$ we find
 
$$\int_{B_1} |q_1\tilde{u}_1+q_2\tilde{u}_2 - \tilde{L}^\prime|^2 = \theta^{-n-2}\int_{B_{\theta}} |q_1 u^k_1 + q_2 u^k_2 -L^\prime|^2 ,$$
where $\tilde{L}^\prime$ is the affine function obtained by dilating $L^\prime$.

Consider the PDE (\ref{eq:PDEweightedaverage}) written for $q_1\tilde{u}_1+q_2\tilde{u}_2$, with $\tilde{u}_1$, $\tilde{u}_2$, $\tilde{H}$ and note that $\tilde{H}=\theta H$ by rescaling properties. Therefore the second term $\frac{1}{\eps}\theta^2 H^2$ that appears (for $q_1 u_1 + q_2 u_2$) in $E^2(\theta)$ is equal to $\frac{1}{\eps} \tilde{H}^2$, i.e. the second term that appears in $E^2(1)$ for $q_1\tilde{u}_1+q_2\tilde{u}_2$.

The dilation preserves the fact that $\tilde{u}_1=\tilde{u}_2=D\tilde{u}_1=D\tilde{u}_2=0$ at the origin. Moreover, since $D\tilde{u}_1(x) = Du_1(\theta x)$ we have

$$[D \tilde{u}_1]_{\alpha,B_1} = \sup_{x,y \in B_1} \frac{|D\tilde{u}_1(x) - D\tilde{u}_1(y)|}{|x-y|^\alpha} =   \theta^\alpha \sup_{x,y \in B_1} \frac{|Du_1(\theta x) - D{u}_1(\theta y)|}{|\theta x- \theta y|^\alpha} = \theta^\alpha [D u_1]_{\alpha,B_\theta}$$
and therefore (for $j\in\{1,2\}$) the term $\theta^{3\alpha}[Du_j]^3_{\alpha, B_\theta}$ that appears (for $q_1 {u}_1+q_2 {u}_2$) in $E^2(\theta)$ is equal to the term $[D\tilde{u}_j]^3_{\alpha, B_1}$ that appears, for $q_1\tilde{u}_1+q_2\tilde{u}_2$, in $E^2(1)$.

In other words we have seen that the excess (for $q_1\tilde{u}_1+q_2\tilde{u}_2$) $E^2_{\tilde{L}^\prime}(1)$ is exactly equal to the excess $E^2_{L^\prime}(\theta)$ of $q_1 u_1 + q_2 u_2$ at scale $\theta$ with respect to $L^\prime$ and so the decay (\ref{eq:DeGiorgitypedecayfortheweightedaverage}) ensures that the smallness condition of the excess at scale $1$ is still satisfied by $q_1\tilde{u}_1+q_2\tilde{u}_2$, upon choosing $\theta$ small enough so that $C \theta^{2\mu} <1$, which can be done since $C$ does not depend on $\theta$. We are thus able to apply Proposition \ref{Prop:DeGiorgitypedecayfortheweightedaverage} to $\tilde{u}_1, \tilde{u}_2$ and iterate. Rescaling back to the original picture this iteration gives, for $d \in \N$, an affine function $L^{(d)}$ such that

$$E^2_{L^{(d)}}(\theta^d) \leq C^d \theta^{2 \mu d} E^2_L(1).$$
Moreover the second line in (\ref{eq:DeGiorgitypedecayfortheweightedaveragetilting}) gives the convergence of $L^{(d)}$ to a certain affine function $L_\infty$ (both the gradients and the translations, in the original picture, converge by the geometric control provided by the inequality). Note that $C$ is independent of $\theta$.

Choosing $0<\delta<\mu$ ($\delta$ much smaller than $\mu$) and $\theta$ small enough\footnote{For any chosen $\delta$ such that $0<\delta<\mu$ we can get $d_0 \in \N$ such that $2\delta d_0 >n+2$: then there exists $\theta$ small enough such that $\theta^{2\delta d_0-n-2}\leq \frac{1}{4C^{d_0}}$. On the other hand $\theta$ is also small enough so that $C \theta^{2\mu}<1$, therefore $\theta^{2\delta d-n-2}\leq \frac{1}{4C^{d}}$ holds for all $d \geq d_0$. For $\rho<\theta^{d_0}=R_\delta$ we have that there exists $d\geq d_0$ such that $\theta^{d+1}\leq \rho\leq \theta^d$. By the decay inequality $$\frac{1}{\rho^{n+2}}\int_{B_\rho}|f-L^{(d)}|^2 \leq \frac{1}{\theta^{(d+1)(n+2)}}\int_{B_{\theta^d}}|f-L^{(d)}|^2 = \frac{1}{\theta^{(n+2)}}\frac{1}{\theta^{d(n+2)}}\int_{B_{\theta^d}}|f-L^{(d)}|^2\leq \frac{C^d}{\theta^{n+2-2\mu d}}E^2_L(1)\leq \frac{1}{4}E^2_L(1).$$ Together with the triangle inequality and (\ref{eq:DeGiorgitypedecayfortheweightedaveragetilting}), and recalling the geometric rescaling of the other terms appearing in the definition of the excess $E^2$, we obtain $E^2_{L_\infty}(\rho) \leq  \rho^{2 (\mu-\delta)} E^2_{L_\infty}(1)$ for $\rho\leq R_\delta$.}, a standard argument delivers that for any $\rho \in (0,R_\delta)$ the following inequality holds:

$$E^2_{L_\infty}(\rho) \leq  \rho^{2 (\mu-\delta)} E^2_{L_\infty}(1).$$

Moreover we know that $Du_1(0)=Du_2(0)=0$: this means that $L_\infty$ is actually the zero-function and the previous inequality shows how the excess decays to it.
\end{oss}

\begin{proof}[proof of Proposition \ref{Prop:alphaimproved}]
The decay obtained in Propositions \ref{Prop:DeGiorgitypedecayforthesemidifference} and \ref{Prop:DeGiorgitypedecayfortheweightedaverage} is valid (for a certain fixed $\mu-\delta \in (\frac{1}{2},1)$) at all points of the touching set. Moreover away from these points we have complete smoothness and elliptic estimates for $u_1$ and $u_2$: combining these two facts (see \cite[Lemma 4.3]{WicAnnals}) with standard theory of Campanato spaces gives that the gradients of $q_1 u_1 + q_2 u_2$ decay to $L_\infty\equiv 0$ in a $C^{0,\mu-\delta}$ fashion. 

With an analogous iteration step based on the decay established in Proposition \ref{Prop:DeGiorgitypedecayforthesemidifference} we get that the gradients of $u_1-u_2$ decay to $L_\infty\equiv 0$ in a $C^{0,\mu-\delta}$ fashion. 

Here $\mu$ can be chosen as close as we want to $1$ and $\delta$ is as small as we wish. Therefore (since we have two independent linear combinations of $u_1$ and $u_2$) the proof of Proposition \ref{Prop:alphaimproved} is complete.  
\end{proof}

\subsection{The semi-difference is in $W_{\rm loc}^{2,2}$}
\label{semidifferenceisW22}

The aim of this section is to conclude that $v \in W_{\rm loc}^{2,2}(B)$. 
The function $v$ solves (\ref{eq:PDEv}) for test functions $\zeta$ compactly supported in $B^n_1 \setminus T$. By assumption $v=0$, $Dv=0$ on $T$. Moreover $v > 0$ on $B \setminus T$ and $Dv$ is H\"older continuous on $B$ and actually $C^1$ on $B\setminus T$ (the fact that $T$ has zero $\mathcal{H}^n$-measure is not needed for this argument). With Proposition \ref{Prop:L2boundonsecondder} below we will ensure a $L^2$-bound for $D^2 v|_{B\setminus T}$: once this is achieved, Lemma \ref{lem:distributionalder} will immediately imply that $v \in W_{\rm loc}^{2,2}(B)$.

\begin{Prop}
\label{Prop:L2boundonsecondder}
We have the summability statement $\int_{B\setminus T} |D^2v|^2 <\infty$.
\end{Prop}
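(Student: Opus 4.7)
The plan is a Caccioppoli-type energy estimate on $B\setminus T$ for the second derivatives of $v$, carried out by differentiating the weak PDE (\ref{eq:PDEv}), testing against a standard cutoff $\phi\in C^{1}_{c}(B)$ multiplied by a cutoff $\chi_{\eps}$ that vanishes in a shrinking tubular neighborhood of $T$, and then passing to the limit $\eps\downarrow 0$ by monotone convergence.

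First I would record that on the open set $B\setminus T$ the coefficients $a_{ij}(x)=\delta_{ij}+b_{ij}(Du_{a}(x),Dv(x))$ are smooth (since $u_{1},u_{2}$, and hence $u_{a}$ and $v$, are classically $C^{\infty}$ off the touching set), and that the form $a_{ij}\xi_{i}\xi_{j}$ is uniformly elliptic---after an initial dilation ensuring $|Du_{a}|+|Dv|$ is so small that $|b_{ij}|<1/2$, which is harmless since the assertion is local. Classical Schauder theory then gives $v\in C^{\infty}(B\setminus T)$. Replacing $\zeta$ by $D_{k}\zeta$ in (\ref{eq:PDEv}) and integrating by parts on the support of $\zeta\in C^{1}_{c}(B\setminus T)$ shows that $D_{k}v$ is a weak solution of
\begin{equation*}
\int a_{ij}\,D_{j}D_{k}v\,D_{i}\zeta \;=\; -\!\int(D_{k}a_{ij})\,D_{j}v\,D_{i}\zeta,
\end{equation*}
where $|D_{k}a_{ij}|\le C(|Du_{a}|+|Dv|)(|D^{2}u_{a}|+|D^{2}v|)$ because $|\partial_{(p,q)}b_{ij}|\le C(|p|+|q|)$.

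Next I would test the differentiated equation with $\zeta=\phi^{2}D_{k}v\,\chi_{\eps}^{2}$ and sum over $k$: uniform ellipticity, Young's inequality, and the smallness of $|Du_{a}|+|Dv|$ to absorb the cross terms coming from $D_{k}a_{ij}$ (treating symmetrically the corresponding Caccioppoli estimate for $u_{a}$, which also yields an $L^{2}$ bound on $D^{2}u_{a}$ on $B\setminus T$) give
\begin{equation*}
\int |D^{2}v|^{2}\phi^{2}\chi_{\eps}^{2} \;\le\; C\!\int |Dv|^{2}\bigl(|D\phi|^{2}+|D\chi_{\eps}|^{2}\bigr) \;+\; C(H^{2}+1)\,|\mathrm{spt}\,\phi|.
\end{equation*}
If the boundary-layer integral $\int|Dv|^{2}|D\chi_{\eps}|^{2}\phi^{2}$ can be bounded uniformly in $\eps$, monotone convergence in $\chi_{\eps}^{2}\uparrow \mathbf{1}_{B\setminus T}$ then yields the claim on every compact subset of $B$.

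The hard part will be controlling $\int|Dv|^{2}|D\chi_{\eps}|^{2}\phi^{2}$ uniformly in $\eps$. Here the improved regularity from Proposition \ref{Prop:alphaimproved} is decisive: $Dv\in C^{0,\alpha}(B)$ for every $\alpha<1$, and combined with $Dv\equiv 0$ on $T$ this furnishes the pointwise decay $|Dv(x)|\le C\,\mathrm{dist}(x,T)^{\alpha}$. Choosing $\chi_{\eps}$ as a smooth regularization of $\mathbf{1}_{\{\mathrm{dist}(\cdot,T)>2\eps\}}$ with $|D\chi_{\eps}|\le C/\mathrm{dist}(\cdot,T)$ concentrated on the annulus $\{\eps<\mathrm{dist}(\cdot,T)<2\eps\}$ then produces the $\eps$-independent pointwise estimate $|Dv|^{2}|D\chi_{\eps}|^{2}\le C\,\mathrm{dist}(\cdot,T)^{2\alpha-2}$. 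With $\alpha$ taken close enough to $1$ so that this integrand is locally integrable on $B$, dominated convergence both bounds the integral uniformly and lets one pass to the limit. The technical core of the section is precisely the justification of this local integrability: one must convert the qualitative vanishing $\mathcal{H}^{n}(T)=0$, together with the graph structure of $u_{1},u_{2}$ and the $C^{1,\alpha}$ decay of $v$, into a quantitative upper bound for the Lebesgue measure of tubular strips $\{\mathrm{dist}(\cdot,T)<\eps\}$ that makes $\mathrm{dist}(\cdot,T)^{2\alpha-2}$ summable near $T$.
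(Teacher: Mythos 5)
Your approach uses a spatial cutoff $\chi_\eps$ that vanishes in a tubular neighborhood $\{\mathrm{dist}(\cdot,T)<\eps\}$, which produces the boundary-layer term $\int|Dv|^2|D\chi_\eps|^2\phi^2$. Combining $|Dv(x)|\le C\,\mathrm{dist}(x,T)^{\alpha}$ with $|D\chi_\eps|\le C/\mathrm{dist}(\cdot,T)$ gives an integrand of order $\mathrm{dist}(\cdot,T)^{2\alpha-2}$, whose exponent is strictly negative for every $\alpha<1$. You then claim this can be dominated by a locally integrable function, but local integrability of $\mathrm{dist}(\cdot,T)^{-s}$ for some $s>0$ does \emph{not} follow from $\mathcal{H}^{n}(T)=0$: one can build closed $T$ with $\mathcal{H}^n(T)=0$ but with $|\{\mathrm{dist}(\cdot,T)<t\}|$ decaying so slowly (e.g.\ like $1/|\log t|$) that $\int\mathrm{dist}(\cdot,T)^{-s}$ diverges for every $s>0$. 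At this stage of the induction no quantitative Minkowski-type estimate on $T$ is available --- the only a priori information is $\mathcal{H}^n(T)=0$. So the step you flag as ``the technical core'' is not a detail to be filled in; it is, with your choice of cutoff, an obstruction that cannot be overcome.

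The paper avoids the issue by cutting off on the \emph{level} of the derivative rather than on the distance to $T$: it tests the weak equation with $\zeta=D_k(\gamma_\delta(D_k v)\,\phi)$, where $\gamma_\delta$ vanishes near $0$ and is the identity off $[-\delta,\delta]$. The differentiated-cutoff contributions then involve $\gamma_\delta'(D_k v)\,D^2_{k\bullet}v$ rather than $D\chi_\eps$, and the resulting boundary terms are products $|D^2 v||Dv|$ and $|D^2 u_a||Dv|$. The Schauder estimate on each connected component of $B\setminus T$ gives the pointwise bounds $|D^2 v|,\ |D^2 u_a|\le C\,\mathrm{dist}(\cdot,T)^{\alpha-1}$ (Lemma~\ref{lem:boundsonsecondder}), and multiplying by $|Dv|\le C\,\mathrm{dist}(\cdot,T)^{\alpha}$ yields a bound of order $\mathrm{dist}(\cdot,T)^{2\alpha-1}$. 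Since Proposition~\ref{Prop:alphaimproved} provides $\alpha\ge\tfrac12$, this exponent is $\ge 0$: the integrand is \emph{bounded}, not merely integrable, and one never needs any estimate on the measure of tubular neighborhoods of $T$. That is the reason the H\"older-exponent improvement to $\alpha\ge\tfrac12$ is proved first and why the exponent $\tfrac12$ (not $1$) is the relevant threshold; your argument, by contrast, would need $\alpha=1$ or additional structure on $T$ that is not available.
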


The proof occupies the rest of this section. For the purpose of obtaining the $L^2$-bound for $D^2 v|_{B\setminus T}$ we will use the PDE (\ref{eq:PDEv}) with the test function $D_k(\gamma_\delta(D_k v) \zeta)$ and $\zeta \in C^\infty_c(B)$ (for a fixed $k$), where $\gamma_\delta$ is as in Lemma \ref{lem:distributionalder}. Plugging in, switching the order of differentation and integrating by parts we obtain

$$\int_{B} (\delta_{ij}+b_{ij}(Du_a, Dv))\, D_k D_j v\, D_i (\gamma_\delta(D_k v) \zeta)+\int_{B} D_k \left((\delta_{ij}+b_{ij}(Du_a, Dv))\right) D_j v\, D_i (\gamma_\delta(D_k v) \zeta) =  0.$$
Standard computations, with the notation $b_{ij}(p,q)$ and $p=(p_1, ..., p_n)$, $q=(q_1, ..., q_n)$, yield

$$\int\limits_{|D_kv|>\delta} (\delta_{ij}+b_{ij}(Du_a, Dv)) (D_j D_k v) (D_i D_k v) \zeta + \int\limits_{|D_kv|>\delta} (D_{q_l}b_{ij})(Du_a, Dv)\, D_k D_l v \, D_j v\, D_i D_k v\, \zeta =$$
$$=-\int\limits_{|D_kv|>\delta} (\delta_{ij}+b_{ij}(Du_a, Dv)) (D_j D_k v) \gamma_\delta(D_k v) D_i\zeta - \int\limits_{|D_kv|>\delta} (D_{q_l}b_{ij})(Du_a, Dv)\, D_k D_l v \, D_j v\, \gamma_\delta(D_k v) D_i\zeta -$$
\begin{equation}
\label{eq:fourterms}
 -\int\limits_{|D_kv|>\delta} (D_{p_l}b_{ij})(Du_a, Dv)\, D_k D_l u_a \, D_j v\, D_i D_k v\, \zeta -   \int\limits_{|D_kv|>\delta} (D_{p_l}b_{ij})(Du_a, Dv)\, D_k D_l u_a \, D_j v\, \gamma_\delta(D_k v) D_i\zeta
\end{equation}
and relabeling indexes in the second term on the left-hand-side we can rewrite the left hand side of the previous equality (\ref{eq:fourterms}) in the form

\begin{equation}
\label{eq:lhsfourterms}\int\limits_{|D_kv|>\delta} (\delta_{ij}+b_{ij}(Du_a, Dv)) (D_j D_k v) (D_i D_k v) \zeta + \int\limits_{|D_kv|>\delta} (D_{q_j}b_{il})(Du_a, Dv)\, D_k D_j v \, D_l v\, D_i D_k v\, \zeta .
\end{equation}
Here we can notice the elliptic matrix whose coefficients are 
$$\delta_{ij}+b_{ij}(Du_a, Dv))+ \sum_l (D_l v)(D_{q_j}b_{il})(Du_a, Dv);$$ the ellipticity is guaranteed by the smallness of $b_{ij}$ in $C^1$-norm and the H\"older continuity of $Du_a$ and $Dv$ with $Du_a(0)=Dv(0)=0$ (either start with a good dilation so that these quantities are small enough, or work at this stage in a small enough ball around $0$). So there exists $c>0$ (independent of $\delta$) such that 

\begin{equation}
 \label{eq:lhs}
c \int\limits_{|D_kv|>\delta} |\nabla(D_k v)|^2\zeta \leq (\ref{eq:lhsfourterms})\,\,\, \text{ for all } \zeta \geq 0.
\end{equation}
Recall that (\ref{eq:lhsfourterms}) is the left-hand side of (\ref{eq:fourterms}) and so we can replace (\ref{eq:lhsfourterms}) by the right-hand side of (\ref{eq:fourterms}). Therefore we will now analyse the four terms on the right-hand side of (\ref{eq:fourterms}) and find suitable bounds. We begin with the third term: by Young's inequality, choosing $\eps>0$ small compared to $c$ in (\ref{eq:lhs}), we find

\begin{equation}
 \label{eq:rhsIII}
\left|\int\limits_{|D_kv|>\delta} (D_{p_l}b_{ij})(Du_a, Dv)\, D_k D_l u_a \, D_j v\, D_i D_k v\, \zeta \right|\leq 
\end{equation} $$ \leq\frac{1}{\eps}\int\limits_{|D_kv|>\delta} |(D_{p_l}b_{ij})(Du_a, Dv)|^2 |D_k D_l u_a|^2 |D_j v|^2 \zeta + \eps\int\limits_{|D_kv|>\delta} |D_i D_k v|^2 \zeta.$$
The second term on the r.h.s. of (\ref{eq:rhsIII}) will be absorbed on the left-hand side of (\ref{eq:lhs}): it is not a priori clear, however, that the first term on the r.h.s. of (\ref{eq:rhsIII}) is summable with a uniform bound, independently of $\delta$. This is obtained in the following lemma (which will also be needed later on in Section \ref{extensionPDEv}).
 
\begin{lem}
 \label{lem:boundsonsecondder}
We have the following summability statements:
$$ \int\limits_{B\setminus T}  |D^2 u_a|^2 |D_j v|^2 \zeta  <\infty \,\,\, \text{ and } \,\,\, 
 \int\limits_{B\setminus T}  |D^2 v|^2 |D_j v|^2  \zeta  <\infty.$$
  
\end{lem}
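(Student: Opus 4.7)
The plan is to reduce both bounds to a single pointwise inequality
$$|D^2 u_i(x)|^2 |Dv(x)|^2 \leq C\, \mathrm{dist}(x, T)^{4\alpha - 2} \qquad (i = 1, 2)$$
on $B \setminus T$ for some $\alpha > 1/2$; since $u_a = (u_1+u_2)/2$ and $v = (u_2 - u_1)/2$, the triangle inequality then bounds both $|D^2 u_a|^2 |D_j v|^2$ and $|D^2 v|^2 |D_j v|^2$ pointwise and uniformly on $B \setminus T$, and integration against the compactly supported $\zeta$ will yield both finiteness statements at once.

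The first ingredient is the H\"older vanishing of $Dv$ on $T$. By Proposition~\ref{Prop:alphaimproved}, after an initial homothety we may assume $u_1, u_2 \in C^{1, \alpha}(B)$ with small $C^{1,\alpha}$-norm for some fixed $\alpha \in (1/2, 1)$. Since $T = \{v = 0,\, Dv = 0\}$ and $Dv \in C^{0, \alpha}(B)$,
$$|Dv(x)| \leq [Dv]_{C^{0, \alpha}(B)} \, \mathrm{dist}(x, T)^{\alpha} \qquad \text{for every } x \in B.$$

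The second ingredient is a sharp interior Schauder bound for each $u_i$ away from $T$. For $x_0 \in B \setminus T$ set $\rho := \frac{1}{4} \min\{\mathrm{dist}(x_0, T),\, \mathrm{dist}(x_0, \partial B)\}$, so that $B_{2\rho}(x_0) \subset B \setminus T$, where $u_i$ is smooth and classically satisfies the CMC equation in non-divergence form
$$\sum_{j,k} \left( \delta_{jk} - \frac{D_j u_i \, D_k u_i}{1 + |Du_i|^2} \right) D_{jk} u_i = \pm H (1 + |Du_i|^2)^{1/2}.$$
I would then rescale by
$$\tilde u(y) := \rho^{-(1+\alpha)} \bigl[ u_i(x_0 + \rho y) - u_i(x_0) - \rho\, Du_i(x_0) \cdot y \bigr] \qquad \text{on } B_2,$$
so that $\tilde u$ satisfies a uniformly elliptic linear PDE in $y$ with $C^{0, \alpha}$-coefficients bounded independently of $\rho \leq 1$ and right-hand side $\pm \rho^{1 - \alpha} H (1 + |Du_i|^2)^{1/2}$ of bounded $C^{0, \alpha}$-norm, while $\|\tilde u\|_{C^{1, \alpha}(B_2)} \leq C\, [Du_i]_{C^{0, \alpha}(B)}$. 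Interior Schauder then gives $|D^2 \tilde u(0)| \leq C$, and scaling back,
$$|D^2 u_i(x_0)| = \rho^{\alpha - 1} |D^2 \tilde u(0)| \leq C\, \mathrm{dist}(x_0, T)^{\alpha - 1}.$$

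Combining the two bounds produces the claimed pointwise inequality; the exponent $4\alpha - 2$ is strictly positive by the choice $\alpha > 1/2$, so the product is uniformly bounded on $B \setminus T$ and both integrals are finite. The main subtlety---and the reason the improved H\"older exponent from Proposition~\ref{Prop:alphaimproved} is indispensable at this step---is the sharp scaling in the Schauder estimate above: a crude $C^0$-based bound $|D^2 u_i| \lesssim \rho^{-2}$ would yield only $|D^2 u_i|^2 |Dv|^2 \lesssim \rho^{2\alpha - 4}$, which is not uniformly bounded on $B \setminus T$ and whose integrability against $\zeta$ could not be established without information on the Hausdorff dimension of $T$ (known a priori only to be $\mathcal{H}^n$-null). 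The $C^{1, \alpha}$-based rescaling above is precisely what makes the threshold $\alpha > 1/2$ sufficient.
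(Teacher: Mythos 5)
Your proposal is correct and follows essentially the same route as the paper: both hinge on the pointwise estimates $|D^2 u_i|(x)\lesssim \mathrm{dist}(x,T)^{\alpha-1}$ and $|Dv|(x)\lesssim \mathrm{dist}(x,T)^{\alpha}$ with $\alpha\geq 1/2$ supplied by Proposition~\ref{Prop:alphaimproved}, giving the integrand a bound $\lesssim \mathrm{dist}(x,T)^{2(2\alpha-1)}$. The only cosmetic difference is in how the Schauder estimate is invoked --- the paper differentiates the divergence-form CMC equation, applies the elliptic estimate to $D_\ell u_i$, and measures the oscillation from a nearby point of $T$, whereas you rescale the non-divergence form for $u_i$ directly; both produce the identical bound.
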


\begin{proof}
 The summability of $\int\limits_{B\setminus T}  |D^2 u_a|^2 |D_j v|^2 $ comes from the following observations. Let $x \in B\setminus T$ and take the largest open ball $B_R(x)$ that is disjoint from $T$, so that $R=\text{dist}(x,T)$. Then $\exists y \in T \cap \p B_R(x)$. The function $D_l u_1$ satisfies (by differentiating the CMC equation)

\begin{equation}
 \label{eq:differentiateCMC}
\int_{B_R(x)} \frac{1}{\sqrt{1+|D u_1|^2}}\left(\delta_{ij}-\frac{D_i u_1 D_j u_1}{1 + |D u_1|^2}\right) D_j(D_l u_1)D_i \zeta =0
\end{equation}
with uniform ellipticity on $B \setminus T$. In other words $f=D_l u_1$ satisfies $\text{div}(A \nabla f)=0$ with $A=(a_{ij})_{i,j=1}^n$, where $a_{ij}=\frac{\p a^i}{\p p_j} (Du_1)=\frac{1}{\sqrt{1+|D u_1|^2}}\left(\delta_{ij}-\frac{D_i u_1 D_j u_1}{1 + |D u_1|^2}\right)$ and $a^i(p)=\frac{p_i}{\sqrt{1+|p|^2}}$, so $a_{ij}$ are smooth uniformly elliptic entries (we can assume that $\nabla u_1$ is small) and hence by Schauder estimates, we have that for any constant $\lambda_{1} \in {\mathbb R}$,  
\begin{equation}
 \label{eq:SchauderDGN1}
|\nabla(D_l u_1)|(x)\leq   \frac{C}{R} \sup_{B_{R}(x)} |D_{l}u_{1} - \lambda_{1}|.
\end{equation}

Similarly, for any constant $\lambda_{2} \in {\mathbb R}$,

\begin{equation}
 \label{eq:SchauderDGN2}
|\nabla(D_l u_2)|(x) \leq   \frac{C}{R} \sup_{B_{R}(x)} |D_{l}u_{2} - \lambda_{2}|.
\end{equation}

Taking the average $u_a=\frac{u_1 +u_2}{2}$ we get
$$\nabla(D_l u_a) = \frac{1}{2} (\nabla(D_l u_1)+\nabla(D_l u_2))$$
so from (\ref{eq:SchauderDGN1}) and (\ref{eq:SchauderDGN2}) we infer,
after choosing $\lambda_1=D_l u_1(y)$ and $\lambda_2=D_l u_2(y)$ and using the H\"older continuity of $Du_1$ and $Du_2$, that 
$|D^2u_a|(x)\leq C \text{dist}(x,T)^{\alpha-1}$. Therefore 

\begin{equation}
\label{eq:boundsonsecondder}
 \int\limits_{B\setminus T}  |D^2 u_a|^2 |D_j v|^2 \zeta \leq \int\limits_{B \setminus T}  \text{dist}(x,T)^{2(2\alpha-1)} \zeta <\infty
\end{equation}
since we can choose $\alpha\geq \frac{1}{2}$, as proved in Proposition \ref{Prop:alphaimproved}. Note that the same summability behaviour can be obtained for $v=\frac{u_1-u_2}{2}$, i.e. $|D^2v|(x)\leq C \text{dist}(x,T)^{\alpha-1}$ and 

\begin{equation}
\label{eq:boundsonseconddersecond}
 \int\limits_{B\setminus T}  |D^2 v|^2 |D_j v|^2 \zeta \leq \int\limits_{B \setminus T}  \text{dist}(x,T)^{2(2\alpha-1)} \zeta <\infty.
\end{equation}

\end{proof}

Lemma \ref{lem:boundsonsecondder} establishes the desired bound on the first term on the r.h.s. of (\ref{eq:rhsIII}). In order to conclude the $L^2$-bound for $D^2 v|_{B\setminus T}$ in Proposition \ref{Prop:L2boundonsecondder}, we now go back to the remaining three terms on the right hand side of (\ref{eq:fourterms}). Let us analyse the second:

$$\left|\int\limits_{|D_kv|>\delta} (D_{q_l}b_{ij})(Du_a, Dv)\, D_k D_l v \, D_j v\, \gamma_\delta(D_k v) D_i\zeta\right|\leq$$ 
\begin{equation}
 \label{eq:secondoffour}
\leq \int\limits_{|D_kv|>\delta} |(D_{q_l}b_{ij})(Du_a, Dv)|\, |D_k D_l v| \, |D_j v|\, (|D_k v|+\delta) |D_i\zeta| 
\end{equation}
and we can use the summability of $|D^2v||Dv|$ on $B\setminus T$, which is a consequence of Lemma \ref{lem:boundsonsecondder}, so the right-hand-side of (\ref{eq:secondoffour}) is finite and bounded independently of $\delta$. We can argue similarly for the first and fourth terms on the right hand side of (\ref{eq:fourterms}). With this we conclude (recall that there is an absorption on the left hand side to be performed, for each fixed $\delta$, when splitting the third term on the right hand side of (\ref{eq:fourterms})), sending $\delta \to 0$, that

$$  \int\limits_{B\setminus T} |\nabla(D_k v)|^2\zeta<\infty.$$
We thus have $D^2 v|_{B \setminus T}$ is in $L_{\rm loc}^2$. As mentioned at the beginning of this section, this implies $v \in W_{\rm loc}^{2,2}(B)$ since we can now apply Lemma \ref{lem:distributionalder} ($D^2 v$ develops no further distribution supported on $T$).

\subsection{Extension of the PDE for $v$ across $T$ and higher regularity conclusions}
\label{extensionPDEv}

Consider the PDE for $v$ (\ref{eq:PDEv}): the aim of this subsection is to extend the PDE for $v$ across $T$ and, as a straightforward consequence, complete the proof of (iv), i.e. the inductive step of Theorem \ref{thm:higher-reg}. While it is indeed true that the PDE for $v$ is valid for arbitrary test functions $\zeta \in C^\infty_c(B)$ (this can be proved exactly with the same argument that we are going to use in this subsection) what we will need is the extension of the PDE for $v$ that takes into accound multiplicities as well.

Consider the function $f(x)=q_1(x) (\delta_{ij}+b_{ij}(Du_a, Dv)) D_j v$ on $B$. Recall that $q_1$ is constant on each connected component of $B\setminus T$. The function $f$ is $C^1$ on $B \setminus T$ and its derivative is summable on $B\setminus T$, in view of the bounds established in Lemma \ref{lem:boundsonsecondder}. Moreover it vanishes everywhere on the set $T$ (since $Dv$ vanishes there\footnote{$q_1$ is not really defined on $T$ but we just set the function to be $0$ since $Dv$ vanishes.}) and it is H\"older continuous on $B$. Indeed for $y \in T$ and $x \in B\setminus T$ we have (recall that $0< q_1(x)<q$)

$$|f(x)-f(y)|=|f(x)|=|q_1(x)| |\delta_{ij}+b_{ij}(Du_a, Dv)| |D_j v| \leq K_{b_{ij},q} |Dv(x)| =$$ $$=K_{b_{ij},q} |Dv(x)-Dv(y)|\leq K_{b_{ij},q}|x-y|^\alpha. $$
For $x,y \in B\setminus T$ and $x$ and $z$ in distinct connected components of $B\setminus T$ we consider the segment joining $x$ and $z$, which must intersect $T$ at a point $y$. Then $|f(x)-f(z)|\leq |f(x)-f(y)|+|f(z)-f(y)|\leq K_{b_{ij},q}|x-y|^\alpha+K_{b_{ij},q}|z-y|^\alpha \leq 2K_{b_{ij},q}|x-z|^\alpha$. For $x,y$ in the same connected component of $B\setminus T$ then $q_1$ is constant, so the H\"older continuity follows. 

We then have by Lemma \ref{lem:distributionalder} that the distributional derivative of $f$ on $B$ is an $L^1$ function, namely is is just the classical derivative on $B \setminus T$ extended by setting it to be $0$ on $T$. So we have $f\in W^{1,1}(B)$. We then have, for an arbitrary $\zeta \in C^\infty_c(B)$ :

$$\int_B q_1(x) (\delta_{ij}+b_{ij}(Du_a, Dv)) D_j v D_i\zeta = -\int_B D_i\left(q_1(x) (\delta_{ij}+b_{ij}(Du_a, Dv)) D_j v \right)\zeta =   $$
$$= -\int_{B\setminus T} D_i\left(q_1(x) (\delta_{ij}+b_{ij}(Du_a, Dv)) D_j v \right)\zeta = -\int_{B\setminus T} q_1(x) D_i\left((\delta_{ij}+b_{ij}(Du_a, Dv)) D_j v \right)\zeta =$$
$$ =-\int_{B\setminus T} q_1(x) H\zeta,$$
where in the second equality we use the fact that $T$ has measure $0$, in the third equality the fact that $q_1$ is locally constant on $B \setminus T$ and in the last equality the fact that $D_i\left((\delta_{ij}+b_{ij}(Du_a, Dv)) D_j v\right)=H$ on $B\setminus T$ (by the PDE (\ref{eq:PDEv}), which is valid in its strong form on $B \setminus T$ since $v$ is $C^2$ there). Since $T$ has zero measure we can write the last term of the previous chain of equalities as $-\int_{B} q_1(x) H\zeta$: in conclusion we find a version of the PDE for $v$ that holds on the whole of $B$ also when we take multiplicity into account, namely:

\begin{equation}
\label{eq:PDevwithmultiplicity}
\int_B q_1(x) (\delta_{ij}+b_{ij}(Du_a, Dv)) D_j v D_i\zeta =   -\int_B q_1(x) H\zeta. 
\end{equation}

\medskip

Now consider the first variation identity (\ref{eq:firstvarformulaglobal}), valid for any arbitrary $\zeta \in C^\infty_c(B)$:

$$\int_B \left( \frac{q_1(x) D_i u_1}{\sqrt{1+|Du_1|^2}} + \frac{q_2(x) D_i u_2}{\sqrt{1+|Du_2|^2}}\right) D_i \zeta =  \int_B (q_1(x)-q_2(x))H \zeta.$$
We rewrite it, using $q_2(x)=q-q_1(x)$, as

$$\int_B  q\frac{D_i u_2}{\sqrt{1+|Du_2|^2}}D_i\zeta + \int_B q_1(x)\left(\frac{ D_i u_1}{\sqrt{1+|Du_1|^2}} - \frac{ D_i u_2}{\sqrt{1+|Du_2|^2}}\right) D_i \zeta =  \int_B (2q_1(x)-q)H\zeta.$$
The term $\left(\frac{ D_i u_1}{\sqrt{1+|Du_1|^2}} - \frac{ D_i u_2}{\sqrt{1+|Du_2|^2}}\right)$ is exactly $2(\delta_{ij}+b_{ij}(Du_a, Dv)) D_j v$, as computed pointwise when we obtained the PDE (\ref{eq:PDEv}) for $v$ (it is $0$ on $T$). Using (\ref{eq:PDevwithmultiplicity}) we then find

$$\int_B  q\frac{D_i u_2}{\sqrt{1+|Du_2|^2}}D_i\zeta  = - \int qH\zeta.$$
Simplifying $q$ we have the standard CMC equation in weak form for $u_2$, where the test function $\zeta$ is arbitrary and $u_2 \in C^{1,\alpha}(B)$. The difference quotients method yields $u_2\in W^{2,2}(B)$ and differentiating the PDE proves $Du_2$ is $C^{1,\alpha}$. Bootstrapping yields $u_2 \in C^\infty$.

With the conlcusions obtained so far we have achieved the proof of the Higher Regularity Theorem \ref{thm:higher-reg} subject to the inductive assumptions, i.e. step (iv) of the general inductive program from Section \ref{mainsteps}. At this stage the entire induction process is complete, and with it the proof of Theorem \ref{thm:mainregularityrestated} (and consequently Theorem \ref{thm:mainregularity} is proved as well).
 
\section{Proof of the Compactness Theorem}

For the sake of clarity we will split the proof in three main parts. We begin by proving the following:

\begin{thm}[\textbf{first part of Theorem \ref{thm:compactness}}]
\label{thm:compactnesssfirstpart}
Let $n \geq 2$ consider an open set $\mathcal{U} \subset \R^{n+1}$. Let $V_j$ be a sequence of integral $n$-varifolds and assume that each $V_j$ satisfies the assumptions of Theorem \ref{thm:mainregularity} and that the $V_j$'s have uniformly bounded masses $M(V)\leq K_0$ and uniformly bounded mean curvatures $|H|\leq H_0$ for $K_0, H_0 \in \R$ (here $H$ is the mean curvature appearing in assumption 3). Then whenever $V_j \to V$ as varifolds, $V$ must be an integral $n$-varifold that satisfies properties 1 and 2 of Theorem \ref{thm:mainregularity}.
\end{thm}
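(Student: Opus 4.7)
The plan is to verify, separately, hypothesis 1 (first-variation conditions) and hypothesis 2 (absence of classical singularities) of Theorem~\ref{thm:mainregularity} for the limit varifold $V$.

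\textbf{Hypothesis 1 via Allard's compactness theorem.} The uniform mass bound $\|V_j\|(\mathcal{U}) \leq K_0$ together with the pointwise bound $|H_{V_j}| \leq H_0$ gives $\|\delta V_j\|(\mathcal{U}) \leq H_0 K_0$, so Allard's integral-varifold compactness theorem produces an integral limit $V$ (along a subsequence) with locally bounded first variation in $\mathcal{U}$. To upgrade to absolute continuity of $\delta V$ with respect to $\|V\|$ and to an $L^{p}_{\rm loc}$ bound on the generalized mean curvature, I would observe that for every $X \in C_c^1(\mathcal{U}; \R^{n+1})$,
\[
|\delta V_j(X)| = \left|\int X \cdot \vec{H}_{V_j}\, d\|V_j\|\right| \leq H_0 \|X\|_{\infty}\, \|V_j\|(\text{spt}\, X),
\]
and pass to the limit $j \to \infty$ using varifold convergence $V_j \to V$ to deduce $|\delta V(X)| \leq H_0 \|X\|_{\infty} \|V\|(\text{spt}\, X)$. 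The Radon--Nikodym theorem then forces $\delta V \ll \|V\|$ with $|\vec{H}_V| \leq H_0$ $\|V\|$-a.e., so in particular $\vec{H}_V \in L^{\infty}_{\rm loc}(\|V\|) \subset L^{p}_{\rm loc}(\|V\|)$ for every $p > n$.

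\textbf{Hypothesis 2 via the Minimum Distance Theorem.} I would argue by contradiction. Suppose $p \in \text{sing}_{C}\, V$: by Definition~\ref{df:classicalsingularity}, there exist $\sigma > 0$ and $\alpha \in (0,1]$ such that $\spt{V} \cap B^{n+1}_{\sigma}(p)$ is the union of $k \geq 3$ embedded $C^{1,\alpha}$ hypersurfaces-with-boundary $M_1, \ldots, M_k$ meeting along a common $C^{1,\alpha}$ boundary $\gamma \ni p$, with some pair $(M_1, M_2)$ transverse. Since hypothesis 1 is now established for $V$, the monotonicity formula applies, and the rescaled varifolds $\eta_{p,r\,\#} V$ subconverge, as $r \downarrow 0$, to a stationary integral cone $\mathbf{C}$ (stationary because the rescaled mean curvature vanishes in the limit). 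By the $C^{1,\alpha}$ graph structure of each $M_i$, $\spt{\mathbf{C}}$ is the union of the $k$ tangent half-hyperplanes $H_1, \ldots, H_k$ of the $M_i$'s along $\gamma$. Transversality of $(M_1, M_2)$ guarantees that $H_1$ and $H_2$ lie in distinct $n$-planes, so $H_1 \cup H_2$ is not a full hyperplane. A standard balancing of outward conormals at the vertex of a stationary cone supported on half-hyperplanes meeting along a common axis then forces $\spt{\mathbf{C}}$ to be either a single full hyperplane or a union of at least three \emph{distinct} half-hyperplanes; combined with the preceding observation, $\spt{\mathbf{C}}$ must contain at least three distinct half-hyperplanes, placing $\mathbf{C}$ within the scope of Theorem~\ref{thm:min-dist}.

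To close the argument, by a diagonal selection using $V_j \to V$ varifold-wise and $\eta_{p, r\,\#}V \to \mathbf{C}$ along a subsequence of scales $r_k \downarrow 0$, I extract $j_k \to \infty$ such that the rescaled varifolds $\tilde V_k := \eta_{p, r_k\,\#} V_{j_k}$ converge as varifolds to $\mathbf{C}$ on $B_2^{n+1}(0)$. Rescaling by the small factor $r_k$ contracts the scalar mean curvature (it scales by $r_k$), so each $\tilde V_k$ still lies in $\mathcal{S}_{H_0}$; moreover $(\omega_n 2^n)^{-1}\|\tilde V_k\|(B_2^{n+1}(0)) \to \Theta(\|\mathbf{C}\|, 0)$, so the density hypothesis of Theorem~\ref{thm:min-dist} is satisfied for all large $k$. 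That theorem then supplies a fixed $\varepsilon > 0$ with
\[
\text{dist}_{\mathcal H}\bigl(\spt{\tilde V_k} \cap B_1^{n+1}(0),\ \spt{\mathbf{C}} \cap B_1^{n+1}(0)\bigr) > \varepsilon
\]
for all large $k$. On the other hand, varifold convergence $\tilde V_k \to \mathbf{C}$ combined with the monotonicity formula (uniform mass lower bounds around points of $\spt{\tilde V_k}$ prevent the support from ``lifting off'' in the limit) forces Hausdorff convergence of the supports on $B_1^{n+1}(0)$, yielding the required contradiction.

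\textbf{Main obstacle.} I expect the balancing-of-conormals classification in the second paragraph to be the core technical step: upgrading ``at least two non-coplanar tangent half-hyperplanes'' to ``at least three distinct half-hyperplanes'' in $\spt{\mathbf{C}}$ is precisely what converts the structural property at a hypothetical classical singularity of $V$ into an input to Theorem~\ref{thm:min-dist}. A secondary but nontrivial point is the diagonal extraction of $(r_k, j_k)$ and the promotion of varifold convergence to Hausdorff convergence of supports; both are standard consequences of the uniform first-variation and mass bounds available here, but they must be carried out with some care, since varifold convergence alone does not imply Hausdorff convergence of supports.
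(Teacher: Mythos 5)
Your proof is correct and follows the same route as the paper's: Allard's integral-varifold compactness for hypothesis 1, and a contradiction via the Minimum Distance Theorem (applied to a rescaled $V_{j_k}$ converging to a tangent cone of $V$) for hypothesis 2. The paper's version picks a single dilation scale $\rho$ with $\mathrm{dist}(\eta_{y,\rho\,\#}V,\mathbf{C})\le\eps/2$ and uses $V_j\to V$ once, rather than your diagonal extraction over $(r_k,j_k)$, but these are interchangeable.

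Two points worth making explicit. First, the $L^\infty$ bound $|\vec H_{V_j}|\le H_0$ on the \emph{generalized} mean curvature is not a direct hypothesis: the stated uniform bound $|H|\le H_0$ is on the constant scalar mean curvature $h$ appearing on $\mathrm{reg}_1\,V_j$, while a priori $\vec H_{V_j}$ is only known to lie in $L^p_{\mathrm{loc}}(\|V_j\|)$. The upgrade to $|\vec H_{V_j}|\le H_0$ $\|V_j\|$-a.e.\ uses Theorem~\ref{thm:mainregularity} (each $V_j$ is smoothly CMC away from a $\|V_j\|$-null set), which the paper invokes explicitly at this step and which you use silently. Second, the conormal-balancing step you spell out -- showing that stationarity of the tangent cone $\mathbf{C}$, together with a non-coplanar transverse pair, forces $\mathrm{spt}\,\|\mathbf{C}\|$ to contain at least three distinct half-hyperplanes -- is a genuine (if short) argument that the paper compresses to ``by definition''. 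You carry it out correctly: the transverse pair only yields two distinct non-coplanar half-hyperplanes, and it is the vanishing of the singular first variation along the axis that excludes the degenerate two-half-plane case. What you flagged as the ``main obstacle'' is indeed the right place to slow down, though it is a brief lemma rather than a substantive difficulty.
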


\begin{proof}[proof of Theorem \ref{thm:compactnesssfirstpart}]
Thanks to the Regularity Theorem \ref{thm:mainregularity} we have that, for any varifold $V$ in $\mathcal{S}_H$ with $H \leq H_0 \in \R^+$, the mean curvature is actually in $L^\infty$ and therefore the first variation of $V$ satisfies $\|\delta V\|(X)\leq H_0 \|X\|_{L^1(\spt{V})}$ for any $X \in C^1_c(\spt{V})$. This gives that, under the assumptions of Theorem \ref{thm:compactness}, we also have a uniform bound for the first variations. In view of this we can apply Allard's compactness theorem for integral varifolds \cite{Allard} and obtain that, given $V_j \in \mathcal{S}_{H_j}$ for $H_j\leq H_0 \in \R^+$, we can extract a subsequence converging to an integral varifold $V_\infty$ with bounded generalized mean curvature and $\|\delta V_\infty\|\leq \liminf_{j \to \infty} \|\delta V_j\|$. Therefore the generalized mean curvature of $V_\infty$ is in $L^\infty$ as well, bounded by $H_0$ in $L^\infty$, so condition 1 is checked. Since we are working up to the extraction of a subsequence, we may also assume without loss of generality that $H=\lim_{j \to \infty} H_{V_j}$ exists. 

For condition 2, i.e. the absence of classical singularities, we assume by contradiction that $V_\infty$ possesses a classical singularity, with $V_j \rightharpoonup V_\infty$ weakly as varifolds. By definition we have a tangent cone $C$ at $y \in \text{sing}_C V_\infty$ such that $\spt{C}$ is a sum of three or more half planes, with at least two of them transversal. Let $\eps>0$ be as in the minimum distance Theorem \ref{thm:min-dist}. Choose $\rho>0$ such that $\text{dist}(\eta_{y,\rho}V_\infty, C) \leq \eps/2$, where $\eta_{y,\rho}$ denotes the dilation of factor $\rho$ around $y$. Since we also have that $\eta_{y,\rho}V_j \rightharpoonup \eta_{y,\rho}V_\infty$ we will find $j$ large enough such that $\text{dist}(\eta_{y,\rho}V_\infty, \eta_{y,\rho}V_j) \leq \eps/2$ and therefore $\text{dist}(C, \eta_{y,\rho}V_j) \leq \eps$: since $\eta_{y,\rho}V_j$ satisfies the assumption of the minimum distance Theorem \ref{thm:min-dist} we have a contradiction.
\end{proof}
Next we have

\begin{thm}[\textbf{second part of Theorem \ref{thm:compactness}}]
\label{thm:compactnessstronstability}
Let $n \geq 2$ consider an open set $\mathcal{U} \subset \R^{n+1}$. Let $V_j$ be a sequence of integral $n$-varifolds and assume that each $V_j$ satisfies the assumptions of Theorem \ref{thm:mainregularity} and that the $V_j$'s have uniformly bounded masses $M(V)\leq K_0$ and uniformly bounded mean curvatures $|H|\leq H_0$ for $K_0, H_0 \in \R$ (here $H$ is the mean curvature appearing in assumption 3). Then whenever $V_j \to V$ as varifolds, $V$ must be an integral $n$-varifold that satisfies properties 3 and 4 of Theorem \ref{thm:mainregularity}.
\end{thm}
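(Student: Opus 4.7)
Since each $V_j$ satisfies all hypotheses of Theorem~\ref{thm:mainregularity}, the Regularity Theorem applied to $V_j$ yields constants $h_j\in\R$ with $|h_j|\le H_0$ and globally consistent unit normals $\hat{\nu}_j$ on $\text{reg}_1\,V_j$ satisfying $\vec{H}_{V_j}=h_j\hat{\nu}_j$.  Passing to a subsequence (not relabeled), $h_j\to h_\infty\in\R$ with $|h_\infty|\le H_0$.  Using the uniform bound $\|\vec{H}_{V_j}\|_{L^\infty}\le H_0$, Allard's regularity theorem applied uniformly gives that on any compact subset of $\text{reg}_1\,V$ the varifolds $V_j$ converge to $V$ graphically in $C^{1,\alpha}$.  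Consequently the $\hat{\nu}_j$ admit, after a further subsequence, a limiting unit normal $\hat{\nu}$ on $\text{reg}_1\,V$ with $\vec{H}_V=h_\infty\hat{\nu}$ pointwise.  If $h_\infty\ne0$ then $\hat{\nu}=\vec{H}_V/|h_\infty|$ is a canonical global choice; if $h_\infty=0$ then $V$ is minimal and any orientation on an orientable $\text{reg}_1\,V\cap\Oc$ trivially satisfies $\vec{H}_V=h_\infty\hat{\nu}$.  In either case the equivalence following hypothesis \textbf{(IV)} gives that $V\res\Oc$ is stationary under volume-preserving variations, verifying property 4.

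To verify property 3, fix $X\in\text{sing}_T\,V$ with density $q=\Theta(\|V\|,X)\ge2$.  By definition $\spt{V}$ near $X$ is the union of exactly two $C^{1,\alpha}$ hypersurfaces meeting tangentially at $X$.  After rescaling and rotating we may take $X=0$ with common tangent plane $P=\R^n\times\{0\}$, and choose $\rho>0$ small enough that $V$ restricted to $B_\rho^n(0)\times\R$ satisfies the $L^2$-height excess and mean-curvature smallness hypotheses of the Sheeting Theorem~\ref{thm:sheeting} relative to the multiplicity-$q$ hyperplane $qP$.  By varifold convergence and the uniform mean-curvature bound, the same hypotheses hold for $V_j$ for all large $j$.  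Applying Theorem~\ref{thm:sheeting} together with the Higher Regularity Theorem~\ref{thm:higher-reg} to $V_j$, we obtain $\spt{V_j}\cap(B^n_{\rho/2}(0)\times\R)=\bigcup_{k=1}^{\tilde q_j}\mathrm{graph}\,\tilde u_j^k$ with weakly ordered smooth CMC graphs $\tilde u_j^1\le\cdots\le\tilde u_j^{\tilde q_j}$, $\tilde q_j\le q$.  By Remark~\ref{oss:C11estimate} the $C^{1,1}$ norms of the $\tilde u_j^k$ are bounded uniformly in $j$; extracting a further subsequence with $\tilde q_j\equiv\tilde q$, the $\tilde u_j^k$ converge in $C^{1,\alpha}$ to limits $\tilde u^k$ solving the CMC equation weakly with constant $h_\infty$ and hence smoothly by elliptic regularity.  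Since $X$ is a genuine two-sheeted touching singularity of $V$, the limit consists of precisely two distinct smooth sheets $\tilde u^1<\tilde u^2$ (outside their touching set) meeting tangentially at $0$.

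For the zero-measure conclusion, if $h_\infty\ne0$ then Lemma~\ref{lem:Hopf} applied to $V_j$ (valid for $j$ large since then $h_j\ne0$) forces the mean curvature vectors of the top and bottom sheets of $V_j$ to point in opposite directions along $e^{n+1}$; passing to the limit yields the classical equations $\mathrm{div}(D\tilde u^1/\sqrt{1+|D\tilde u^1|^2})=-h_\infty$ and $\mathrm{div}(D\tilde u^2/\sqrt{1+|D\tilde u^2|^2})=+h_\infty$ on all of $B^n_{\rho/2}$.  At any Lebesgue density point $x_0$ of $T:=\{\tilde u^1=\tilde u^2\}$, smoothness together with the fact that $\tilde u^2-\tilde u^1$ and $D(\tilde u^2-\tilde u^1)$ vanish on $T$ (no classical singularities) forces $D^2\tilde u^1(x_0)=D^2\tilde u^2(x_0)$, and evaluating both CMC equations at $x_0$ gives $-h_\infty=h_\infty$, a contradiction.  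Hence $\mathcal{H}^n(T)=0$.  If instead $h_\infty=0$, both sheets are minimal graphs and the strong maximum principle (cf.~Remark~\ref{oss:maxprincsmooth}) forces $\tilde u^1\equiv\tilde u^2$ near $X$, contradicting $X\in\text{sing}_T\,V$; thus $\text{sing}_T\,V=\emptyset$ and property 3 holds vacuously.  Since $\{Y\in\spt{V}\cap B^{n+1}_\rho(X):\Theta(\|V\|,Y)=q\}$ coincides with $T$ near such a singularity (the only density-$q$ points near a two-sheeted touching singularity being on $T$), this verifies property 3.

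The chief technical obstacle is the passage to the limit in the individual smooth CMC structure of the sheets despite the presence of multiplicity.  Varifold convergence alone would yield only weak convergence, with no direct control over the limit sheets' regularity or the PDEs they satisfy; the argument rests crucially on the uniform $C^{1,1}$ bounds of Remark~\ref{oss:C11estimate} (which in turn uses the full inductive strength of (H1), (H2), (H3) already established), elliptic regularity applied to the limit CMC equation, and the opposite-direction-of-mean-curvature constraint from Lemma~\ref{lem:Hopf}.  Everything else reduces to standard passage-to-limit arguments for density, tangent planes, and mean curvature under varifold convergence with uniform $L^\infty$ mean-curvature bounds.
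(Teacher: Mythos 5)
Your argument for property 3, once the Sheeting and Higher Regularity Theorems are in play, follows the paper's route closely (uniform higher-order bounds for the top and bottom sheets, $C^2$ limits satisfying the CMC equations with opposite signs via Lemma~\ref{lem:Hopf}, measure-zero touching set from the sign conflict, and the minimal case handled by the strong maximum principle). But there are two genuine gaps.

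First, you apply the Sheeting Theorem~\ref{thm:sheeting} (hence also Theorem~\ref{thm:higher-reg} and Remark~\ref{oss:C11estimate}) to $V_j$ without verifying its hypotheses. That theorem requires $V_j\in\mathcal{S}_H$, i.e.\ $V_j$ must satisfy assumptions 1--5 of Theorem~\ref{thm:mainregularityrestated}, which include \emph{strong} stability of $\Greg{V_j}$ with respect to $J$. The $V_j$ in the compactness theorem satisfy only the \emph{weak} (volume-preserving) stability hypothesis of Theorem~\ref{thm:mainregularity}. Remark~\ref{oss:weakimpliesstrongforJ} gives local strong stability, but the radius of the ball in which it holds depends on the individual $V_j$ and there is a priori no uniform lower bound, so one cannot simply fix a $\rho$ and quote the Sheeting Theorem for all large $j$. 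The paper's Step 1 handles precisely this: it uses Remark~\ref{oss:weakimpliesstrong} (strong stability holds on at least one of two disjoint open sets) to produce, after a subsequence extraction and --- in the annulus case --- a diagonal argument plus a capacity argument, a fixed ball around the point on which all $V_j$ (along the subsequence) are strongly stable. Your proof needs this reduction before any invocation of the Sheeting Theorem.

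Second, the verification of property 4 rests on the claim that ``Allard's regularity theorem applied uniformly gives that on any compact subset of $\text{reg}_1\,V$ the varifolds $V_j$ converge to $V$ graphically in $C^{1,\alpha}$.'' This is false at points of $\text{reg}_1\,V$ of density $\ge 2$: Allard's theorem is a multiplicity-one statement (density ratio close to $1$), and a $C^1$-embedded part of $V$ with constant integer density $q\ge 2$ is allowed by the hypotheses. At such a point, $V_j\to V$ gives no $C^{1,\alpha}$ graphical control from Allard alone; one again needs the (stability-dependent) Sheeting Theorem. The paper argues property 4 by running the same sheeting-plus-limit argument at embedded points as at touching points, and then observing that embeddedness of the limit forces $u^1_\infty\equiv u^q_\infty$ via the Hopf alternative in Lemma~\ref{lem:Hopf}. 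Your Allard-based shortcut does not survive the multiplicity issue, and so the derivation of the limiting orientation $\hat\nu$ and the stationarity of $V$ under volume-preserving deformations is not actually established.
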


\begin{oss}[improved sheeting theorem]
\label{oss:improvedsheeting}
We will make use of the following ``improved sheeting theorem''. Note that the Sheeting Theorem and Higher Regularity Theorem previously established during the inductive proof are at this stage statements in their own right, as the induction is over. Note that $u_1=\tilde{u}_1$ and $u_q=\tilde{u}_{\tilde{q}}$ when we compare Theorems \ref{thm:sheeting} and \ref{thm:higher-reg}. Combining these two theorems we can state the following. Under the assumptions of Theorem \ref{thm:sheeting} (note in particular the assumption of strong stability of $\Greg V$ with repect to the functional $J$) we can conclude that $V$ is the sum (as varifolds, just as in Theorem \ref{thm:sheeting}) of the graphs of $q$ $C^{1,\alpha}$ functions $u_1 \leq ... \leq u_q$ on $B_{1/2}^n(0)$ (actually they are $C^{1,1}$ but this is not needed) with $u_1$ and $u_q$ smooth and satisfying separately the CMC PDE with either $|h|$ or $-|h|$ on the right-hand side, and moreover $u_1$ and $u_q$ fulfil the elliptic estimate
$$\|u_j\|_{C^{2,\alpha}(B_{1/2})} \leq C\left(\int_{B_1 \times \R} |x^{n+1}|^2 d\|V\|(X)+ |h| \right),$$
for $C=C(n,\alpha,q)$ and for $j=1$ and $j=q$. In other words, the improvement with respect to the statement of Theorem \ref{thm:sheeting} concerns the top and bottom graphs: they enjoy higher regularity and higher order elliptic estimates. This is not necessarily true for the remaining graphs, since our assumptions allow the presence of jumps in multiplicities at the touching singularity (as in example \ref{oss:jumpsattouchingsing}) that can indeed cause $u_j$ to be $C^{1,1}$ but not $C^2$ for $2\leq j \leq q-1$.
\end{oss}

\begin{proof}[proof of Theorem \ref{thm:compactnessstronstability}]
Note that conditions 3 and 4 (and also 5, that we will deal with later) are to be checked respectively at touching singularities and on the embedded $C^{1,\alpha}$ part of $V$ (and condition 5 on $\Greg{V})$): for this reason, in the following step we will address the case of a point $x \in \spt{V_\infty}$ where the tangent exists and is a plane counted $q$ times. In the following step we aim to ensure the applicability of the sheeting theorem  along a subsequence in a fixed ball around an arbitrary such point $x$: with this at hand, in step 2 we will check conditions 2 and 4.

\medskip

\textit{Step 1: ensure the applicability of the sheeting theorem  along a subsequence (not relabeled) in a fixed ball around an arbitrary point}. As a preliminary step, we will need to ensure that we can work in a fixed ball where the stability (for $J$) holds for $\Greg{V_j}$ at least along a subsequence. We assume that $x \in \spt{V_\infty}$ is a point at which the tangent cone is a plane counted $q$ times for some $q \in \N$. Given $0<R<1$ look at the disjoint open sets $B^{n+1}_R(x)$ and $B^{n+1}_1(x) \setminus B^{n+1}_R(x)$: by Remark \ref{oss:weakimpliesstrong} we have that either for all $j$ large enough $\Greg{V_j}$ is stable in $B^{n+1}_R(x)$ for the functional $J$, or this stability is valid for a subsequence suitably extracted from $V_{j}$ in the annulus $B^{n+1}_1(x) \setminus B^{n+1}_R(x)$. In view of this, either we find $R>0$ so that the first alternative holds for all $j$ large enough or we have a subsequence ${j_k}$ such that each $\Greg{V_{j_k}}$ is stable for $J$ in any annulus of the form $B^{n+1}_1(x) \setminus B^{n+1}_{{1}/{2^N}}(x)$ (for $N\in \N$: this uses a diagonal argument). In the second case, however, the arbitraryness of $N$ and the fact that $n\geq 2$ allow to extend, by a standard capacity argument, the stability condition to the whole ball $B_1^{n+1}(x)$. Either way we have identified a subsequence (that by abuse of notation we will not relabel from now on) $V_j$ and a ball of fixed radius around $x$ such that  for every $j$ the immersion $\Greg{V_{j}}$ is stable for $J$ in the selected ball.

We now need to ensure the remaining conditions for the applicability of the sheeting theorem around $x$ to $V_j$: the mass bounds and the smallness of the excess (all unifomly in $j$). Without loss of generality we can assume $x=0$ and $T_x V_\infty = \R^n \times \{0\}$. 
This is simply a matter of restricting to a small enough ball around $x$ and dilating to the size $B_2$. Indeed, we can choose $\sigma>0$ small enough, up to replacing $V_j$ and $V_\infty$ with a suitable dilation (with factor independent of $j$) such that for all $j$ large enough:

\noindent (i) $\spt{V_j} \subset \{|x^{n+1}| < \sigma\}$ for every $X=(x,x^{n+1}) \in \spt{V_j} \cap \left(B^n_2(0) \times (-1, 1) \right)$ (by the weak convergence and the monotonicity formula we have that the supports of $V_j$ converge to the support of $V_\infty$ in Hausdorff distance, so we just need to ensure the condition $\spt{V_\infty} \subset \{|x^{n+1}| < \sigma\}$ on $V_\infty$).

\noindent (ii) $|h_{V_j}| \leq \sigma H_0$ for all $V_j$ and $\int_{B^n_2(0) \times (-1, 1)}|h|^p d\|V_j\| \leq \sigma H_0 \|V_j\|\left( B^n_2(0) \times (-1, 1)\right)^{1/p} \leq \sigma H_0 \Lambda^{1/p}$, where $\Lambda$ is a bound for the masses of $V_j$

\noindent (iii) Denoting with $q$ the density of $V_\infty$ at $0$, $q-\frac{1}{2}<\|V_j\|\left( B^n_2(0) \times (-1, 1)\right) < q+\frac{1}{2}$ (by the weak convergence, the masses converge to the mass and we can ensure, by the dilation step, that $\|V_\infty\|\left( B^n_2(0) \times (-1, 1)\right)$ is as close to $q$ as we wish.

\medskip

\textit{Step 2: conditions 3 and 4.} Let us check condition 3 (on the touching singularities). Assume that $x \in \spt{V_\infty}$ is a (two-fold) touching singularity, $x \in \text{sing}_T(V_\infty)$: so the support of $V_\infty$ in a small enough ball around $x$ is described by two $C^{1,\alpha}$ graphs touching only tangentially (and surely touching at $x$). In particular we have a well-defined tangent plane to $V_\infty$ at $x$. We assume without loss of generality $x=0$ and $T_x V_\infty = \R^n \times \{0\}$. The previous step yields $r>0$ such that $V_j \res \left( B_r^{n}(x)\right) \times (-r,r)$ is the varifold sum of the graphs of $q$ $C^{1,\alpha}$ functions $u_j^1 \leq ...\leq u_j^q$. Upon choosing $r$ small enough we have that $V_\infty \res \left( B_r^{n}(x)\right) \times (-r,r)$ is supported on the union of exactly two $C^{1,\alpha}$ functions. The elliptic estimates in Theorem \ref{thm:sheeting} imply uniform $C^{1,\alpha}$ bounds on this sequence of $q$-valued functions and therefore by Ascoli-Arzela's compactness theorem we have $C^1$-convergence (up to a subsequence) to a $q$-valued graph $u_\infty^1 \leq ...\leq u_\infty^q$ (naturally ordered in an increasing fashion since so are those in the sequence) that describes necessarily the support of $V_\infty$. Remark \ref{oss:improvedsheeting}, however, gives smoothness and higher elliptic estimates for $u_j^1$ and $u_j^q$, uniformly in $j$. Therefore, again using Ascoli-Arzela's theorem, we get $C^2$ convergence of $u_j^q \to u_\infty^q$ and $u_j^1 \to u_\infty^1$, which forces in particular the $C^2$-regularity of $u_\infty^1$ and $u_\infty^q$. While it is not necesarily true that the remaining graphs $u_\infty^\ell$ are $C^2$ (jumps in multiplicities as in example \ref{oss:jumpsattouchingsing} might prevent that) we obtain that the \textit{support} of $V_\infty \res \left( B_r^{n}(x)\right) \times (-r,r)$ is exactly given by the union of the graphs of $u_\infty^1$ and $u_\infty^q$, that are $C^2$. Moreover, again by Remark \ref{oss:improvedsheeting}, $u_j^1$ and $u_j^q$ satisfy the CMC PDE with either $|h_j|$ of $-|h_j|$ on the right-hand side and therefore (up to a subsequence that we do not relabel) also  $u_\infty^1$ and $u_\infty^q$ satisfy the CMC PDE with either $|h_\infty|=\lim_j|h_j|$ of $-|h_\infty|$. So the two sheets $u_\infty^1$ and $u_\infty^q$ are separately CMC and Hopf boundary point Lemma \ref{lem:Hopf} implies that either $u_\infty^1\equiv u_\infty^q$ (in which case $x$ is in $\text{reg}{V_\infty}$ which contradicts $x \in \text{sing}_T(V_\infty)$, so this cannot happen) or (only possible case) the graphs of $u_\infty^1$ and $u_\infty^q$ have mean curvatures pointing in opposite directions, which (by continuity of the second derivatives, since there are $C^2$ graphs) forces the set where $u_1=u_q$ to have $n$-dimensional measure $0$ (Remark \ref{oss:maxprincsmooth}).

Let us check condition 4. Whenever we have a $C^{1,\alpha}$ embedded part of $V_\infty$, we can use locally around each point the same argument just discussed. This time, when we get to Hopf boundary point lemma, the fact that $x\in \text{reg}{V_\infty}$ will force $u_\infty^1\equiv u_\infty^q$ and $u_\infty^1$ smooth and CMC, by passing the PDE for $u_j^1$ to the limit.

\end{proof}

\begin{oss}
As a byproduct of the proof we can see that whenever $x \in \spt{V_\infty}$ is such that the tangent at $x$ is unique and is a plane with multiplicity, then $x \in \Greg{V}$. On the other hand it is always true that at $x \in \Greg{V}$ the tangent at $x$ is unique and is a plane with multiplicity. It would be possible to continue at this stage along this direction in order to establish the fact that $V_\infty$ enjoys the same regularity properties as $V_j$ - however, since this will follow from the compactness theorem that we are in the process of proving, we will not do that.
\end{oss}

With Theorem \ref{thm:compactnessstronstability} in place, we will now prove the compactness theorem by showing that the limit $V$ satisfies the weak stability condition on $\Greg V$. 
We will need the following (see lemma \ref{lem:neckregion} below): given a sequence of weakly stable CMC varifolds either we have a suitable notion of global sheeting or we can identify a ``neck point'' characterized by the fact that, away from it, we can obtain a suitable ``global sheeting'' and moreover the strong stability holds for a suitable subsequence\footnote{Intuitively this can be thought of as an analogue of the neck region (localized around the origin) for a sequence of blow-downs of the standard catenoid (this sequence converges to a double copy of the plane).}.Remark that the stability condition needs to be checked only on $\Greg{V}$, in other words we need to check it for volume-preserving variations that are compactly supported in the open set $\Greg{V}$. Note that $\text{sing}V \setminus \Greg V$ is a closed set: indeed $\text{sing}V$ is closed and if $x_n \to x$ with $x_n \in \text{sing}V \setminus \Greg V$ and $x \in \text{sing}V \cap \Greg V$ then we would have, by definition of $\Greg{V}$, that $T_x V$ is well-defined and in a small ball around $x$ we can write $\spt{V}$ as the union of two smooth graphs (on the hyperplane $T_x V$) touching only tangentially and touching at $x$. Any point in this small ball is then either a smooth point (if the graph is embedded there) or is in $\Greg{V} \cap \text{sing}V$. A variation compactly supported in $\Greg{V}$ is therefore supported in an open set that is a positive distance away from $\text{sing}V \setminus \Greg V$. on this open set we also have a $L^\infty$-bound on the second fundamental form of the immersion. We will therefore restrict for the rest of the proof to such an open set, on which $\Greg{V}$ is represented by an immersion $i$ of an abstract $n$-dimensional surface $\Sigma$ and the second fundamental form of $i(\Sigma)$ is bounded by $A>0$. This allows to consider a tubular neighbourhood of fixed width $\frac{1}{C_n A}$ (note that we are considering the tubular neighbouhood of the immersion, so it overlaps with itself at the touching singularities of $\Greg{V}$).

\begin{lem}[\textbf{identification of at most one ``neck region'' and sheeting selection away from it}]
\label{lem:neckregion}
Let $V_j \to V$, where the $V_j$'s are as in Theorem \ref{thm:compactnessstronstability}. Denote by $i:\Sigma \to \mathcal{U}$ the immersion describing $\Greg V$ restricted to the open set described above, where the variation is supported and where we have a global bound $A$ on the second fundamental form of the immersion. Then either 

(i) for every $k$ large enough we can find a smooth immersion $i_k:\Sigma \to \mathcal{U}$ such that $i_k \stackrel{C^2}{\to} i$ and $i_k\left(\Sigma\right) \subset \Greg{V_k}$.

(ii) there exists a point $y \in i(\Sigma)$ such that, for any $R>0$, the following holds: there exists a subsequence $V_{j'}$ (possibly depending on $R$) such that $\Greg V_{j'}$ is strongly stable (in the sense of the functional $J$) in $\mathcal{U} \setminus B_R(y)$ and moreover we we can find a smooth immersion $i_{j'}:\Sigma \setminus i^{-1}(B_R(y)) \to \mathcal{U}$ such that $i_{j'} \stackrel{C^2}{\to} i$ (where $i$ is restricted to $\Sigma  \setminus i^{-1}(B_R(y))$) and $i_{j'}\left(\Sigma  \setminus i^{-1}(B_R(y))\right) \subset \Greg{V_{j'}}$.
\end{lem}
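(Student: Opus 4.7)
The plan is to exploit Remark~\ref{oss:weakimpliesstrong}, which forces the failure of strong stability (for $J$) to be localised to at most one point per varifold. For each $j$ define
\[
B_j \;:=\; \{\, p \in \Greg{V_j} : \text{strong stability for $J$ fails in every neighbourhood of } p\,\}.
\]
I claim $\#B_j \leq 1$: if $p, q \in B_j$ were distinct, then disjoint open neighbourhoods $\mathcal{V}_1 \ni p$, $\mathcal{V}_2 \ni q$ inside $\Greg{V_j}$ would both fail the strong stability inequality, directly contradicting the dichotomy in Remark~\ref{oss:weakimpliesstrong}. Passing to a subsequence (not relabelled), either (a) $B_j = \emptyset$ for all $j$, in which case strong stability for $J$ holds on all of $\Greg{V_j}$ in the working open set, or (b) $B_j = \{p_j\}$ for all $j$, and a further subsequence gives $p_j \to y$. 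To verify that $y \in i(\Sigma)$ in case (b), I would note that $y \in \spt{V}$ by Hausdorff convergence of supports (Allard compactness together with Theorem~\ref{thm:compactnesssfirstpart}), that the working open set has positive distance from $\text{sing}\, V \setminus \Greg{V}$ by construction, and that $y$ cannot be a regular point of $V$: indeed the $C^{2}$ local sheeting argument from the proof of Theorem~\ref{thm:compactnessstronstability}, combined with the uniform $|A|$-bound and a small-ball Poincar\'e inequality, would imply strong stability for $J$ on $\Greg{V_j}$ in an ambient neighbourhood of $y$, contradicting $p_j \to y$.

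In case (b), for fixed $R > 0$ and $j$ large enough that $p_j \in B_{R/2}(y)$, I would apply Remark~\ref{oss:weakimpliesstrong} with $\mathcal{V}_1 = B_{R/2}(y) \cap \Greg{V_j}$ (in which strong stability fails, by definition of $p_j$) and $\mathcal{V}_2 = (\mathcal{U} \setminus \overline{B_R(y)}) \cap \Greg{V_j}$, which forces strong stability on $\mathcal{V}_2$; a diagonal extraction over $R_k \searrow 0$ then yields a single subsequence $\{j'\}$ on which strong stability holds on $\mathcal{V}_2$ for every $k$. Having secured strong stability, Theorem~\ref{thm:SS} together with the Sheeting Theorem~\ref{thm:sheeting} and the improved $C^{2,\alpha}$ version in Remark~\ref{oss:improvedsheeting} is applicable; using the uniform $|A|$-bound on $i(\Sigma)$ to fix a covering scale, I would cover the relevant portion of $i(\Sigma)$ by finitely many balls of uniform radius on which $V_j$ decomposes as an ordered varifold sum of $C^{1,\alpha}$ graphs whose \emph{top and bottom} graphs describe $\spt{V_j}$ and satisfy uniform $C^{2,\alpha}$ estimates, and which therefore converge, along a further subsequence, in $C^{2,\alpha}$ to the corresponding sheets of $i(\Sigma)$.

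The main obstacle is the global assembly: one must piece these local $C^{2,\alpha}$-convergent graph fragments into a single smooth immersion $i_k$ of the \emph{fixed} abstract manifold $\Sigma$, with $i_k(\Sigma) \subset \Greg{V_k}$ and $i_k \to i$ in $C^2$. The subtlety sits at points of $\text{sing}_T\, V \cap i(\Sigma)$, where two distinct sheets of $i$ meet in the ambient space: one must verify that the sheet-pairing combinatorics of $i(\Sigma)$ pass to the limit, so that sheets of $\spt{V_k}$ can be unambiguously matched with sheets of $i(\Sigma)$. I would use the uniform curvature bound to fix a tubular neighbourhood of $i(\Sigma)$ of width $\lesssim A^{-1}$ (whose normal exponential map is injective on each local sheet, even allowing for self-intersection of $i$ at touching singularities), choose a finite graph-chart cover of $\Sigma$ whose $i$-image fits inside this tube, and define $i_k$ chartwise by lifting, via the normal exponential map of $i$, the unique local sheet of $\spt{V_k}$ selected by the Sheeting Theorem; compatibility on chart overlaps follows from the uniqueness of the local graphs. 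In case (b) the identical construction is carried out on the compact complement $\Sigma \setminus i^{-1}(B_R(y))$ for each $R > 0$, delivering the $i_{j'}$ claimed in the lemma.
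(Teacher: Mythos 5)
Your set $B_j$ is empty for every $j$, which collapses the whole argument. Remark~\ref{oss:weakimpliesstrong} says precisely that, given weak stability on $\Greg{V_j}$, \emph{every} point $p \in \Greg{V_j}$ has an ambient ball around it on which the strong stability inequality holds; this is the exact negation of the defining condition for membership in $B_j$. Hence $B_j = \emptyset$ always, and your dichotomy degenerates: you are always in case~(a*), case~(b*) never occurs, and no neck is ever detected. But the lemma must allow for alternative~(ii), and indeed the neck phenomenon is real (e.g.\ two CMC sheets pinching together), so a proof that always produces alternative~(i) cannot be right. Compounding this, the implication you draw in case~(a*) --- that $B_j = \emptyset$ gives strong stability ``on all of $\Greg{V_j}$ in the working open set'' --- is also false on its own terms: strong stability around each point at some point-dependent scale does not imply strong stability on the full domain (the first eigenvalue of $-\Delta_M - |A|^2$ can be positive on small balls yet negative on a large domain, as for a long catenoidal strip), and what the Sheeting Theorem actually needs is strong stability in balls of a \emph{fixed} radius.

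The underlying misconception is that the neck is a pointwise failure of stability; it is instead a \emph{scale degeneration}. For each $j$ and each point there is always a good ball, but its radius may shrink to zero as $j \to \infty$ near some location $y$. The paper's Step~1 captures exactly this: fix $r>0$, choose a finite cover $\{B_{r_x}(x)\}_{x \in L}$ of $\overline{i(\Sigma)}$ with $r_x < r$ calibrated so that the sheeting hypotheses hold at scale $C r_x$, and ask, for that fixed cover, whether strong stability for $V_j$ holds in every $B_{4Cr_x}(x)$ for a tail of the sequence. If yes for some $r$, one gets alternative~(i); if no for every $r$, one extracts radii $r_n \to 0$, bad balls $B_{4Cr_{y_n}}(y_n)$ and indices $j_{y_n} \to \infty$, and the shrinking radii force the bad balls to localise at a single point $y$, after which Remark~\ref{oss:weakimpliesstrong} applied to the disjoint pair $B_{R/2}(y)$ and $\mathcal{U}\setminus B_{R/2}(y)$ yields strong stability away from $y$ for the extracted subsequence. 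A scale-quantified analogue of your $B_j$ --- say $B_j^r := \{\,x \in \overline{i(\Sigma)} : \text{strong stability for $V_j$ fails in } B_{4Cr_x}(x)\,\}$, which by the disjoint-sets dichotomy has diameter $O(r)$ --- would be a workable substitute, but the pointwise version you wrote cannot see the neck at all. (Your subsequent sheeting, $C^{2,\alpha}$ convergence and tubular-neighbourhood pasting are essentially the paper's Steps~2 and~3 and are fine in spirit, but they are built on a foundation that never supplies the needed fixed-scale strong stability.)
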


\begin{proof}[\textbf{proof of Lemma \ref{lem:neckregion}}]
\textit{Step 1: identification of at most one neck region}. In the following $C>0$ is a constant depending only on the dimension $n$, on $\eps$ in Theorem \ref{thm:sheeting} and on the bound $A$ on the second fundamental form of the immersion: it will be specified below how to choose this $C$.

For $x \in \overline{i(\Sigma)}$ denote by $q_x$ the density at $x$ (this is an integer since we are in $\Greg{V}$ - note also that $T_x V$ is well-defined for all $x \in  \overline{i(\Sigma)}$). For any fixed $r>0$ we can find, for every $x \in  \overline{i(\Sigma)}$, a radius $r_x<r$ such that the mass ratio of $V$ in $B_{4C r_x}^{n+1}(x)$ is smaller than  $q_x+\frac{1}{2}$, the support of $V \res B_{4C r_x}^{n+1}(x)$ is contained in the cone of slope $\frac{1}{4}$ around $T_x V$, the mass ratio in the cylinder $(B^{n+1}_{2C r_x}(x) \cap T_x V) \times (T_x V^\bot \cap B_{2C r_x}(x))$ is a value between $q_x-\frac{1}{2}$ and $q_x+\frac{1}{2}$ and the $L^2$-excess (from the Sheeting theorem) with respect to $T_xV$ in the cylinder $(B^{n+1}_{2C r_x}(x) \cap T_x V) \times (T_x V^\bot \cap B_{2C r_x}(x))$ is controlled by $\eps$. Moreover we can ensure that the ball $B_{C r_x}(x)$ does not intersect $\text{sing}V \setminus \Greg{V}$. Note that this choice (we give more details below) is made in order to ensure the applicability of the Sheeting Theorem \ref{thm:sheeting} at scale $C r_x$ around $x$ (both for $V$ and for $V_k$ with $k$ large enough), which gives sheeting on the hyperplane $T_x V$. Later on we will need to restrict the sheeting result to a smaller scale, namely $r_x$, around $x$: this will be done when we will replace the graphical decomposition on $T_x V$ with a graphical decomposition with respect to the normal bundle to $i(\Sigma)$: at that stage the meaning of the constant $C=C(n,\eps,A)$ will become evident. 

The collection $\{B_{r_x}(x)\}_{x \in  \overline{i(\Sigma)}}$ is an open cover of $ \overline{i(\Sigma)}$ so we can extract a finite cover $\{B_{r_x}(x)\}_{x \in L}$. We will use this as a finite cover of $i(\Sigma)$. Then either (a) all the balls $\{B_{4 C r_x}(x)\}_{x \in L}$ are such that we have strong stability for all $V_j$'s for the functional $J$ or (b) there exists a ball $B_{r_y}(y)$ for which the strong stability fails for some $V_{j_y}$ in the ball $B_{4C r_y}(y)$. 
If (a) holds true for some (initially chosen) $r>0$, then we will show that alternative (i) stated in the lemma holds; if not (i.e. if (b) holds for every initially chosen $r>0$) we will have alternative (ii) in the lemma.

Assume that (a) holds, for some $r>0$, for every $j$ large enough (i.e. for a tail of the sequence $V_j$). Then in each ball $B_r$ we can apply the sheeting theorem to $V_j$ for every $j$ large enough; note indeed that there are finitely many balls and that the varifold convergence implies mass convergence, therefore the mass condition for the sheeting theorem is satisfied for $j$ large enough in all balls, and moreover, still by varifold convergence, we obtain the $L^2$-excess condition for $V_j$. The idea will be, by unique continuation, to suitably paste together the sheeting decomposition and obtain a sort of ``global sheeting'' (for $j$ suitably large). 

If (b) holds true for every $r>0$, then we have a sequence of radii $r_n \to 0$ and the corresponding sequence of points $y_n \in V_{j_{y_n}}$ given by (b); upon extracting a subsequence (from $r_n$) that we do not relabel, we can assume that there exists a point $y$ such that $y_n \to y$. Notice that in this case we also have $j_{y_n} \to \infty$ (for, otherwise, if $j_{y_n}$ stays bounded, we would have the strong stability in all balls for a tail of the sequence $V_j$, i.e. (a) would be verified). Now for an arbitrary $R>0$ we look at $B_R(y)$: surely there is $n_0$ large enough ($n_0$ depending on $R$) such that for $n \geq n_0$ the balls $B_{r_{y_n}}(y_n)$ are contained in $B_{R/2}(y)$ and $r_n <\frac{R}{2}$, in particular for all $n \geq n_0$ there is $j_n$ such that the varifold $V_{j_{y_n}}$ is strongly stable in $\mathcal{U} \setminus B_{R/2}(y)$ (indeed $V_{j_{y_n}}$ fails to be strongly stable in $B_{r_{y_n}}(y_n)$ by construction and the open sets $B_{r_n}(y_n)$ and $\mathcal{U} \setminus B_{R/2}(y)$ are disjoint, so we can use Remark \ref{oss:weakimpliesstrong}). We have thus produced a subsequence $V_{j'}=V_{j_{y_n}}$ such that all the $V_{j'}$'s are strongly stable (in the sense of the functional $J$ - away from their codimension $7$ singular set) in $\mathcal{U} \setminus B_{R/2}(y)$, and a fortiori in $\mathcal{U} \setminus B_{R}(y)$. Here $R>0$ can be taken arbitrarily small (but as we change $R$ we get a possibly different subsequence since we may have to take a further tail of the previously extracted subsequence). For every $j'=j_{y_n}$ (so for every $n$ fixed) in all the balls $\{B_{r_x}(x)\}_{x \in L}$ that are contained in $\mathcal{U} \setminus B_{R/2}(y)$ the sheeting theorem applies to $V_{j'}$ (since the strong stability holds in these balls). Moreover the union of these balls contains $i(\Sigma) \cap \left(\mathcal{U}\setminus B_R(y)\right)$ (since, among all the balls from the original cover $\{B_{r_x}(x)\}_{x \in L}$,  we are taking those that do not intersect $B_{R/2}(y)$ and moreover $r_x<r_n < \frac{R}{2}$). The idea again will be to suitably paste together the sheeting decompositions in each of these good balls and obtain a ``global sheeting'' for the subsequence $V_{j'}$ away from $B_R(y)$. 

\textit{Step 2: sheeting in each ball.} In the first alternative identified in the previous step, we have a collection of balls $\{B_{r_x}(x)\}_{x \in L}$ (covering all of $i(\Sigma)$) such that for every $j$ and for every $x\in L$ the varifold $V_j \res B_{4C r_x}(x)$ satisfies the assumptions of the Sheeting theorem. In the second alternative identified in the previous step, for every $R>0$ we have a subcollection of balls $\{B_{r_x}(x)\}_{x \in L_R}$ (covering all of $i(\Sigma) \setminus B_R(y)$) and a subsequence $V_{j'}$ such that for every $x \in L_R$ the varifold $V_{j'} \res B_{4C r_x}(x)$ satisfies the assumptions of the Sheeting theorem. In either of the two cases, in each $B_{4C r_x}(x)$ we can obtain that in the cylinder $(B^{n+1}_{C r_x}(x) \cap T_x V) \times (T_x V^\bot \cap B_{2C r_x}(x))$ the support of $V_j$ (or $V_{j'}$) is described by a collection of $q$ graphs $u_j^1\leq ... u_j^q$ (or $u_{j'}^1\leq ... u_{j'}^q$ in the second alternative) of $C^{1,1}$ functions on $T_x V \cap B_{C r_x}^{n+1}(x) \to T_x V^\bot$ of which $u_j^1$ and $u_j^q$ are separately smooth and CMC. Moreover the estimate in Remark \ref{oss:improvedsheeting} holds for $u_j^1$ and $u_j^q$, which gives a uniform $C^{2,\alpha}$ bound, independently of $j$, on these two graphs.  The uniform $C^{1,\alpha}$ estimate from Theorem \ref{thm:sheeting} gives a uniform $C^{1,\alpha}$ bound on the graphs $u_j^1\leq \ldots \leq u_j^q$, independently of $j$ (independently of $j'$). By Ascoli-Arzel\`{a}'s compactness theorem we can then obtain $C^1$-convergence for these $q$-valued graphs and by necessity the limiting $q$-valued graph must describe the support of $V \res (B^{n+1}_{C r_x}(x) \cap T_x V) \times (T_x V^\bot \cap B_{2C r_x}(x))$ (since varifold convergence implies convergence in Hausdorff distance). Ordering this $q$-valued graph increasingly we obtain $q$ graphs $u^1\leq \ldots \leq u^q$ describing $\spt{V}$ in this cylinder.  On the other hand it is known by assumption that $\Greg{V}$ decomposes in this cylinder as the union of two $C^2$ graphs, which forces $u_1$ and $u_q$ to be smooth and CMC. Recalling the $C^{2,\alpha}$ bound on $u_j^1$ and $u_j^q$ we conclude that actually $u_j^1$ and $u_j^q$ converge in $C^2$ respectively to $u^1$ and $u^q$. By PDE limit $u^1$ and $u^q$ are separately CMC.

In the case that $x$ is a point where $\spt{V}$ is embedded, i.e. if $x \in \Greg{V} \setminus \text{sing}V$, then actually all the graphs describing $\spt{V}$ around $x$ must coincide, $u^1=  \ldots =u^q$; if, on the other hand, $x \in \Greg{V}\cap \text{sing}V$ (i.e. we are at a twofold touching singularity where two $C^2$ sheets touch tangentially) then we must have $u^1\neq u^q$. If $V$ is CMC but not minimal (which is the harder case since otherwise touching of sheets is ruled out) then, if $x \in \Greg{V} \setminus \text{sing}V$ then the mean curvature provides a natural orientation for $V$ and the ordering of the $q$ graphs can be chosen coherently with this orientation (this is intrinsic). If $x \in \Greg{V}\cap \text{sing}V$ then the mean curvature vectors on $u_1$ and $u_{q}$ have opposite directions. By $C^2$ convergence we also conclude that (for $j$ large enough) the CMC graphs $u_j^1$ have mean curvatures pointing in one direction (the same as $u_m$) and the CMC graphs $u_j^q$ have mean curvatures pointing in the opposite direction (the same as $u^{q}$). In view of this, at $x \in \Greg{V}\cap \text{sing}V$ we can choose the sheet $u_j^{1}$ (as top sheet, for the intrisic orientation, above $u_1$) and select the sheet $u_j^{q}$ as the top sheet on $u^{q}$. On the other hand, when $x \in \Greg{V} \setminus \text{sing}V$ then we select the top sheet for the natural orientation, i.e.\ $u_j^{q}$.

\textit{Step 3: selection of the global sheet (away from the neck region).} In this step we will paste together the sheets selected in the previous step in order to produce a global sheet, which will be an immersion $i_j$ of $\Sigma$ close to $i$ in $C^2$-norm (more precisely $i_j \to i$ as smooth immersions on $\Sigma$). We will deal only with alternative (i), since in the case of alternative (ii) the only difference lies in the fact that we only need to perform the same procedure on a subcollection of balls (note that the convergence is a local matter and only needs to be checked in each ball).

We assume that $V$ is CMC but not minimal (this is the harder case since otherwise touching of sheets is ruled out). If $x \in \Greg{V} \setminus \text{sing}V$ we have chosen in step 2 the top sheet describing $\spt{V_j}$ (in the cylinder of size $r_x$ at $x$) with respect to the orientation induced by the mean curvature vector; if $x \in \Greg{V}\cap \text{sing}V$ we have chosen $u_j^1$ and $u_j^{q}$. We will see that this selection can be pasted together to produce an immersion $i_j$. 

The sheets are given, in step 2, as graphs on $T_x V$. On the other hand they are also graphs on $V$ with respect to the graphing direction given by the unit normal to $V$ (we take the orientation induced by the mean curvature), as the support of $V_j$ is contained in the tubular neighbourhood of $\Greg{V}$ chosen in the beginning and we only need to restrict to $\Greg{V} \cap B_{r_x}(x)$ (this is where we lose a factor $C$ in the size of the region where we have a graphical representation). The graphs of $u^j_1:T_xV \to  (T_xV)^\bot$ and $u^j_q:T_xV \to  (T_xV)^\bot$ will be represented, with respect to the normal bundle coordinate system, as smooth functions $n^j_1:i(\Sigma)\cap B_{r_x}(x) \to  \R  \hat{\nu}$ and $n^j_q:i(\Sigma)\cap B_{r_x}(x) \to  \R  \hat{\nu}$. 

Remark that when $x \in \text{reg}V$ then the counter image $i^{-1}\left(B_{r_x}(x) \cap i(\Sigma)\right)$ is made of a (simply) connected open set, when $B_r$ is centred on $\text{sing}_T V$ then the counter image $i^{-1}\left(B_{r_x}(x) \cap i(\Sigma)\right)$ is made of two (simply) connected components (as $r_x$ were chosen so that the is sheeting for $V$ at scale $r_x$ with the number of sheets equal to the density $q_x$). Consider the set $\tilde{L}=\{i^{-1}(x):x\in L\}$. The set $\tilde{L}$ has cardinality $\sharp \{x \in L: x\in  \text{reg}V\} + 2 \sharp \{x \in L: x\in  \text{sing}_T V\}$: this is the set over which we can index the cover $\{A_p\}_{p \in \tilde{L}}$ of $\Sigma$ obtained by taking the counter-images of $B_x$ via $i$: each element $A_p$ in the cover is diffeomorphic to a ball (in other words, when $x \in \text{sing}_T V$ and so the counter-image is made of two connected components we make them two distinct elements in the cover). We will define the parametrization $i_j$ on each element $A_p$ by setting $i_j=i+n_q^j$ if $i(A_p)=B_{r_x}(x)\cap i(\Sigma)$ is embedded; when $i(p)=x$ is a touching singularity of $V$, on the other hand, then (with $q=q_x$) we set $i_j=i+n_q^j$ if $i(A_p)=\text{graph}(u_q)$ and $i=i+n_1^j$ if $i(A_p)=\text{graph}(u_1)$. In other words we are choosing everywhere the ``top sheet'', taking into account the orientation induced by $\vec{H}$ everywhere on the immersed hypersurface. Whenever $A_p \cap A_b \neq \emptyset$, then $i(A_p)\cap i(A_b) = i(A_p\cap  A_b)$ by construction and on $i(A_p)\cap i(A_b)$ the mean curvature vector identifies uniquely the direction according to which we choose the top sheet. As we have used the system of coordinates induced by the normal bundle, the map $i_j$ is uniquely defined on the overlap.

The $C^2$-convergence of $i_j$ to $i$ only needs to be checked locally, i.e. in each $A_p$, where it follows immediately from the fact that $u^j_1:T_xV \to  (T_xV)^\bot$ and $u^j_q:T_xV \to  (T_xV)^\bot$ both converge as $j\to\infty$ in $C^2$ repspectively to $u^\infty_1:T_xV \to  (T_xV)^\bot$ and $u^\infty_q:T_xV \to  (T_xV)^\bot$ and from the fact that the system of coordinates induced by the normal bundle is smooth and independent of $j$.

\end{proof}

\begin{proof}[\textbf{proof of Theorem \ref{thm:compactness}, third part: condition 5 (stability)}]

Recall that we are working in an open set in which the variation $\zeta$ (for which we want to check the stability) is supported, so $i(\Sigma) \subset \subset \Greg{V}$. It is convenient to write the stability inequality for the immersion $i:\Sigma \to \mathcal{U}$ using a test function $\zeta$ defined on $\Sigma$ rather than in $\mathcal{U}$. Let $A$ denote the second fundamental form of the immersion ($A$ is defined on $\Sigma$ here). The stability inequality then reads

\begin{equation}
 \label{eq:stabilityimmersedversion}
\int_{\Sigma} |A|^2 \zeta^2 dvol_g \leq \int_{\Sigma} |g^{-1} d\zeta|_g^2 dvol_g
\end{equation}
where $g$ is the metric on $\Sigma$ induced by the immersion, namely $g(X,Y) = \langle di(X), di(Y) \rangle$ (the scalar product is the one induced by the ambient metric in $\mathcal{U}$ - in this case the euclidean metric) . The constraint for $\zeta$ is that it must induce a volume-preserving deformation of the immersion and so it reads $\int_{\Sigma} \zeta dg =0$.  On the right-hand side $g^{-1} d\zeta$ is the contravariant version of the exterior differential on $(\Sigma, g)$.

If alternative (i) of Lemma \ref{lem:neckregion} holds then we have a subsequence (not relabeled) $V_k$ such that for every $k$ there exists an immersion $i_k:\Sigma \to \mathcal{U}$ of class $C^2(\Sigma)$ such that $i_k(\Sigma) \subset \Greg{V_k}$ and $i_k \to i$ in $C^2$ as $k \to \infty$. In this case we aim to prove the weak stability inequality for a volume-preserving variation on $\Greg{V}$ with initial speed $\phi \nu$, for any given $\phi\in C^1_c(\mathcal{U}\setminus (\text{sing}\,V \setminus \text{sing}_T\,V))$ such that $\int_{\Greg{V}} \phi=0$, by passing to the limit the stability inequalities for the $V_k$'s thanks to the ``global sheeting''. We must pay attention to the fact that we cannot write the stability inequality for $V_k$ for the given $\phi$ because $c_k:=\int_{\Greg{V_k}} \phi$ is not necessarily $0$; however we have $c_k \to 0$ as $k\to \infty$. We therefore pick a point $p\in \Reg{V}$ that lies in the complement of $\text{spt}\,\phi$ and consider an open ball $B_R(p)$ all contained in the complement  of $\text{spt}\,\phi$. In $B_R$ we have convergence with sheeting of the $V_k$'s and we pick an ambient function $\zeta\in C^1_c(B_R(p))$ such that $\zeta =1$ on $B_{R/2}(p)$ and $|\nabla \zeta| \leq \frac{2}{R}$. A suitable constant $a_k \in \R$ ensures that $\int_{\Greg{V_k} \cap B_R(p)} a_k \zeta = -c_k$ (note that $c_k \to 0$ forces $a_k \to 0$) and therefore the ambient test function $\phi+a_k \zeta$ can be plugged in the weak stability inequality for $V_k$:

$$\int_{\Greg{V_k}} |\phi+a_k \zeta|^2 |A_k|^2 \leq \int_{\Greg{V_k}} |\nabla \phi+ a_k \nabla \zeta|^2 .$$
The disjointness of $\text{spt}\,\phi$ and $\text{spt}\,\zeta$ imply

$$\int_{\Greg{V_k}} |\phi|^2 |A_k|^2 +a_k^2\int_{\Greg{V_k}} |\zeta|^2 |A_k|^2 \leq \int_{\Greg{V_k}} |\nabla \phi|^2 + a_k^2 \int_{\Greg{V_k}}| \nabla \zeta|^2 $$
and $a_k \to 0$, $|\nabla \zeta|\leq \frac{2}{R}$ and the $C^2$ convergence give that the second term on each side of the inequality goes to $0$ in the limit, so
$$\int_{\Greg{V_k}} |\phi|^2 |A_k|^2  \leq \int_{\Greg{V_k}} |\nabla \phi|^2 .$$
We thus conclude that (\ref{eq:stabilityimmersedversion}) holds if alternative (i) of Lemma \ref{lem:neckregion} is satisfied.

If alternative (ii) of Lemma \ref{lem:neckregion} holds then for the ``neck point'' $y$ consider $i^{-1}(\{y\})$, which is given by either a single point or two points (respectively when $y \notin Sing_T V$ and when $y \in Sing_T V$). Let $\chi$ be a cutoff function that vanishes in a neighbourhood $\mathcal{O}$ of $i^{-1}(\{y\})$ and is identically $1$ away from a neighbourhood of $i^{-1}(\{y\})$. Choose $R$ small enough so that $i^{-1}(B_R(x)) \subset \mathcal{O}$ and consider the corresponding sequence $V_{j'}$. Then we have immersions $i_{j'}:\Sigma \setminus i^{-1}(B_R(y)) \to \mathcal{U} \setminus B_R(y)$ of class $C^2$ such that $i_{j'}(\Sigma \setminus i^{-1}(B_R(y))) \subset \Greg{V_{j'}}$ and $i_{j'} \to i$ in $C^2(\Sigma \setminus i^{-1}(B_R(y)))$ as $j' \to \infty$. The stability inequality holds for the immersions $i_{j'}$ even without the zero-average constraint on the test functions, since we proved that the immersions $i_{j'}$ are stable in the strong sense, i.e. for the functional $J$. Let $\tilde{\zeta}=\zeta \chi$, this is smooth and compactly supported in $\Sigma$ and such that $i^{-1}(\{y\})$ does not lie in the support, so we can plug $\tilde{\zeta}$ in the (strong) stability inequality for $i_{j'}$. 
Passing to the limit as $j' \to \infty$ thanks to the $C^2$ convergence we obtain that (\ref{eq:stabilityimmersedversion}) holds for the immersion $i$ with such $\tilde{\zeta}$. Upon letting $\chi$ converge to $1$, a standard capacity argument leads to the validity of (\ref{eq:stabilityimmersedversion}) with the test function $\zeta$ (one or two points have zero capacity for the inequality). Note that in the case of alternative (ii) we actually obtain even more than we wanted, as $\zeta$ does not need to have zero-average, i.e. we have proved (\ref{eq:stabilityimmersedversion}) for variations that are not necessarily volume-preserving.

The proof of \ref{thm:compactness} is thus complete, as inequality (\ref{eq:stabilityimmersedversion}) for zero-average test functions is equivalent to the  variational stability under volume-preserving deformations of $\Greg{V}$.

\end{proof}

\section{Proof of Corollary \ref{cor:Caccioppolianalytic}}
\label{Caccioppolicorollariesproofs}

The key argument for the proof consists in showing how the stationarity assumption taken with respect to the $L^1_{\text{loc}}$-topology on Caccioppoli sets allows to rule out 

(i) the existence of classical singularities in $|\p^* E|$,

(ii) the existence of touching singularities $p \in \Greg{|\p^* E|}$ such that, locally around $p$, $\text{Sing}_T\,|\p^* E| \cap \Greg{|\p^* E|}$ is an $(n-1)$-dimensional submanifold (that locally disconnect $\Greg{|\p^* E|}$).

\medskip

\textit{(i) Ruling out classical singularities}. Assume that $|\p^* E|$ has a classical singularity $p$, namely there exists three or more $C^{1,\alpha}$ hypersurfaces-with-boundary that intersect only along their common $C^{1,\alpha}$-boundary $L$ and such that $|\p^* E|$ is described, in a neighbourhood $B$ of $p$, by the union of these hypersurfaces-with-boundary (clearly $p\in L$). Set coordinates so that the tangent to $L$ at $p$ is the subspace $\{x_1=0, x_2=0\}$ and so that the two of the hypersurfaces-with-boundary intersect transversely at $p$ with tangents $x_2 = a x_1$ and $x_2 = -a x_1$, where $a>0$, and no other hyper-surface with boundary intersects the open set $\{-a x_1 <x_2 <a x_1\}$ in the chosen neighbourhood $B$. Let $A$ denote the connected component of $E \cap B$ whose boundary is given by the restriction to $B$ of the two selected hypersurfaces-with-boundary. 

Consider the vector field $\partial_{x_1}=(1,0,...0)$ and fix a compactly supported smooth (ambient) function $\Psi\in C^1_c(B)$ such that $\Psi\geq 0$ and $\Psi=1$ in a smaller neighbouhood of $p$. The choice of coordinates made ensures, in the upcoming argument, that the vector field $\partial_{x_1}=(1,0,...0)$ pushes $A$ to its interior near $p$, for small positive $t$. More precisely, we consider $E_t=(E\setminus A) \cup (Id+t \Psi \partial_{x_1})(A)$ as one parameter family of deformation of $E$ in $B$ for $t\in[0,\eps)$. The fact that $t>0$ guarantees that (the interiors of) $(E\setminus A)$ and $(Id+t \Psi\partial_{x_1})(A)$ stay disjoint, so that we have a well-defined Cacioppoli set for $t\in[0,\eps)$ and this is a continuous curve in the $L^1_{\text{loc}}$-topology. Set $\varphi_{t}=Id+t \Psi \partial_{x_1}$.
 
The stationarity condition implies the validity of the following inequality: 
 $$\left.\frac{d}{dt}^+\right|_{t=0} J(E_t) \geq 0, \text{ where } t\in [0,\eps)\to \chi_{E_t}, \text{ for $E_t$ constructed above}. \,\,\, (\dagger)$$ 
At the level of the stationarity assumption, such a ``one-sided'' deformation is actually the only requirement that we must allow in the assumptions of Corollary \ref{cor:Caccioppolianalytic} when it comes to non-ambient deformations\footnote{This is clearly weaker than requiring $\left.\frac{d}{dt}\right|_{t=0}=0$ for all deformations parametrized on $t\in(-\eps, \eps)$. Moreover we do not need to worry about how we continue the deformation when we go to negative $t$, where overlapping of $E \setminus A$ and $(Id+t \Psi\partial_{x_1})(A)$ would happen and we would have to redefine $E_t$ for $t<0$.}.

Assume $(\dagger)$ for the deformation constructed, i.e.~$t\to E_t=(E\setminus A) \cup (Id+t \Psi \partial_{x_1})(A)$, where $\chi_{E_t}=\chi_{E\setminus A} + \chi_{(Id+t \Psi \partial_{x_1})(A)}$ and $t\in [0,\eps)$. Since $|D\chi_{E_{t}}| = |D\chi_{\varphi_{t}(A)}| + |D\chi_{E \setminus A}|$ in this case and the volume enclosed by $E_t$ is the sum of the volumes enclosed by $E\setminus A$ and by $(Id+t \Psi \partial_{x_1})(A)$, noting that $E\setminus A$ does not change in the deformation we conclude that $(\dagger)$ holds also for $A$ under the deformation $t \to (Id+t \Psi \partial_{x_1})(A)$ for $t\in [0,\eps)$.

Set $X=\Psi \partial_{x_1}$; we will contradict the inequality $(\dagger)$ for $A$ with the choice of one-parameter family of deformations of $A$ in $B$ induced by $X$ for $t>0$ (as decribed above). Take a tangent cone at $p \in L$ (with rescaling factors $r_k \to 0$): we get a sequence $A_k \to C$ where $C$ is made of two half-hyperplanes meeting transversely at their common boundary. The first variations  $\delta_{A_k}$ converge to the first variation of $C$ as functionals on $C^1_c$ vector fields $X$, $\delta_{A_k}(X) \to \delta_{C}(X)$. Let $\Psi_r (\cdot)= \Psi(\frac{\cdot}{r})$, where $\Psi \in C^1_c(B_1)$. By homothetic rescalings 

$$\delta_{A_k}(\Psi \p_{x_1})= \frac{1}{{r_k}^{n-1}} \delta_A (\Psi_{r_k} \p_{x_1}).$$
By convergence of the first variations 
$$\delta_{A_k}(\Psi \p_{x_1}) \to \delta_{C}(\Psi \p_{x_1})=\int -c(x) \Psi(y) \vec{w}\cdot \p_{x_1} d\mathcal{H}^{n-1}(y)\,\, \text{ with } c(x)> 0,$$
where the latter comes from the analysis of the $1$-dimensional situation arising for $C=V\times \R^{n-1}$; here $V$ is $1$-dimensional in the plane spanned by the coordinates $x_1, x_2$, it is made of two segments with a common extremal point, joined at an angle $2\alpha=2\tan(a)$, the first variation of $V$ is a Dirac-$\delta$ at the corner with weight equal to $c(x)=2\cos(a)$ and lenght decreases if we push to the inside of $V$.

From $(\dagger)$ for $A$, we get the inequality (writing the first variation of the enclosed volume)

$$\frac{1}{r^{n-1}} \delta_A (\Psi_r \p_{x_1}) \geq -\frac{1}{r^{n-1}} \lambda \int \Psi_r \p_{x_1}\cdot \vec{\nu} d\mathcal{H}^n \res \p^*A ;$$
writing this inequality for $r=r_k$ and sending $k\to \infty$ we get (the right-hand-side goes to $0$ because the integral is of order $r_k^n$) we get

$$-c(x)\geq 0, \,\,\, \text{ contradiction}.$$

\medskip
 
\textit{(ii) Ruling out a touching set of dimension $(n-1)$.} This can be done as for classical singularities: note that the topological structure now is exactly the same as in the case of a classical singularity, therefore in a ball $B$ around a touching singularity $p$ we can consider $(B\setminus E)$, which gets disconnected when removing $\text{Sing}_T \,V$, so a deformation of one of its connected components of the type exhibited above will produce the desired contradiction (note that in this case $c(p)=2$). 

\medskip

In view of the fact that a $C^2$ CMC hypersurface is actually analytic, two CMC hypersurfaces that intersect tangetially must necessarily intersect along a finite union of (analytic) submanifolds with integer dimensions between $0$ and $(n-1)$; therefore, once we have ruled out the possilbility that said intersection is locally a submanifold of dimension $(n-1)$, we immediately have that $\text{Sing}_T\,|\p^* E| \cap \Greg{|\p^* E|}$ is a finite union of submanifolds of dimensions between $0$ and $(n-2)$. In particular 
\begin{equation}
 \label{eq:analyticdimensional}
\mathcal{H}^{n-2}\left(\text{Sing}_T\,|\p^* E| \cap \Greg{|\p^* E|}\right)<\infty \text{ locally.}
\end{equation}
Recall that, for any $x\in \text{spt}\,|\p^* E|$ there exists $B^{n+1}_r(x)$ such that the strong stability inequality holds for any $\phi \in C^1_c(B^{n+1}_r(x) \setminus \text{Sing} \,|\p^* E|)$ (Remark \ref{oss:weakimpliesstrongforJ}). Thanks to (\ref{eq:analyticdimensional}) and to a slightly refined capacity argument (see \cite[7.4.2]{EvansGar}) we can then show that the strong stability inequality holds for any $\phi \in C^1_c(B^{n+1}_r(x) \setminus (\text{Sing} \,|\p^* E| \setminus \text{Sing}_T \,|\p^* E|))$; equivalently, for any $x\in \text{spt}\,|\p^* E|$ there exists $B^{n+1}_r(x)$ such that the strong stability inequality holds for $\Greg{|\p^* E|}$ in $B^{n+1}_r(x)$, and not just for $\Reg{|\p^* E|}$. Corollary \ref{cor:Caccioppolianalytic} now follows immediately from Corollary \ref{cor:CaccioppoliwithoutLp} or from Theorem \ref{thm:mainregularity}.

\appendix
\section{Two constancy lemmas for integral varifolds}
\label{appendixconstancy}

We collect in this appendix two general facts regarding the density of integral $n$-varifolds with generalized mean curvature in $L^p_{\text{loc}}(\spt{V})$ for some $p>n$. These statements should be implicitly born in mind when reading the statement of the main Theorem \ref{thm:mainregularity}.

\begin{lem}[first constancy lemma]
\label{lem:constancyfirst}
Let $V$ be an integral $n$-varifold in an open set $\mathcal{U} \subset \R^N$ and assume that $\spt{V}=M$, where $M$ is a $C^1$ connected submanifold of dimension $n$ in $\mathcal{U}$ and that the generalized mean curvature $\vec{H}$ of $V$ is in $L^p_{\text{loc}}(\spt{V})$ for some $p>n$. Then $V$ is the varifold of integration on $M$ with a \textit{constant} integer multiplicity, namely $V=(M, \theta_0)$ for a certain $\theta_0 \in \N$.
\end{lem}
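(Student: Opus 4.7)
The approach is a small variant of the classical constancy theorem for integer rectifiable varifolds whose support lies in a $C^{1}$ submanifold (cf.\ Allard \cite{Allard} and \cite[\S41]{SimonNotes}), adapted to the setting of $L^{p}$ mean curvature with $p>n$. First I would verify that the hypotheses put us in that setting: since $\vec H \in L^{p}_{\text{loc}}(\|V\|)$ with $p>n$, H\"older's inequality gives $|\delta V|(K) \leq \|\vec H\|_{L^{p}(\|V\|\res K)}\|V\|(K)^{1-1/p}<\infty$ on every compact $K \subset \mathcal U$, so $\delta V$ is a locally finite Radon measure representable by integration against $\|V\|$ with $L^{p}_{\text{loc}}$ density $-\vec H$. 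Since $V$ is integer rectifiable and $\spt{V}$ coincides with the $n$-dimensional $C^{1}$ embedded submanifold $M$, one writes $V = \underline v(M,\theta)$ for some $\mathcal H^{n}$-measurable function $\theta : M \to \N$, locally bounded on $M$ by the local mass bound. The target is to show $\theta \equiv \theta_{0}$ on the connected set $M$ for a single $\theta_{0} \in \N$.

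The heart of the argument is local. Around each $p_{0} \in M$ fix a $C^{1}$ chart in which $M$ appears as the graph of some $u \in C^{1}(\Omega;\R^{N-n})$ over a connected open $\Omega \subset \R^{n}$, and set $\tilde\theta(x):=\theta(x,u(x))$, $g_{ij}(x):=\delta_{ij}+D_{i}u(x)\cdot D_{j}u(x)$, $J(x):=\sqrt{\det g(x)}$. Testing the first variation formula $\delta V(X) = -\int X\cdot\vec H\,d\|V\|$ with $X = \varphi e_{i}$ for $\varphi \in C^{1}_{c}(\Omega)$ and $i=1,\dots,n$, and unwinding the tangential divergence of $X$ at $(x,u(x))$ via the orthogonal projection onto $T_{(x,u(x))}M$ (expressed through $g$), yields the distributional system
\begin{equation*}
  \sum_{k=1}^{n} \partial_{k}\bigl(\tilde\theta\, J\, g^{ik}\bigr) \;=\; \tilde\theta\, J\, e_{i}\cdot \vec H(\cdot,u) \quad\text{in } \mathcal D'(\Omega), \qquad i=1,\dots,n,
\end{equation*}
where $\tilde\theta \in L^{\infty}_{\text{loc}}(\Omega)$, the coefficients $J g^{ik}$ are continuous and uniformly positive definite on compact subsets, and the right-hand side lies in $L^{p}_{\text{loc}}(\Omega)$.

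From this system I would extract $\tilde\theta \in W^{1,p}_{\text{loc}}(\Omega)$ by a mollification argument: multiply the $i$-th equation by a smoothing $(g_{ji})_{\varepsilon}$ of the continuous weight $g_{ji}$, test against $\psi \in C^{1}_{c}(\Omega)$, integrate by parts, sum over $i$ using $\sum_{i}g_{ji}g^{ik}=\delta_{jk}$, and let $\varepsilon\to 0$; the principal term delivers $\int \tilde\theta\,\partial_{j}\psi\,dx$, so one is left to control the commutator error. Morrey's embedding (since $p>n$) would then give $\tilde\theta$ a continuous representative, and its $\N$-valuedness on the connected set $\Omega$ forces it constant; patching across overlapping charts using connectedness of $M$ yields $V = \underline v(M,\theta_{0})$ for a single $\theta_{0}\in\N$. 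The main obstacle is precisely this last commutator step: because $M$ is only $C^{1}$, the metric $g$ is merely continuous and cannot be classically differentiated, so the error $\int \tilde\theta\, J\, g^{ik}\,\psi\,\partial_{k}(g_{ji})_{\varepsilon}\,dx$ must be handled by exploiting that the full combination $\tilde\theta J g^{ik}$ (rather than any individual factor) has distributional divergence in $L^{p}$ — this, together with the uniform convergence $(g_{ji})_{\varepsilon}\to g_{ji}$, is what makes the limit $\varepsilon\to 0$ go through and is the place where the $L^{p}$ hypothesis on $\vec H$ with $p>n$ enters essentially.
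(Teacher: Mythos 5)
Your route is fundamentally different from the paper's, and the key step is not justified. The paper avoids the PDE machinery entirely: it uses the monotonicity formula (available because $\vec H\in L^p_{\rm loc}$, $p>n$) to obtain that $\Theta(\|V\|,\cdot)$ exists everywhere and is upper-semicontinuous, then observes that every tangent varifold is stationary and supported on a plane (so has constant integer multiplicity by integrality plus Allard compactness), hence $\theta$ is integer-valued pointwise, and finally runs a touching-balls contradiction argument with a blow-up at a boundary point of $\Sigma_Q=\{\theta=Q\}$ to conclude $\Sigma_Q$ is open; connectedness then forces $\theta\equiv Q$. That argument is genuinely short. Your PDE route is in the spirit of Duggan, which the authors explicitly note is ``rather lengthy and non-trivial'' and deliberately bypass.

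The gap in your proposal is the commutator step, and it is not a technicality. Write $A^{(i)}_k:=\tilde\theta J g^{ik}$, so the distributional content of your system is $\int A^{(i)}_k\,\partial_k\psi\,dx=-\int f_i\,\psi\,dx$ for all $\psi\in C^1_c$, with $f_i\in L^p$. Substituting $\psi=(g_{ji})_\varepsilon\phi$ and summing in $i$ gives the identity
\begin{equation*}
\underbrace{\sum_{i,k}\int A^{(i)}_k\,\partial_k(g_{ji})_\varepsilon\,\phi\,dx}_{E_\varepsilon}
\;+\;\sum_{i,k}\int A^{(i)}_k\,(g_{ji})_\varepsilon\,\partial_k\phi\,dx
\;=\;-\sum_i\int f_i\,(g_{ji})_\varepsilon\,\phi\,dx.
\end{equation*}
As $\varepsilon\to0$ the middle term converges (by uniform convergence of $(g_{ji})_\varepsilon$ and $\sum_i g^{ik}g_{ji}=\delta_{jk}$) to $\int\tilde\theta J\,\partial_j\phi\,dx$ and the right side to $-\sum_i\int f_i g_{ji}\phi\,dx$. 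Hence $E_\varepsilon$ converges to $-\int\tilde\theta J\,\partial_j\phi\,dx-\sum_i\int f_i g_{ji}\phi\,dx$, i.e.\ to exactly the quantity you need to show is an $L^p$ functional of $\phi$. In other words, the ``commutator'' is not an error term to be killed --- it encodes precisely the conclusion, and the argument as sketched is circular. There is no cancellation mechanism available: $g$ is merely continuous (only $C^{0,\alpha}$ even in the $C^{1,\alpha}$ setting), so $\partial_k(g_{ji})_\varepsilon$ has no $L^p$ bound independent of $\varepsilon$, and the hypothesis $\vec H\in L^p$ with $p>n$ plays no role in controlling $E_\varepsilon$, contrary to what you assert. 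Knowing that the row divergences of a symmetric matrix $\tilde\theta B$ (with $B$ continuous positive definite) lie in $L^p$ does not, by any simple mollification, yield $\tilde\theta\in W^{1,p}_{\rm loc}$ or even continuity of $\tilde\theta$; this is what requires the Campanato-type decay machinery of Duggan. The paper's geometric argument sidesteps all of this because the integer-valuedness of the density, together with upper semicontinuity and the monotonicity-formula blow-up, already pins $\theta$ down without ever differentiating $g$.
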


\begin{oss}
We need to apply this result to the embedded $C^{1, \alpha}$ part of our varifold (that is non-empty by Allard's result): we could then deduce the constancy of the density from \cite{Duggan}, where however the varifold is rectifiable (and not necessarily integral) and the conclusions are that the support is actually $W^{2,p}$ and $\theta$ is $W^{1,p}$ (thus a fortiori continuous). The proof in \cite{Duggan} is however rather lengthy and non-trivial: the extra assumption of integrality on the varifold allows us to give the following very direct proof. 
\end{oss}

\begin{proof}[proof of Lemma \ref{lem:constancyfirst}]
Thanks to the assumption that the generalized mean curvature is in $L^p_{\text{loc}}(\spt{V})$ for $p>n$ we have the monotonicity formula \cite[\S 17]{SimonNotes} and in particular its consequence \cite[Corollary 17.8]{SimonNotes} that the density $\theta$ is \textit{everywhere} well-defined and it is an \textit{upper semi-continuous} function. The assumption that the support is a $C^1$-submanifold $M$ forces the support of any tangent to the varifold at $x \in M$ to be the unique tangent plane to $M$ at $x$. Moreover each tangent is a stationary varifold by the assumption on $\vec{H}$ - this forces each tangent to be the tangent plane to $M$ endowed with constant multiplicity\footnote{The fact that a plane with multiplicity is stationary if and only if the multiplicity is constant can be directly checked very easily, so this does not really require the stronger statement of \cite[\S 41]{SimonNotes}.}. Since by the compactness theorem for integral varifolds \cite[Theorem 42.7-42.8]{SimonNotes} we have that every tangent is itself an integral varifold, then this multiplicity is an integer and therefore $\theta$ is \textit{everywhere} integer valued\footnote{The proof of this lemma can go through with minor modifications even without noticing that the density is everywhere integer-valued, but this observation makes it a bit more slender.}.

Consider an arbitrary point $p$ and let $Q \in \N$ be its density: by upper semicontinuity there exists $B^N_\rho(p) \subset \subset  \mathcal{U}$ in which the density is $\leq Q$ for all points. We wish to conclude that the density of $V$ is exactly $Q$ in a neighbourhood of $p$. If this is not the case, then the \textit{closed} set 

$$\Sigma_Q=\{x\in B^N_\rho(p) : \theta(x)=Q \} ,$$
is not the whole of $M \cap B^N_{\rho/3}(p)$. Its complement $B^N_\rho(p) \setminus \Sigma_Q$ is then an open and non empty set in $M \cap B^N_\rho(p)$, so we can choose $y \in (M \cap B^N_{\rho/3}(p)) \setminus \Sigma_Q$. Consider the open balls centered at $y$ and contained in  $(M \cap B^N_\rho(p)) \setminus \Sigma_Q$: take the sup $d$ of the radii of such balls. Then $\p B_d(y)$ intersects $\Sigma_Q$ (actually interesects its boundary) at least at one point $q$ and  $B_d(y) \cap \Sigma_Q = \emptyset$. 

We now consider $q$, where we have density $Q$, and perform a blow up: by the monotonicity formula we have $Q = \lim_{\sigma\to 0} \frac{\|V\|(B^N_\sigma(q))}{\om_n \sigma^n}$, where $\om_n$ denotes the volume of the unit $n$-dimensional ball. However by the $C^1$ assumption we can assume without loss of generality that $M \cap B^N_\rho(p)$ is given by the graph of a $C^1$ function $g:\R^n \to \R^{N-n}$ with $Dg(\pi(q))=0$ ($\pi$ is the projection on the first factor $\R^n$) we get

$$\|V\|(B_\sigma^N(q))=\|V\|(A_\sigma \times \R^{N-n}) = \int_{A_\sigma} (\theta \circ g) |\text{Jac}\,(Id,g)|,$$
where $A_\sigma = \pi(B_\sigma^N(q) \cap M)$. By the $C^1$ regularity we have $||\text{Jac}\,(Id,g)| - 1|\leq C_n|Dg|^2 = 1+o(\sigma)$ and $\frac{|A_\sigma|}{\om_n \sigma^n} \to 1$ as $\sigma \to 0$. The existence of the ball $B^N_d(y)$ where $\theta \leq Q-1$ and such that $q \in \p B_d(y)$ implies that 
$$\limsup_{\sigma \to 0}\frac{\|V\|(B^N_\sigma(q))}{\om_n \sigma^n} \leq \frac{Q-1}{2} + \frac{Q}{2}=Q - \frac{1}{2},$$
contradiction.
\end{proof}

\begin{lem}[second constancy lemma]
\label{lem:constancysecond}
Let $V$ be an integral $n$-varifold in an open set $B^n_R(0) \times \R$ and assume that $\spt{V}=M_1 \cup M_2$, where each $M_j$ is the graph of a $C^{1,\alpha}$ function $u_j:B^n_R(0) \to \R$ and that the generalized mean curvature $\vec{H}$ of $V$ is in $L^p_{\text{loc}}(\|V\|)$ for some $p>n$. Let $T:=\{(x, t) \in B^n_R(0) \times \R: u_1(x) = u_2(x) =t\}$. Denote by $\pi:B^n_R(0) \times \R \to B^n_R(0)$ the standard projection. For $x \in B^n_R(0) \setminus \pi(T)$ let $s(x)=\Theta(\|V\|, (x, u_1(x))) + \Theta(\|V\|, (x, u_2(x)))$ and for $x \in \pi(T)$ let $s(x)=\Theta(\|V\|, (x, u_1(x)))$. Then there exists $N\in\N$ such that $s=N$ everywhere on $B^n_R(0)$.
\end{lem}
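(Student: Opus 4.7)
The plan is to show $s: B_R^n(0) \to \mathbb{N}$ is locally constant, from which a single value throughout follows by connectivity. Two easy cases are handled directly by Lemma \ref{lem:constancyfirst}: on each connected component $C$ of the open set $B_R^n(0) \setminus \pi(T)$, the graphs $M_1, M_2$ are disjoint over $C$, so $V \res (C \times \R)$ is the sum of two varifolds each supported on a connected $C^{1,\alpha}$ submanifold, and Lemma \ref{lem:constancyfirst} gives constant multiplicities $q_1(C), q_2(C) \in \mathbb{N}$, whence $s|_C = q_1(C) + q_2(C)$. Likewise, on each connected component of $\mathrm{int}(\pi(T))$, $\spt{V}$ is a single $C^{1,\alpha}$ graph and Lemma \ref{lem:constancyfirst} again yields $s$ constant there.

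The core of the proof is gluing these values across $\partial \pi(T)$ via a blow-up argument. Fix $x_0 \in \pi(T)$, set $t_0 := u_1(x_0) = u_2(x_0)$, $p_0 := (x_0, t_0)$, and let $P_j$ denote the tangent hyperplane to $M_j$ at $p_0$ translated through the origin. The $L^p_{\mathrm{loc}}$ bound on $\vec{H}$ combined with monotonicity supplies uniform mass bounds, so any sequence $r_k \to 0^+$ has a subsequence along which the blow-ups $V_k := \eta_{p_0, r_k \#}V$ converge (as varifolds) to an integral \emph{stationary} varifold $V_\infty$ with $\spt{V_\infty} \subset P_1 \cup P_2$ (by Hausdorff convergence of each $\eta_{p_0, r_k}(M_j)$ to $P_j$). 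Applying Lemma \ref{lem:constancyfirst} separately to $V_\infty$ on each of the two half-planes comprising $P_j \setminus (P_1 \cap P_2)$ and combining with the first variation balance of $V_\infty$ at the axis $P_1 \cap P_2$ (which, by linear independence of the co-normals to $P_1$ and $P_2$, forces opposite half-planes of each $P_j$ to carry equal multiplicities), one obtains $V_\infty = a|P_1| + b|P_2|$ with $a, b \in \mathbb{N}$ when $P_1 \neq P_2$, or $V_\infty = N|P|$ (with $P = P_1 = P_2$, $N \in \mathbb{N}$) in the tangential case; in particular $s(x_0) = \Theta(\|V\|, p_0)$ equals $a+b$ (resp.\ $N$).

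The main obstacle is identifying this blow-up multiplicity with $q_1(C) + q_2(C)$ for every component $C$ of $B_R^n(0) \setminus \pi(T)$ accumulating at $x_0$. For such $C$, I would pick a direction $\xi' \in \R^n$ in the (nonempty, open) tangent cone of $C$ at $x_0$, generic enough that $(Du_1 - Du_2)(x_0) \cdot \xi' \neq 0$ when $P_1 \neq P_2$. Then $x_0 + r_k \xi' \in C$ for all large $k$, and the points $p_k^j := (x_0 + r_k \xi', u_j(x_0 + r_k \xi')) \in M_j$ satisfy $\eta_{p_0, r_k}(p_k^j) \to \xi_j := (\xi', Du_j(x_0) \cdot \xi') \in P_j$. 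When $P_1 \neq P_2$, the separation $|u_1 - u_2|(x_0 + r_k \xi')$ is of exact order $r_k$, so for small fixed $\eps > 0$ the ball $B_{\eps r_k}(p_k^j)$ is eventually disjoint from $M_{3-j}$ and contains a piece of $M_j$ of mass $q_j(C)\omega_n(\eps r_k)^n(1+o(1))$; rescaling and passing to the limit via mass convergence of $\|V_k\| \to \|V_\infty\|$ on balls (valid at a.e.\ radius) gives $a = q_1(C)$ and $b = q_2(C)$. When $P_1 = P_2$, both graph pieces populate the same small ball $B_{\eps r_k}(p_0 + r_k \xi)$, and the analogous mass balance directly yields $N = q_1(C) + q_2(C)$. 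Either way $s|_C = s(x_0)$; an analogous argument using approaching components of $\mathrm{int}(\pi(T))$ extends local constancy there. By connectivity of $B_R^n(0)$, $s \equiv N$ for some $N \in \mathbb{N}$, completing the proof.
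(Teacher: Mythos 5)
You set up the same structural dichotomy the paper does (classifying the two possible tangent cones, using Lemma \ref{lem:constancyfirst} on the embedded part, and reading off multiplicities from a blow-up at a boundary point of $\pi(T)$), but the paper's proof does \emph{not} need the quantitative mass-matching step that forms the core of your third paragraph. Instead, the paper argues as follows. First it shows $s$ is upper semi-continuous: if $s$ jumped up from a point $q \in \pi(T)$ to a nearby component $C$ of $B^n_R(0)\setminus\pi(T)$, take the ball $B^n_d(y)$ centred at a point $y \in C$ with $s(y)$ too large, with $\p B^n_d(y)$ touching $\pi(T)$ at $q$; then a half of any blow-up of $V$ at $(q, u_1(q))$ comes from the region over $B^n_d(y)$, forcing the tangent cone's density at the axis to already be $\geq s(y)$, contradicting $\Theta(\|V\|, (q,u_1(q))) = s(q) < s(y)$. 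It then runs the identical ``half-space'' argument again, this time against $\Sigma_Q = \{s = Q\}$ instead of $\pi(T)$, to rule out $s$ dropping below $Q$ on an open ball whose closure touches $\Sigma_Q$. So the paper exploits only the \emph{one-sided} density inequality coming from a single adjacent half-ball; it never needs to identify $a$ with $q_1(C)$ and $b$ with $q_2(C)$ individually, which is the delicate point in your write-up.

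Your step that ``the ball $B_{\eps r_k}(p_k^j)$ is eventually disjoint from $M_{3-j}$ and contains a piece of $M_j$ of mass $q_j(C)\omega_n(\eps r_k)^n(1+o(1))$'' is the weak link. You are implicitly invoking density convergence $\Theta(\|V\|, p_k^j) \to \Theta(\|V_\infty\|, \xi_j)$ or an equivalent small-ball mass estimate at a moving centre, and neither is automatic from varifold convergence alone (density is only upper semi-continuous under varifold convergence, and the point $\xi_j$ you land on lies in the interior of a plane, where you must \emph{separately} argue that $V_\infty$'s multiplicity there is not strictly larger). The estimate $\|V\|(B_{\eps r_k}(p_k^j)) = q_j(C)\omega_n(\eps r_k)^n(1+o(1))$ does hold here because $q_j(C) = \Theta(\|V\|, p_k^j)$ and the monotonicity formula with $L^p$ mean curvature gives a uniform almost-monotone density ratio, but you would need to spell out that the ratio at radius $\eps r_k$ is already close to the point density uniformly in $k$, which is not just monotonicity but requires the $C^1$ graphicality of each sheet to pin the ratio down from above as well. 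This is repairable, but as written the argument has a genuine gap; the paper's upper-semi-continuity plus half-ball trick avoids the issue entirely.

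One further remark: your reduction ``$V_\infty$ is a stationary integral varifold supported in $P_1 \cup P_2$, so by first variation balance along the axis $V_\infty = a|P_1| + b|P_2|$'' is correct but should note that the balance argument uses that the two planes are transverse when $P_1 \neq P_2$; when $P_1 = P_2$ the conclusion $V_\infty = N|P|$ instead comes directly from Lemma \ref{lem:constancyfirst}. The paper states both cases explicitly in its opening paragraph.
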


\begin{proof}[proof of Lemma \ref{lem:constancysecond}]
Recall that the density $\Theta(\|V\|, \cdot)$ is everywhere well-defined thanks to the condition that $\vec{H}$ is in $L^p(\spt{V})$, in view of the monotonicity formula \cite[\S 17]{SimonNotes}. Note that if $C$ is a stationary integral $n$-varifold supported on the union of two transversal planes intersecting along an $(n-1)$-subspace then, setting coordinates so that $\spt{C}=\{x^{n+1}=L_1(x)=a x^1\} \cup \{x^{n+1}=L_2(x)=b x^1\}:=P_1 \cup P_2$ for some $a\neq b \in \R$, the density $\Theta(\|C\|, x)$ is constant on each of the four half-planes $\{x^{n+1}=L_1(x), x^1>0\} , \{x^{n+1}=L_1(x), x^1<0\} , \{x^{n+1}=L_2(x), x^1>0\}, \{x^{n+1}=L_2(x), x^1<0\}$ by Lemma \ref{lem:constancyfirst}, respectively with values $\theta_1^+$, $\theta_1^-$, $\theta_2^+$, $\theta_2^-$. The stationarity of $C$ implies the vanishing of the first variation along $\{x^1=0, x^{n+1}=0\}$ and therefore $\theta_1^+-\theta_1^-=0$ and $\theta_2^+-\theta_2^-=0$. Then $C=\theta_1 |P_1| + \theta_2 |P_2|$ for some $\theta_1, \theta_2 \in \N$. This implies that $\Theta(\|C\|, (x, 0))=\theta_1+\theta_2 \in \N$ for every $x$ such that $x^1=0$. By the assumptions on $\spt{V}$, any tangent to $V$ is supported on the union of two planes, possibly coinciding, so it is either a cone of the type just described or it is a plane with constant multiplicity (again by stationarity, similarly to Lemma \ref{lem:constancyfirst}). This implies that $\Theta$ is everywhere integer-valued on $\spt{V}$ and as a consequence $s$ is integer-valued.

On the complement of $T$ the varifold is supported on a $C^1$ submanifold, so $\Theta(\|V\|, \cdot)$ is locally constant by Lemma \ref{lem:constancyfirst}, i.e. $\Theta(\|V\|, \cdot)$ is constant on each connected component of $(M_1 \cup M_2)\setminus T$. Moreover $\Theta(\|V\|, \cdot)$ is upper semi-continuous, and in particular the restriction of $s$ to $\pi(T)$ is upper semi-continuous. Choose an arbitrary $x \in \pi(T)$ and fix a neighbourhood $B^n_\sigma(x)$ so that $s|_{\pi(T) \cap B^n_\sigma(x)} \leq Q$. We wish to prove, as a first step, that $s$ is upper semi-continuous in $B^n_{\sigma/3}(x)$. Arguing by contradiction, we assume that this fails and choose $y \in B^n_{\sigma/3}(x)$ with $s(y)\geq Q+1$ $y \notin T$. Since $B^n_{\sigma}(x) \setminus \pi(T)$ is open, we can choose balls centred at $y$ and disjoint from $\pi(T)$ and take the sup $d$ of the radii of such balls. Then $B_d^n(y) \subset B^n_{\sigma}(x) \setminus \pi(T)$ and $\p B_d^n(y)$ intersects $\pi(T)$ at least at a point $q$, which lies in $B^n_{\sigma}(x)$ by the choice of $y$. The fact that $B_d^n(y)$ lies in a single connected component of $B^n_{\sigma}(x) \setminus \pi(T)$ implies that $s$ is constant on it, i.e. $s \geq Q+1$ on $B_d^n(y)$. However $q \in \pi(T)\cap B^n_\sigma(x)$ and so $s(q)\leq Q$: take the unique point on the varifold above $q$, namely $(q, u_1(q))$, and blow up at this point: $s(q)\leq Q$ this must give as tangent varifold either a plane with constant multiplicity $\leq Q$ or the sum of two transversal planes with constant multiplicities that add up to a value $\leq Q$. However the presence of $B_d^n(y) \times \R$ in which the multiplicities of the two sheets add up to an integer $\geq Q+1$ provides a contradiction, since in the limit we would find either a half-plane counted with multiplicity $\geq Q+1$ or the sum of two transversal planes with constant multiplicities that add up to a value $\geq Q+1$.

With the upper semi-continuity of $s$ we can conlude the proof as follows. Choose $x \in \pi(T)$ with density $Q \in \N$ and fix a neihbourhood $B^n_\sigma(x)$ such that $s \leq Q$ in it. The set 
$$\Sigma_Q=\{z \in B^n_\sigma(x): s(z)=Q\}$$
is closed and we wish to conclude that it contains a neighbouhood of $x$, namely $B^n_{\sigma/3}(x)$. If that is not the case then its open complement $B^n_\sigma(x) \setminus \Sigma_Q$ contains a point $y \in B^n_{\sigma/3}(x) \setminus \Sigma_Q$. Choose the sup $d$ of the radii of balls centred at $y$ and contained in $B^n_\sigma(x) \setminus \Sigma_Q$: then $B_d^n(y) \subset B^n_\sigma(x) \setminus \Sigma_Q$ and $\p B_d^n(y)$ intersects $\Sigma_Q$ at least at a point $q$, which lies in $B^n_{\sigma}(y)$ by the choice of $y$. So the value of $s$ at $q$ is $Q$ and on $B_d^n(y)$ we have $s \leq Q-1$. If $q \notin \pi(T)$ then we would have a contradiction, in that $s$ is jumping in the interior of $B^n_\sigma(x) \setminus T$, where we know that it is locally constant. If $q \in \pi(T)$ then we perform a blow-up at $q$ which must give as tangent varifold either a plane with constant multiplicity $Q$ or the sum of two transversal planes with constant multiplicities that add up to a value $\geq Q+1$. However the presence of $B_d^n(y) \times \R$ in which the multiplicities of the two sheets add up to an integer $\leq Q-1$ provides a contradiction, since in the limit we would find either a half-plane counted with multiplicity $\leq Q-1$ or the sum of two transversal planes with constant multiplicities that add up to a value $\leq Q-1$. So we have concluded that $s$ is locally constant at every point on $\pi(T)$, which finishes the proof.
\end{proof}

\section{Strong stability for CMC graphs}
\label{stabilitygraphs}

The following result is well-known for minimal graphs and is likely to be known to experts in the case of CMC graphs. Due to its importance within the proof of Theorem \ref{thm:SS} (and to non-immediate availability of its proof in the literature) we present in this appendix the statement and two proofs.

\begin{Prop}
Let $f:\Om \to \R$ be a $C^2$ function (on $\Om\subset \R^n$ open) such that the graph $M$ of $f$ is a CMC hypersurface in $\Om \times \R \subset \R^{n+1}$. Let $H$ denote the mean curvature.
Then the strong stability inequality

$$\int |A|^2 \phi^2 d\mathcal{H}^n \leq \int |\nabla_M \phi|^2 d\mathcal{H}^n$$
holds for every $\phi\in C^\infty_c(M)$, where $\nabla_M$ is the gradient on $M$.
\end{Prop}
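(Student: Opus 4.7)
The plan is to exploit the fact that a CMC graph admits a positive Jacobi field coming from vertical translations, and then use the classical Fischer--Colbrie--Schoen style ``logarithmic cut-off'' argument (with $\phi = u \psi$) to deduce strong stability from this one positivity fact; no zero-average constraint is needed because $u$ itself plays the role of a positive test function.

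First I would identify the Jacobi field. Let $\nu = \frac{(-Df, 1)}{\sqrt{1+|Df|^2}}$ be the upward unit normal to $M$, and set
\[
u := \nu \cdot e_{n+1} = \frac{1}{\sqrt{1+|Df|^2}},
\]
which is a smooth strictly positive function on $M$. Since vertical translations $x \mapsto x + t e_{n+1}$ are ambient isometries of $\R^{n+1}$ preserving the CMC condition with the same value of $H$, the one-parameter family $M_t := M + t e_{n+1}$ consists of CMC hypersurfaces with mean curvature $H$. Differentiating at $t=0$ in the normal direction produces a solution of the Jacobi equation for CMC hypersurfaces in Euclidean space, namely
\[
\Delta_M u + |A|^2 u = 0 \quad \text{on } M.
\]
(Equivalently, one can verify this identity directly by computing $\Delta_M (\nu \cdot e_{n+1})$ using the Weingarten and Codazzi equations, which is the standard computation for the Jacobi operator in codimension one and does not see the value of $H$.) The key point is that for the CMC Jacobi operator in Euclidean space the ``curvature of the ambient'' term vanishes, so $u$ is a \emph{positive} classical solution of $Lu := \Delta_M u + |A|^2 u = 0$, regardless of the value of $H$.

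The main step is then the standard substitution. Given $\phi \in C^\infty_c(M)$, set $\psi := \phi/u$, which is smooth and compactly supported in $M$ since $u > 0$. Multiplying $Lu = 0$ by $\psi^2 u$ and integrating, and using the product rule $\nabla_M(\psi^2 u) = 2\psi u \nabla_M \psi + \psi^2 \nabla_M u$ together with integration by parts (justified by the compact support of $\psi^2 u$ in $M$), one obtains
\[
\int_M |A|^2 u^2 \psi^2 \,d\mathcal{H}^n = 2\int_M u \psi \,\nabla_M u \cdot \nabla_M \psi \,d\mathcal{H}^n + \int_M \psi^2 |\nabla_M u|^2 \,d\mathcal{H}^n .
\]
Expanding $|\nabla_M \phi|^2 = |u \nabla_M \psi + \psi \nabla_M u|^2$ and subtracting gives the identity
\[
\int_M |\nabla_M \phi|^2 \,d\mathcal{H}^n - \int_M |A|^2 \phi^2 \,d\mathcal{H}^n = \int_M u^2 |\nabla_M \psi|^2 \,d\mathcal{H}^n \,\geq\, 0,
\]
which is precisely the strong stability inequality for arbitrary $\phi \in C^\infty_c(M)$.

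The only step that requires any thought is the verification that the pointwise identity $\Delta_M u + |A|^2 u = 0$ holds for $u = \nu \cdot e_{n+1}$ on a CMC graph, and I expect this to be the main (modest) obstacle in writing out a self-contained proof. The cleanest way is the translation argument above: the vertical variation vector field $e_{n+1}$ along $M$ has normal component $u\nu$, and the first variation of mean curvature under a normal variation $u \nu$ on a CMC hypersurface in $\R^{n+1}$ equals $\Delta_M u + |A|^2 u$; since translations fix $H$, this first variation vanishes identically. An alternative, purely computational, proof proceeds by writing everything in the graph chart: use the divergence form of the CMC equation $\mathrm{div}\bigl(Df/\sqrt{1+|Df|^2}\bigr) = H$, differentiate, and directly verify $\Delta_M u + |A|^2 u = 0$ from the Gauss--Weingarten formulas together with the identity $|A|^2 = \sum h_{ij}^2$ read off from second derivatives of $f$. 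Either route completes the proof.
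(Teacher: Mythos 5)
Your proof is correct and coincides, essentially line by line, with the paper's second proof (the ``Jacobi fields argument'' in Appendix B): both hinge on the same positive Jacobi field $u = \nu \cdot e_{n+1}$ solving $\Delta_M u + |A|^2 u = 0$, and your substitution $\psi = \phi/u$ leading to the exact identity $\int_M |\nabla_M \phi|^2 - \int_M |A|^2\phi^2 = \int_M u^2|\nabla_M\psi|^2$ is just the completed-square form of the paper's log-substitution plus Cauchy--Schwarz step. The paper additionally gives a first, independent proof via a semi-calibration argument showing $M$ actually minimizes $J$ for its boundary, a stronger conclusion, but you were not asked to reproduce both.
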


\begin{proof}[first proof - semi-calibration argument]
This argument actually shows something more, namely that $M$ is a minimizer of $J$ for its boundary. Let $N$ be a hypersurface with the same boundary and let $L$ be a $(n+1)$-dimensional submanifold with boundary $M-N$. Denote by $\nu$ the unit normal vector to $M$ such that $\nu \cdot e^{n+1} >0$ and extend $\nu$ to a vector field on $\Om \times \R$ by setting $\nu(x,y) = \nu(x,f(x))$, for $x\in \Om$ and $y \in \R$ (vertically invariant extension). Define the $n$-form $\om=\iota_{\nu} d\text{vol}^{n+1}$, where $d\text{vol}^{n+1} = dx^1 \wedge \ldots \wedge dx^{n+1}$ is the standard volume form on $\R^{n+1}$ and $\iota$ denotes the inner product (contraction). Then $d\om = \text{div} \nu \, d\text{vol}^{n+1}= H\, d\text{vol}^{n+1} $, where $H$ is the constant mean curvature of $M$. Note also that $\int_M \om = \mathcal{H}^n(M)$ and that $\om$ has comass $1$. Now using Stokes' theorem we compute

$$\mathcal{H}^n(M)=\int_M \om = \int_N \om + \int_L d\om = \int_N \om + H \int_L d\text{vol}^{n+1} \leq \mathcal{H}^n(N)+ H \int_L d\text{vol}^{n+1}.$$
Since $J(N)=\mathcal{H}^n(N) - H \text{vol}(N) = \mathcal{H}^n(N) - H ( \text{vol}(M) - \int_L d\text{vol}^{n+1})$ we obtain 
$$J(N) \geq \mathcal{H}^n(M)- H  \text{vol}(M) =J(M),$$
so $M$ is a minimizer of $J$ for its boundary $\p M$ and, in particular, $J''(\phi)\geq 0$ for any $\phi\in C^\infty_c(M)$, as desired. 

Note that, if $N$ encloses the same volume as $M$, the inequality above says that $\mathcal{H}^n(M)\leq \mathcal{H}^n(N)$, i.e.~$M$ minimizes area among hypersurfaces with the same boundary and with the same enclosed volume.
\end{proof}

\begin{proof}[second proof - Jacobi fields argument]
Let $u:=\nu \cdot e^{n+1}$, where $\nu$ is the unit normal to $M$ chosen so that $u$ is a positive function on $M$. We will prove that $u$ satisfies $\Delta_M u = -|A|^2 u$, where $\Delta_M$ is the Laplacian on $M$. We will compute at an arbitrary point $x$, in a system of normal coordinates around $x$. Let $v_i$ ($i=1, ..., n$) be a orthonormal basis for $M$.

$$\Delta_M u = \nabla_{v_i}  \nabla_{v_i} u =  \nabla_{v_i}  \nabla_{v_i} \langle \nu, e^{n+1} \rangle =  \nabla_{v_i} \langle  \nabla_{v_i}\nu, e^{n+1} \rangle=$$
$$= -\nabla_{v_i} \langle  A_{ij}v_j, e^{n+1} \rangle = - \langle  (\nabla_{v_i}A_{ij}) v_j, e^{n+1} \rangle - \langle  A_{ij} (\nabla^{\R^3}_{v_i}v_j), e^{n+1} \rangle=$$
and using Codazzi equations for the first term and the fact that the tangential component of $\nabla_{v_i}v_j$ is $0$ (we are in normal coordinates) for the second term we continue the chain of equalities as follows:
$$= - \langle  (\nabla_{v_j}A_{ii}) v_j, e^{n+1} \rangle - \langle  A_{ij} A_{ij} \nu, e^{n+1} \rangle=- \langle  (\nabla_{v_j}H) v_j, e^{n+1} \rangle - |A|^2 u =$$
$$ = - |A|^2 u,$$
where, in the last step, we used the constancy of $H$. The function $w=\log u$ (well-defined since $u>0$) satisfies, by an algebraic manipulation,

$$\Delta_M w = \frac{\Delta_M u}{u} - |\nabla_M w|^2;$$
in view of $\Delta_M u = -|A|^2 u$, we have immdiately

$$|A|^2 + |\nabla_M w|^2=-\Delta_M w .$$
Multiplying this identity by $\phi^2$ and integrating by parts we obtain

$$\int |A|^2 \phi^2+ \int |\nabla_M w|^2 \phi^2=-\int \Delta_M w \,\phi^2 \leq 2 \int |\phi| |\nabla_M w| |\nabla_M \phi| \leq \int |\phi|^2 |\nabla_M w|^2 +\int  |\nabla_M \phi|^2$$
and simplyfing we conclude, as desired, that
$$\int |A|^2 \phi^2  \leq  \int  |\nabla_M \phi|^2.$$
\end{proof}

\section{On the distributional derivative}
The following fact is used twice in the inductive step for the proof of the higher regularity Theorem \ref{thm:higher-reg}.

\begin{lem}
\label{lem:distributionalder} 
Let $B \subset \R^n$ be an open ball and $T \subset B$ be a closed subset. Assume that $f:B \to \R$ is a continuous function with the following properties: $f=0$ on $T$, $f|_{B \setminus T}$ is $C^1$ and $Df$ is summable on $B \setminus T$ (i.e. $\int_{B \setminus T} |D f| <\infty$). Then the distributional derivative of $f$ on $B$ is given by the $L^1$-function on $B$ that is equal to $0$ on $T$ and to $Df$ on $B \setminus T$. 
\end{lem}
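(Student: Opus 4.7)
The plan is to reduce the statement to a one-dimensional fact on almost every line parallel to a coordinate axis, via Fubini, and then to integrate by parts on the countably many open intervals into which such a line is decomposed by the closed set $T$. Fix $l\in\{1,\dots,n\}$ and a test function $\varphi\in C^\infty_c(B)$; write $x=(x',t)$ with $x'\in\R^{n-1}$ the coordinates orthogonal to $e_l$ and $t=x_l$, and let $I_{x'}=\{t:(x',t)\in B\}$. Define $g_l$ to be $\partial_l f$ on $B\setminus T$ and $0$ on $T$; by hypothesis $g_l\in L^1(B)$, and by Fubini the slice $\int_{I_{x'}\setminus T_{x'}}|g_l(x',t)|\,dt$ is finite for almost every $x'$, where $T_{x'}=\{t\in I_{x'}:(x',t)\in T\}$. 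The goal is to show
\[
-\int_{I_{x'}} f(x',t)\,\partial_t\varphi(x',t)\,dt=\int_{I_{x'}} g_l(x',t)\,\varphi(x',t)\,dt
\]
for every such $x'$; integrating in $x'$ and applying Fubini again will then give the distributional identity.

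Fix such an $x'$. The set $T_{x'}$ is closed in the open interval $I_{x'}$, so its complement is a countable disjoint union of open intervals $(a_j,b_j)$. On each $(a_j,b_j)$ the function $t\mapsto f(x',t)$ is $C^1$, and $|\partial_t f(x',\cdot)|$ is integrable there by the slicewise finiteness above. Moreover $f$ is continuous on $I_{x'}$, so if $a_j$ (resp.\ $b_j$) lies in $T_{x'}$ then $f(x',a_j)=0$ (resp.\ $f(x',b_j)=0$); if instead $a_j$ or $b_j$ is a boundary point of $I_{x'}$, then $\varphi(x',\cdot)$ vanishes there because $\varphi$ has compact support in $B$. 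For any $a_j<a_j'<b_j'<b_j$ the classical $C^1$ integration by parts gives
\[
-\int_{a_j'}^{b_j'} f\,\partial_t\varphi\,dt=\int_{a_j'}^{b_j'}(\partial_t f)\,\varphi\,dt-f(x',b_j')\varphi(x',b_j')+f(x',a_j')\varphi(x',a_j'),
\]
and letting $a_j'\downarrow a_j$, $b_j'\uparrow b_j$ the interior integrals pass to the limit by the dominated convergence theorem (using $|\partial_t f|\in L^1(a_j,b_j)$ for the right-hand side and boundedness of $f\,\partial_t\varphi$ on $(a_j,b_j)$ for the left-hand side), while the boundary terms vanish by the discussion above. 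Hence
\[
-\int_{a_j}^{b_j} f(x',t)\,\partial_t\varphi(x',t)\,dt=\int_{a_j}^{b_j}\partial_t f(x',t)\,\varphi(x',t)\,dt.
\]

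Summing over $j$ yields the desired slicewise identity: the sum on the right converges absolutely because $\sum_j\int_{a_j}^{b_j}|\partial_t f|\,|\varphi|\le\|\varphi\|_\infty\int_{I_{x'}\setminus T_{x'}}|g_l(x',\cdot)|<\infty$, while the sum on the left equals $-\int_{I_{x'}\setminus T_{x'}}f\,\partial_t\varphi$, which coincides with $-\int_{I_{x'}}f\,\partial_t\varphi$ since $f$ vanishes on $T_{x'}$. Integrating over $x'\in\R^{n-1}$ and using Fubini on both sides produces
\[
-\int_B f\,\partial_l\varphi\,dx=\int_B g_l\,\varphi\,dx,
\]
which is precisely the claim.

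The only real obstacle is the endpoint/boundary analysis of the integration by parts on each component $(a_j,b_j)$: the hypothesis that $f$ is only $C^1$ on the open set $B\setminus T$ (not up to $T$) means we cannot integrate by parts directly on $[a_j,b_j]$, so the approximation by $[a_j',b_j']\subset(a_j,b_j)$ and passage to the limit is essential. The continuity of $f$ and the fact that $f\equiv0$ on $T$ are exactly what make the boundary terms disappear; the $L^1$ hypothesis on $Df$ on $B\setminus T$ is exactly what justifies both the slicewise integrability and the summation over the (possibly infinite) family of components. No control on the size of $T$ (e.g.\ Minkowski content, Hausdorff measure) is needed, in contrast with the naive cutoff-function approach, which would require estimates on $|\{d(\cdot,T)<\varepsilon\}|/\varepsilon$ that need not hold for arbitrary closed sets.
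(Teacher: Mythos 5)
Your proof is correct, but it takes a genuinely different route from the paper's. The paper constructs a smooth odd function $\gamma_\delta$ that vanishes identically on $[-\delta/2,\delta/2]$ and equals $t\mp\delta$ for $|t|>\delta$, and observes that $\gamma_\delta(f)$ is $C^1$ on all of $B$: indeed $\gamma_\delta(f)\equiv 0$ on the open set $\{|f|<\delta/2\}$, which contains $T$ by continuity, while on $\{|f|>\delta/4\}\subset B\setminus T$ it inherits the $C^1$ regularity of $f$. The paper then integrates by parts globally against $\gamma_\delta(f)\in C^1(B)$, uses the chain rule, and lets $\delta\to 0^+$ via dominated convergence on both sides (using $\gamma_\delta'\le 1$, $|\gamma_\delta(f)|\le|f|$, and the fact that $\gamma_\delta'(f)\to 1$ pointwise except on the $\mathcal H^n$-null set $\{f=0,\ Df\neq0\}\subset B\setminus T$). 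Your argument instead runs a Fubini slicing: reduce to a one-dimensional integration-by-parts identity on a.e.\ line in the $e_l$-direction, decompose the open complement of the (closed) slice of $T$ into countably many open intervals, integrate by parts classically on compact subintervals and pass to the limit so the boundary terms vanish because $f$ is continuous and vanishes on $T$, then sum over the components and re-integrate over $x'$. Both proofs are sound and both correctly make no size assumption on $T$. The paper's argument is shorter and dimension-free, avoiding the bookkeeping of countable decompositions of slices; yours is more elementary, replacing the auxiliary truncation $\gamma_\delta$ and the measure-zero observation about $\{f=0,\ Df\neq0\}$ with a direct 1D limit computation, and it makes transparent exactly where the hypotheses $f|_T=0$ and $Df\in L^1(B\setminus T)$ enter (boundary terms and summability over components respectively).
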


\begin{proof}
For $\delta >0$, let $\gamma_{\delta}:\R \to \R$ be a smooth non-decreasing function such that  $\gamma_{\delta}(t) = 0$ for $|t| < \delta/2$ , $\gamma_{\delta}(t) = t - \delta$ for $t > \delta$, $\gamma_{\delta}(t) = t + \delta$ for $t < -\delta$ and $\gamma_{\delta}^{\prime}(t) \leq 1$ for all $t \in {\mathbb R}$. 
Note that $\gamma_\delta(f)$ is $C^1$ on the open set \{$|f|>\delta/4\}$ and is identically $0$ on the open set $\{|f|<\delta/2\}$, so $\gamma_\delta(f)$ is $C^{1}$ on $B$ and so we can integrate by parts and use the chain rule to get, for any $\zeta \in C^{1}_{c}(B)$ and $l \in \{1, 2, \ldots, n\}$,  $$\int_B \gamma_\delta(f) D_l \zeta = -\int_B \gamma_{\delta}^{\prime}(f)D_{l}f\zeta = -\int_{B \setminus T} \gamma_{\delta}^{\prime}(f) D_{l}f \zeta.$$ 
Since $f$ is $C^{1}$ on $B \setminus T$, we have that ${\mathcal H}^{n}(\{x \in B \setminus T \, : \, f(x) = 0, \; Df(x) \neq 0\}) = 0,$ so in view of the fact that $\gamma_{\delta}^{\prime}(f(x)) \to 1$ as $\delta \to 0^{+}$ for every $x \in B \setminus T$ with $f(x) \neq 0$, and $D_{l}f \in L^{1}(B \setminus T)$, it follows from the dominated convergence theorem that $$\int_{B \setminus T} \gamma_{\delta}^{\prime}(f) D_{l}f \zeta\to \int_{B \setminus T} D_{l}f \zeta$$ 
as $\delta \to 0^{+}$. Since  $\gamma_{\delta}(f(x)) \to f(x)$ for every $x \in B$,  again by the dominated convergence theorem  $\int_{B} \gamma_{\delta}(f) D_{\l} \zeta \to \int_{B}f D_{l}\zeta$ as $\delta \to 0^{+}$, so letting $\delta \to 0^{+}$ in the above we conclude that 
$$\int_B f D_l\zeta=- \int_{B \setminus T} D_l f \, \zeta$$
for every $\zeta \in C^{1}_{c}(B).$ From this the conclusion follows. 
\end{proof}

\section{Notation used in the paper}
 
$IV_n(\mathcal{U})$: integral $n$-dimensional varifolds on the open set $\Uc$.
 
 \medskip
 
$\spt{V}$: support of the varifold $V$.

 \medskip
 
$\Reg V$: set of regular (smooth) points of the varifold $V$.

 \medskip
 
$\text{reg}_1 V$: set of $C^1$-embedded points of the varifold $V$.

 \medskip
 
$\text{sing} V$: set of singular points of the varifold $V$ (complement of $\Reg V$ in $\spt{V}$).

 \medskip
 
$\text{sing}_C V$: set of classical singularities of the varifold $V$.

 \medskip
 
$\text{sing}_T V = \text{sing}_T^2 V$: set of (two-fold) touching singularities of the varifold $V$.

 \medskip
 
$\text{sing}_T^\ell V$: set of $\ell$-fold touching singularities of the varifold $V$.

 \medskip
 
$\Greg{V}$: set of generalized regular points of the varifold $V$.

 \medskip
 
$\mathcal{S}_H$: class of integral $n$-varifolds satisfying assumptions 1, 2, 3, 4, 5 of Theorem \ref{thm:mainregularityrestated} with ${\mathcal U} = B_{2}^{n+1}(0)$ and $|h| \leq H$.

 \medskip
 
$\text{vol}_{\mathcal{O}}(W)$: volume enclosed by $V \res \Oc$, for $\Oc$ open such that $V \res \Oc \subset \text{reg}_1 V$ and $V \res \Oc$ is orientable.

 \medskip
 
$\text{A}_{\mathcal{O}}(W)$: total weight of $V$ in $\Oc$, i.e.~$\|V\|(\Oc)$ (hypersurface area in $\Oc$), for $\Oc$ open such that $V \res \Oc \subset \text{reg}_1 V$.

 \medskip
 
$J_{\mathcal{O}}(W)$: for $\lambda \in \R$, this denotes the functional $\text{A}_{\mathcal{O}}(W)+\lambda \text{vol}_{\mathcal{O}}(W)$.

 \medskip
 
$\mathscr{vol}(\mathscr{i})$: for an oriented immersion $\mathscr{i}$, this denotes the enclosed volume.

 \medskip
 
$\mathscr{a}(\mathscr{i})$: for an oriented immersion $\mathscr{i}$, this denotes the hypersurface area.

 \medskip
 
$J(\mathscr{i})$: for $\lambda \in \R$, this denotes the functional $\mathscr{a}(\mathscr{i})+\lambda \mathscr{vol}(\mathscr{i})$.

 \medskip
 
${\rm dist}_{\mathcal H}$: Hausdorff distance.

\bigskip
\hskip-.2in\vbox{\hsize3in\obeylines\parskip -1pt 
  \small 
Costante Bellettini
Department of Mathematics
University College London
London WC1E 6BT, United Kingdom
\vspace{4pt}
{\tt C.Bellettini@ucl.ac.uk}} 
\vbox{\hsize3in
\obeylines 
\parskip-1pt 
\small 
Neshan Wickramasekera
DPMMS 
University of Cambridge 
Cambridge CB3 0WB, United Kingdom
\vspace{4pt}
{\tt N.Wickramasekera@dpmms.cam.ac.uk}}

\end{document}